\newcommand{\NN}{\mathbb{N}}
\newcommand{\TT}{\mathbb{T}}
\newcommand{\RR}{\mathbb{R}}
\newcommand{\bb}{\ensuremath{\mathcal{B}}}
\newcommand{\BB}{\ensuremath{\widetilde{\mathcal{B}}}}
\newtheorem{theorem}{Theorem}[section]
\newtheorem{proposition}{Proposition}[section]
\newtheorem{lemma}{Lemma}[section]
\newtheorem{remark}{Remark}[section]
\newtheorem{coro}{Corollary}[section]
\newtheorem{defn}{Definition}[section]
\newcommand{\dd}{\mathrm{d}}
\begin{document}

\title[Temperature patches for 2D non-diffusive Boussinesq system]
{Global regularity and infinite Prandtl number limit of temperature patches for the 2D Boussinesq system}
\author{Omar Lazar}
\address{College of Engineering and Technology, American University of the Middle East, Kuwait}
\email{omar.lazar@aum.edu.kw}
\author{Liutang Xue}
\address{School of Mathematical Sciences, Laboratory of Mathematics and Complex Systems (MOE), Beijing Normal University, Beijing 100875, P.R. China}
\email{xuelt@bnu.edu.cn}
\author{Jiakun Yang}
\address{School of Mathematical Sciences, Laboratory of Mathematics and Complex Systems (MOE), Beijing Normal University, Beijing 100875, P.R. China}
\email{yjk20000123@126.com}

\keywords{Boussinesq-Navier-Stokes system, Stokes-transport system, temperature patches, infinite Prandtl number limit.}
\subjclass[2010]{Primary 76D03, 35Q35, 35Q86}
\date{\today}

\maketitle
\begin{abstract}
  We prove global regularity and study the infinite Prandtl number limit of temperature patches for the 2D non-diffusive
  Boussinesq system with dissipation in the full subcritical regime.
  The temperature satisfies a transport equation and the temperature initial data
  are given in the form of non-constant patches.
  Our first main result is a persistence of regularity of the patches globally in time.
  More precisely, we prove that if the boundary of the initial temperature patch lies in $C^{k+\gamma}$ with $k\geq 1$
  and $\gamma\in(0,1)$ then this initial regularity is preserved for all  time. Importantly, our proof is robust enough to show uniform dependence on the Prandtl number in some cases.
  This result solves a question in Khor and Xu \cite{KX22} concerning the global control of the curvature of
  the patch boundary.  Besides, by studying the limit when the Prandtl number goes to infinity,
  we find that the patch solutions to the 2D Boussinesq-Navier-Stokes system in the torus   converge to the unique patch solutions
  of the (fractional) Stokes-transport equation and that the $C^{k+\gamma}$ regularity of the patch boundary is
  globally preserved. This allows us to extend the $C^{k+\gamma}$ persistence result of  Grayer II \cite{Gray23}
  from the range $k\in \{0,1,2\}$ to the full range $k\geq 1$.
\end{abstract}

\tableofcontents

\section{Introduction}
In this paper, we study the Cauchy problem of the two-dimensional  non-diffusive Boussinesq-Navier-Stokes system:
\begin{equation}\label{BoussEq}
  (\mathrm{B}_\alpha)\left\{\begin{aligned}
  \partial_t \theta + u \cdot \nabla \theta & =0, \qquad \qquad\; (t, x) \in \mathbb{R}_{+} \times \mathbf{D}, \\
  \tfrac{1}{\mathrm{Pr}} \big(\partial_t u + u \cdot \nabla u \big)
  + \nu \Lambda^{2 \alpha} u
  & = - \nabla p+\theta e_2, \\
  \nabla \cdot u & =0, \\
  \left.(u,\theta)\right|_{t=0} & = (u_0,\theta_0),
%  \left.\theta\right|_{t=0} & =\theta_0 .
  \end{aligned}\right.
\end{equation}
where $\mathbf{D}$ is either $\mathbb{R}^2$ or torus $\mathbb{T}^2$, $e_2:= (0,1)^T$ is the second canonical vector of $ \mathbb{R}^2$, $\nu\geq0$ is the kinematic viscosity,
$\mathrm{Pr}>0$ is the non-dimensional Prandtl number,
the dissipation operator $\Lambda^{2\alpha}:=(-\Delta)^\alpha$ ($\alpha\in (0,1]$) is the classical (fractional) Laplacian operator defined {\it{via}} the Fourier transform through the formula
$\widehat{\Lambda^{2 \alpha} f} (\cdot)= |\cdot|^{2 \alpha} \widehat{f}(\cdot)$.
% if $\alpha\in (0,1)$.
The vector field $u=\left(u_1( x,t), u_2( x,t)\right)^T$ is the velocity of the fluid,
and the scalars $p=p( x,t) $ and $\theta=\theta( x,t) $  denote the pressure and temperature of the fluid, respectively.
This system $(\mathrm{B}_\alpha)$ with $\alpha=1$ is used to model the natural convection phenomena in the ocean and atmospheric dynamics \cite{AJM03,JP1987}. It is also an important mathematical model used to study the Rayleigh-B\'enard convective motion as noticed for example in the work by Constantin and Doering \cite{CD1999}. The system $(\mathrm{B}_\alpha)$ with $\nu>0$, $\alpha\in(0,1)$ can be viewed as an intermediate model connecting the inviscid case (i.e. $\nu=0$) and the full Laplacian case that is $\nu>0$, $\alpha=1$. We refer to \cite{FLB74,MGSIG} for some physical background on the fractional Navier-Stokes equations which corresponds to the case $\theta\equiv 0$.

From the mathematical point of view, the Boussinesq system \eqref{BoussEq} contains the incompressible Navier-Stokes
and Euler equations as special cases \cite{LEM16,MB02}. Furthermore, in the inviscid case, {\it i.e.} $\nu=0$, the Boussinesq system shares many similarities with the 3D axi-symmetric  Euler equations with swirl.
In light of the maximum principle of $\theta$  and the maximal regularity estimates of fractional parabolic equations, we can distinguish 3 cases in the the viscous case (i.e. $\nu>0$). Namely, the cases $\alpha>\frac{1}{2}$, $\alpha=\frac{1}{2}$ and $0<\alpha<\frac{1}{2}$ which are classically called  \textit{subcritical}, \textit{critical} and \textit{supercritical}  respectively.
It is worth recalling that, so far, the global well-posedness  of smooth solutions for the 2D Boussinesq system \eqref{BoussEq} remains a challenging open problem in the inviscid case or in the supercritical case.
Some important recent advances in the study of the 2D inviscid Boussinesq system \eqref{BoussEq} have been obtained in \cite{ElgJ20,CheH21,CheH23} where  finite time blow-up results in various domains are proved. We  refer also to \cite{CCL19,ElgW16} for the global stability results.

For the 2D Boussinesq-Navier-Stokes system $(\mathrm{B}_\alpha)$ with $\alpha=1$ and $\mathrm{Pr}=1$,
Chae \cite{DC06} and Hou, Li \cite{HL05}
independently proved the global existence and uniqueness result associated with
the smooth initial data $(u_0,\theta_0)\in H^s\times H^s$ with $s>2$,
which gives an answer to the problem number 3 in Moffatt \cite{Moff01} by ruling out the possible development of
singularity in the gradient for this system.
The same type of results have been obtained by Abidi and Hmidi \cite{AH07} who proved the global well-posedness result for less regular initial data
$\theta_0\in B^0_{2,1}$, $u_0\in L^2\cap B^{-1}_{\infty,1}$. Then,
Hmidi and Keraani \cite{HK2007} proved global existence of weak solutions to the Boussinesq-Navier-Stokes system $(\mathrm{B}_1)$
with initial data $\theta_0\in L^2$, $u_0\in H^s$, $s\in [0,2)$. The uniqueness  of weak solutions obtained in \cite{HK2007}  has been  solved by Danchin and Pai\"cu \cite{DP08} using  new regularizing effect  together with paradifferential calculus. Let us also mention the work of Hu, Kukavica and Ziane \cite{HKZ15} who proved global persistence of regularity  in Sobolev spaces.

For the 2D Boussinesq-Navier-Stokes system $(\mathrm{B}_\alpha)$ with fractional dissipation and $\mathrm{Pr}=1$,
Hmidi, Keraani and Rousset \cite{HKR10} considered the critical viscous case $\alpha=\frac{1}{2}$
and proved that  $(\mathrm{B}_{\frac{1}{2}})$ is globally well-posed for any data
$\theta_0\in L^2\cap B^0_{\infty,1}$ and $u_0\in H^1\cap\dot{W}^{1,p}$, $p>2$.

Recently, there has also been significant attention on the \textit{Boussinesq temperature patch problem}
for the non-diffusive Boussinesq system $(\mathrm{B}_\alpha)$, which is
a free boundary problem of the system  $(\mathrm{B}_\alpha)$
associated with an initial data which is given as the characteristic function of an initial domain $D_0\subset \mathbf{D}$
which is assumed to be simply connected and bounded.
Since $\theta$ satisfies a transport equation and since the velocity $u$ is assumed to be regular enough,
it  implies (at least formally) that the temperature patch structure is  preserved.
In other words, any  initial data $\mathbf{1}_{D_0}$  gives rise to a solution $\theta(x, t)=\mathbf{1}_{D(t)}$
where $D(t)=X_t\left(D_0\right)$ and $X_t(\cdot)$ is the particle trajectory generated
by the velocity $u$ which verifies
\begin{equation}\label{eq:X}
  \frac{\partial X_t(y)}{\partial t}=u\left(X_t(y), t\right),\left.\quad X_t(y)\right|_{t=0}=y.
\end{equation}
This point of view allows us to study many regularity questions, in particular, one may wonder whether the initial regularity of the patch boundary is globally preserved along the evolution. More precisely, one may study and try to answer the following question:
\begin{align*}
  \textrm{suppose $\partial D_0 \in C^{k+\gamma}$, $k \in \mathbb{Z}^{+}, \gamma \in(0,1)$,
  whether $\partial D(t) \in C^{k+\gamma}$ for all time?}
\end{align*}
Here the notation $\partial D(t) \in C^{k+\gamma}$ means  that there is a parametrization of the patch boundary
$\partial D(t)=\left\{z(\alpha, t) \in \mathbf{D}, \alpha \in \mathbb{S}^1=[0,1]\right\}$
with $z(\cdot, t) \in C^{k+\gamma}$.

Such regularity problem were initiated in the 1980s, in particular with the study of the vorticity patch problem for the 2D Euler equations. This problem was solved by Chemin \cite{Chem93} (using paradifferential calculus together with  striated regularity estimates) and Bertozzi and Constantin \cite{bertozzi1993} (using a more geometric approach based on cancellation of singular kernel). They were able to prove that if an initial patch  of the 2D Euler equations has its boundary in $C^{k+\gamma}$ then this regularity remains forever.

As for the temperature patch problem for the Boussinesq system $(\mathrm{B}_\alpha)$
with $\alpha=1$ and $\mathrm{Pr}=1$, Danchin and Zhang \cite{DZ17}  proved that the $C^{1+\gamma}$ regularity of the patch boundary is globally preserved in the $2 \mathrm{D}$ case as well as in the
$3 \mathrm{D}$ case under an additional smallness condition by using the striated estimates. Using another approach,  Gancedo and Garc\'ia-Juárez \cite{GGJ17}  gave a proof of 
the persistence of the $C^{1+\gamma}$ regularity in the 2D case. Moreover, they proved that the
$W^{2, \infty}$ and $C^{2+\gamma}$ regularity of the boundary of the temperature patch  is globally preserved.
Their approach are based on new cancellations in time-dependent Calder\'on-Zygmund operators.
In particular, the result of \cite{GGJ17} implies that the curvature of the patch boundary is uniformly-in-time bounded.
Furthermore, Chae, Miao and Xue \cite{CMX22} established the global
$C^{k+\gamma}$ (for all $k\in\mathbb{Z}^+$)-regularity
persistence of the patch boundary. The same type of results were obtained in the 3D case in \cite{GGJ20,LLX24}.

For the 2D Boussinesq-Navier-Stokes system $(\mathrm{B}_\alpha)$ with
$\frac{1}{2}<\alpha<1$, $\mathrm{Pr}=1$, Khor and Xu \cite{KX22} were able to prove that, given  an initial  temperature patch data
$\theta_0 = \mathbf{1}_{D_0}$ whose boundary of in $C^{2\alpha}$ then this regularity is preserved for all time. Besides, the authors in \cite{KX22} raised two  interesting questions, namely,
\begin{enumerate}[(a)]
  \item \textit{Is it possible to control the curvature for $\alpha\in (1/2,1)$, as it is possible in the case $\alpha=1$?}
  \item \textit{Can the critical equation $\alpha=1/2$ support  unique temperature patch solutions, and what regularity of their boundary would be preserved?}
\end{enumerate}
As we shall see later, we give an affirmative answer to the question (a) in Theorem \ref{thm:main},
and we make some comments on the question (b) in Remark \ref{rem:crit}.

\begin{comment}
For more general patch-type solutions and related contour interface dynamics,
they can be used to model many important physical phenomena arising from water
waves, porous media, or frontogenesis, and so on, and have been intensively studied
in recent decades, and one can see [7, 8, 9, 10, 18, 19, 20, 28, 34, 42] and references
therein for recent progress. In particular, the finite-time singularities were rigorously
proved for Muskat system [7, 10], free-surface Euler equations [8, 19], free-surface
Navier--Stokes equations [9, 20], and a modified surface quasi-geostrophic equation
[42, 33].
\end{comment}

When the Prandtl number $\mathrm{Pr}$ tends to infinity, as observed by Grayer II \cite{Gray23},
the material derivative of $u$ in the momentum equation in the system \eqref{BoussEq} vanishes (at least formally) and
the system \eqref{BoussEq} becomes the two-dimensional (fractional) Stokes-transport system (by assuming $\nu=1$):
\begin{equation}\label{eq:StokTrans}
  (\mathrm{ST}_\alpha)\left\{\begin{aligned}
  \partial_t \theta + u \cdot \nabla \theta & =0, \qquad \qquad \quad (t, x) \in \mathbb{R}_{+} \times \mathbf{D},
  \\
  \Lambda^{2 \alpha} u  & = - \nabla p+\theta e_2, \\
  \nabla \cdot u & =0, \\
  \left.\theta \right|_{t=0} & = \theta_0,
  \end{aligned}\right.
\end{equation}
where 
$\alpha \in (0,1]$.
The Stokes-transport system, which is the system $(\mathrm{ST}_\alpha)$ with $\alpha =1$,
can also be recovered by taking a limit of sedimentation of inertialess rigid particles in a viscous fluid satisfying Stokes system
\cite{Hofer18}, where $\theta$ stands for the probability density function of the particles and $(u,p)$ are the velocity and pressure of the fluid. The global existence and uniqueness issue for the 3D Stokes-transport system associated with regular or rough initial data
has been intensely studied in various settings \cite{Hofer18,HofS21,Lebl22,MS22,Inv23}.
For the 2D Stokes-transport system $(\mathrm{ST}_1)$,
Grayer II \cite{Gray23} proved tha the Cauchy problem associated with data in $L^1\cap L^\infty$ is globally well-posed. As well, he proved the global persistence of $C^{k+\gamma}$ ($k\in\{0,1,2\}$) boundary regularity of the associated patch solutions.
Dalibard, Guillod and Leblond \cite{DGL23} studied the long-time behavior for the 2D Stokes-transport system in a channel
$\mathbf{D}= \mathbb{T}\times (0,1)$ (see also \cite{Park24}).
One can refer to \cite{AMY00,GGBS23,GGBS24} for some further regularity results for an interface of density in the Stokes-transport system.

The fractional Stokes-transport system $(\mathrm{ST}_\alpha)$ with $\alpha\in (0,1)$ can be seen as an intermediate model
between the inviscid incompressible porous media (IPM) equation (i.e. the $\alpha =0$ case, see e.g. \cite{CCGO09,CGO07})
and the Stokes-transport system.
In a very recent work, Cobb \cite{Cobb23} proved various well-posedness results in critical function spaces for the fractional Stokes-transport system in any dimension $d\geq 2$. In particular, the author showed a global well-posedness result
in the case $\{d=2,\,\alpha\in(\frac{1}{2},1)\}$ associated with data  $\theta_0\in L^p \cap L^{\frac{2}{2\alpha-1}}$
($1\leq p <\frac{1}{\alpha}$) and in the case $\{d=2,\,\alpha=\frac{1}{2}\}$ associated with
$\theta_0\in \dot B^0_{2,1}\cap \dot B^0_{\infty,1}$.
\vskip1mm

In this paper we study the patch problem for the 2D Boussinesq-Navier-Stokes system $(\mathrm{B}_\alpha)$
in the full subcritical regime, namely $\frac{1}{2}<\alpha\leq 1$.
One of the main task is to get a control which is independent on the Prandtl number $\mathrm{Pr}\in [1,\infty)$
in order to let the Prandtl number goes to infinity. This will allow us to recover the patch solutions
for the system $(\mathrm{ST}_\alpha)$ by passing to the limit.
The initial temperature $\theta_0$ is the patch of nonconstant values,
which is classically called the temperature front initial data.
This setting describes the evolution of the temperature front governed by the fluid flow, and it is an important physical scenario in geophysics \cite{AG1982,AJM03}. Let us now introduce our analytical setting regarding the initial condition. \\

Let $k\geq 2$  be an integer,
\begin{align}\label{eq:the-assum1}
  \theta_0(x) = \bar{\theta}_0(x) \mathbf{1}_{D_0}(x),\quad
  \bar{\theta}_0(x) \in
  \begin{cases}
    C^{k+\gamma-2\alpha}(\overline{D_0}),\quad & \textrm{for}\;\;\gamma\in(0,2\alpha-1), \alpha\in (\frac{1}{2},1], \\
    C^{k-1+\widetilde{\gamma}}(\overline{D_0}),\,\widetilde{\gamma}>0, \quad
    & \textrm{for}\;\; \gamma = 2\alpha-1,\alpha\in (\frac{1}{2},1),
  \end{cases}
\end{align}
where $D_0\subset \mathbf{D}$ is a bounded simply connected domain
with boundary
\begin{align}\label{eq:parD0-assum}
  \partial D_0\in C^{k+\gamma},\quad
  \gamma \in
  \begin{cases}
    (0,2\alpha-1],\quad & \textrm{if}\;\;\alpha\in (\frac{1}{2},1), \\
    (0,1),\quad & \textrm{if}\;\;\alpha =1.
  \end{cases}
\end{align}
We consider the level-set characterization of the domain $D_0$: there exists a function $\varphi_0 \in C^{k+\gamma}\left(\mathbf{D}\right)$
such that
\begin{equation}\label{eq:D_0}
  \partial D_0=\left\{x \in \mathbf{D}: \varphi_0(x)=0\right\},\;\;
  D_0=\left\{x \in \mathbf{D}: \varphi_0(x)>0\right\},
  \quad \nabla \varphi_0 \neq 0 \text { on } \partial D_0 .
\end{equation}
Then, the boundary $\partial D_0$ can be parameterized as
\begin{equation}\label{eq:z_0}
  z_0: \mathbb{S}^1 \mapsto \partial D_0 \quad \text { with }
  \partial_\alpha z_0(\alpha)=\nabla^{\perp} \varphi_0\left(z_0(\alpha)\right):= W_0\left(z_0(\alpha)\right),
\end{equation}
with $\nabla^{\perp}=\left(-\partial_2, \partial_1\right)^T$. In what follows we also set the viscosity $\nu=1$ for simplicity. \\

Our first main result is to show that the  $C^{k+\gamma}$ regularity of the boundary of the  patch is globally preserved for
the 2D subcritical Boussinesq system $(\mathrm{B}_\alpha)$, where
$k \in \mathbb{Z}^+$. In particular, we may clearly show the dependence on the Prandtl number $\mathrm{Pr}\in [1,\infty)$,
and also positively answer the question (a) raised by \cite{KX22} in the case $\alpha\in (1/2,1)$. As a matter of fact,  $\partial D(t)\in L^\infty_T(C^{k+\gamma})$, $k\geq 2$ implies that the curvature of $\partial D(t)$ is uniformly bounded.
\begin{theorem}\label{thm:main}
Let $\varepsilon =\frac{1}{\mathrm{Pr}} \in (0,1]$, $\alpha\in (\frac{1}{2},1]$, and $k\geq 2$ be an integer.
Let $\mathbf{D}$ be either $\mathbb{R}^2$ or $\mathbb{T}^2$.
Suppose that $\theta_0(x) = \bar{\theta}_0(x) \,\mathbf{1}_{D_0}(x)$ satisfies the above conditions
\eqref{eq:the-assum1}-\eqref{eq:parD0-assum}. Assume that the initial  velocity $u_0$ satisfies
\begin{itemize}
\item $u_0\in H^1\cap\dot{W}^{1,p}(\mathbf{D})$,
\item $(\partial_{W_0}u_0,\cdots, \partial_{W_0}^{k-1}u_0)\in W^{1,p}(\mathbf{D})$ for some $p>\frac{2}{2\alpha-1}$,
\item $\nabla\cdot u_0=0.$
\end{itemize}

Then, there exists a unique global solution $(u,\theta)$
to the 2D Boussinesq-Navier-Stokes system \eqref{BoussEq} which satisfies
\begin{equation}\label{eq:the-exp}
  \theta(x, t) = \bar{\theta}_0(X_t^{-1}(x)) \mathbf{1}_{D(t)}(x),
\end{equation}
with
\begin{equation}\label{eq:reg-parD(t)}
  \partial D(t) \in L^{\infty}\big([0, T], C^{k+\gamma} \big),
\end{equation}
where $D(t)=X_t(D_0)$,  $X_t$ is the particle-trajectory solving the equation \eqref{eq:X}
and $X^{-1}_t$ is its inverse. In particular, for the cases that
\begin{align}\label{eq:cases}
  \textrm{either \big\{$\alpha\in (\tfrac{1}{2},1)\big\}$,\quad
  or\quad \big\{$\alpha=1$, $\mathbf{D}=\mathbb{T}^2$\big\}},
\end{align}
the result \eqref{eq:reg-parD(t)}
holds uniformly with respect to $\varepsilon$.
\end{theorem}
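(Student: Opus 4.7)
The plan is to work within the striated (conormal) regularity framework developed by Chemin for the vorticity patch problem and later adapted to Boussinesq patches by Danchin--Zhang, Gancedo--Garc\'ia-Ju\'arez and Chae--Miao--Xue, but with all key estimates performed in a way that is uniform in the Prandtl parameter $\varepsilon$ in the cases \eqref{eq:cases}. Local-in-time existence and uniqueness for the $(u,\theta)$-system under the stated hypotheses is standard (Friedrichs truncation plus energy/parabolic maximal regularity), so the real task is to produce \emph{a priori} bounds allowing the local lifespan to be continued indefinitely and to transfer the $C^{k+\gamma}$ structure of $\partial D_0$ to $\partial D(t)$ via the flow $X_t$.

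First, I would establish the unconditional bounds. Since $\theta$ solves a transport equation and $\bar\theta_0\in L^\infty$, we have $\|\theta(t)\|_{L^p}\equiv \|\theta_0\|_{L^p}$ for every $p\in[1,\infty]$. Combining this with the basic energy identity and the fractional Stokes maximal regularity for the momentum equation in $\dot W^{1,p}$ with $p>\tfrac{2}{2\alpha-1}$ yields, by Sobolev embedding, a bound
\begin{equation*}
\|\nabla u\|_{L^1_T L^\infty} \lesssim \Phi\bigl(T,\|u_0\|_{H^1\cap \dot W^{1,p}},\|\theta_0\|_{L^\infty\cap L^2}\bigr),
\end{equation*}
whose right-hand side depends only on $T$ and the initial norms, and in the regime \eqref{eq:cases} does \emph{not} blow up as $\varepsilon\to 0$. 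This controls the flow $X_t$ globally in $C^{1}$ and hence already secures persistence of $C^{1+\gamma}$ of $\partial D(t)$ as well as the transport identity \eqref{eq:the-exp}.

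Next comes the striated step, which is the heart of the proof. Introduce the push-forward vector field $W(t,x)=(\partial X_t\cdot W_0)(X_t^{-1}(x))$; by construction $W$ is tangent to $\partial D(t)$ and satisfies $\partial_t W+u\cdot\nabla W=W\cdot\nabla u$, while iterated tangential derivatives $\partial_W^j$ can be defined as in Chemin. The plan is to propagate the family of norms
\begin{equation*}
\mathcal N_j(t):=\|\partial_W^j u(t)\|_{W^{1,p}}+\|\partial_W^j\theta(t)\|_{L^\infty\cap L^p},\qquad 0\le j\le k-1,
\end{equation*}
by commuting $\partial_W^j$ through the momentum equation and invoking a striated maximal regularity/commutator estimate for $\Lambda^{2\alpha}$: the core lemma, already essentially present in Khor--Xu for $j=1$, says that $[\partial_W,\Lambda^{2\alpha}]$ loses exactly $2\alpha-1$ striated derivatives, which is absorbed thanks to $\gamma\le 2\alpha-1$ and $p>\tfrac{2}{2\alpha-1}$. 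I would proceed by induction on $j$, using at each level the fact that $W$ itself gains one derivative relative to the previous level via the transport equation for $W$. Once $\mathcal N_{k-1}$ is controlled, the identity $\partial_\alpha^k z(\alpha,t)=(\partial_W^{k-1}W)(z(\alpha,t))$ together with H\"older estimates of the Calder\'on--Zygmund kernels along $W$ (in the spirit of Bertozzi--Constantin and Gancedo--Garc\'ia-Ju\'arez) promotes this to the full $C^{k+\gamma}$ control of $\partial D(t)$.

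The main obstacle, and the step I would spend most effort on, is the \emph{Prandtl-uniformity} of the striated estimates. The momentum equation rewritten as $(\varepsilon\partial_t+\Lambda^{2\alpha})u=-\nabla p+\theta e_2-\varepsilon u\cdot\nabla u$ has a parabolic scaling that degenerates as $\varepsilon\to 0$; a naive application of heat-semigroup estimates produces constants $\sim \varepsilon^{-1}$ and destroys the limit. My strategy is to avoid inverting the parabolic operator altogether and instead test the equation directly against suitable multipliers of $u$ and $\partial_W^j u$, treating the material derivative $\varepsilon(\partial_t+u\cdot\nabla)$ as a perturbation with a good sign after integration against $\Lambda^{2\alpha} u$, and absorbing $\varepsilon u\cdot\nabla u$ using $\|\nabla u\|_{L^1_TL^\infty}$ already obtained independently of $\varepsilon$. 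In the torus case for $\alpha=1$ the Poincar\'e inequality plays a compensating role and removes the last $\varepsilon$-dependent constant, while for $\alpha\in(\tfrac12,1)$ the fractional dissipation itself delivers enough coercivity on $\mathbb R^2$. This is where the geometric hypothesis \eqref{eq:cases} enters in an essential way, and why the whole-space case at $\alpha=1$ is excluded from the uniform statement. Uniqueness follows from a standard energy argument in $L^2$ for the difference of two solutions combined with the $\nabla u\in L^1_T L^\infty$ bound.
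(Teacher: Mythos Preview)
Your outline captures the overall skeleton (transport of a tangential field $W$, striated induction, commutators with $\Lambda^{2\alpha}$), but it misses the single algebraic device on which the whole paper rests: the \emph{good unknown} $\Gamma:=\omega-\mathcal R_{1-2\alpha}\theta$ with $\mathcal R_{1-2\alpha}=\partial_1\Lambda^{-2\alpha}$. You propose to commute $\partial_W^j$ through the momentum (equivalently vorticity) equation, where the forcing is $\tfrac{1}{\varepsilon}\partial_1\theta$. After commuting, this produces $\partial_W\partial_1\theta=\partial_1(\partial_W\theta)-(\partial_1W)\!\cdot\!\nabla\theta$; the second piece contains $\nabla\theta$, which for a patch is a measure on $\partial D(t)$, so the product with $\partial_1W\in C^\gamma$ is not controllable in any negative Besov space with index~$>-1$. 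Even the first piece fails: closing the smoothing estimate at level $B^{\gamma'}_{\infty,1}$ requires $\partial_W\theta\in B^{\gamma'-2\alpha+1}_{\infty,1}$, while Lemma~2.3 only gives $\partial_W\theta\in C^{\gamma-2\alpha+1}$; since one must take $\gamma'>2\alpha-1\ge\gamma$, this does not close. The paper avoids exactly this obstruction by passing to $\Gamma$, whose equation carries the forcing $\varepsilon[\mathcal R_{1-2\alpha},u\!\cdot\!\nabla]\theta$: a commutator that is genuinely smoother than $\partial_1\theta$ \emph{and} comes with an explicit factor~$\varepsilon$. The residual $\theta$–contribution to $\nabla u$ then enters only through $\nabla\nabla^\perp\partial_1\Lambda^{-2-2\alpha}\theta$, which is a smoothing operator of positive order $2\alpha-1$ acting on $\theta$, so its striated derivatives are controlled by $\partial_W^j\theta$ alone.

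Your $\varepsilon$–uniformity paragraph is also off target. You propose to ``test against multipliers'' and treat $\varepsilon(\partial_t+u\!\cdot\!\nabla)$ as a signed perturbation, but energy identities do not by themselves produce the $L^1_T(B^{\gamma'}_{\infty,1})$ smoothing that the striated induction needs. The paper's actual mechanism is sharp and specific: the transport–diffusion smoothing estimate (Lemma~2.8, in particular \eqref{eq:TD-sm4}) with viscosity $\nu=\tfrac{1}{\varepsilon}$ yields a prefactor $\varepsilon$ on the $L^1_T$ gain; this $\varepsilon$ exactly cancels the $\tfrac{1}{\varepsilon}$ in front of the dangerous commutator $[\Lambda^{2\alpha},\partial_W^{j}]\Gamma$, while the remaining forcing $\partial_W^{j}\big([\mathcal R_{1-2\alpha},u\!\cdot\!\nabla]\theta\big)$ already carries its own $\varepsilon$ and is therefore harmless. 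The restriction \eqref{eq:cases} enters one level lower, through the commutator lemma for $\mathcal R_{-1}$ (Lemma~2.4), which on $\mathbb R^2$ with $\alpha=1$ forces a $\|u\|_{L^2}$–term whose only available bound grows like $T/\varepsilon$; on $\mathbb T^2$ Poincar\'e replaces it by $\|\nabla u\|_{L^2}$. In short, the proof cannot be run on $u$ (or $\omega$) directly: introduce $\Gamma$, use the explicit splitting $\nabla u=\nabla\nabla^\perp\Lambda^{-2}\Gamma+\nabla\nabla^\perp\partial_1\Lambda^{-2-2\alpha}\theta$, and close the striated induction on $\big(\|W\|_{L^\infty_T(\mathcal C^{\gamma+1,\ell}_W)},\,\|\Gamma\|_{L^1_T(\mathcal B^{\gamma',\ell+1}_W)},\,\|\nabla u\|_{L^1_T(\mathcal C^{\gamma,\ell+1}_W)}\big)$ rather than on $\|\partial_W^j u\|_{W^{1,p}}$.
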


Note that the notation $\partial_{W_0} u_0 := W_0\cdot \nabla u_0 = \mathrm{div}\,(W_0\, u_0)$
means the directional derivative of $u_0$ along the divergence-free vector field $W_0 := \nabla^\perp \varphi_0$.

\begin{remark}\label{rem:k=1}
  It is important to mention  that in the case $k=1$ in Theorem \ref{thm:main}, we still have the same conclusion regarding the regularity of the patch. Indeed, if the initial data $u_0(x)$
and $\theta_0(x) = \bar{\theta}_0(x) \mathbf{1}_{D_0}(x)$ are such that
\begin{equation*}%\label{eq:assum-u0the0}
\begin{cases}
  u_0\in H^1, \;\;\bar{\theta}_0\in L^\infty(\overline{D}_0), \;
  \partial D_0 \in C^{1+\gamma},\; \gamma\in (0,1),
  \; & \textrm{for}\;\; \alpha =1, \\
  u_0\in H^1\cap W^{1,p},\; p\in(2,\infty), \;
  \bar{\theta}_0\in L^\infty(\overline{D_0}), \; \partial D_0\in C^{1+\gamma},
  \gamma \in (0,2\alpha -1], \; & \textrm{for}\;\; \alpha \in (\frac{1}{2},1),
\end{cases}
\end{equation*}
then according to Propositions \ref{prop:ap-es1} and \ref{prop:ap-es2} below,
we have $\nabla u \in L^\infty([0,T], C^\gamma(\mathbf{D}))$ where $\gamma$ is such that \eqref{eq:parD0-assum}
with the corresponding estimates. Therefore, by combining with the following estimate (see e.g. \cite[Lemma 2.10]{CMX22})
\begin{align}\label{eq:Xt-C1gam}
  \|\nabla X_t^{\pm 1}\|_{C^\gamma} \leq e^{(2+\gamma) \int_0^t \|\nabla u\|_{L^\infty} \dd \tau}
  \bigg(1+ \int_0^t \|\nabla u(\tau)\|_{C^\gamma} \dd \tau  \bigg),
\end{align}
we find that $\partial D(t) \in L^\infty(0,T; C^{1+\gamma}(\mathbf{D}))$. Importantly, the above estimate is
uniform with respect to $\varepsilon$ for the cases \eqref{eq:cases}.
Note that the special structure of the temperature patches is not used in the proof of the global preservation of the
$C^{1+\gamma}$ boundary regularity of the patch. This is completely  different from the proof of the $C^{k+\gamma}$
($k\geq 2$) case treated in Theorem \ref{thm:main} where it makes a systematic use of the structure of the temperature patch.
\end{remark}

\begin{remark}\label{rem:crit}
  In the critical case $\alpha =\frac{1}{2}$,
it remains a very interesting question to show the global well-posedness and the persistence of regularity
of the patch boundary  for both the Boussinesq-Navier-Stokes system $(\mathrm{B}_{1/2})$
or the fractional Stokes-transport system $(\mathrm{ST}_{1/2})$.
From the system $(\mathrm{ST}_{1/2})$, we find that the relation of $u$ and $\theta$ can be written as
\begin{align*}
  u =  \nabla \partial_2 \Lambda^{-3} \theta + (\Lambda^{-1} \theta) \,e_2,
\end{align*}
which enjoys the same scaling with the Biot-Savart law of the 2D Euler equations: $u= \nabla ^\perp \Lambda^{-2} \theta$.
It is well-known that the vorticity patch problem for the
2D Euler equations was solved by \cite{Chem93,bertozzi1993} as we already mentioned in the introduction. One may therefore wonder if these  techniques can be adapted to get new regularity results in the patch problem for the critical system $(\mathrm{ST}_{1/2})$.
However, there is a clear difference in the constitutive relation linking $\theta$ and $u$. Indeed, noticing that
\begin{align}\label{eq:nab-u-rel-crit}
  \nabla u =  \nabla^2 \partial_2 \Lambda^{-3} \theta + (\nabla\Lambda^{-1} \theta) \,e_2,\quad \textrm{with}\; \;\,
  \theta = \mathbf{1}_{D(t)},
\end{align}
we see that the operators in front of $\theta$ in \eqref{eq:nab-u-rel-crit} are  singular integral operators with odd kernels,
which is different from the case of even kernels in the vorticity patch problem. The even kernels have  additional cancellation effect and
this property plays an essential role in proving the key Lipschitz estimate of velocity $u$ as in the works \cite{Chem93,bertozzi1993}.
For further developments regarding the fine properties of the singular integral with even kernels one may see \cite{MOV09,MOV11}.
\end{remark}

Our second main result deals with the infinity Prandtl number limit of the patch solutions for the
2D Boussinesq-Navier-Stokes system $(\mathrm{B}_\alpha)$ in the torus $\mathbb{T}^2$. We rigorously justify the convergence to the patch solutions of
the 2D (fractional) Stokes-transport system $(\mathrm{ST}_\alpha)$.
In particular, we provide an indirect proof that the $C^{k+\gamma}$ ($k\in\mathbb{Z}^+$)
boundary regularity of the patch solutions to the system $(\mathrm{ST}_\alpha)$ is preserved globally in time. 
Specially, our theorem extends the result of Grayer II \cite{Gray23} to the regime $k\geq 3$.
\begin{theorem}\label{thm:limit}
Let $\alpha\in \left(\frac{1}{2},1\right]$, $\varepsilon \in (0,1]$, $\mathbf{D}=\TT^2$, $k\geq 1$ be an integer.
Suppose that $\theta_0(x) = \bar{\theta}_0(x) \mathbf{1}_{D_0}(x)$ satisfying $\int_{\mathbb{T}^2} \theta_0\,\dd x =0$
is the temperature patch initial data that fulfills the assumptions
in Theorem \ref{thm:main} and Remark \ref{rem:k=1}.
Assume that $u_0^\varepsilon\in H^1\cap \dot W^{1,p}(\mathbb{T}^2)$,
$(\partial_{W_0}u_0^\varepsilon,\cdots, \partial_{W_0}^{k-1}u_0^\varepsilon)\in W^{1,p}(\mathbb{T}^2)$
for some $p>\frac{2}{2\alpha-1}$, $\nabla\cdot u^\varepsilon_0=0$, and they converge to $u_0$ and
$(\partial_{W_0}u_0,\cdots,\partial_{W_0}^{k-1}u_0)$ in the corresponding norms.
Let $(u^\varepsilon, \theta^\varepsilon)$ be the unique global regular solution to the 2D Boussinesq-Navier-Stokes system $(\mathrm{B}_\alpha)$
constructed in Theorem \ref{thm:main}.

Then, as $\varepsilon\rightarrow 0$, up to an extraction of a subsequence, $(u^\varepsilon, \theta^\varepsilon)$
converges to the global unique weak solution $(u, \theta)$ which is solution of the (fractional) Stokes-transport system
$(\mathrm{ST}_\alpha)$, and $(u,\theta)$ satisfies that
\begin{equation}\label{eq:limit-targ}
  \theta(x, t) = \bar{\theta}_0(X_t^{-1}(x)) \mathbf{1}_{D(t)}(x), \quad
  \textrm{with}\;\; \partial D(t) \in L^{\infty}\big([0, T], C^{k+\gamma}\big),
\end{equation}
where $D(t)=X_t(D_0)$,  $X_t$ is the particle-trajectory generated by the velocity $u$
and $X^{-1}_t$ is its inverse.
\end{theorem}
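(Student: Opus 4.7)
The plan is to use the uniform-in-$\varepsilon$ a priori estimates provided by Theorem \ref{thm:main} (which hold in the torus case covered by \eqref{eq:cases}), extract compactness, and pass to the limit in each equation of $(\mathrm{B}_\alpha)$. Applying Theorem \ref{thm:main} to $(u_0^\varepsilon,\theta_0)$ yields $\theta^\varepsilon\in L^\infty_T L^\infty$, $u^\varepsilon\in L^\infty_T(H^1\cap \dot W^{1,p})$ with $\nabla u^\varepsilon\in L^\infty_T C^\gamma$, and $\partial D^\varepsilon(t)\in L^\infty_T C^{k+\gamma}$, all uniformly in $\varepsilon\in(0,1]$. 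In particular, the flow maps $X_t^\varepsilon$ and their inverses are equi-bi-Lipschitz on $[0,T]\times\mathbb{T}^2$, and the parametrizations $z^\varepsilon(\alpha,t):=X_t^\varepsilon(z_0(\alpha))$ are uniformly bounded in $L^\infty_T C^{k+\gamma}$.

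Next, the transport equation gives a uniform bound $\partial_t\theta^\varepsilon\in L^\infty_T H^{-1}$, so by Aubin--Lions a subsequence satisfies $\theta^\varepsilon\to\theta$ strongly in $C([0,T];H^{-s}(\mathbb{T}^2))$ for any $s>0$ and weakly-$*$ in $L^\infty_T L^\infty$. Projecting the momentum equation onto divergence-free fields by the Leray projector $\mathcal{P}$ and inverting $\Lambda^{2\alpha}$ gives
\begin{equation*}
u^\varepsilon \;=\; \Lambda^{-2\alpha}\mathcal{P}(\theta^\varepsilon e_2) \;-\; \varepsilon\, \Lambda^{-2\alpha}\mathcal{P}\big(\partial_t u^\varepsilon + u^\varepsilon\cdot\nabla u^\varepsilon\big),
\end{equation*}
in which the second term can be shown to be $O(\varepsilon)$ in a suitable negative-regularity norm using the uniform bounds on $(u^\varepsilon,\theta^\varepsilon)$. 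This transfers the compactness of $\theta^\varepsilon$ to $u^\varepsilon$: one gets $u^\varepsilon\to u=\Lambda^{-2\alpha}\mathcal{P}(\theta e_2)$ strongly in, say, $L^2_T L^2$, and by Arzel\`a--Ascoli applied to \eqref{eq:X}, $X_t^\varepsilon\to X_t$ and $X_t^{\varepsilon,-1}\to X_t^{-1}$ uniformly on $[0,T]\times\mathbb{T}^2$.

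With this strong convergence, I pass to the limit in the distributional formulation of the transport equation (the product $u^\varepsilon\theta^\varepsilon\to u\theta$ in $\mathcal{D}'$) and in the elliptic equation (the $\varepsilon$-terms vanish), so $(u,\theta)$ is a weak solution of $(\mathrm{ST}_\alpha)$. Uniqueness of weak solutions in the class $\theta\in L^\infty$ (Grayer II \cite{Gray23} for $\alpha=1$ and Cobb \cite{Cobb23} for $\alpha\in(\tfrac12,1)$) pins down the limit, so the full family converges. The representation \eqref{eq:limit-targ} then follows from the uniform convergence of $X_t^{\varepsilon,-1}$ and the continuity of $\bar\theta_0$ on $\overline{D_0}$, while the uniform bound on $\|z^\varepsilon\|_{L^\infty_T C^{k+\gamma}}$, combined with the compact embedding $C^{k+\gamma}\hookrightarrow C^{k+\gamma'}$ ($\gamma'<\gamma$) and the lower semicontinuity of the H\"older seminorm under uniform convergence, yields $z(\cdot,t):=X_t(z_0(\cdot))\in L^\infty_T C^{k+\gamma}$, i.e.\ $\partial D(t)\in L^\infty_T C^{k+\gamma}$.

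The main obstacle is obtaining compactness of $u^\varepsilon$ that is strong enough to pass to the limit in the nonlinear transport term, given that we control only $\varepsilon\partial_t u^\varepsilon$ rather than $\partial_t u^\varepsilon$. The elliptic decomposition above is the crucial device: it decouples $u^\varepsilon$ from its own time derivative at leading order, reducing the compactness of velocity to that of temperature, which is supplied by the transport structure. A secondary point is that the resulting $C^{k+\gamma}$ patch regularity for $k\geq 3$ in $(\mathrm{ST}_\alpha)$ extends Grayer II's range $k\in\{0,1,2\}$; here this extension is obtained as a corollary of the uniform-in-$\varepsilon$ estimates of Theorem \ref{thm:main} together with uniqueness for $(\mathrm{ST}_\alpha)$, rather than by direct analysis of the Stokes-transport system.
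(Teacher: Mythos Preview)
Your overall strategy matches the paper's, but the argument for strong convergence of $u^\varepsilon$ has a gap. The elliptic identity you write is a tautology: the equation itself gives
\[
\varepsilon\,\Lambda^{-2\alpha}\mathcal{P}\big(\partial_t u^\varepsilon+u^\varepsilon\!\cdot\!\nabla u^\varepsilon\big)=\Lambda^{-2\alpha}\mathcal{P}(\theta^\varepsilon e_2)-u^\varepsilon
\]
(modulo the zero mode), so declaring the left side $O(\varepsilon)$ ``from the uniform bounds on $(u^\varepsilon,\theta^\varepsilon)$'' is circular. There is no uniform-in-$\varepsilon$ control on $\partial_t u^\varepsilon$, only on $\varepsilon\,\partial_t u^\varepsilon$; integrating by parts in time yields smallness only in the sense of space--time distributions, not the strong $L^2_TL^2$ convergence you need to identify the flow-map limit via Arzel\`a--Ascoli. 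The fix is to note that $u^\varepsilon-\nabla^\perp\partial_1\Lambda^{-2-2\alpha}\theta^\varepsilon=\nabla^\perp\Lambda^{-2}\Gamma^\varepsilon$ and to invoke the \emph{dissipation} part of the $\Gamma$-energy estimate \eqref{es:GamL2}, which gives $\|\Lambda^\alpha\Gamma^\varepsilon\|_{L^2_TL^2}=O(\sqrt\varepsilon)$ (not $O(\varepsilon)$) and hence $\nabla^\perp\Lambda^{-2}\Gamma^\varepsilon\to0$ strongly in $L^2_TH^{1+\alpha}$; combined with the strong convergence of $\theta^\varepsilon$, this yields the strong convergence of $u^\varepsilon$ you claim.

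The paper proceeds differently in two places. First, it never asserts strong convergence of $u^\varepsilon$ when passing to the limit in the equations: weak-$*$ convergence of $u^\varepsilon$ in $L^\infty_TH^1$ paired with the strong convergence $\theta^\varepsilon\to\theta$ in $C([0,T];H^{-1/2})$ already handles $\mathrm{div}(u^\varepsilon\theta^\varepsilon)$, while the $\varepsilon$-prefactor kills the material derivative in distributions; compactness of the flows is obtained by Aubin--Lions applied directly to $X_t^{\varepsilon,\pm1}$ (uniformly in $L^\infty_TC^{2\alpha}$ with $\partial_tX_t^{\varepsilon,\pm1}\in L^\infty_TH^1$). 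Second, for the $C^{k+\gamma}$ boundary regularity the paper does not use the parametrisation $z^\varepsilon$ at all, but applies Aubin--Lions successively to the level-set $\varphi^\varepsilon$ (for $k=2$) and to the striated quantities $\partial_{W^\varepsilon}^{k-1}W^\varepsilon$ (for $k\ge3$), identifying the limit through the equivalence \eqref{eq:targ1}. Your direct use of $z^\varepsilon=X_t^\varepsilon\circ z_0$ together with lower semicontinuity of the $C^{k+\gamma}$ norm is a legitimate and somewhat cleaner alternative for that step, but it only becomes available once the flow-map limit is correctly secured as above.
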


To prove the persistence  of the temperature patch boundary in $C^{1+\gamma}$ and $C^{2+\gamma}$, it suffices to prove that $\varphi$ belongs to $L^\infty(0,T;C^{1+\gamma}(\mathbf{D}))$ and
$L^\infty(0,T;C^{2+\gamma}(\mathbf{D}))$, respectively. Note that the domain $D(t)=X_t(D_0)$ can be determined by the level-set function $\varphi(x,t)=\varphi_0(X_t^{-1}(x))$ which solves
\begin{equation}\label{eq:varphi}
  \partial_t\varphi + u\cdot\nabla\varphi = 0,~~~\varphi(0,x)=\varphi_0(x).
\end{equation}
Then, motivated by the works  \cite{HKR10,HKR11} which has been recently applied in  \cite{CMX22,KX22},
one  introduces an auxiliary quantity $\Gamma$,
which plays an important role in the analysis.
First, recall that the equation for the vorticity $\omega:= \partial_1u_2-\partial_2u_1$ is expressed as
\begin{equation}\label{eq:vorticity}
  \varepsilon \big(\partial_t\omega +  u\cdot\nabla\omega\big) + \Lambda^{2\alpha}\omega = \partial_1\theta.
\end{equation}
We rewrite it as
$\left(\varepsilon \partial_t + \varepsilon u \cdot \nabla + \Lambda^{2 \alpha}\right) \omega-\Lambda^{2 \alpha} \Theta=0$,
where $\partial_1 \theta=\Lambda^{2 \alpha} \Theta$, which  may be rewritten as
$\Theta=\mathcal{R}_{1-2 \alpha} \theta:= \partial_1 \Lambda^{-2 \alpha} \theta$.
Hence, applying the operator $\mathcal{R}_{1-2\alpha}$ to the temperature equation yields
\begin{align*}
  \varepsilon \partial_t \Theta  + \varepsilon \,u\cdot \nabla \Theta = -\varepsilon [\mathcal{R}_{1-2\alpha}, u\cdot \nabla] \theta.
\end{align*}
Thus, if we set $\Gamma:=\omega-\Theta = \omega  -\mathcal{R}_{1-2\alpha} \theta$, then
we obtain the following equation for $\Gamma$
\begin{equation}\label{eq:Gamma}
  \varepsilon \big( \partial_t \Gamma +  u \cdot \nabla \Gamma \big) + \Lambda^{2 \alpha} \Gamma
  = \varepsilon \left[\mathcal{R}_{1-2 \alpha}, u \cdot \nabla\right] \theta.
\end{equation}
Note that the vorticity equation \eqref{eq:vorticity} enjoys the same structure as the equation \eqref{eq:Gamma},
however the forcing term $\varepsilon \left[\mathcal{R}_{1-2 \alpha}, u \cdot \nabla\right] \theta$ in \eqref{eq:Gamma}
is more regular than the original term $\partial_1\theta$. In particular, it allows to prove the uniform  estimates with respect to $\varepsilon$.
Taking advantage of the Biot-Savart law and the relation $\omega = \Gamma+\mathcal{R}_{1-2\alpha}\theta$, we have
\begin{equation}\label{eq:u-exp1}
  u=\nabla^{\bot}\Lambda^{-2}\omega=\nabla^{\bot}\Lambda^{-2}\Gamma+\nabla^{\bot}\partial_1\Lambda^{-2-2\alpha}\theta.
\end{equation}

We first note that the $L^\infty_T(L^2(\mathbf{D}))$ norm of the velocity $u$ has an upper bound given by \eqref{eq:u-L2-es}
that is growing in $\frac{T}{\varepsilon}$ (with $\varepsilon =\frac{1}{\mathrm{Pr}}$) and this seems to be an obstruction to get uniform estimates with respect to  $\varepsilon$.
However, by using the good unknown $\Gamma$, we  can get nice {\it a priori} estimates of $\nabla u$ and $\Gamma$ which are uniform with respect to  $\varepsilon$ in some cases \eqref{eq:cases} (see  Propositions \ref{prop:ap-es1} and \ref{prop:ap-es2}). It is worth mentioning that the commutator estimates in Lemma \ref{lem:Rbeta-cm} play a crucial  role to get nice estimates and  in particular it allows us to deal with the forcing term in \eqref{eq:Gamma}. Since the commutator estimate \eqref{eq:R-1cm-es1} requires  a control on $\|u\|_{L^2}$  (unlike \eqref{eq:Rbeta-cm-es} and \eqref{eq:R-1cm-es2} where one only needs to control a higher order semi-norm of $u$ in the cases \eqref{eq:cases}), this  makes the case $\{\alpha =1,\,\mathbf{D}=\mathbb{R}^2\}$ a bit particular.
In this special case, we only prove the estimates of $\Gamma$ and $\nabla u$ with upper bounds depending on
$\frac{T}{\varepsilon}$. Importantly, one may notice that Propositions \ref{prop:ap-es1} and \ref{prop:ap-es2} are enough
to get the global well-posedness of temperature patch solutions for the system $(\mathrm{B}_\alpha)$
together with the global persistence of the regularity $C^{1+\gamma}$ of the patch boundary $\partial D(t)$, see Remarks \ref{rem:apes1}, \ref{rem:prop-ap-es2}
below or Remark \ref{rem:k=1}.
\vskip1mm

In order to prove that $\varphi\in L^\infty([0,T],C^{2+\gamma}(\mathbf{D}))$,
we introduce the tangential derivative $W=\nabla^{\perp}\varphi$ which solves the equation \eqref{eq:W},
and we focus on the $C^\gamma$ norm of the quantity $\nabla W$.
It remains to control the $L^1([0,T],C^\gamma)$ norm of the term $\partial_{W}\nabla u$,
where $\partial_W=W\cdot\nabla$.
Since we have \eqref{eq:u-exp1}, we shall prove the estimates of $\partial_W\Gamma$
and $\partial_W\theta$ separately.
Regarding the control of $\partial_W \Gamma$,
we need to apply the smoothing estimate \eqref{eq:TD-sm4} in the equation \eqref{eq:parW} of $\partial_W \Gamma$ in order to give a good dependence of the coefficient $\varepsilon = \frac{1}{\mathrm{Pr}}$. This is crucial especially after having noticed that there is a singular forcing term $\frac{1}{\varepsilon} [\Lambda^{2\alpha},W\cdot\nabla]\Gamma$ in \eqref{eq:parW}. By taking advantage of the commutator estimates in \eqref{eq:commEs} and Lemma \ref{lem:Rbeta-cm},
we can control the $L^1_T(B^{\gamma'}_{\infty,1})$ norm of $\partial_W \Gamma$ with some specific $\gamma'>\gamma$. 
This control is done in terms of the integral of $\|W\|_{W^{1,\infty}}$ multiplying some norms of $\Gamma$, which, in combination with the striated estimate \eqref{eq:str-es-mDphi} gives a good estimate of the $L^1_T(C^\gamma)$ norm of $\partial_W\nabla \nabla^\perp \Lambda^{-2}\Gamma$.
Concerning  the control of $\partial_W \theta$, we use a key property in Lemma \ref{lem:str-reg} that holds for the patch initial data,
and by using again the striated estimate \eqref{eq:str-es-mDphi}, we can get an upper bound of the
$L^1_T(C^\gamma)$ norm of $\partial_W \nabla \nabla^\perp \partial_1 \Lambda^{-2-2\alpha}\theta$.
Hence, collecting these estimates and \eqref{eq:u-exp1},
we get that $\|W(t)\|_{C^{1+\gamma}}$ is bounded by the time integral of $\|\nabla W\|_{C^\gamma}$ times  some norms of
$(\Gamma,\theta,\nabla u)$. Then, the wanted global estimate follows from Gr\"onwall's inequality,
and this allows to finish the proof of Theorem \ref{thm:main} when $k=2$.
Along the proof, we need to consider separately the cases $\alpha \in(\frac{1}{2},1)$ and
$\alpha=1$ as they require different approach. 
\vskip1mm

In the proof of the propagation of higher order regularity, namely the  $C^{k+\gamma}$-regularity of the patch boundary, following the technics of \cite{Chem91}, it suffices to show the striated estimate $\partial_W^{k-1}W\in L^\infty([0,T],C^\gamma(\mathbf{D}))$.
We use the induction method to prove it.
Assume that we already control the quantities $W$, $\nabla u$, and $\Gamma$ in the appropriate
$\mathcal{B}_{p,r,W}^{s, \ell}$-norms (see Definition \ref{def:gBesS}) as in \eqref{eq:ind-assum}
with $\ell \in\{1, \ldots, k-2\}$,
then our aim is to establish the corresponding estimates for the step $\ell+1$.
The procedure is analogous to the proof of the $C^{2+\gamma}$-persistence result. The higher-order striated estimates in Lemma \ref{lem:CMXstra-es} play an important role in the proof.
In order to get a control of $W$ in $L_T^{\infty}\big(\mathcal{C}_{W}^{\gamma+1, \ell}\big)$, using the equation of $\partial_W^{\ell} \nabla^2 W$
and  the striated estimate \eqref{eq:CMX2.13},
we see that the main task is to get a control the of $\nabla^2 u$ in $L_T^1\big(\mathcal{C}_{W}^{\gamma-1, \ell+1}\big)$.
Since we have \eqref{eq:u-exp1}, we deal with $\Gamma$ and $\theta$ separately. Indeed, by applying the smoothing estimate \eqref{eq:TD-sm4} of the transport-diffusion equation
and the induction assumption, we obtain a good estimate of $\Gamma$ (see \eqref{eq:Gam-Bgam'-el+1}) in the space
$L^1_T\big(\mathcal{B}_W^{\gamma', \ell+1}\big)$
for a specific $\gamma'>\gamma$. This can be used to bound the $L^1_T(\mathcal{C}^{\gamma-1,\ell+1}_W)$ norm of
$\nabla^2 \nabla^{\perp}\Lambda^{-2} \Gamma$. As for the control of $\theta$, by using the patch structure of $\theta$ and the striated estimates
\eqref{eq:str-reg}-\eqref{eq:str-reg2}, one can bound the
$L_T^1\big(\mathcal{C}_{W}^{\gamma-1, \ell+1}\big)$ norm of
$\nabla^2 \nabla^\perp \partial_1\Lambda^{-2-2\alpha} \theta$ as in \eqref{eq:the-str-es2}.
Gathering all these estimates and using Gr\"onwall's inequality, we find the desired uniform estimates \eqref{eq:ind-conc} in the $\ell+1$ level,
so that the induction scheme can be continued to give the final objective. This leads to the statement of Theorem \ref{thm:main} with general $k\geq 3$.
\vskip1mm

As far as the proof of Theorem \ref{thm:limit} is concerned, it is mainly based on the use of  the uniform estimates with respect to $\varepsilon$  obtained in
Theorem \ref{thm:main} and classical compactness arguments.
Although the $L^2$ energy estimate \eqref{eq:u-L2-es} of $u$ is not enough to provide a uniform bound in $\varepsilon$,
we can consider the zero mode and non-zero mode separately to show the uniform boundedness of
$u^\varepsilon$ in $\varepsilon$ in the  $L^\infty(0,T; H^1(\mathbb{T}^2))$  topology. By using the Aubin-Lions lemma,
we deduce the strong convergence of $\theta^\varepsilon$ as in \eqref{eq:the-str-conv},
so that by sending $\varepsilon = \frac{1}{\mathrm{Pr}} \rightarrow 0$
we can prove  $(u^\varepsilon, \theta^\varepsilon)$ converges to $(u,\theta)$ and  solves the Stokes-transport system $(\mathrm{ST_\alpha})$
in the sense of distribution. For a more general and precise statement one may see Proposition \ref{prop:weak-limit}. As well, by studying the strong convergence of  the particle-trajectory $X^{\varepsilon,\pm1}_t$,
level-set function $\varphi^\varepsilon$ and striated quantity $\partial_{W^\varepsilon} W^\varepsilon$
in the appropriate topology, we can conclude that the limit function $\theta$ preserves the patch structure
and the $C^{k+\gamma}$-regularity of the patch boundary $\partial D(t)$,
as claimed.
\vskip1mm

The structure of the paper is as follows. In Section~\ref{sec:pre}, we introduce some useful tools:
The Subsection \ref{subsec:str-es} is the introduction of some background on striated type Besov spaces
$\mathcal{B}_{p, r, \mathcal{W}}^{s, \ell}$ together with several estimates in striated spaces. In the subsection \ref{subsec:aux-lem},
we collect some useful intermediary  lemmas. Section~\ref{sec:C2gamma} is devoted to the proof of the
 persistence of the $C^{2+\gamma}$ regularity.  In the section~\ref{sec:Ckgamma},
we give the proof of the persistence of the regularity in $C^{k+\gamma}$ for any $k \geq 3$. In Section \ref{sec:limit}, we give the proof of Theorem \ref{thm:limit} which deals with the passage to the limit to infinity of the Prandtl number.
Finally, the last section~\ref{sec:apdx} is  the proof of the striated estimates \eqref{eq:CMX2.15} and \eqref{eq:str-es-mDphi} in Lemma~\ref{lem:CMXstra-es}.

\textbf{Notations.} The following notations will be used throughout this paper.

\begin{enumerate}[$\bullet$]
  \item $\mathbb{N}:= \{0,1,2,3,\cdots\}$, $\mathbb{Z}^+ := \{1,2,3,\cdots\}$
  and $\mathbb{R}_+ := (0,+\infty)$.
  \item Classically, $\mathcal{S}\left(\mathbb{R}^d\right)$ is the Schwartz class of rapidly decreasing $C^{\infty}$ functions
  and by $\mathcal{S}^{\prime}\left(\mathbb{R}^d\right)$ the space of tempered distributions which is the dual space of $\mathcal{S}\left(\mathbb{R}^d\right)$.
 \item  We also use several times the notation $\left\|\left(f_1, \ldots, f_n\right)\right\|_X:= \left\|f_1\right\|_X+\cdots+\left\|f_n\right\|_X$.
  \item For two operators $\mathcal{X}$ and $\mathcal{Y}$, the notation
  $[\mathcal{X}, \mathcal{Y}]:= \mathcal{X} \mathcal{Y}-\mathcal{Y} \mathcal{X}$ denotes the commutator operator.
\end{enumerate}

\section{Some definitions and lemmas}\label{sec:pre}

\subsection{Striated type Besov spaces and related estimates.}\label{subsec:str-es}
We first define the classical Littlewood-Paley decomposition and the definition of the Besov spaces  (see \cite{BCD11, LEM16}). The idea is that one can choose two nonnegative radial functions $\chi, \varphi \in C_c^{\infty}\left(\mathbb{R}^d\right)$
that are supported respectively in the ball $\left\{\xi \in \mathbb{R}^d:|\xi| \leq \frac{4}{3}\right\}$
and in the annulus $\left\{\xi \in \mathbb{R}^d: \frac{3}{4}\leq|\xi| \leq \frac{8}{3}\right\}$ such that
\begin{equation*}
  \chi(\xi)+\sum_{j \geq 0} \varphi\left(2^{-j} \xi\right)=1 \quad \text { for all } \xi \in \mathbb{R}^d.
\end{equation*}
For all tempered distribution $f$, the dyadic block operators $\Delta_j$ and $S_j$ are defined by
\begin{equation}\label{eq:Sj}
\begin{aligned}
  & \Delta_{-1} f=\chi(D) f= \overline{h} * f, \quad \Delta_j f=\varphi\left(2^{-j} D\right) f
  = h_j * f, \quad \forall j \in \mathbb{N}, \\
  & S_j f=\chi\left(2^{-j} D\right) f = \sum_{-1 \leq l \leq j-1} \Delta_l f
  = \overline{h}_j * f, \quad \forall j \in \mathbb{N},
\end{aligned}
\end{equation}
where $h_j(\cdot) := 2^{jd} h(2^j \cdot)$, $h:= \mathcal{F}^{-1} \varphi\in \mathcal{S}(\mathbb{R}^d)$,
$\overline{h}_j(\cdot):= 2^{j d}
\overline{h}(2^j \cdot)$, $\overline{h}:=\mathcal{F}^{-1} \chi\in \mathcal{S}(\mathbb{R}^d)$.

For all $f, g \in \mathcal{S}^{\prime}\left(\mathbb{R}^d\right)$, we have the following Bony's decomposition:
\begin{equation*}
  f g = T_f g+T_g f+R(f, g),
\end{equation*}
with
\begin{equation*}
  T_f g:= \sum_{q \in \mathbb{N}} S_{q-1} f \Delta_q g,
  \quad R(f, g):= \sum_{q \geq-1} \Delta_q f \widetilde{\Delta}_q g,
  \quad \widetilde{\Delta}_q:= \Delta_{q-1}+\Delta_q+\Delta_{q+1}.
\end{equation*}
In what follows, for a vector field $W: \mathbb{R}^d \rightarrow \mathbb{R}^d$,
we also use the notation $T_{W \cdot \nabla}$ to denote the operator $\sum_{q \in \mathbb{N}} S_{q-1} W \cdot \nabla \Delta_q$. \\

Now we introduce the Besov space $B_{p, r}^s\left(\mathbb{R}^d\right)$ and its striated version.
\begin{defn}\label{def:gBesS}
Let $s\in \RR$, $(p,r)\in [1,\infty]^2$. Denote by $B^s_{p,r}= B^s_{p,r}(\RR^d)$ the space of tempered distributions $f\in \mathcal{S}'(\RR^d)$ such that
\begin{align*}
  \|f\|_{B^s_{p,r}} := \big\| \big\{2^{qs}  \|\Delta_q f\|_{L^p}\big\}_{q\geq -1}  \big\|_{\ell^r}  < \infty.
\end{align*}
For all $\ell\in \NN$, $N\in \mathbb{Z}^+$ and a set of regular vector fields $\mathcal{W}= \{W_i\}_{1\leq i \leq N}$
with $W_i:\RR^d\rightarrow \RR^d$,
denote by $\bb^{s,\ell}_{p,r,\mathcal{W}}=\bb^{s,\ell}_{p,r,\mathcal{W}}(\RR^d)$ the space of tempered distributions
$f\in B^s_{p,r}(\RR^d)$ such that
\begin{align*}
  \|f\|_{\bb^{s,\ell}_{p,r,\mathcal{W}}} := \sum_{\lambda=0}^\ell \|\partial_{\mathcal{W}}^\lambda f\|_{B^s_{p,r}}
  = \sum_{\lambda=0}^\ell \sum_{\lambda_i\in \NN,\lambda_1 + \cdots + \lambda_N=\lambda}
  \|\partial_{W_1}^{\lambda_1}\cdots \partial_{W_N}^{\lambda_N} f\|_{B^s_{p,r}}  <\infty ;
\end{align*}
we also denote by $\BB^{s,\ell}_{p,r,\mathcal{W}}= \BB^{s,\ell}_{p,r,\mathcal{W}}(\RR^d)$ the set of tempered distributions $f\in B^s_{p,r}(\RR^d)$ such that
\begin{equation*}
\begin{split}
  \|f\|_{\BB^{s,\ell}_{p,r, \mathcal{W}}}  := & \sum_{\lambda=0}^\ell \|(T_{\mathcal{W}\cdot\nabla})^\lambda f\|_{B^s_{p,r}} \\
  = & \sum_{\lambda=0}^\ell \sum_{\lambda_i\in \NN,\lambda_1 +\cdots + \lambda_N = \lambda}
  \|(T_{W_1\cdot \nabla})^{\lambda_1}\cdots (T_{W_N\cdot\nabla})^{\lambda_N} f\|_{B^s_{p,r}} < \infty .
\end{split}
\end{equation*}
In particular, when $p=\infty$, we always use the following notations
\begin{equation}\label{eq:abbr}
\begin{split}
  \mathcal{C}^{s,\ell}_{\mathcal{W}} := \bb^{s,\ell}_{\infty,\infty,\mathcal{W}} , \quad
  \widetilde{\mathcal{C}}^{s,\ell}_{\mathcal{W}} := \BB^{s,\ell}_{\infty,\infty,\mathcal{W}}, \qquad
  \bb^{s,\ell}_{\mathcal{W}} :=  \bb^{s,\ell}_{\infty,1,\mathcal{W}},
  \quad  \BB^{s,\ell}_{\mathcal{W}} := \BB^{s,\ell}_{\infty,1,\mathcal{W}}.
\end{split}
\end{equation}
Besides, if $\mathcal{W}$ contains only one regular vector field $W$, i.e. $\mathcal{W}= \{W\}$,
we also denote
\begin{align}
  \bb^{s,\ell}_{p,r,W} & := \Big\{f\in B^s_{p,r}(\RR^d)\, \big|\, \|f\|_{\bb^{s,\ell}_{p,r,W}}
  := \sum_{\lambda=0}^\ell \|\partial_W^\lambda f\|_{B^s_{p,r}} < \infty \Big\}, \label{norm:BBsln2-2}\\
  \BB^{s,\ell}_{p,r,W} & := \Big\{f\in B^s_{p,r}(\RR^d)\, \big|\, \|f\|_{\BB^{s,\ell}_{p,r,W}}
  := \sum_{\lambda=0}^\ell \|(T_{W\cdot\nabla})^\lambda f\|_{B^s_{p,r}} < \infty \Big\}, \label{norm:BBsln-2}
\end{align}
and similar notations \eqref{eq:abbr} are used with $W$ in place of $\mathcal{W}$.
\end{defn}

We shall use the Chemin-Lerner mixed space-time Besov space which is denoted
$\widetilde{L}^\rho\left([0, T] , B_{p, r}^s\right)$ and is the set of tempered distribution $g$ such that
$\|g\|_{\widetilde{L}_T^\rho\left(B_{p, r}^s\right)} := \Big\|\Big(2^{q s}\|\Delta_q g\|_{L_T^\rho(L^p)}\Big)_{q \geq-1}\Big\|_{\ell^r}<\infty$. \\

In the above, the notations $\partial_{\mathcal{W}}=\mathcal{W} \cdot \nabla$ and $T_{\mathcal{W}\cdot \nabla}$
respectively denote the vector-valued operators $\left\{W_i \cdot \nabla\right\}_{1 \leq i \leq N}$
and $\left\{T_{W_i\cdot \nabla}\right\}_{1 \leq i \leq N}$,
and $\partial_{\mathcal{W}}^\lambda=\Big\{\partial_{W_1}^{\lambda_1} \cdots \partial_{W_N}^{\lambda_N}: \lambda_1+
\cdots+\lambda_N=\lambda, \lambda_i \in \mathbb{N}\Big\}$
and $\left(T_{\mathcal{W} \cdot \nabla}\right)^\lambda=\Big\{\left(T_{W_1 \cdot \nabla}\right)^{\lambda_1} \cdots
\left(T_{W_N \cdot \nabla}\right)^{\lambda_N}: \lambda_1+\cdots+\lambda_N= \lambda, \lambda_i \in \mathbb{N}\Big\}$
for all $\lambda \in \mathbb{N}$.

\begin{remark}\label{rem:Besov-torus}
  The above definition of Besov space $B^s_{p,r}(\mathbb{R}^d)$ and striated Besov space
$\mathcal{B}^{s,\ell}_{p,r,\mathcal{W}}(\mathbb{R}^d)$
can be extended to their counterparts  $B^s_{p,r}(\mathbb{T}^d)$ and
$\mathcal{B}^{s,\ell}_{p,r,\mathcal{W}}(\mathbb{T}^d)$ for distributions defined on $\mathbb{T}^d$.
Indeed, one can view a function $f$ on $\mathbb{T}^d$ as a 1-periodic function of $\mathbb{R}^d$
in all coordinate, i.e. $f(x+m)=f(x)$ for all $x\in\mathbb{R}^d$ and $m\in\mathbb{Z}^d$,
thus recalling \eqref{eq:Sj}, we have for all $x\in \mathbb{T}^d$, $j\in \mathbb{N}$,
\begin{align*}
  \Delta_j f (x) = \int_{\mathbb{R}^d} h_j(x-y) f(y) \dd y = \sum_{m\in \mathbb{Z}^d}
  \int_{\mathbb{T}^d + m} h_j(y) f(x-y)\dd y
  = \int_{\mathbb{T}^d } \mathbf{h}_j(y) f(x-y) \dd y,
\end{align*}
and
\begin{align*}
  S_j f(x) = \int_{\mathbb{R}^d} \overline{h}_j(y) f(x-y) \dd y
  = \int_{\mathbb{T}^d } \overline{\mathbf{h}}_j(y) f(x-y) \dd y
  = \overline{\mathbf{h}}_j * f(x),
\end{align*}
with
\begin{align*}
  \mathbf{h}_j(y) = \sum_{m\in\mathbb{Z}^d} h_j(y+m)
  = \sum_{m\in\mathbb{Z}^d} 2^{jd} h(2^j (y+m)),\quad
  \overline{\mathbf{h}}_j(y) = \sum_{m\in\mathbb{Z}^d} \overline{h}_j(y+m).
%  = \sum_{m\in\mathbb{Z}^d} 2^{jd} \overline{h}(2^j (y+m)).
\end{align*}
From Poisson's summation formula, we see that $\mathbf{h}_j$ and $\overline{\mathbf{h}}_j$
have only discrete spectrum with
\begin{align*}
  \mathbf{h}_j(y) = \sum_{m\in \mathbb{Z}^d} \varphi(2^{-j} m) e^{2\pi i m\cdot y},
  \quad\textrm{and}\quad \overline{\mathbf{h}}_j(y) = \sum_{m\in\mathbb{Z}^d} \chi(2^{-j}m) e^{2\pi i m\cdot y}.
\end{align*}
\end{remark}

Some basic properties of the space $\mathcal{B}_{p, r, \mathcal{W}}^{s, \ell}$ are presented as follows.
\begin{lemma}Let $s, \widetilde{s} \in \mathbb{R}, \ell, \widetilde{\ell} \in \mathbb{N}, r, \widetilde{r} \in[1, \infty],~p \in[1, \infty]$, and $\mathcal{W}=\left\{W_i\right\}_{1 \leq i \leq N}$ be composed of regular vector fields $W_i: \mathbb{R}^d \rightarrow \mathbb{R}^d$. The function space $\mathcal{B}_{p, r, \mathcal{W}}^{s, \ell}$  satisfies that
\begin{equation*}%\label{eq:2.10}
\begin{aligned}
  &\mathcal{B}_{p, r, \mathcal{W}}^{s, \ell} \subset \mathcal{B}_{p, r, \mathcal{W}}^{\widetilde{s}, \ell} \;\;\text { for } s \geq \widetilde{s},\qquad
  \mathcal{B}_{p, r, \mathcal{W}}^{s, \ell} \subset \mathcal{B}_{p, r, \mathcal{W}}^{s, \widetilde{\ell}} \;\;\text { for } \ell \geq \widetilde{\ell},\\
  &\mathcal{B}_{p, r, \mathcal{W}}^{s, \ell} \supset \mathcal{B}_{p, \widetilde{r}, \mathcal{W}}^{s, \ell}
  \;\; \text { for } r \geq \widetilde{r},
\end{aligned}
\end{equation*}
and
\begin{equation}\label{eq:stBes-fact}
  \|f\|_{\mathcal{B}_{p, r, \mathcal{W}}^{s, \ell+1}}
  =\big\|\partial_{\mathcal{W}}^{\ell+1} f\big\|_{B_{p, r}^s}
  +\|f\|_{\mathcal{B}_{p, r, \mathcal{W}}^{s, \ell}},
  \quad\|f\|_{\mathcal{B}_{p, r, \mathcal{W}}^{s, \ell+1}}
  =\left\|\partial_{\mathcal{W}} f\right\|_{\mathcal{B}_{p, r, \mathcal{W}}^{s, \ell}}+\|f\|_{B_{p, r}^s} .
\end{equation}
\end{lemma}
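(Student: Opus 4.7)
\smallskip
\emph{Proof proposal.} All of the assertions reduce to direct manipulations of the defining sum
\[
\|f\|_{\mathcal{B}^{s,\ell}_{p,r,\mathcal{W}}} = \sum_{\lambda=0}^{\ell} \|\partial_{\mathcal{W}}^\lambda f\|_{B^s_{p,r}}
\]
combined with the monotonicity properties of the classical Besov spaces $B^s_{p,r}$. No analysis beyond what is already built into the definitions of Section~\ref{subsec:str-es} is required; the statement is essentially a bookkeeping result.

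For the three embeddings I would proceed termwise. The inclusion in the regularity index $s \geq \widetilde{s}$ follows by applying the standard Besov embedding $B^s_{p,r}(\mathbb{R}^d) \hookrightarrow B^{\widetilde{s}}_{p,r}(\mathbb{R}^d)$ to each summand $\|\partial_{\mathcal{W}}^\lambda f\|_{B^s_{p,r}}$ in the defining sum. The inclusion in the striation index $\ell \geq \widetilde{\ell}$ is immediate: truncating the range $\{0,\dots,\ell\}$ of a sum of nonnegative quantities to $\{0,\dots,\widetilde{\ell}\}$ only decreases it. The inclusion $\mathcal{B}^{s,\ell}_{p,\widetilde{r},\mathcal{W}} \subset \mathcal{B}^{s,\ell}_{p,r,\mathcal{W}}$ for $\widetilde{r} \leq r$ follows from the standard embedding $\ell^{\widetilde{r}}(\mathbb{N}) \hookrightarrow \ell^r(\mathbb{N})$ applied to the dyadic sequence $\bigl(2^{qs}\|\Delta_q(\partial_{\mathcal{W}}^\lambda f)\|_{L^p}\bigr)_{q\geq -1}$ for each fixed $\lambda$.

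The two identities in \eqref{eq:stBes-fact} are then pure rearrangements of the defining sum. For the first, splitting $\{0,\dots,\ell+1\} = \{0,\dots,\ell\}\cup\{\ell+1\}$ gives
\[
\sum_{\lambda=0}^{\ell+1}\|\partial_{\mathcal{W}}^\lambda f\|_{B^s_{p,r}} = \sum_{\lambda=0}^{\ell}\|\partial_{\mathcal{W}}^\lambda f\|_{B^s_{p,r}} + \|\partial_{\mathcal{W}}^{\ell+1}f\|_{B^s_{p,r}},
\]
which is the first claim. For the second I would peel off the $\lambda=0$ contribution $\|f\|_{B^s_{p,r}}$ and reindex $\lambda = \lambda'+1$ in the remaining sum to obtain $\sum_{\lambda'=0}^{\ell}\|\partial_{\mathcal{W}}^{\lambda'}(\partial_{\mathcal{W}}f)\|_{B^s_{p,r}} = \|\partial_{\mathcal{W}}f\|_{\mathcal{B}^{s,\ell}_{p,r,\mathcal{W}}}$, which on adding back $\|f\|_{B^s_{p,r}}$ reconstitutes $\|f\|_{\mathcal{B}^{s,\ell+1}_{p,r,\mathcal{W}}}$. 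The only point requiring a little care, and the closest thing to an obstacle, is the combinatorial bookkeeping in the multi-vector-field case $\mathcal{W}=\{W_1,\dots,W_N\}$: one must check that the ordered monomials $\partial_{W_1}^{\lambda_1}\cdots\partial_{W_N}^{\lambda_N}\partial_{W_i}$ produced by the reindexing exhaust the family of compositions of length $\lambda+1$ used to define $\partial_{\mathcal{W}}^{\lambda+1}$. In the single-field special case $N=1$ this collapses to the tautology $\partial_W^{\lambda+1} = \partial_W^{\lambda}\,\partial_W$, which already captures the essential content.
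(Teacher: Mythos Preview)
Your proposal is correct and matches the paper's treatment: the paper states this lemma without proof, treating all assertions as immediate consequences of the definition of $\mathcal{B}^{s,\ell}_{p,r,\mathcal{W}}$, which is exactly what you do. Your flagging of the combinatorial bookkeeping in the multi-field case $N>1$ for the second identity in \eqref{eq:stBes-fact} is the only place where any care is needed, and since the paper's applications (Sections \ref{sec:C2gamma}--\ref{sec:Ckgamma}) only use the single-field case $\mathcal{W}=\{W\}$ with $W=\nabla^\perp\varphi$, the tautology $\partial_W^{\lambda+1}=\partial_W^\lambda\,\partial_W$ you mention is all that is actually required.
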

The following striated estimates dealing with the spaces $\mathcal{B}_{p, r, \mathcal{W}}^{s, \ell}$ will play a crucial role in the proof of the main results. The  proof of this lemma is provided in Section \ref{sec:apdx}.

\begin{lemma}\label{lem:CMXstra-es}
  Let $k \in \mathbb{N}$, $\rho \in(0,1)$, $N \in \mathbb{Z}^+$,
and $\mathcal{W}=\left\{W_i\right\}_{1 \leq i \leq N}$ be a set of regular divergence-free vector fields
$W_i: \mathbb{R}^d \rightarrow \mathbb{R}^d$ satisfying that
\begin{equation*}%\label{eq:stra-assum}
  \|\mathcal{W}\|_{\mathcal{C}_{\mathcal{W}}^{1+\rho, k-1}}:= \sum_{\lambda=0}^{k-1}
  \big\|\partial_{\mathcal{W}}^\lambda \mathcal{W}\big\|_{C^{1+\rho}}
  <\infty .
\end{equation*}
Let $m(D) = \Lambda^\sigma m_0(D)$, $\sigma>-1$, and $m_0(D)$ be a zero-order pseudo-differential operator with
$m_0(\xi) \in C^{\infty}\left(\mathbb{R}^d \backslash\{0\}\right)$.
Assume that $u$ is a smooth divergence-free vector field of $\mathbb{R}^d$,
and $\phi: \mathbb{R}^d \rightarrow \mathbb{R}$ is a smooth function.
Then the following statements hold true.
\begin{enumerate}
\item For all $\epsilon\in(0,1)$ and $(p,r)\in [1,\infty]^2$, we have
\begin{equation}\label{eq:CMX2.13}
  \|u \cdot \nabla \phi\|_{\mathcal{B}_{p, r, \mathcal{W}}^{-\epsilon, k}}
  \leq C \min \bigg\{\sum_{\mu=0}^k\|u\|_{\mathcal{B}_{\mathcal{W}}^{0, \mu}}
  \|\nabla \phi\|_{\mathcal{B}_{p, r, \mathcal{W}}^{-\epsilon, k-\mu}},
  \sum_{\mu=0}^k\|u\|_{\mathcal{B}_{p, r, \mathcal{W}}^{-\epsilon, \mu}}
  \|\nabla \phi\|_{\mathcal{B}_{\mathcal{W}}^{0, k-\mu}}\bigg\} .
\end{equation}
\item For all $s\in(-1,1)$, $-1<\sigma+s<1$ and $(p,r)\in [1,\infty]^2$, we have
\begin{equation}\label{eq:CMX2.15}
  \|[m(D), u \cdot \nabla] \phi\|_{\mathcal{B}_{p, r, \mathcal{W}}^{s, k}}
  \leq C\left(\|\nabla u\|_{\mathcal{B}_{\mathcal{W}}^{0, k}}
  +\|u\|_{L^{\infty}}\right)\|\phi\|_{\mathcal{B}_{p, r, \mathcal{W}}^{s+\sigma, k}}.
\end{equation}
\item For all $s\in(-1,1)$, $-1<\sigma+s<1$ and $(p,r)\in [1,\infty]^2$, we have
\begin{equation}\label{eq:str-es-mDphi}
\begin{aligned}
  \|m(D) \phi\|_{\mathcal{B}_{p, r, \mathcal{W}}^{s, k+1}}
  \leq C\|\phi\|_{\mathcal{B}_{p, r, \mathcal{W}}^{s+\sigma, k+1}}
  + C\|\mathcal{W}\|_{\mathcal{B}_{\mathcal{W}}^{1, k}}
  \Big(\|\phi\|_{\mathcal{B}_{p, r, \mathcal{W}}^{s+\sigma, k}} + \mathbf{1}_{\{-1<\sigma \leq 0\}}
  \|\Delta_{-1} m(D) \phi\|_{L^p} \Big) ,
\end{aligned}
\end{equation}
where $\mathbf{1}_{\{-1<\sigma\leq 0\}}$ is the characteristic function of the set $\{-1<\sigma\leq 0\}$.
\end{enumerate}
In the above, $C>0$ depends  on $d,k,\epsilon,\sigma,s$
and $\|\mathcal{W}\|_{\mathcal{C}^{1+\rho,k-1}_{\mathcal{W}}}$
(when $k=0$ this norm vanishes).
\end{lemma}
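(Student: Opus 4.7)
The plan is to prove the three estimates simultaneously by induction on $k$, exploiting the factorization \eqref{eq:stBes-fact} that peels off one directional derivative at a time. In the base case $k=0$ all three bounds reduce to classical results in the non-striated Besov space $B^s_{p,r}$. For \eqref{eq:CMX2.13} with $k=0$, I would write $u\cdot\nabla\phi = \nabla\cdot(u\phi)$ thanks to $\nabla\cdot u=0$, decompose via Bony as $T_u\nabla\phi + T_{\nabla\phi}u + R(u,\nabla\phi)$, and use the embedding $B^0_{\infty,1}\hookrightarrow L^\infty$ to place the low-frequency factor in $L^\infty$ while the high-frequency factor sits in $B^{-\epsilon}_{p,r}$; the choice $-\epsilon<0$ is exactly what makes the remainder term sum. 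For \eqref{eq:CMX2.15} with $k=0$ I invoke the classical Coifman--Meyer commutator estimate: since $m(D)=\Lambda^\sigma m_0(D)$ has order $\sigma$ and $u\cdot\nabla$ has order $1$, their commutator gains one derivative with norm controlled by $\|\nabla u\|_{L^\infty}+\|u\|_{L^\infty}$, and the restriction that both $s$ and $s+\sigma$ lie in $(-1,1)$ is precisely the range in which the paradifferential argument converges. For \eqref{eq:str-es-mDphi} with $k=0$ the mapping property $m(D):B^{s+\sigma}_{p,r}\to B^s_{p,r}$ is immediate, with a separate bound for the low-frequency block $\Delta_{-1}m(D)\phi$ when $\sigma\leq 0$.

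For the inductive step, I would assume (1), (2), (3) hold at striated order $\leq k-1$ and apply the second form of \eqref{eq:stBes-fact}, which reduces each statement at level $k$ to the control of $\partial_{W_i}$ applied to the relevant quantity. The key algebraic identity for every smooth divergence-free $W_i\in\mathcal{W}$ is the Lie-bracket relation
\begin{equation*}
\partial_{W_i}(u\cdot\nabla\phi) = u\cdot\nabla(\partial_{W_i}\phi) + (\partial_{W_i}u)\cdot\nabla\phi - (\partial_u W_i)\cdot\nabla\phi,
\end{equation*}
which distributes the derivative onto $u$ or $\phi$ and throws off a remainder involving $\partial_u W_i$. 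Iterating yields $\partial_{\mathcal{W}}^\lambda(u\cdot\nabla\phi)$ as a finite sum of products $(\partial^{\lambda_1}_{\mathcal{W}}u)\cdot\nabla(\partial^{\lambda_2}_{\mathcal{W}}\phi)$ with $\lambda_1+\lambda_2=\lambda$, plus lower-striated-order remainders absorbed via the induction hypothesis and the assumption $\|\mathcal{W}\|_{\mathcal{C}^{1+\rho,k-1}_{\mathcal{W}}}<\infty$; applying the base-case bound to each summand produces \eqref{eq:CMX2.13}. The analogous commutation
\begin{equation*}
\partial_{W_i}[m(D),u\cdot\nabla]\phi = [m(D),u\cdot\nabla]\partial_{W_i}\phi + [m(D),[\partial_{W_i},u\cdot\nabla]]\phi + [[\partial_{W_i},m(D)],u\cdot\nabla]\phi
\end{equation*}
reduces \eqref{eq:CMX2.15} at level $k$ to itself at level $k-1$ together with the product estimate \eqref{eq:CMX2.13}, the two new commutators being of strictly lower striated order. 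Finally, for \eqref{eq:str-es-mDphi}, the identity $\partial_{W_i}m(D)\phi = m(D)\partial_{W_i}\phi - [m(D),W_i\cdot\nabla]\phi$ directly reduces the striated-order-$(k+1)$ estimate for $m(D)\phi$ to the striated-order-$k$ estimate for $m(D)\partial_{W_i}\phi$ (inductive hypothesis of (3)) and for $[m(D),W_i\cdot\nabla]\phi$ (application of (2) with $W_i$ in place of $u$).

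The principal obstacle is the $k=0$ commutator estimate \eqref{eq:CMX2.15}, since everything else amounts to bookkeeping once this is in hand. The delicate point is to obtain the one-derivative gain uniformly across the stated range of $(s,\sigma)$: I would decompose $u\cdot\nabla\phi = T_{u^j}\partial_j\phi + T_{\partial_j\phi}u^j + \partial_j R(u^j,\phi)$ and study the commutator with $m(D)$ on each piece. The paraproduct term $[m(D),T_{u^j}]\partial_j\phi$ yields the Coifman--Meyer gain via a first-order Taylor expansion of the symbol of $m(D)$ around the low-frequency localization of $u^j$, while $T_{\partial_j\phi}u^j$ and $\partial_jR(u^j,\phi)$ require $s+\sigma\in(-1,1)$ in order to sum in the $\ell^r$ index. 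A second subtle point is the indicator $\mathbf{1}_{\{-1<\sigma\leq 0\}}\|\Delta_{-1} m(D)\phi\|_{L^p}$ appearing in \eqref{eq:str-es-mDphi}: when $\sigma\leq 0$ the operator $m(D)$ is not smoothing on low frequencies, and the paraproduct reconstruction formula only controls frequencies $\geq 0$, so the low-frequency block $\Delta_{-1}m(D)\phi$ has to be recovered separately; this is exactly the source of that indicator. Finally, in the iteration one must check that the remainder $\partial_u W_i$ never demands more striated regularity on $\mathcal{W}$ than $\mathcal{C}^{1+\rho,k-1}_{\mathcal{W}}$, and the combinatorics of the peel-off scheme precisely matches this hypothesis.
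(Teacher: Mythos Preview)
Your scheme for (i) and (iii) is essentially what the paper does (for (i) it defers to \cite{CMX22}, and for (iii) it uses exactly the identity $\partial_{W_i}m(D)\phi = m(D)\partial_{W_i}\phi - [m(D),W_i\cdot\nabla]\phi$ together with (ii), as you propose). The divergence is in the inductive step for (ii).

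Your Jacobi-type decomposition for (ii) is algebraically correct, but the third term $[[\partial_{W_i},m(D)],u\cdot\nabla]\phi$ does not feed back into the induction. The operator $[\partial_{W_i},m(D)]=-[m(D),W_i\cdot\nabla]$ is a variable-coefficient pseudo-differential operator of order~$\sigma$; it is \emph{not} a Fourier multiplier of the form $\Lambda^\sigma m_0(D)$. Hence its commutator with $u\cdot\nabla$ is not an instance of (ii) at level $k-1$, and unfolding it as $-[m(D),W_i\cdot\nabla](u\cdot\nabla\phi)+u\cdot\nabla[m(D),W_i\cdot\nabla]\phi$ produces products $u\cdot\nabla\psi$ that must be estimated in $\mathcal{B}^{s,k-1}_{p,r,\mathcal{W}}$ for $s\in(-1,1)$, whereas your product estimate (i) only covers the negative-index case $s=-\epsilon<0$. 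So the loop does not close over the full stated range of $s$.

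The paper avoids this by a genuinely different mechanism: it passes to the equivalent paraproduct-based norms $\widetilde{\mathcal{B}}^{s,\ell}_{p,r,\mathcal{W}}$ (built from $T_{\mathcal{W}\cdot\nabla}$ instead of $\partial_{\mathcal{W}}$) via \eqref{eq:CMX5.14}, performs the full Bony decomposition of $[m(D),u\cdot\nabla]\phi$ into five pieces $I_1,\dots,I_5$, and estimates $(T_{\mathcal{W}\cdot\nabla})^\lambda I_j$ directly using the frequency-localized machinery of Lemmas~\ref{lem:CMX5.1}--\ref{lem:CMX5.2}. The point is that $T_{\mathcal{W}\cdot\nabla}$ commutes much more cleanly with Littlewood--Paley blocks than $\partial_{\mathcal{W}}$ does, so one never has to confront a double commutator of the type your scheme generates.
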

In particular, for the special cases $k=0$ and $k=1$, the dependence on the lower order term $\|\mathcal{W}\|_{\mathcal{C}_{\mathcal{W}}^{1+\rho, k-1}}$ in the constant $C$ in Lemma~\ref{lem:CMXstra-es}
can be explicitly calculated. The corresponding striated estimates are stated as follows and they will play an important role in the proof as well.
\begin{lemma}\label{lem:CMX2.5}
Assume that $u$ is a smooth divergence-free vector field of $\mathbb{R}^d~(d \geq 2)$
and~$\mathcal{W}=\left\{W_i\right\}_{1 \leq i \leq N}$ ($N \in \mathbb{Z}^{+}$) is a set of smooth divergence-free vector fields.
Let $\phi: \mathbb{R}^d \rightarrow \mathbb{R}$ be a smooth function.
Let $m(D) = \Lambda^\sigma m_0(D)$, $\sigma>-1$, and $m_0(D)$ be a zero-order pseudo-differential operator with
$m_0(\xi) \in C^{\infty}\left(\mathbb{R}^d \backslash\{0\}\right)$.
Then the following statements hold true.
\begin{enumerate}
\item For all $\epsilon \in(0,1)$ and $(p, r) \in[1, \infty]^2$, there exists a constant $C=C(d, \epsilon)>0$ such that
\begin{equation}\label{eq:prodBes}
  \|u \cdot \nabla \phi\|_{B_{p, r}^{-\epsilon}}
  \leq C \min \left\{\|u\|_{B_{p, r}^{-\epsilon}}\|\nabla \phi\|_{L^{\infty}},\|u\|_{L^{\infty}}\|\nabla \phi\|_{B_{p, r}^{-\epsilon}}\right\},
\end{equation}
and
\begin{equation}\label{eq:paWprod-es}
  \left\|\partial_{\mathcal{W}}(u \cdot \nabla \phi)\right\|_{B_{p, r}^{-\epsilon}}
  + \left\|T_{\mathcal{W} \cdot \nabla}(u \cdot \nabla \phi)\right\|_{B_{p, r}^{-\epsilon}} \leq C \min \left\{A_1, A_2, A_3\right\},
\end{equation}
with
\begin{align*}
  A_1:= & \|u\|_{B_{p, r}^{-\epsilon}}\left\|\partial_{\mathcal{W}} \nabla \phi\right\|_{B_{\infty, 1}^0}
  +\left(\left\|\partial_{\mathcal{W}} u\right\|_{B_{p, r}^{-\epsilon}}+\|\mathcal{W}\|_{B_{\infty, 1}^1}
  \|u\|_{B_{p, r}^{-\epsilon}}\right)\|\nabla \phi\|_{B_{\infty, 1}^0}, \\
  A_2:=  & \|u\|_{B_{\infty, 1}^0}\left\|\partial_{\mathcal{W}} \nabla \phi\right\|_{B_{p, r}^{-\epsilon}}
  +\left(\left\|\partial_{\mathcal{W}} u\right\|_{B_{\infty, 1}^0}
  +\|\mathcal{W}\|_{B_{\infty, 1}^1}\|u\|_{B_{\infty, 1}^0}\right)\|\nabla \phi\|_{B_{p, r}^{-\epsilon}}, \\
  A_3:=  & \|u\|_{B_{\infty, 1}^0}\left(\left\|\partial_{\mathcal{W}} \nabla \phi\right\|_{B_{p, r}^{-\epsilon}}
  +\|\mathcal{W}\|_{B_{\infty, 1}^1}\|\nabla \phi\|_{B_{p, r}^{-\epsilon}}\right)
  +\left(\left\|\partial_{\mathcal{W}} u\right\|_{B_{p, r}^{-\epsilon}}+\|\mathcal{W}\|_{B_{\infty, 1}^1}
  \|u\|_{B_{p, r}^{-\epsilon}}\right)\|\nabla \phi\|_{B_{\infty, 1}^0}.
\end{align*}
\item For all $s \in(-1,1)$, $-1<\sigma+ s <1$ and $(p, r) \in[1, \infty]^2$, there exists a constant $C=C(d, s, \sigma)>0$ so that
\begin{equation}\label{eq:commEs}
  \|[m(D), u \cdot \nabla] \phi\|_{B_{p, r}^s} \leq C\|u\|_{W^{1, \infty}}\|\phi\|_{B_{p, r}^{s + \sigma}}.
\end{equation}
\item For all $s \in(-1,1)$, $-1<\sigma+ s <1$ and $(p, r) \in[1, \infty]^2$, there exists a constant $C=C(d, s, \sigma)>0$ so that
\begin{equation}\label{eq:paWmD-es}
  \left\|\partial_{\mathcal{W}}(m(D) \phi)\right\|_{B_{p,r}^s}
  \leq C\left\|\partial_{\mathcal{W}} \phi\right\|_{B_{p, r}^{s +\sigma}}+C\|\mathcal{W}\|_{W^{1, \infty}}\|\phi\|_{B_{p, r}^{s+\sigma}}.
\end{equation}
\end{enumerate}
\end{lemma}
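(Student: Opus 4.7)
The plan is to deduce all three parts from Bony's paraproduct decomposition combined with the divergence-free cancellations $\nabla\cdot u = 0$ and $\nabla\cdot W_i = 0$.

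For \eqref{eq:prodBes}, I decompose $u\cdot\nabla\phi = T_u\cdot\nabla\phi + T_{\nabla\phi}\cdot u + R(u,\nabla\phi)$. The two paraproduct pieces are handled by the standard bound $\|T_fg\|_{B^s_{p,r}}\lesssim \|f\|_{L^\infty}\|g\|_{B^s_{p,r}}$, yielding the contributions $\|u\|_{L^\infty}\|\nabla\phi\|_{B^{-\epsilon}_{p,r}}$ and $\|\nabla\phi\|_{L^\infty}\|u\|_{B^{-\epsilon}_{p,r}}$ directly. For the remainder, which is not controlled by the usual bound when $s = -\epsilon < 0$, the divergence-free condition lets me rewrite $R(u,\nabla\phi) = \nabla\cdot R(u,\phi)$, so that matters reduce to estimating $\|R(u,\phi)\|_{B^{1-\epsilon}_{p,r}}$, where now the positive regularity $1-\epsilon>0$ makes the standard remainder bound applicable. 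Part (3) then follows at once from the identity $\partial_{\mathcal{W}}(m(D)\phi) = m(D)(\partial_{\mathcal{W}}\phi) - [m(D),\mathcal{W}\cdot\nabla]\phi$, combined with the $B^{s+\sigma}_{p,r}\to B^s_{p,r}$ continuity of $m(D)$ and the commutator estimate \eqref{eq:commEs} applied with $\mathcal{W}$ in place of $u$.

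The main ingredient is therefore the commutator bound \eqref{eq:commEs}. Exploiting $\nabla\cdot u=0$, I write $[m(D), u\cdot\nabla]\phi = \partial_j[m(D), u_j]\phi$ and reduce the problem to $\|[m(D),u_j]\phi\|_{B^{s+1}_{p,r}} \lesssim \|u\|_{W^{1,\infty}}\|\phi\|_{B^{s+\sigma}_{p,r}}$. Bony's decomposition gives
$$[m(D), u_j]\phi = [m(D), T_{u_j}]\phi + m(D)\bigl(T_\phi u_j + R(u_j,\phi)\bigr) - T_{u_j}m(D)\phi - T_{m(D)\phi}u_j - R(u_j, m(D)\phi).$$
The paraproduct commutator $[m(D), T_{u_j}]$ is a classical pseudo-differential operator of order $\sigma-1$ satisfying $\|[m(D), T_{u_j}]\phi\|_{B^{s+1}_{p,r}}\lesssim \|\nabla u_j\|_{L^\infty}\|\phi\|_{B^{s+\sigma}_{p,r}}$, via a first-order Taylor expansion of its symbol; the remaining pieces are controlled by standard paraproduct and remainder estimates under the range conditions $s,\,s+\sigma\in(-1,1)$. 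Absorbing $\partial_j$ gains one derivative and yields \eqref{eq:commEs}.

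The most delicate bound is the striated product estimate \eqref{eq:paWprod-es}. For the $\partial_{\mathcal{W}}$ version I apply Leibniz to obtain
$$\partial_{\mathcal{W}}(u\cdot\nabla\phi) = (\partial_{\mathcal{W}}u)\cdot\nabla\phi + u\cdot\nabla(\partial_{\mathcal{W}}\phi) - (u\cdot\nabla \mathcal{W})\cdot\nabla\phi,$$
and distribute the norms across $B^{-\epsilon}_{p,r}$, $B^0_{\infty,1}$ and $L^\infty$ in the three ways indicated by $A_1,A_2,A_3$, invoking \eqref{eq:prodBes} on each piece; the cross term $(u\cdot\nabla\mathcal{W})\cdot\nabla\phi$ is what produces the factor $\|\mathcal{W}\|_{B^1_{\infty,1}}$. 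For the paraproduct version $T_{\mathcal{W}\cdot\nabla}(u\cdot\nabla\phi)$, a pseudo-Leibniz rearrangement gives $(T_{\mathcal{W}\cdot\nabla}u)\cdot\nabla\phi + u\cdot\nabla(T_{\mathcal{W}\cdot\nabla}\phi)$ modulo commutator remainders of the form $[T_{\mathcal{W}\cdot\nabla}, u\cdot\nabla]\phi$, which paraproduct calculus controls in terms of $\|\mathcal{W}\|_{B^1_{\infty,1}}$ times the norms of $u$ and $\nabla\phi$. I expect the bookkeeping of the various paraproduct remainders to be the main obstacle, but each ultimately reduces to \eqref{eq:prodBes} and classical paraproduct estimates, so no new analytic ingredient beyond the divergence-free cancellation is needed.
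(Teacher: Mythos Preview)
Your overall scheme is close to the paper's, but two of the ``standard'' steps you invoke actually fail in the stated generality and are precisely the places where the paper does extra work.

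\smallskip
\textbf{Part (iii).} You assert that $m(D):B^{s+\sigma}_{p,r}\to B^s_{p,r}$ is continuous. For the high-frequency blocks this is fine, but for $-1<\sigma\le 0$ the symbol $|\xi|^\sigma m_0(\xi)$ is singular at the origin and $\|\Delta_{-1}m(D)\psi\|_{L^p}$ is \emph{not} controlled by $\|\psi\|_{B^{s+\sigma}_{p,r}}$ in general. The paper isolates this term, writes $\Delta_{-1}m(D)\partial_{\mathcal W}\phi=\Delta_{-1}m(D)\operatorname{div}(\mathcal W\,\phi)$ using $\nabla\cdot\mathcal W=0$, and then uses that $\Delta_{-1}m(D)\operatorname{div}$ has a bounded symbol near $0$ (since $\sigma+1>0$) together with a Bony decomposition of $\mathcal W\,\phi$ to recover $C\|\mathcal W\|_{W^{1,\infty}}\|\phi\|_{B^{s+\sigma}_{p,r}}$. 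Without this step your argument for \eqref{eq:paWmD-es} has a genuine gap in the range $-1<\sigma\le 0$.

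\smallskip
\textbf{Part (ii).} Your route---pull out $\partial_j$ via $\nabla\cdot u=0$ and bound $[m(D),u_j]\phi$ in $B^{s+1}_{p,r}$---differs from the paper, which paralinearizes $[m(D),u\cdot\nabla]\phi$ directly (keeping $\nabla$ inside). The paracommutator $[m(D),T_{u_j}]\phi$ and the two remainder pieces do behave as you say, but the ``reverse paraproduct'' pieces $m(D)T_\phi u_j$ and $T_{m(D)\phi}u_j$ are not individually bounded in $B^{s+1}_{p,r}$ by $\|u\|_{W^{1,\infty}}\|\phi\|_{B^{s+\sigma}_{p,r}}$ across the full range: when $s+\sigma>0$ (or when $r<\infty$) these pieces demand $u_j\in B^{s+1+\sigma}_{\infty,r}$ or $B^{s+1}_{\infty,r}$, which $W^{1,\infty}$ does not provide. (Also, your displayed identity double-counts $-T_{u_j}m(D)\phi$.) The fix is to push $\sum_k\partial_k$ back into these two terms and use $\nabla\cdot u=0$ again, which turns them into $m(D)T_{\nabla\phi}\!\cdot u-T_{m(D)\nabla\phi}\!\cdot u$; this is exactly the paper's term $I_2$, where the extra derivative lands on $\phi$ rather than $u$ and the estimate closes. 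So your approach can be repaired, but it effectively collapses back to the paper's direct decomposition at the delicate step.
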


\begin{proof}[Proof of Lemma~\ref{lem:CMX2.5}]
Estimates \eqref{eq:prodBes} and \eqref{eq:paWprod-es} are exactly the same as those in \cite[Lemma 2.5]{CMX22},
thus we only need to prove (ii) and (iii).

For the estimation of \eqref{eq:commEs}, using  Bony's decomposition, we have
\begin{align*}
  {[m(D), u \cdot \nabla] \phi=} & \sum_{j \in \mathbb{N}}\left[m(D), S_{j-1} u \cdot \nabla\right] \Delta_j \phi
  +\sum_{j \in \mathbb{N}}\left[m(D), \Delta_j u \cdot \nabla\right] S_{j-1} \phi \\
  & +\sum_{j \geq-1}\left[m(D), \Delta_j u \cdot \nabla\right] \widetilde{\Delta}_j \phi \\
  := &\, {I}_1 + {I}_2 + {I}_3.
\end{align*}
Noting that there exist $\widetilde{\psi}\in C^\infty_c(\mathbb{R}^d)$ supported in an annulus away from the origin and
$\widetilde{h} = \mathcal{F}^{-1}(m \widetilde{\psi}) = \mathcal{F}^{-1}(|\xi|^\sigma m_0 \widetilde{\psi})
\in \mathcal{S}\left(\mathbb{R}^d\right)$ such that
\begin{eqnarray}\label{eq:comm-formu1}
   \left[m(D), S_{j-1} u \cdot \nabla\right] \Delta_j \phi
  &=& \big[m(D)\widetilde{\psi}(2^{-j}D), S_{j-1} u \cdot \nabla\big] \Delta_j \phi\\
  &=& 2^{j(\sigma+d)} \int_{\mathbb{R}^d} \widetilde{h}(2^jy)\left(S_{j-1} u
  \left(x-y\right)-S_{j-1} u(x)\right) \cdot \nabla \Delta_j \phi\left(x-y\right)  \mathrm{d} y, \nonumber
\end{eqnarray}
we find that
\begin{equation*}
\begin{aligned}
  2^{q s}\left\|\Delta_q  {I}_1\right\|_{L^p}
  & \lesssim  2^{q s} \sum_{j \in \mathbb{N},|q-j| \leq 4}
  \left\|\Delta_q\left[m(D), S_{j-1} u \cdot \nabla\right] \Delta_j \phi\right\|_{L^p} \\
  & \lesssim  2^{qs} \sum_{j\in \NN, |j-q|\leq 4} 2^{j(\sigma -1)} \|\nabla S_{j-1} u\|_{L^\infty} \|\nabla \Delta_j \phi\|_{L^p} \\
  & \lesssim  c_q\|\nabla u\|_{L^{\infty}}\|\phi\|_{B_{p, r}^{s +\sigma}},
\end{aligned}
\end{equation*}
with $\left\{c_q\right\}_{q \geq-1}$ satisfying $\left\|c_q\right\|_{\ell^r}=1$.
For ${I}_2$, taking advantages of the following fact that (using Lemma~\ref{lem:m(D)}),
\begin{align}\label{eq:fact1a}
  \|\nabla m(D)\Delta_j\phi\|_{L^p} \leq C 2^{j(1+\sigma)} \|\Delta_j\phi\|_{L^p},\quad
 \forall p\in [1,\infty],\, \sigma>-1,\,j\geq -1,
\end{align}
we have, by setting $A:=2^{q s} \left\|\Delta_q {I}_2\right\|_{L^p}$
\begin{eqnarray*}
  A&=& C 2^{q s} \sum_{j \in \mathbb{N},|q-j| \leq 4}
  \big\|\Delta_q \left[m(D), \Delta_j u \cdot \nabla\right] S_{j-1} \phi\big\|_{L^p} \\
  &\leq& C 2^{q s} \sum_{j \in \mathbb{N},|q-j| \leq 4}
  \Big(\big\|\Delta_q m(D)\widetilde{\psi}(2^{-j}D)\big(\Delta_j u \cdot\nabla S_{j-1}\phi\big)\big\|_{L^p} +
  \big\|\Delta_q\big(\Delta_j u\cdot\nabla m(D)S_{j-1}\phi\big)\big\|_{L^p}\Big) \\
  &\leq& C  2^{q s} \sum_{j \in \mathbb{N},|q-j| \leq 4} \|\Delta_j u\|_{L^\infty}
  \Big( 2^{j\sigma} \left\|\nabla S_{j-1} \phi\right\|_{L^p} + \|\nabla m(D) S_{j-1}\phi\|_{L^p} \Big)  \\
  &\leq& C  \sum_{j \in \mathbb{N},|q-j| \leq 4} 2^{j(s-1)} \|\nabla \Delta_j u\|_{L^\infty}
  \bigg( 2^{j\sigma}\sum_{-1\leq j'\leq j-1} 2^{j'} \|\Delta_{j'}\phi\|_{L^p} +
  \sum_{-1\leq j'\leq j-1} 2^{j'(1+\sigma)} \|\Delta_{j'}\phi\|_{L^p}  \bigg)  \\
  &\leq& C \|\nabla u\|_{L^{\infty}} \sum_{j \in \mathbb{N},|j-q| \leq 4}
  \sum_{-1\leq j^{\prime} \leq j-1} \Big(2^{(j^{\prime}-j)(1-s-\sigma)} + 2^{(j'-j)(1-s)} \Big) 2^{j'(s+\sigma)} \left\|\Delta_{j^{\prime}} \phi\right\|_{L^p} \\
  &\leq& C c_q\|\nabla u\|_{L^{\infty}}\|\phi\|_{B_{p, r}^{s + \sigma}},
\end{eqnarray*}
where $\left\{c_q\right\}_{q \geq-1}$ is such that $\left\|c_q\right\|_{\ell^r}=1$.
The term ${I}_3$ can be decomposed as the following
\begin{equation*}
\begin{aligned}
  {I}_3 & = \sum_{j \geq -1} m(D) \operatorname{div}\big(\Delta_j u \,\widetilde{\Delta}_j \phi\big)
  - \sum_{j \geq -1}  \operatorname{div}\big(\Delta_j u\, m(D) \widetilde{\Delta}_j \phi\big) \\
  & := {I}_{3,1} + {I}_{3,2}.
\end{aligned}
\end{equation*}
For ${I}_{3,1}$ and ${I}_{3,2}$, in view of \eqref{eq:fact1a} and the discrete Young's inequality,
we infer that for all $s\in(-1,1)$, $-1<s+\sigma<1$,
\begin{align*}
  2^{q s} \left\|\Delta_q {I}_{3,1} \right\|_{L^p}
  & \leq C 2^{q s} \sum_{j \geq \max\{q-3, -1\}}
  \big\|\Delta_q m(D) \operatorname{div}\big(\Delta_j u \widetilde{\Delta}_j \phi\big) \big\|_{L^p} \\
  & \leq C \sum_{j \geq \max\{q-3,  -1\}} 2^{(q-j)(1+ s +\sigma)}
  2^j \left\|\Delta_j u\right\|_{L^{\infty}} 2^{j(s +\sigma)} \big\|\widetilde{\Delta}_j \phi\big\|_{L^p} \\
  & \leq C c_q\|u\|_{B_{\infty,\infty}^1} \|\phi\|_{B_{p,r}^{s +\sigma}}
  \leq C c_q\|u\|_{W^{1,\infty}} \|\phi\|_{B_{p,r}^{s +\sigma}},
\end{align*}
and, set $B:=2^{q s}\left\|\Delta_q {I}_{3,2}\right\|_{L^p} $
\begin{eqnarray*}
  B&\leq& C 2^{qs} \bigg(\sum_{j\geq \max\{q-3,2\}} \big\|\Delta_q \mathrm{div}\,\big(\Delta_j u\,
  m(D)\widetilde{\Delta}_j\phi\big)\big\|_{L^p}
  + \sum_{j\geq q-3, j\leq 2} \big\|\Delta_q \big(\Delta_j u\cdot\nabla m(D)
  \widetilde{\Delta}_j\phi \big)\big\|_{L^p}\bigg) \\
    &\leq& C \bigg( 2^{q(1+s)} \sum_{j \geq \max\{q-3,2\}}
  \left\|\Delta_j u\right\|_{L^{\infty}}
  2^{j\sigma}\big\| \widetilde{\Delta}_j \phi\big\|_{L^p}
  + \mathbf{1}_{\{-1\leq q\leq 5\}} \sum_{-1\leq j\leq 2} \|\Delta_j u\|_{L^\infty}
  \|\widetilde{\Delta}_j \phi\|_{L^p} \bigg) \\
    &\leq& C \sum_{j \geq \max\{q-3, 2\}} 2^{(q-j)(1+s)} 2^j\left\| \Delta_j u\right\|_{L^{\infty}}
  2^{j(s+ \sigma)} \big\|\widetilde{\Delta}_j \phi\big\|_{L^p}
  + \mathbf{1}_{\{-1\leq q\leq 5\}} \|u\|_{L^\infty} \|\phi\|_{B^s_{p,r}} \\
    &\leq& C \big(c_q + \mathbf{1}_{\{-1\leq q\leq 5\}} \big)
  \|u\|_{W^{1,\infty}} \|\phi\|_{B_{p,r}^{s +\sigma}},
\end{eqnarray*}
where $\left\{c_q\right\}_{q \geq-1}$ is such that $\left\|c_q\right\|_{\ell^r}=1$.
Collecting all the above controls gives the estimate \eqref{eq:commEs}.

Next we prove \eqref{eq:paWmD-es}.
Noting that $\partial_\mathcal{W} \big(m(D)\phi\big) = -[m(D), \mathcal{W}\cdot\nabla]\phi + m(D) \partial_\mathcal{W}\phi$,
and using \eqref{eq:commEs}, we have
\begin{align*}
  \|\partial_\mathcal{W} \big( m(D)\phi\big)\|_{B^s_{p,r}} & \leq \|[m(D),\mathcal{W}\cdot\nabla]\phi\|_{B^s_{p,r}}
  + \|m(D)\partial_\mathcal{W} \phi\|_{B^s_{p,r}} \\
  & \leq C \|\mathcal{W}\|_{W^{1,\infty}} \|\phi\|_{B^{s+\sigma}_{p,r}} + C \|\partial_\mathcal{W}\phi\|_{B^{s+\sigma}_{p,r}}
  + C \|\Delta_{-1} m(D)\partial_\mathcal{W} \phi\|_{L^p}.
\end{align*}
By applying Bony's decomposition and \eqref{eq:fact1a},
we obtain that
\begin{align}\label{eq:Del-1mDpaW}
  \|\Delta_{-1} m(D)\partial_\mathcal{W} \phi\|_{L^p}
  &\leq \sum_{0\leq j\leq 4} \|\Delta_{-1} m(D) \mathrm{div} (S_{j-1}\mathcal{W} \,\Delta_j \phi ) \|_{L^p}
  + \sum_{0\leq j\leq 4} \|\Delta_{-1} m(D) \mathrm{div}\, (\Delta_j \mathcal{W} \, S_{j-1}\phi)\|_{L^p} \nonumber \\
  & \quad + \sum_{j\geq -1} \big\|\Delta_{-1}m(D) \mathrm{div} \big(\Delta_j\mathcal{W}\, \widetilde{\Delta}_j \phi \big) \big\|_{L^p}
  \nonumber \\
  & \leq C \|\mathcal{W}\|_{L^\infty} \sum_{-1\leq j\leq 4} \|S_{j+1}\phi\|_{L^p} + C \sum_{j\geq -1} \|\Delta_j \mathcal{W}\|_{L^\infty}
  \|\widetilde{\Delta}_j\phi\|_{L^p} \nonumber \\
  & \leq C \|\mathcal{W}\|_{B^1_{\infty,\infty}} \|\phi\|_{B^{s+\sigma}_{p,r}} \bigg( 1 + \sum_{j\geq -1} 2^{-j(s+\sigma+1)}\bigg) \nonumber \\
  & \leq C \|\mathcal{W}\|_{W^{1,\infty}} \|\phi\|_{B^{s+\sigma}_{p,r}} .
\end{align}
Thus combining the above two estimates leads to \eqref{eq:paWmD-es}, as desired.
\end{proof}

The lemma below deals with the striated estimate of  patch-type initial data.
\begin{lemma}\label{lem:str-reg}
Let $\mathbf{D}$ be either $\mathbb{R}^2$
or $\mathbb{T}^2$.   Let $\alpha\in (\frac{1}{2},1]$, $k\geq 2$ and $0<\gamma\leq 2\alpha-1$.
Assume that $D_0 \subset \mathbf{D}$ is a bounded simply connected domain with boundary
$\partial D_0$ characterized by the level-set function $\varphi_0\in C^{k+\gamma}(\mathbf{D})$ for some $\gamma\in (0,2\alpha-1]$
if $\alpha\in (\frac{1}{2},1)$ and for some $\gamma\in(0,1)$ if $\alpha =1$.
%\footnote{Here for a Lipschitz domain $\Omega\subset \R^d$, the Besov space $B^{s,n}_{p,r}(\Omega)$
%consists of those distributions $f\in \mathcal{D}'(\Omega)$ which have extensions beyond $\Omega$ belonging to $B^{s,n}_{p,r}(\R^d)$,
%and it is endowed with the quotient-space quasi-norms (e.g. see \cite{Ryc99}).}
Denote by $W_0 = \nabla^\perp \varphi_0$.
\begin{enumerate}[$(1)$]
\item
If $\alpha\in (\frac{1}{2},1]$, $0<\gamma <2\alpha-1$ and $\theta_0(x) = \bar{\theta}_0(x) \mathbf{1}_{D_0}(x)$ with $\bar{\theta}_0 \in C^{k-2\alpha+\gamma}(\overline{D_0})$, then we have
\begin{align}\label{eq:str-reg}
  \partial_{W_0}^{k-1} \theta_0(x) \in C^{\gamma-2\alpha+1}(\mathbf{D}).
\end{align}
\item
If $\alpha\in (\frac{1}{2},1)$, $\gamma=2\alpha-1$ and $\theta_0(x) = \bar{\theta}_0(x) \mathbf{1}_{D_0}(x)$ with $\bar{\theta}_0 \in C^{k-1+\widetilde{\gamma}}(\overline{D_0})$, $\widetilde{\gamma}>0$, then we have
\begin{align}\label{eq:str-reg2}
  \partial_{W_0}^{k-1} \theta_0(x) \in L^{\infty}(\mathbf{D}).
\end{align}
\end{enumerate}
\end{lemma}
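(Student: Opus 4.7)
The approach rests on two structural properties of $W_0 = \nabla^\perp \varphi_0$: it is divergence-free (since $\nabla \cdot \nabla^\perp \equiv 0$) and tangent to $\partial D_0$ (since $W_0 \cdot \nabla \varphi_0 \equiv 0$ forces $W_0 \cdot \nu = 0$ pointwise on $\partial D_0 = \{\varphi_0 = 0\}$). The first step is to establish the distributional identity $\partial_{W_0} \mathbf{1}_{D_0} = 0$: for every test function $\phi$,
\[
\langle \partial_{W_0} \mathbf{1}_{D_0}, \phi\rangle
= -\int_{D_0} W_0 \cdot \nabla \phi \, dx
= -\int_{\partial D_0}(W_0\cdot\nu)\,\phi\,d\sigma + \int_{D_0}(\nabla\cdot W_0)\,\phi\,dx = 0,
\]
so $\partial_{W_0} \mathbf{1}_{D_0} = 0$ in $\mathcal{D}'(\mathbf{D})$, and by iteration $\partial_{W_0}^{j}\mathbf{1}_{D_0} = 0$ for every $j \geq 1$.

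Next I would extend $\bar\theta_0$ from $\overline{D_0}$ to $\widetilde{\bar\theta}_0$ on all of $\mathbf{D}$ preserving the Hölder regularity (possible via a Stein-type extension, since $\partial D_0 \in C^{k+\gamma}$). Mollifying to $\widetilde{\bar\theta}_0^{\epsilon} \in C^\infty$, applying the classical Leibniz rule to $\partial_{W_0}^{k-1}(\widetilde{\bar\theta}_0^{\epsilon}\mathbf{1}_{D_0})$ and using the first step to kill every term that carries at least one derivative onto $\mathbf{1}_{D_0}$, then passing to the distributional limit $\epsilon \to 0$, I obtain the key identity
\[
\partial_{W_0}^{k-1}\theta_0 = \bigl(\partial_{W_0}^{k-1}\widetilde{\bar\theta}_0\bigr)\mathbf{1}_{D_0}
\qquad \text{in }\mathcal{D}'(\mathbf{D}).
\]

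In case (2), $\widetilde{\bar\theta}_0 \in C^{k-1+\widetilde{\gamma}}(\mathbf{D})$ and $W_0 \in C^{k-1+\gamma}$ (from $\varphi_0 \in C^{k+\gamma}$), so the multilinear Leibniz expansion of $\partial_{W_0}^{k-1}\widetilde{\bar\theta}_0$ is a finite sum of products of bounded factors; hence $\partial_{W_0}^{k-1}\widetilde{\bar\theta}_0 \in L^\infty(\mathbf{D})$, and multiplication by $\mathbf{1}_{D_0}$ preserves this bound, giving \eqref{eq:str-reg2}.

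For case (1), where $k-2\alpha+\gamma \in (k-2,k-1)$, the quantity $\partial_{W_0}^{k-1}\widetilde{\bar\theta}_0$ is only a Besov distribution of negative index $\gamma+1-2\alpha$, so the right-hand side of the key identity is not literally a pointwise product. I would estimate $\|\partial_{W_0}^{k-1}\theta_0\|_{B^{\gamma+1-2\alpha}_{\infty,\infty}}$ through a localization and straightening argument: near $\partial D_0$ a $C^{k+\gamma}$ change of variables turns $W_0$ into an essentially tangential field and $\partial D_0$ into a flat hyperplane $\{t=0\}$, reducing the problem to bounding $\partial_s^{k-1}\bigl(\widetilde{\bar\theta}_0(s,t)\,\mathbf{1}_{\{t>0\}}\bigr)$. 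Since the tangential Littlewood-Paley blocks in $s$ commute with the cut-off $\mathbf{1}_{\{t>0\}}$, the tangential regularity $C^{k-2\alpha+\gamma}$ of $\widetilde{\bar\theta}_0$ translates, after losing $k-1$ tangential derivatives and combining with the bounded normal contribution, into the desired isotropic Besov bound \eqref{eq:str-reg}. The principal obstacle is precisely this last estimate: since $\mathbf{1}_{D_0}$ is not a pointwise multiplier on $B^{s}_{\infty,\infty}$ for $s \neq 0$, standard Besov product estimates are unavailable, and one must rely on the tangential structure of $\partial_{W_0}$ together with the vanishing $\partial_{W_0}\mathbf{1}_{D_0}=0$ to transfer the anisotropic smoothness of $\widetilde{\bar\theta}_0$ into the required isotropic regularity.
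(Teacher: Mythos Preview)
Your treatment of case (2) is correct and matches the paper's proof exactly: Rychkov extension, the tangential identity $\partial_{W_0}\mathbf{1}_{D_0}=0$, and then a direct H\"older product bound on $\partial_{W_0}^{k-1}\widetilde{\bar\theta}_0$. For case (1) the paper simply cites \cite[Lemma~2.6]{CMX22}, whose argument follows the very same extension-plus-commutation scheme you set up.

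Where your proposal diverges is in the final step of case (1). You correctly isolate the obstacle: $\partial_{W_0}^{k-1}\widetilde{\bar\theta}_0$ lies only in $C^{\gamma+1-2\alpha}$ with negative index, so the product with $\mathbf{1}_{D_0}$ cannot be closed by standard paraproduct rules. Your remedy---flattening $\partial D_0$, reducing $\partial_{W_0}$ to a pure tangential $\partial_s$, and invoking that tangential Littlewood--Paley blocks commute with $\mathbf{1}_{\{t>0\}}$---is plausible, but the sentence ``combining with the bounded normal contribution, into the desired isotropic Besov bound'' is exactly the unproved step: after straightening, $(\partial_s^{k-1}\widetilde{\bar\theta}_0)\mathbf{1}_{\{t>0\}}$ is \emph{not} bounded in the normal direction (it is not even a function near the bad set), so the passage from tangential $C^{\gamma+1-2\alpha}$-regularity to isotropic $C^{\gamma+1-2\alpha}$ still needs a genuine argument. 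You have relocated the difficulty rather than resolved it.

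There is, however, a short direct argument that bypasses the straightening entirely (and is what underlies \cite{CMX22}). Set $G:=\partial_{W_0}^{k-2}\widetilde{\bar\theta}_0\in C^{\rho}$ with $\rho:=\gamma+2-2\alpha\in(0,1)$, so that by your commutation identity $\partial_{W_0}^{k-1}\theta_0=\partial_{W_0}(G\,\mathbf{1}_{D_0})$. For each $j\ge 0$ split $G=S_jG+(G-S_jG)$. On the smooth piece, tangency gives the honest Leibniz rule $\partial_{W_0}\bigl((S_jG)\mathbf{1}_{D_0}\bigr)=(\partial_{W_0}S_jG)\mathbf{1}_{D_0}$, and Bernstein yields $\|\partial_{W_0}S_jG\|_{L^\infty}\lesssim 2^{j(1-\rho)}\|G\|_{C^\rho}$. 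On the remainder, write $\partial_{W_0}\bigl((G-S_jG)\mathbf{1}_{D_0}\bigr)=\operatorname{div}\bigl(W_0(G-S_jG)\mathbf{1}_{D_0}\bigr)$ and use $\|G-S_jG\|_{L^\infty}\lesssim 2^{-j\rho}$ together with $\|\Delta_j\operatorname{div}(\cdot)\|_{L^\infty}\lesssim 2^{j}\|\cdot\|_{L^\infty}$. Both contributions to $\|\Delta_j\partial_{W_0}(G\mathbf{1}_{D_0})\|_{L^\infty}$ are $\lesssim 2^{j(1-\rho)}=2^{-j(\gamma+1-2\alpha)}$, which is exactly \eqref{eq:str-reg}. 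This uses the tangency only through the single identity $\partial_{W_0}\mathbf{1}_{D_0}=0$ that you already established, and avoids any change of variables or anisotropic Besov machinery.
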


\begin{proof}[Proof of Lemma \ref{lem:str-reg}]
We give the proof for $\mathbf{D}=\RR^2$ first. The proof of \eqref{eq:str-reg} in the case $0<\gamma<2\alpha-1$ is the same with \cite[Lemma 2.6]{CMX22}.
Thus we only need to sketch the proof of \eqref{eq:str-reg2}.

First note that Rychkov's extension theorem \cite{VSR1999} guarantees that there exists a function
$\widetilde{\theta}_0\in C^{k-1+\widetilde{\gamma}}(\RR^2)$ with the following restriction condition
$\widetilde{\theta}_0|_{D_0}=\theta_0$.
Then, it suffices to prove that $ \partial_{W_0}^{k-1} (\widetilde{\theta}_0\mathbf{1}_{D_0} )\in L^{\infty}(\mathbb{R}^2)$.
Since the vector field $W_0$ is tangential to the patch boundary $\partial D_0$, the operator $\partial_{W_0}^{k-1}$
commutes with the characteristic function $\mathbf{1}_{D_0}$, and thus we only need to prove
$\partial_{W_0}^{k-1}\widetilde{\theta}_0\in L^\infty (\RR^2)$.
In fact,
\begin{equation*}
\begin{aligned}
  \|\partial_{W_0}^{k-1}\widetilde{\theta}_0\|_{L^\infty} & \leq \|\partial_{W_0}^{k-1}\widetilde{\theta}_0\|_{C^{\widetilde{\gamma}}}
  \leq C\|W_0\|_{C^{\widetilde{\gamma}}} \|\nabla\partial_{W_0}^{k-2}\widetilde{\theta}_0\|_{C^{\widetilde{\gamma}}} \\
  &\le C\|W_0\|_{C^{1+\widetilde{\gamma}}} \left(\|\nabla\partial_{W_0}^{k-3}\widetilde{\theta}_0\|_{C^{\widetilde{\gamma}}}
  +\|\nabla^2\partial_{W_0}^{k-3}\widetilde{\theta}_0\|_{C^{\widetilde{\gamma}}}\right) \\
  &\le C\|W_0\|_{C^{k-2+\widetilde{\gamma}}} \left(\|\nabla\widetilde{\theta}_0\|_{C^{\widetilde{\gamma}}}+\|\nabla^2\widetilde{\theta}_0\|_{C^{\widetilde{\gamma}}}
  +\cdots+\|\nabla^{k-1}\widetilde{\theta}_0\|_{C^{\widetilde{\gamma}}}\right) \\
  &\le C\|W_0\|_{C^{k-2+\widetilde{\gamma}}}\|\widetilde{\theta}_0\|_{C^{k-1+\widetilde{\gamma}}}.
\end{aligned}
\end{equation*}
Hence \eqref{eq:str-reg2} is proved and we finish the proof of Lemma \ref{lem:str-reg} for $\mathbf{D}=\RR^2$.

As for the case $\mathbf{D}=\TT^2$, we define $\underline{\theta}_0$ in $\RR^2$ as $\underline{\theta}_0=\theta_0$
in a whole period, and vanishes in others. Then we treat $\underline{\theta}_0$ in the whole space case, it yields
that $\partial_{W_0}^{k-1} \underline{\theta}_0 \in C^{\gamma-2\alpha+1}(\mathbb{R}^2)$
and $ \partial_{W_0}^{k-1} \underline{\theta}_0 \in L^{\infty}(\mathbb{R}^2)$, respectively,
which implies the desired results that are \eqref{eq:str-reg} and \eqref{eq:str-reg2} where $\mathbf{D} =\mathbb{T}^2$.
\end{proof}

\subsection{Auxiliary lemmas}\label{subsec:aux-lem}

We refer to \cite[Proposition 3.1]{HKR11} for the following lemma.
\begin{lemma}\label{lem:m(D)}
Let $m(D) = \Lambda^\sigma m_0(D)$, $\sigma>0$, and $m_0(D)$ be a zero-order pseudo-differential operator with
$m_0(\xi) \in C^{\infty}\left(\mathbb{R}^d \backslash\{0\}\right)$, then we have that for all $p\in [1,\infty], q\geq-1, j\geq0$,
\begin{equation}\label{es:mD-Sj-Lp}
\begin{aligned}
  &\|m(D)S_{j}u\|_{L^p}\leq C2^{j\sigma}\|S_{j}u\|_{L^p},\\
  &\|m(D)\Delta_qu\|_{L^p}\leq C2^{q\sigma}\|\Delta_qu\|_{L^p}.
\end{aligned}
\end{equation}
\end{lemma}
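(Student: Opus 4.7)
My plan is to prove both estimates by the standard Bernstein-type argument: localize in Fourier space by an auxiliary smooth cutoff, express $m(D)$ acting on the localized piece as convolution with an $L^1$ kernel, and control the kernel's $L^1$-norm by rescaling.

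Concretely, I would choose $\widetilde{\chi}\in C_c^\infty(\mathbb{R}^d)$ with $\widetilde{\chi}\equiv 1$ on $\mathrm{supp}\,\chi$, and $\widetilde{\varphi}\in C_c^\infty(\mathbb{R}^d\setminus\{0\})$ supported in an annulus bounded away from the origin with $\widetilde{\varphi}\equiv 1$ on $\mathrm{supp}\,\varphi$. By the support properties of the Littlewood-Paley blocks, $S_j u=\widetilde{\chi}(2^{-j}D)S_j u$ and $\Delta_q u=\widetilde{\varphi}(2^{-q}D)\Delta_q u$ for $q\geq 0$, whence
\[
m(D)S_j u = K_j^{(1)}*S_j u,\qquad m(D)\Delta_q u = K_q^{(2)}*\Delta_q u,
\]
with $K_j^{(1)}(x)=\mathcal{F}^{-1}\bigl(m(\xi)\widetilde{\chi}(2^{-j}\xi)\bigr)(x)$ and $K_q^{(2)}(x)=\mathcal{F}^{-1}\bigl(m(\xi)\widetilde{\varphi}(2^{-q}\xi)\bigr)(x)$. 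By Young's convolution inequality, it suffices to prove that $\|K_j^{(1)}\|_{L^1}\leq C\,2^{j\sigma}$ and $\|K_q^{(2)}\|_{L^1}\leq C\,2^{q\sigma}$ uniformly in $j,q$.

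For these $L^1$ bounds I would use a dilation argument. Setting $\xi=2^q\eta$ and using the homogeneity of $|\xi|^\sigma$ together with the zero-order (scale-invariant) nature of $m_0$, one gets $K_q^{(2)}(x)=2^{q(d+\sigma)}\widetilde{K}^{(2)}(2^q x)$, where
\[
\widetilde{K}^{(2)}(y)=\mathcal{F}^{-1}\bigl(|\eta|^\sigma m_0(\eta)\widetilde{\varphi}(\eta)\bigr)(y).
\]
Since $\widetilde{\varphi}$ is supported in an annulus away from the origin, the symbol $|\eta|^\sigma m_0(\eta)\widetilde{\varphi}(\eta)$ lies in $C_c^\infty(\mathbb{R}^d)$, so $\widetilde{K}^{(2)}\in\mathcal{S}(\mathbb{R}^d)$; a change of variable then gives $\|K_q^{(2)}\|_{L^1}=2^{q\sigma}\|\widetilde{K}^{(2)}\|_{L^1}\leq C\,2^{q\sigma}$. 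The case $q=-1$ follows identically using $\chi$ in place of $\widetilde{\varphi}$: since $\sigma>0$, the factor $|\xi|^\sigma$ tames any homogeneous-degree-zero behavior of $m_0$ at the origin, so $\mathcal{F}^{-1}(m(\xi)\chi(\xi))\in L^1$. The case of $S_j$ is entirely analogous via $K_j^{(1)}(x)=2^{j(d+\sigma)}\widetilde{K}^{(1)}(2^j x)$ with $\widetilde{K}^{(1)}=\mathcal{F}^{-1}(|\eta|^\sigma m_0(\eta)\widetilde{\chi}(\eta))$, whose $L^1$ norm is finite by the same low-frequency consideration.

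The only mildly delicate point, and the step I would check most carefully, is the low-frequency behavior: the factor $|\eta|^\sigma$ is only $C^{\lfloor\sigma\rfloor,\{\sigma\}}$ at the origin, so to verify $\widetilde{K}^{(1)}\in L^1$ one invokes the assumption $\sigma>0$ together with the smoothness of $m_0$ away from zero to show enough decay of $\widetilde{K}^{(1)}$ at infinity (stationary-phase / integration-by-parts in the $\eta$-integral). Once $\|\widetilde{K}^{(1)}\|_{L^1}+\|\widetilde{K}^{(2)}\|_{L^1}<\infty$ is secured, both estimates in \eqref{es:mD-Sj-Lp} follow directly from Young's inequality. The full proof would, of course, simply refer to \cite[Proposition 3.1]{HKR11} as the authors do.
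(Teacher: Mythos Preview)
Your sketch is correct and is precisely the standard Bernstein-type argument that underlies \cite[Proposition~3.1]{HKR11}; the paper itself gives no proof and only cites that reference, so there is nothing further to compare. One minor remark: your rescaling step writes $m_0(2^q\eta)=m_0(\eta)$, which uses homogeneity of degree zero rather than merely the zero-order symbol bounds; in the applications of the paper $m_0$ is always genuinely homogeneous, but if one wanted to cover general $m_0\in S^0$ one would instead use that the family $\{m_0(2^q\cdot)\widetilde{\varphi}\}_{q\ge0}$ is uniformly bounded in $C_c^\infty$ on the fixed annulus, which yields the same uniform $L^1$ bound on $\widetilde{K}^{(2)}$.
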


%\begin{proof}[Proof of Lemma \ref{lem:m(D)}]
%Let $g\in C_c^\infty(\RR^d)$ satisfies $g=1$ on supp$\chi$, one can get $$m(D)S_ju=m(D)g(2^{-j}D)S_ju$$ for $j\in\NN$. For $j=0$, let $K \in \mathcal{S}^{\prime}$ such that $\mathcal{F} K(\xi)=m(\xi)g(\xi)$, $K$ is a tempered distribution such that its Fourier transform $\mathcal{F} K$ is $C^{\infty}\left(\mathbb{R}^d \backslash\{0\}\right)$ and satisfies for all $\alpha \in \mathbb{N}^d$
%$$
%\left|\partial_{\xi}^\alpha \mathcal{F} K(\xi)\right| \leq C_\alpha|\xi|^{\sigma-|\alpha|} \text {. }
%$$
%According to Mikhlin-Hörmander Theorem (see \cite{Stein1970} for instance) we have for all $\alpha \in \mathbb{N}^d$ and $x \in \mathbb{R}^d \backslash\{0\}$,
%$$
%\left|\partial_x^\alpha K(x)\right| \leq C_\alpha^{\prime}|x|^{-\sigma-d-|\alpha|} .
%$$
%Then, it ensues that
%$$
%|K(x)| \leq C|x|^{-d-\sigma} \quad \forall x \neq 0 .
%$$
%Since $\mathcal{F} K$ is obviously in $L^1$, we also have that $K \in \mathcal{C}_0\left(\mathbb{R}^d\right)$. This removes the singularity at the origin and gives
%$$
%|K(x)| \leq C(1+|x|)^{-d-\sigma} \quad \forall x \in \mathbb{R}^d .
%$$
%Therefore we get that the kernel $K \in L^1$ and $m(D)g(D)S_0u=K\ast S_0u$. We can now conclude the case $j=0$ by using the classical Young inequality for convolution products.
%The case $j \geq 1$ can be derived from $j=0$ via an obvious argument of homogeneity. The estimate of $\Delta_qu$ is similarly.
%\end{proof}

We have the following estimates of commutators involving the operator $\mathcal{R}_{1-\beta}$.
\begin{lemma}\label{lem:Rbeta-cm}
Let $\mathbf{D}$ be either $\mathbb{R}^d$ or torus $\mathbb{T}^d$, $d\geq 2$.
Let $(p, r) \in[2, \infty] \times[1, \infty]$, $\mathcal{R}_{1-\beta} := \partial_1 \Lambda^{-\beta}$,
$\beta \in ( 1,2 ]$.
Assume that $u=(u_1,\cdots,u_d)$ is a smooth divergence-free vector field on $\mathbf{D}$
and $\phi$ is a smooth scalar function on $\mathbf{D}$.
\begin{enumerate}
\item
If $\mathbf{D}=\mathbb{R}^d$ and $\beta\in (1,2)$, we have that for all $s \in (\beta-2, 1)$,
\begin{align}\label{eq:Rbeta-cm-es}
  \left\|\left[\mathcal{R}_{1-\beta}, u \cdot \nabla\right] \phi\right\|_{B_{p, r}^s}
  \leq C_{s, \beta}\|\nabla u\|_{L^p}\left(\|\phi\|_{B_{\infty, r}^{s+1-\beta}}+\|\phi\|_{L^2}\right) ,
\end{align}
and for all $s \in (0,1)$,
\begin{equation}\label{eq:R-1cm-es1}
  \|[\mathcal{R}_{-1}, u\cdot\nabla] \phi\|_{B^s_{p,r}}
  \leq C_s \Big( \|\nabla u\|_{L^p} \|\phi\|_{B^{s - 1}_{\infty,r}}
  + \|u\|_{L^2} \|\phi\|_{L^2} \Big).
\end{equation}
%Besides, if $p=\infty$, we also have
%\begin{align*}
%  \left\|\left[\mathcal{R}_\beta, u \cdot \nabla\right] \phi\right\|_{B_{\infty, r}^s}
%  \le C _{s, \beta}\left(\|\omega\|_{L^{\infty}}
%  + \|u\|_{L^2}\right)\|\phi\|_{B_{\infty, r}^{s+\frac{(1-\beta)}{2} }}+\|u\|_{L^2}\|\phi\|_{L^2}.
%\end{align*}
\item
If $\mathbf{D} =\mathbb{T}^d$,
%and $\widehat{u}(0) = \int_{\mathbb{T}^d} u(x)\dd x =0$,
we have that \eqref{eq:Rbeta-cm-es} also holds %for all $s\in (\beta-2,1)$, $\beta\in(1,2)$,
and for all $s\in (0,1)$,
\begin{align}\label{eq:R-1cm-es2}
  \|[\mathcal{R}_{-1},u\cdot\nabla]\phi\|_{B^s_{p,r}} \leq C_s \|\nabla u\|_{L^p}
  \Big(\|\phi\|_{B^{s-1}_{\infty,r}} + \|\phi\|_{L^2} \Big).
\end{align}
\end{enumerate}
\end{lemma}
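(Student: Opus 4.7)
The plan is to mimic the Bony-decomposition argument used in the proof of Lemma~\ref{lem:CMX2.5}~(ii), tailored to the fact that $\mathcal{R}_{1-\beta}=\partial_1\Lambda^{-\beta}$ is a Fourier multiplier of order $1-\beta\in[-1,0)$ whose symbol $i\xi_1|\xi|^{-\beta}$ is singular at $\xi=0$. I would split
\begin{align*}
  [\mathcal{R}_{1-\beta},u\cdot\nabla]\phi
  &=\sum_{j\in\NN}[\mathcal{R}_{1-\beta},S_{j-1}u\cdot\nabla]\Delta_j\phi
  +\sum_{j\in\NN}[\mathcal{R}_{1-\beta},\Delta_j u\cdot\nabla]S_{j-1}\phi\\
  &\quad+\sum_{j\geq-1}[\mathcal{R}_{1-\beta},\Delta_j u\cdot\nabla]\widetilde\Delta_j\phi
  =:I_1+I_2+I_3,
\end{align*}
and estimate $2^{qs}\|\Delta_q I_k\|_{L^p}$ separately for $k=1,2,3$.

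For $I_1$ the kernel of $\mathcal{R}_{1-\beta}\widetilde\psi(2^{-j}D)$ has the form $2^{j(1-\beta+d)}\widetilde h(2^j\cdot)$ with $\widetilde h\in\mathcal{S}(\RR^d)$, so the first-order Taylor trick in the style of~\eqref{eq:comm-formu1} gives a gain $|y|\lesssim 2^{-j}$ from the difference $S_{j-1}u(x-y)-S_{j-1}u(x)$; putting $L^p$ on $\nabla u$ and $L^\infty$ on $\Delta_j\phi$ yields a bound $2^{j(1-\beta)}\|\nabla u\|_{L^p}\|\Delta_j\phi\|_{L^\infty}$, hence the target $c_q\|\nabla u\|_{L^p}\|\phi\|_{B^{s+1-\beta}_{\infty,r}}$ after summation over $|j-q|\le 4$. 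For $I_2$ I would expand the commutator as $\mathcal{R}_{1-\beta}(\Delta_j u\cdot\nabla S_{j-1}\phi)-\Delta_j u\cdot\nabla(\mathcal{R}_{1-\beta}S_{j-1}\phi)$; Lemma~\ref{lem:m(D)} controls both pieces, and combined with $\|\Delta_j u\|_{L^p}\lesssim 2^{-j}\|\nabla u\|_{L^p}$ the standard geometric summations close in the stated range of $s$. For $I_3$, the divergence-free condition lets me rewrite
\[
  I_3=\sum_{j\geq-1}\mathcal{R}_{1-\beta}\,\mathrm{div}(\Delta_j u\,\widetilde\Delta_j\phi)
  -\sum_{j\geq-1}\mathrm{div}(\Delta_j u\,\mathcal{R}_{1-\beta}\widetilde\Delta_j\phi),
\]
where a Bernstein estimate on the first sum produces a factor $2^{(q-j)(s+2-\beta)}$, summable over $j\geq q-3$ precisely under the hypothesis $s>\beta-2$; this is the source of that constraint.

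The main obstacle is the low-frequency action of $\mathcal{R}_{1-\beta}$: in $I_2$ and $I_3$ one unavoidably applies $\mathcal{R}_{1-\beta}$ to low-frequency blocks such as $\Delta_{-1}\phi$ or $S_{j-1}\phi$ with $j$ small, where the naive Bernstein bound $\|\mathcal{R}_{1-\beta}\Delta_{j'}f\|_{L^\infty}\lesssim 2^{j'(1-\beta)}\|\Delta_{j'}f\|_{L^\infty}$ fails at $j'=-1$ because $|\xi|^{-\beta}$ is not integrable at the origin in the relevant sense. On $\RR^d$ with $\beta\in(1,2)$, the localized symbol $\xi_1|\xi|^{-\beta}\chi(\xi)$ lies in $L^2(\RR^d)$ for every $d\geq 2$, so Plancherel and Young give
\[
  \|\Delta_{-1}\mathcal{R}_{1-\beta}f\|_{L^\infty}\lesssim\|f\|_{L^2},
\]
which accounts for the $\|\phi\|_{L^2}$ correction in~\eqref{eq:Rbeta-cm-es}. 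In the borderline case $\beta=2$ on $\RR^d$ the symbol $\xi_1/|\xi|^2$ is no longer $L^2_{\mathrm{loc}}$ near $0$ in dimension $d=2$; one instead treats the low-mode piece of $I_3$, namely $\Delta_{-1}\mathcal{R}_{-1}\,\mathrm{div}(\Delta_j u\,\widetilde\Delta_j\phi)$, via Plancherel together with a Cauchy--Schwarz pairing of the two factors, which is what produces the additional $\|u\|_{L^2}\|\phi\|_{L^2}$ term in~\eqref{eq:R-1cm-es1}.

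On $\TT^d$, Remark~\ref{rem:Besov-torus} shows that $\Delta_{-1}$ is a finite-rank projector onto the modes $|m|\lesssim 1$, and $\mathcal{R}_{-1}$ annihilates the zero mode. Since subtracting the constant mean of $u$ does not alter $[\mathcal{R}_{-1},u\cdot\nabla]\phi$ (the operator $\bar u\cdot\nabla$ commutes with $\mathcal{R}_{-1}$), I may assume $u$ is mean-zero, and the Poincar\'e inequality $\|u\|_{L^2(\TT^d)}\lesssim\|\nabla u\|_{L^2}\lesssim\|\nabla u\|_{L^p}$ (valid for $p\geq 2$ by H\"older on the compact torus) allows to trade the $\|u\|_{L^2}$ factor produced in the $\RR^d$ analysis for $\|\nabla u\|_{L^p}$, yielding the cleaner estimate~\eqref{eq:R-1cm-es2}. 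Estimate~\eqref{eq:Rbeta-cm-es} transfers to the torus without modification, since its proof only uses the dyadic structure shared by $\RR^d$ and $\TT^d$.
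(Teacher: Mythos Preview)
Your proposal is correct and follows essentially the same approach as the paper: both use Bony's decomposition into the three pieces $I_1,I_2,I_3$, handle $I_1$ via the first-order Taylor trick on the localized kernel, expand $I_2$ into its two halves and control them with Lemma~\ref{lem:m(D)}, rewrite $I_3$ using the divergence-free structure, and resolve the low-frequency singularity of $\mathcal{R}_{1-\beta}$ through $L^2$-based bounds (yielding the $\|\phi\|_{L^2}$ correction, and on $\mathbb{R}^d$ with $\beta=2$ the extra $\|u\|_{L^2}\|\phi\|_{L^2}$), with the torus case obtained by subtracting the mean of $u$ and invoking Poincar\'e. The paper outsources part of the $\mathbb{R}^d$, $\beta\in(1,2)$ argument to \cite{WX12} and gives full detail only for \eqref{eq:R-1cm-es2}, but the mechanism you describe matches theirs.
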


\begin{proof}[Proof of Lemma \ref{lem:Rbeta-cm}]
  $\mathbf{(i)}$ We only sketch the proof of \eqref{eq:Rbeta-cm-es}.
The proof of \eqref{eq:R-1cm-es1} is analogous with that of \eqref{eq:R-1cm-es2} given below, thus we omit the details.
%One can refer to Proposition 4.2 of \cite{WX12} for the proof of and to Lemma 2.7 of \cite{CMX22} for.
By using Bony's decomposition, we may write
\begin{align*}
  [\mathcal{R}_{1-\beta},u\cdot\nabla]\phi & = \sum_{q\in\mathbb{N}} [\mathcal{R}_{1-\beta},S_{q-1}u\cdot\nabla]\Delta_q\phi
  + \sum_{q\in\mathbb{N}} [\mathcal{R}_{1-\beta},\Delta_q u\cdot\nabla]S_{q-1}\phi
  + \sum_{q\geq -1}[\mathcal{R}_{1-\beta},\Delta_q u\cdot\nabla]\widetilde{\Delta}_q\phi  \\
  & := \mathrm{I}_\beta + \mathrm{II}_\beta + \mathrm{III}_\beta.
\end{align*}
The control of $\mathrm{I}_\beta$ and $\mathrm{III}_\beta$ is analogous to the proof of $\mathrm{I}$ and $\mathrm{III}$
in Proposition 4.2 of \cite{WX12}, so that for all $s>\beta -2$,
\begin{align*}
  \|\mathrm{I}_\beta\|_{B^s_{p,r}} + \|\mathrm{III}_\beta\|_{B^s_{p,r}}
  \leq C \|\nabla u\|_{L^p} \big( \|\phi\|_{B^{s+1-\beta}_{\infty,r}} +  \|\phi\|_{L^2} \big).
\end{align*}
For $\mathrm{II}_\beta$, the control of the corresponding term $\mathrm{II}$ in Proposition 4.2 of \cite{WX12}
contains some error,  instead we estimate it as follows
\begin{eqnarray*}
  2^{js}\|\Delta_j \mathrm{II}_\beta\|_{L^p}  &\leq& C 2^{js} \sum_{q\in\mathbb{N},|q-j|\leq 4} \Big(\| \mathcal{R}_{1-\beta} (\Delta_q u\cdot\nabla S_{q-1}\phi)\|_{L^p}
  + \|\Delta_q u \cdot\nabla \mathcal{R}_{1-\beta} S_{q-1}\phi\|_{L^p} \Big) \\
  &\leq& C 2^{js} \sum_{q\in\mathbb{N},|q-j|\leq 4} \|\Delta_q u\|_{L^p} \Big( 2^{q(1-\beta)}
  \|\nabla S_{q-1} \phi\|_{L^\infty} + \|\nabla \mathcal{R}_{1-\beta} S_{q-1}\phi\|_{L^\infty} \Big) \\
  &\leq& C \sum_{|q-j|\leq 4} 2^{q(s-1)} \|\Delta_q\nabla u\|_{L^p}
  \bigg(2^{q(1-\beta)} \sum_{-1\leq q'\leq q-1} 2^{q'} \|\Delta_{q'}\phi\|_{L^\infty} \\
  && \hspace{2cm} \ + \sum_{-1\leq q'\leq q-1} 2^{q' (2-\beta)} \|\Delta_{q'}\phi\|_{L^\infty} \bigg) \\
  &\leq& C \|\nabla u\|_{L^p} \sum_{|q-j|\leq 4}
  \sum_{-1\leq q' \leq q-1} 2^{(q'-q)(1-s)} 2^{q'(s+1-\beta)} \|\Delta_{q'}\phi\|_{L^\infty}
\end{eqnarray*}
where in the fourth line we have used \eqref{es:mD-Sj-Lp}.
Discrete Young's inequality yields that for all $s<1$,
\begin{align*}
  \|\mathrm{II}_\beta\|_{B^s_{p,r}} \leq C \|\nabla u\|_{L^p} \|\phi\|_{B^{s+1-\beta}_{\infty,r}}.
\end{align*}
Gathering the above estimates lead to \eqref{eq:Rbeta-cm-es} if $\mathbf{D}=\mathbb{R}^d$. \\

$\mathbf{(ii)}$ We only give the proof of \eqref{eq:R-1cm-es2}
and the estimate \eqref{eq:Rbeta-cm-es} as  the proof can be adapted in a similar manner in the torus.
Since $[\mathcal{R}_{-1}, u\cdot \nabla ] \phi=[\mathcal{R}_{-1}, \left(u-\widehat{u}(0)\right)\cdot \nabla ] \phi$,
we can assume $\widehat{u}(0) = \int_{\mathbb{T}^d} u \dd x =0$ without loss of generality. Then, using Bony's decomposition
\begin{align}
  [\mathcal{R}_{-1}, u\cdot \nabla ] \phi
  & = \sum_{q\in \NN}[\mathcal{R}_{-1}, S_{q-1}u\cdot\nabla] \Delta_q \phi
  + \sum_{q\in \NN} [\mathcal{R}_{-1}, \Delta_q u \cdot\nabla] S_{q-1}\phi
  + \sum_{q\geq -1} [\mathcal{R}_{-1}, \Delta_q u\cdot\nabla] \widetilde{\Delta}_q \phi \nonumber \\
  & := {I}_1 + {I}_2 + {I}_3, \nonumber
\end{align}
where we have used the notations introduced in Remark \ref{rem:Besov-torus}.
For ${I}_1$, noting that there exists a bump function $\widetilde{\psi}\in C^\infty_c(\mathbb{R}^d)$
supported on an annulus of $\mathbb{R}^d$ away from the origin
such that
$${I}_1 = \sum_{q\in \mathbb{N}} [\mathcal{R}_{-1}\widetilde{\psi}
(2^{-q}D), S_{q-1}u\cdot\nabla]\Delta_q \phi,$$
and using  the fact that
\begin{align*}
  \mathcal{R}_{-1} \widetilde{\psi}(2^{-q}D) f(x) = \int_{\mathbb{R}^d} h^*_q (y) f(x-y)\dd y =
  \int_{\mathbb{T}^d} \mathbf{h}^*_q(y) f(x-y) \dd y,\quad \mathbf{h}^*_q(y) := \sum_{m\in\mathbb{Z}^d}h^*_q(y+m),
\end{align*}
with $h_q^*(y) := 2^{q(d-1)} h^*(2^q y)$ and $h^* = \mathcal{F}^{-1}\big(i \xi_1 |\xi|^{-2}\widetilde{\psi} \big)
\in \mathcal{S}(\mathbb{R}^d)$, we infer that
\begin{align*}
  {I}_1 & = \int_{\mathbb{T}^d} \mathbf{h}^*_q(y) \big(S_{q-1}u(x-y)
  - S_{q-1}u(x) \big)  \cdot\nabla \Delta_q\phi(x-y) \dd y  \\
  & = \int_{\mathbb{T}^d} \bigg(\sum_{m\in\mathbb{Z}^d}h_q^*(y+m)
  \big( S_{q-1}u(x-y-m) - S_{q-1}u(x)\big) \bigg)\cdot \nabla
  \Delta_q\phi(x-y) \dd y \\
  & = \int_{\mathbb{T}^d} \int_0^1 \bigg( \sum_{m\in\mathbb{Z}^d} h_q^*(y+m) (-y-m)\cdot
  \nabla S_{q-1}u(x-\tau y-\tau m) \bigg) \cdot \nabla \Delta_q\phi(x-y) \dd \tau \dd y.
\end{align*}
Thus, by using the Minkowski inequality  we find that for all $j\geq -1$,
\begin{align*}
  2^{js} \|\Delta_j {I}_1\|_{L^p} & \leq C 2^{js} \sum_{q\in\mathbb{N},|q-j|\leq 4}
  \|[\mathcal{R}_{-1}\widetilde{\psi}(2^{-q}D), S_{q-1}u\cdot\nabla]\Delta_q\phi\|_{L^p} \\
  & \leq C 2^{js} \sum_{|q-j|\leq 4} \int_{\mathbb{T}^d}
  \bigg(\sum_{m\in\mathbb{Z}^d} |h_q^*(y+m)| |y+m|\bigg) \dd y
  \|\nabla S_{q-1} u\|_{L^p} \|\nabla \Delta_q\phi\|_{L^\infty} \\
  & \leq C\|\nabla u\|_{L^p}
  \sum_{|q-j|\leq 4} 2^{q(1+s)} \|\Delta_q \phi\|_{L^\infty} \int_{\mathbb{R}^d} 2^{q(d-1)}|h^*(2^q y)| |y| \dd y \\
  & \leq C\|\nabla u\|_{L^p}
  \sum_{|q-j|\leq 4} 2^{q(s-1)} \|\Delta_q \phi\|_{L^\infty} ,
\end{align*}
which ensures that
\begin{align*}
  \|{I}_1\|_{B^s_{p,r}} \leq  C \|\nabla u\|_{L^p} \|\phi\|_{B^{s-1}_{\infty,r}}.
\end{align*}
%The estimation of $\mathrm{II}$ and $\mathrm{III}$ does not use the commutator structure.
For ${I}_2$, noting that
\begin{align*}
  {I}_2 = \sum_{q\in\mathbb{N}}\Big( \mathcal{R}_{-1} \widetilde{\psi}(2^{-q} D)\big(
  \Delta_q u \cdot\nabla S_{q-1}\phi\big) -\Delta_q u \cdot\nabla \mathcal{R}_{-1} S_{q-1}\phi \Big),
\end{align*}
we obtain
\begin{align*}
  2^{js} \|\Delta_j {I}_2\|_{L^p}
  & \leq C 2^{js} \sum_{q\in\mathbb{N},|q-j|\leq 4}
  \Big(\|\mathbf{h}^*_q * \big(\Delta_q u\cdot\nabla S_{q-1}\phi \big)\|_{L^p}
  + \|\Delta_q u\|_{L^p} \|\nabla \mathcal{R}_{-1}S_{q-1}\phi\|_{L^\infty} \Big) \\
  & \leq C 2^{js}\sum_{q\in\mathbb{N},|q-j|\leq 4} \|\Delta_q u\|_{L^p} \bigg(2^{-q} \|\nabla S_{q-1}\phi\|_{L^\infty}
  + \|\nabla \mathcal{R}_{-1}\Delta_{-1}\phi\|_{L^\infty}
  + \sum_{0\leq q'\leq q-1} \|\Delta_{q'} \phi\|_{L^\infty} \bigg) \\
  & \leq C \|\nabla u\|_{L^p} \sum_{|q-j|\leq 4} \bigg(2^{q(s-1)}\|\Delta_{-1}\phi\|_{L^2} + \sum_{0\leq q' \leq q-1}
  2^{(q-q')(s-1)} 2^{q'(s-1)} \|\Delta_{q'} \phi\|_{L^\infty} \bigg),
\end{align*}
which leads to that for all $s<1$,
\begin{align*}
  \|{I}_2\|_{B^s_{p,r}} \leq C \|\nabla u\|_{L^p} \big( \|\phi\|_{B^{s-1}_{\infty,r}} + \|\phi\|_{L^2} \big).
\end{align*}
For ${I}_3$, using the fact that $u$ is divergence-free, we split it as the following
\begin{align*}
  {I}_3 & = \sum_{q\geq 3} \mathcal{R}_{-1}\nabla\cdot\big(\Delta_q u\,\widetilde{\Delta}_q\phi \big)
  - \sum_{q\geq 3}\Delta_q u\cdot\nabla \mathcal{R}_{-1}\widetilde{\Delta}_q \phi
  + \sum_{-1\leq q\leq 2} [\mathcal{R}_{-1} \nabla\cdot, \Delta_q u] \widetilde{\Delta}_q \phi \\
  & := {I}_{3,1} + {I}_{3,2}  + {I}_{3,3}.
\end{align*}
For the first term ${I}_{3,1}$, since the operator $\mathcal{R}_{-1}\nabla$ is bounded on $L^p(\mathbb{T}^d)$
for $p\in[2,\infty)$
and $\mathcal{R}_{-1}\nabla \Delta_j$ ($j\in\mathbb{N}$) is bounded on $L^p(\mathbb{T}^d)$ for $p\in [2,\infty]$,
we infer that for $j= -1$ (where $\frac{2p}{p+2} = 2$ for $p=\infty$),
\begin{align*}
  2^{-s} \|\Delta_{-1} {I}_{3,1}\|_{L^\infty}
  \leq C \sum_{q\geq 3} \|\Delta_q u\,\widetilde{\Delta}_q\phi \|_{L^{\frac{2p}{p+2}}}
  \leq C \sum_{q\geq 3} \|\Delta_q u\|_{L^p} \|\widetilde{\Delta}_q \phi\|_{L^2}
  \leq C \|\nabla u\|_{L^p} \|\phi\|_{L^2},
\end{align*}
and for all $j\in\mathbb{N}$ and $s>0$,
\begin{align*}
  2^{js} \|\Delta_j {I}_{3,1}\|_{L^p}
  & \leq C 2^{js} \sum_{q\geq 3,q\geq j-3} \|\Delta_j \mathcal{R}_{-1}\nabla\cdot(\Delta_q u\,\widetilde{\Delta}_q \phi) \|_{L^p} \\
  & \leq C \sum_{q\geq 3} 2^{(j-q)s} 2^q\|\Delta_q u\|_{L^p} 2^{q(s-1)}\|\widetilde{\Delta}_q \phi\|_{L^\infty}
  \leq C c_j \|\nabla u \|_{L^p} \|\phi\|_{B^{s-1}_{\infty,r}},
\end{align*}
where $\{c_j\}_{j\in\mathbb{N}}$ is such that $\|c_j\|_{\ell^r} = 1$.
The estimation of ${I}_{3,2}$ is similar as that of ${I}_{3,1}$, and we have
\begin{align*}
  \|{I}_{3,2}\|_{B^s_{p,r}} \leq C \|\nabla u\|_{L^p} \big(\|\phi\|_{B^{s-1}_{\infty,r}} + \|\phi\|_{L^2} \big).
\end{align*}
For ${I}_{3,3}$, we use Poincar\'e's inequality,
\begin{align*}
  \|u\|_{L^p(\mathbb{T}^d)} = \Big\|u(\cdot) - \frac{1}{|\mathbb{T}^d|} \int_{\mathbb{T}^d}u(x)\dd x\Big\|_{L^p(\mathbb{T}^d)}
  \leq C \|\nabla u\|_{L^p(\mathbb{T}^d)},
\end{align*}
we find
\begin{align*}
  \|{I}_{3,3}\|_{B^s_{p,r}} & \leq C \sum_{-1\leq j\leq 6} \sum_{-1\leq q\leq 2}
  \Big( \|\Delta_j \mathcal{R}_{-1}\nabla\cdot (\Delta_q u\,\widetilde{\Delta}_q \phi)\|_{L^p}
  + \|\Delta_j (\Delta_q u \cdot\nabla \mathcal{R}_{-1} \widetilde{\Delta}_q \phi)\|_{L^p} \Big) \\
  & \leq C \sum_{-1\leq q\leq 2} \Big( \|\Delta_q u\,\widetilde{\Delta}_q\phi\|_{L^{\frac{2p}{p+2}}}
  + \|\Delta_q u\cdot \nabla \mathcal{R}_{-1}\widetilde{\Delta}_q \phi\|_{L^{\frac{2p}{p+2}}} \Big) \\
  & \leq C \sum_{-1\leq q\leq 2} \|\Delta_q u\|_{L^p} \|\widetilde{\Delta}_q \phi\|_{L^2}
  \leq C \|\nabla u\|_{L^p}  \|\phi\|_{L^2}.
\end{align*}
Therefore, collecting the above estimates yields the wanted estimate \eqref{eq:R-1cm-es2} in case of the torus.
\end{proof}

We refer to Lemma 6.10 of \cite{HKR10} for the following useful result.
\begin{lemma}\label{lem:HKR}
Let $\mathbf{D}$ be either $\mathbb{R}^d$ or torus $\mathbb{T}^d$, $d\geq 2$. Let $v$ be a smooth divergence-free vector field of $\mathbf{D}$ and $f$ be a smooth scalar function. Then, for all $p\in [1,\infty]$ and $q\geq -1$,
\begin{align*}
  \|[\Delta_q, v\cdot\nabla]f\|_{L^p}\le C \|\nabla v\|_{L^p}\|f\|_{B^{0}_{\infty,\infty}}.
\end{align*}
\end{lemma}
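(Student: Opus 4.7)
The plan is to decompose the commutator via Bony's paraproduct formula and treat each piece with the standard integral-kernel commutator trick, exploiting the divergence-free condition on $v$ in the remainder term. Precisely, write
\begin{equation*}
  [\Delta_q, v \cdot \nabla] f
  = \underbrace{\sum_{j\in\NN}[\Delta_q,\, S_{j-1}v\cdot\nabla]\Delta_j f}_{J_1}
  + \underbrace{\sum_{j\in\NN}[\Delta_q,\, \Delta_j v\cdot\nabla]S_{j-1}f}_{J_2}
  + \underbrace{\sum_{j\geq -1}[\Delta_q,\, \Delta_j v\cdot\nabla]\widetilde{\Delta}_j f}_{J_3}.
\end{equation*}
The aim is to bound each $J_i$ in $L^p$ by $C\|\nabla v\|_{L^p}\|f\|_{B^0_{\infty,\infty}}$, uniformly in $q\geq -1$.

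For $J_1$, spectral localization implies that only indices with $|j-q|\leq 4$ contribute. For such $j$ I would use the integral representation, in the spirit of \eqref{eq:comm-formu1}, namely
\begin{equation*}
  [\Delta_q,\, S_{j-1}v\cdot\nabla]\Delta_j f (x)
  = \int_{\RR^d} 2^{qd} h(2^q y)\bigl(S_{j-1}v(x-y) - S_{j-1}v(x)\bigr)\cdot \nabla\Delta_j f(x-y)\,\dd y,
\end{equation*}
together with the first-order Taylor expansion of $S_{j-1}v$ and $\int 2^{qd}|h(2^qy)|\,|y|\,\dd y \lesssim 2^{-q}$. This yields $\|J_1\|_{L^p}\lesssim \|\nabla v\|_{L^p}\|\nabla \Delta_j f\|_{L^\infty} \cdot 2^{j-q}\lesssim \|\nabla v\|_{L^p}\|f\|_{B^0_{\infty,\infty}}$ after summing over the finitely many relevant $j$.

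For $J_2$, again only $|j-q|\leq 4$ contribute; here I would simply bound both pieces of the commutator separately, using $\|\Delta_j v\|_{L^p}\lesssim 2^{-j}\|\nabla \Delta_j v\|_{L^p}$ and $\|\nabla S_{j-1}f\|_{L^\infty}\lesssim 2^j \|f\|_{B^0_{\infty,\infty}}$.

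The main point, and the only place that uses the divergence-free hypothesis in an essential way, is $J_3$, where the summation runs over all $j\geq q-3$ and one must avoid a loss of derivative. I would write
\begin{equation*}
  [\Delta_q,\, \Delta_j v\cdot\nabla]\widetilde{\Delta}_j f
  = \Delta_q \nabla\cdot(\Delta_j v\, \widetilde{\Delta}_j f) - \nabla\cdot(\Delta_j v\, \Delta_q\widetilde{\Delta}_j f),
\end{equation*}
using $\nabla\cdot v = 0$, and then estimate each piece by taking the derivative out:
\begin{equation*}
  \|J_3\|_{L^p}
  \lesssim \sum_{j\geq q-3} 2^q \|\Delta_j v\|_{L^p}\|\widetilde{\Delta}_j f\|_{L^\infty}
  \lesssim \sum_{j\geq q-3} 2^{q-j} \|\nabla \Delta_j v\|_{L^p}\|f\|_{B^0_{\infty,\infty}}
  \lesssim \|\nabla v\|_{L^p}\|f\|_{B^0_{\infty,\infty}},
\end{equation*}
the final step using the geometric summability of $2^{q-j}$ for $j\geq q-3$. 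The torus case follows by the same argument with the periodized kernels $\mathbf{h}_j$ of Remark \ref{rem:Besov-torus}, since Poisson summation preserves the moment bound $\int |y|\,\mathbf{h}_q(y)\,\dd y \lesssim 2^{-q}$. The anticipated obstacle is $J_3$: without the divergence-free cancellation one would end up with $2^q\|\Delta_j v\|_{L^p}$ and no way to compensate for high frequencies of $v$; rewriting as a divergence is what converts the loss into a gain and closes the estimate.
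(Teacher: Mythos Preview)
The paper does not give its own proof of this lemma; it simply cites Lemma~6.10 of \cite{HKR10}. Your Bony-decomposition approach is the standard one and is essentially what that reference does, and the treatment of $J_1$ and $J_2$ is fine (the displayed factor $2^{j-q}$ in $J_1$ is a slip --- the kernel moment gives $2^{-q}$, and $\|\nabla\Delta_j f\|_{L^\infty}\lesssim 2^j\|f\|_{B^0_{\infty,\infty}}$ --- but since $|j-q|\le 4$ this is harmless).

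There is, however, a genuine gap in your handling of $J_3$: the step
\[
2^q\|\Delta_j v\|_{L^p}\lesssim 2^{q-j}\|\nabla\Delta_j v\|_{L^p}
\]
relies on the reverse Bernstein inequality, which is valid only for $j\ge 0$ (where the Fourier support is an annulus bounded away from the origin) and \emph{fails} for $j=-1$. In $\mathbb{R}^d$ one cannot control $\|\Delta_{-1}v\|_{L^p}$ by $\|\nabla v\|_{L^p}$, so as written your bound on $J_3$ does not close. The fix is to split off the low frequencies: treat, say, $-1\le j\le 2$ in $J_3$ separately (these contribute only for $q\le 5$) using exactly the same kernel/Taylor trick you used for $J_1$, which produces $\|\nabla\Delta_j v\|_{L^p}$ directly rather than $\|\Delta_j v\|_{L^p}$; then run your divergence-form argument on $j\ge 3$, where reverse Bernstein is available. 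This is also how the paper organizes the analogous remainder terms in the proofs of Lemmas~\ref{lem:CMX2.5} and~\ref{lem:Rbeta-cm}. Finally, your torus remark is morally right but slightly misstated: what one actually uses is $\int_{\mathbb{T}^d}\sum_{m\in\mathbb{Z}^d}|h_q(y+m)|\,|y+m|\,\dd y=\int_{\mathbb{R}^d}|h_q(y)|\,|y|\,\dd y\lesssim 2^{-q}$, as in the paper's proof of Lemma~\ref{lem:Rbeta-cm}; the quantity $\int_{\mathbb{T}^d}|y|\,\mathbf{h}_q(y)\,\dd y$ is not the correct object.
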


We recall the following regularity estimates of the transport equation (one can see \cite{BCD11} for a detailed proof).
\begin{lemma}\label{lem:CMX}
Let $\mathbf{D}$ be either $\mathbb{R}^d$ or torus $\mathbb{T}^d$, $d\geq 2$. Let $( p, r) \in[1, \infty]^2$ and $-1<s<1$. Assume that $u$ is a smooth divergence-free vector field of $\mathbf{D}$, and $\phi$ is a smooth function solving the transport equation
\begin{align*}
  \partial_t \phi+u \cdot \nabla \phi =f,\left.\quad \phi\right|_{t=0}(x)=\phi_0(x), \quad x \in \mathbb{R}^d,
\end{align*}
then there exists a constant $C=C(d, s)$ so that for all $t>0$,
\begin{equation}\label{eq:T-BesEs0}
  \|\phi\|_{L_t^{\infty}\left(B_{p, r}^s\right)} \leq C\left(\left\|\phi_0\right\|_{B_{p, r}^s}+\|f\|_{\widetilde{L}_t^1\left(B_{p, r}^s\right)}+\int_0^t\|\nabla u(\tau)\|_{L^{\infty}}\|\phi(\tau)\|_{B_{p, r}^s} \mathrm{~d} \tau\right),
\end{equation}
and
\begin{equation}\label{eq:T-BesEs}
  \|\phi\|_{L_t^{\infty}\left(B_{p, r}^s\right)} \leq C e^{C \int_0^t\|\nabla u\|_{L^{\infty}} d \tau}\left(\left\|\phi_0\right\|_{B_{p, r}^s}+\|f\|_{\widetilde{L}_t^1\left(B_{p, r}^s\right)}\right).
\end{equation}
\end{lemma}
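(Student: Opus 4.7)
The plan is the standard Littlewood--Paley energy method for the transport equation; the content is classical (a detailed version is in \cite{BCD11}), but I would organise the argument in three steps.

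First I would localise the equation in frequency. Applying $\Delta_q$ to the transport equation gives
\begin{equation*}
  \partial_t \Delta_q \phi + u\cdot\nabla \Delta_q \phi = \Delta_q f + R_q, \qquad R_q := [u\cdot\nabla,\Delta_q]\phi,
\end{equation*}
with initial datum $\Delta_q \phi_0$. Since $u$ is divergence-free, the standard $L^p$ energy estimate (multiplying by $|\Delta_q\phi|^{p-2}\Delta_q\phi$ and integrating, or working directly with the transport flow for $p=\infty$) kills the transport term and yields
\begin{equation*}
  \|\Delta_q \phi(t)\|_{L^p} \leq \|\Delta_q \phi_0\|_{L^p} + \int_0^t \|\Delta_q f(\tau)\|_{L^p}\,\dd\tau + \int_0^t \|R_q(\tau)\|_{L^p}\,\dd\tau .
\end{equation*}

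The main work is then in controlling the commutator $R_q$. I would use Bony's decomposition to split $R_q$ into paraproduct and remainder pieces and prove the now-classical bound
\begin{equation*}
  \|R_q\|_{L^p} \leq C\, c_q(\tau)\, 2^{-qs}\, \|\nabla u(\tau)\|_{L^\infty}\, \|\phi(\tau)\|_{B^s_{p,r}},
  \qquad \|(c_q)_{q\geq -1}\|_{\ell^r}=1,
\end{equation*}
valid precisely in the range $-1<s<1$; this is where the assumption on $s$ enters, through the need for both the low-high and high-high parts of Bony's decomposition to sum. Lemma \ref{lem:HKR} gives the baseline $L^p$ estimate of $[\Delta_q, v\cdot\nabla]f$, which upgrades to the weighted Besov form by standard bookkeeping.

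Finally, multiplying the pointwise-in-$q$ inequality by $2^{qs}$, taking the $\ell^r$ norm, and applying Minkowski's inequality in $\tau$ to the $f$-term (to produce the Chemin--Lerner norm $\|f\|_{\widetilde{L}^1_t(B^s_{p,r})}$) and to the commutator term produces
\begin{equation*}
  \|\phi\|_{L^\infty_t(B^s_{p,r})} \leq C\Big(\|\phi_0\|_{B^s_{p,r}} + \|f\|_{\widetilde{L}^1_t(B^s_{p,r})} + \int_0^t \|\nabla u(\tau)\|_{L^\infty} \|\phi(\tau)\|_{B^s_{p,r}}\,\dd\tau\Big),
\end{equation*}
which is \eqref{eq:T-BesEs0}. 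The estimate \eqref{eq:T-BesEs} then follows immediately from Grönwall's inequality applied to $t\mapsto \|\phi\|_{L^\infty_t(B^s_{p,r})}$. The torus case is identical once one works with the periodic Littlewood--Paley blocks recalled in Remark \ref{rem:Besov-torus}; the only serious obstacle in the whole argument is the commutator bound, whose range of validity $-1<s<1$ is precisely the hypothesis imposed in the statement.
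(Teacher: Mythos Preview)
Your proposal is correct and follows exactly the classical Littlewood--Paley energy method that the paper has in mind: the paper does not give its own proof but simply refers to \cite{BCD11}, and your outline (localise with $\Delta_q$, $L^p$ energy estimate, commutator bound via Bony's decomposition valid for $-1<s<1$, sum and Gr\"onwall) is precisely the argument found there.
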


We have the following regularity estimates of the transport-diffusion equation.
\begin{lemma}\label{lem:TD-sm}
Let $\mathbf{D}$ be either $\mathbb{R}^d$ or torus $\mathbb{T}^d$, $d\geq 2$. Let $\nu >0$, $(\rho, p, r) \in[1, \infty]^3$, $-1<s<1$, $0<\alpha\leq 1$.  Assume that $u$ is a smooth divergence-free vector field of $\mathbf{D}$, and $\phi$ is a smooth function solving the transport-diffusion equation
\begin{equation}\label{eq:TD}
  \partial_t \phi + u \cdot \nabla \phi + \nu \Lambda^{2\alpha}\phi =f,
  \left.\quad \phi\right|_{t=0}(x)=\phi_0(x), \quad x \in \mathbf{D}.
\end{equation}
Then, there exists a constant $C = C(d, s, \alpha)$ independent of $\nu$ so that for all $t>0$,
%\begin{equation*}
%\begin{aligned}
%  \|\phi\|_{\widetilde{L}_t^\rho\big(B_{p, 1}^{s+\frac{2\alpha}{\rho}}\big)}
%  & \leq C\left(\|\phi_0\|_{B^s_{p,1}} + \|f\|_{L^1_t (B^s_{p,1})}+\int_0^t\|\nabla u(\tau)\|_{L^{\infty}}\|\phi(\tau)\|_{B_{p, r}^{s}} \mathrm{~d} \tau\right),
%\end{aligned}
%\end{equation*}
%and
\begin{equation}\label{eq:TD-sm1}
\begin{aligned}
  \nu^{\frac{1}{\rho}} \|\phi\|_{\widetilde{L}_t^\rho\big(\dot B_{p, r}^{s+\frac{2\alpha}{\rho}}\big)}
  \leq C e^{C\int_0^t\|\nabla u(\tau)\|_{L^{\infty}} \dd \tau}
  \left(\|\phi_0\|_{\dot B^s_{p,r}} + \|f\|_{\widetilde{L}_t^1 (\dot B^s_{p,r})}\right),
\end{aligned}
\end{equation}
and
\begin{equation}\label{eq:TD-sm2}
\begin{aligned}
  \nu^{\frac{1}{\rho}} \|\phi\|_{\widetilde{L}_t^\rho\big(B_{p, r}^{s+\frac{2\alpha}{\rho}}\big)}
  \leq C(1+\nu t)^{\frac{1}{\rho}} e^{C \int_0^t\|\nabla u(\tau)\|_{L^{\infty}} \dd \tau}
  \left(\|\phi_0\|_{B^s_{p,r}}+\|f\|_{L_t^1(B^s_{p,r})}\right),
\end{aligned}
\end{equation}
and
%\begin{equation}\label{eq:TD-sm3}
%\begin{aligned}
 % \|\phi\|_{\widetilde{L}_t^\infty\big(B_{p, r}^{s}\big)}
 % \leq C_1 e^{C_1\int_0^t\|\nabla u(\tau)\|_{L^{\infty}} \dd \tau}
 % \left(\|\phi_0\|_{B^s_{p,r}}+\|f\|_{L_t^1(B^s_{p,r})}\right),
%\end{aligned}
%\end{equation}
%and
\begin{equation}\label{eq:TD-sm4}
\begin{aligned}
  \nu^{\frac{1}{\rho}} \Big\|\Big(2^{q(s+\frac{2\alpha}{\rho})}\|\Delta_q\phi\|_{L_t^\rho(L^p)}\Big)_{q\in \mathbb{N}}\Big\|_{\ell^r}
  \leq C e^{C \int_0^t\|\nabla u(\tau)\|_{L^{\infty}} \dd \tau}
  \left(\|\phi_0\|_{B^s_{p,r}}+\|f\|_{L_t^1(B^s_{p,r})}\right).
\end{aligned}
\end{equation}

\end{lemma}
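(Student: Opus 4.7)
The plan is to apply a dyadic block $\Delta_q$ (or $\dot\Delta_q$ for the homogeneous statement) to the equation \eqref{eq:TD} and derive a frequency-localized ODE with exponential decay coming from the fractional diffusion. Writing $\phi_q := \Delta_q \phi$, we obtain
\begin{equation*}
  \partial_t \phi_q + u\cdot \nabla \phi_q + \nu \Lambda^{2\alpha}\phi_q = \Delta_q f + [u\cdot\nabla,\Delta_q]\phi.
\end{equation*}
Multiplying by $|\phi_q|^{p-2}\phi_q$ and integrating, and invoking the now-classical frequency-localized lower bound
$\int (\Lambda^{2\alpha}\phi_q) |\phi_q|^{p-2}\phi_q\,\dd x \geq c\, 2^{2\alpha q}\|\phi_q\|_{L^p}^p$ (valid for $q\in \mathbb{Z}$ in the homogeneous setting, and for $q\in\NN$ in the inhomogeneous one), together with the fact that $u\cdot\nabla \phi_q$ gives zero $L^p$-contribution since $\mathrm{div}\,u=0$, we obtain the scalar differential inequality
\begin{equation*}
  \tfrac{\dd}{\dd t}\|\phi_q\|_{L^p} + c\,\nu\, 2^{2\alpha q}\|\phi_q\|_{L^p}
  \leq \|\Delta_q f\|_{L^p} + \big\|[u\cdot\nabla,\Delta_q]\phi\big\|_{L^p}.
\end{equation*}

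Next I would solve this by the integrating factor $e^{c\nu 2^{2\alpha q} t}$ and take the $L^\rho_t$-norm. Young's convolution inequality gives $\|e^{-c\nu 2^{2\alpha q}(\cdot)}\|_{L^\rho([0,t])} \leq C (\nu 2^{2\alpha q})^{-1/\rho}$ for $q$ such that $\nu 2^{2\alpha q} > 0$, producing
\begin{equation*}
  \nu^{1/\rho} 2^{q(s+2\alpha/\rho)}\|\phi_q\|_{L^\rho_t(L^p)}
  \leq C\, 2^{qs}\Big( \|\Delta_q \phi_0\|_{L^p} + \|\Delta_q f\|_{L^1_t(L^p)}
  + \big\|[u\cdot\nabla,\Delta_q]\phi\big\|_{L^1_t(L^p)}\Big).
\end{equation*}
Taking $\ell^r$-sum in $q$ and using the standard commutator estimate
$\big\|2^{qs}\|[u\cdot\nabla,\Delta_q]\phi\|_{L^p}\big\|_{\ell^r_q}
\leq C\|\nabla u\|_{L^\infty}\|\phi\|_{B^s_{p,r}}$ (which, in the regime $-1<s<1$, follows from Bony decomposition and Lemma~\ref{lem:HKR}), one obtains
\begin{equation*}
  \nu^{1/\rho}\|\phi\|_{\widetilde L^\rho_t(\dot B^{s+2\alpha/\rho}_{p,r})}
  \leq C\Big( \|\phi_0\|_{\dot B^s_{p,r}} + \|f\|_{\widetilde L^1_t(\dot B^s_{p,r})}
  + \int_0^t \|\nabla u\|_{L^\infty}\|\phi\|_{\dot B^s_{p,r}}\dd\tau\Big),
\end{equation*}
and a Gr\"onwall argument on $\|\phi\|_{\widetilde L^\infty_t(\dot B^s_{p,r})}$ yields \eqref{eq:TD-sm1}, and analogously \eqref{eq:TD-sm4} when we restrict to $q\in\NN$ only.

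The remaining point is the low-frequency block $\Delta_{-1}\phi$ in the inhomogeneous estimate \eqref{eq:TD-sm2}: at this frequency the diffusion provides no gain, so I would bound $\Delta_{-1}\phi$ by treating it as a pure transport quantity via Lemma~\ref{lem:CMX}, which gives $\|\Delta_{-1}\phi\|_{L^\infty_t(L^p)}$ in terms of the data and a Gr\"onwall-type factor in $\|\nabla u\|_{L^1_t(L^\infty)}$. Taking the $L^\rho_t$ norm in time then costs a factor $t^{1/\rho}$; combining with the high-frequency estimate and using $\nu^{1/\rho}\cdot t^{1/\rho} \leq (1+\nu t)^{1/\rho}$ accounts precisely for the prefactor $(1+\nu t)^{1/\rho}$ in \eqref{eq:TD-sm2}.

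The main obstacle I anticipate is the careful bookkeeping of the $q=-1$ block and the matching of $\widetilde L^\rho$ norms with the $L^\rho$ norms after the commutator is absorbed; in particular one must ensure that the commutator term can be absorbed on the left-hand side before Gr\"onwall, which forces the range $-1<s<1$. The argument is otherwise completely parallel to the classical proofs for $\alpha=1$ in \cite{BCD11}, with the only modification being the frequency-localized coercivity of $\Lambda^{2\alpha}$ valid for the full range $0<\alpha\leq 1$.
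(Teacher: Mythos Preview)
Your proposal is correct and follows essentially the same strategy as the paper: frequency-localize, exploit the coercivity of $\Lambda^{2\alpha}$ on each dyadic block, control the commutator by $\|\nabla u\|_{L^\infty}\|\phi\|_{B^s_{p,r}}$, and close via Gr\"onwall, with the low-frequency block $\Delta_{-1}$ handled separately as a transport quantity to produce the $(1+\nu t)^{1/\rho}$ factor. The only cosmetic differences are that the paper writes the localized equation with $S_{q-1}u\cdot\nabla$ as drift (following \cite{MW09}) rather than the full $u\cdot\nabla$, and replaces your direct Gr\"onwall by an equivalent time-partition iteration on intervals where $\int\|\nabla u\|_{L^\infty}\,\dd\tau$ is small; neither changes the substance of the argument.
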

%\begin{remark}
%In the above $\|g\|_{\widetilde{L}_T^\rho\left(B_{p, r}^s\right)}:=\left\|\left\{2^{q s}\left\|\Delta_q g\right\|_{L_T^\rho\left(L^p\right)}\right\}_{q \geq-1}\right\|_{\ell^r}$ denotes the norm of the Chemin-Lerner spacetime Besov space $\widetilde{L}^\rho\left(0, T ; B_{p, r}^s\left(\mathbb{R}^d\right)\right)$ $($see \cite{BCD11}, Chapter 2 $)$.
%\end{remark}

\begin{proof}[Proof of Lemma \ref{lem:TD-sm}]
  The smoothing estimate \eqref{eq:TD-sm1}   comes from \cite[Theorem 1.2]{MW09} (while for  the cases $\alpha\in (0,\frac{1}{2}]$ and $r=1$
it is done in \cite[Theorem 1.2]{HK07}).

We here only sketch the proof of \eqref{eq:TD-sm2} and \eqref{eq:TD-sm4}. For all $q\in \mathbb{N}$,
applying the frequency-localization operator $\Delta_q$ to the equation \eqref{eq:TD} gives
\begin{align*}
  \partial_t \Delta_q \phi + S_{q-1} u \cdot \nabla  \Delta_q \phi + \nu \Lambda^{2\alpha} \Delta_q \phi
  = \Delta_q f + R_q,
\end{align*}
with
\begin{align*}
  R_q := (S_{q-1} u - u)\cdot \nabla \Delta_q \phi - [\Delta_q, u\cdot\nabla] \phi .
\end{align*} Using \cite[Lemma 2.100]{BCD11},
we get
\begin{align*}
  2^{q s}\|R_q\|_{L^p} & \leq 2^{qs} \|(S_{q-1}u - u)\cdot \nabla \Delta_q \phi\|_{L^p} + 2^{q s} \|[\Delta_q,u\cdot\nabla]\phi\|_{L^p} \\
  & \leq C 2^{qs} \sum_{k\geq q-1} \|\Delta_k u\|_{L^\infty} 2^q \|\Delta_q \phi\|_{L^p} + C \,c_q \|\nabla u\|_{L^\infty} \|\phi\|_{B^s_{p,r}} \\
  & \leq C\, c_q \|\nabla u\|_{L^\infty} \|\phi\|_{B^s_{p,r}},
\end{align*}
where $\{c_q\}_{q\in \mathbb{N}}$ satisfies $\|c_q\|_{\ell^r} \leq 1$. Following the same strategy  as the proof of \cite[Eq. (3.10)]{MW09} and using the above estimate on $R_q$, we find that
there exists a small time $T'>0$  satisfying
\begin{align*}
  \int_0^{T'} \|\nabla u\|_{L^\infty}\dd \tau \leq C_0 \ll 1,
\end{align*}
so that for all $t\leq T'$,
\begin{align*}
  \nu^{\frac{1}{\rho}} 2^{q(s + \frac{2\alpha}{\rho})} \|\Delta_q \phi\|_{L^\rho_t (L^p)}
  \leq C \bigg( 2^{qs} \|\Delta_q \phi_0\|_{L^p} + 2^{q s} \|\Delta_q f\|_{L^1_t (L^p)}
  + \int_0^t c_q(\tau) \|\nabla u(\tau)\|_{L^\infty} \|\phi(\tau)\|_{B^s_{p,r}} \dd \tau\bigg).
\end{align*}
Thus using the embedding $L_t^1(B^s_{p,r})\hookrightarrow \widetilde{L}_t^1(B^s_{p,r})$, we have
\begin{align*}%\label{es:high-freq1}
  \nu^{\frac{1}{\rho}}\Big\|\Big( 2^{q(s+\frac{2\alpha}{\rho})} \|\Delta_q\phi\|_{L^\rho_t (L^p)}\Big)_{q\in \mathbb{N}}
  \Big\|_{\ell^r}
  \leq C \bigg(\|\phi_0\|_{B^s_{p,r}} + \|f\|_{L^1_t(B^s_{p,r})}
  + \int_0^t \|\nabla u\|_{L^\infty} \|\phi\|_{B^s_{p,r}} \dd \tau \bigg).
\end{align*}
On the other hand, applying the operator $\Delta_{-1}$ to the equation \eqref{eq:TD} gives
\begin{align*}
  \partial_t \Delta_{-1}\phi + u\cdot \nabla \Delta_{-1} \phi + \nu \Lambda^{2\alpha} \Delta_{-1}\phi
  = \Delta_{-1} f - [\Delta_{-1}, u\cdot\nabla]\phi.
\end{align*}
Taking advantage of the $L^p$-estimate of the transport-diffusion equation (see e.g. \cite[Proposition 6.2]{HK07}),
we infer that
\begin{align*}
  \|\Delta_{-1}\phi(t)\|_{L^p} & \leq \|\Delta_{-1}\phi_0\|_{L^p}
  + \|\Delta_{-1}f\|_{L^1_t(L^p)} + \int_0^t \|[\Delta_{-1}, u\cdot\nabla ]\phi(\tau)\|_{L^p}\dd \tau \\
  & \leq \|\Delta_{-1}\phi_0\|_{L^p}
  + \|\Delta_{-1}f\|_{L^1_t(L^p)} + C \int_0^t \|\nabla u(\tau)\|_{L^\infty} \|\phi(\tau)\|_{B^s_{p,r}}\dd \tau,
\end{align*}
where in the last line we have used \cite[Lemma 2.100]{BCD11} to deal with the commutator term.
%Similarly one can get
%\begin{align*}
%  \|\Delta_{0}\phi(t)\|_{L^p} & \leq \|\Delta_{0}\phi_0\|_{L^p}
%  + \|\Delta_{0}f\|_{L^1_t(L^p)} + C \int_0^t \|\nabla u(\tau)\|_{L^\infty} \|\phi(\tau)\|_{B^s_{p,r}}\dd \tau.
%\end{align*}
Hence, combining the above estimates on high and low frequencies allow us to find that for all $t\leq T'$,
\begin{align*}
  \nu^{\frac{1}{\rho}}\|\phi\|_{\widetilde{L}^\rho_t (B^{s + \frac{2\alpha}{\rho}}_{p,r})}
  \leq C  (1+\nu t)^{\frac{1}{\rho}}\bigg(\|\phi_0\|_{B^s_{p,r}} + \|f\|_{L^1_t(B^s_{p,r})}
  + \int_0^t \|\nabla u(\tau)\|_{L^\infty} \|\phi(\tau)\|_{B^s_{p,r}} \dd \tau \bigg).
\end{align*}
By taking $\rho =\infty$, we find that for all $t\leq T'$ small enough,
\begin{align}\label{eq:phi-sminfty}
  \|\phi\|_{\widetilde{L}^\infty_t (B^{s}_{p,r})}
  \leq C \big(\|\phi_0\|_{B^s_{p,r}} + \|f\|_{L^1_t(B^s_{p,r})}\big),
\end{align}
which also implies
\begin{align}\label{eq:phi-smBes}
  \nu^{\frac{1}{\rho}} \|\phi\|_{\widetilde{L}^\rho_t(B^{s + \frac{2\alpha}{\rho}}_{p,r})}
  \leq C (1+\nu t)^{\frac{1}{\rho}} \Big(\|\phi_0\|_{B^s_{p,r}} + \|f\|_{L^1_t(B^s_{p,r})}\Big) ,
\end{align}
and
\begin{align}\label{eq:high-fre2}
  \nu^{\frac{1}{\rho}}\Big\|\Big( 2^{q(s+\frac{2\alpha}{\rho})} \|\Delta_q\phi\|_{L^\rho_t (L^p)}\Big)_{q\in \mathbb{N}}
  \Big\|_{\ell^r}
  \leq C \Big(\|\phi_0\|_{B^s_{p,r}} + \|f\|_{L^1_t(B^s_{p,r})}\Big).
\end{align}

Furthermore, for any $T>0$, we make a partition $\{T_i\}_{i=0}^M$ of the time interval $[0,T]$ so that
$\int_{T_i}^{T_{i+1}} \|\nabla u(\tau)\|_{L^\infty} \dd \tau \approx \frac{C_0}{2}$.
Then following the same ideas as the proof of \eqref{eq:phi-sminfty}, \eqref{eq:phi-smBes} and \eqref{eq:high-fre2}, we infer that
\begin{align}\label{eq:phi-sminfty2}
  \|\phi\|_{\widetilde{L}^\infty([T_i,T_{i+1}]; B^{s}_{p,r})}
  \leq C \Big(\|\phi(T_i)\|_{B^s_{p,r}} + \|f\|_{L^1([T_i,T_{i+1}]; B^s_{p,r})}\Big),
\end{align}
\begin{align*}%\label{eq:phi-smBes2}
  \nu^{\frac{1}{\rho}}\|\phi\|_{\widetilde{L}^\rho([T_i,T_{i+1}]; B^{s + \frac{2\alpha}{\rho}}_{p,r})}
  \leq C(1+\nu T)^{\frac{1}{\rho}} \Big( \|\phi(T_i)\|_{B^s_{p,r}} + \|f\|_{L^1([T_i,T_{i+1}]; B^s_{p,r})}\Big),
\end{align*}
and
\begin{align*}%\label{es:high-freq2}
  \nu^{\frac{1}{\rho}}\Big\|\Big( 2^{q(s+\frac{2\alpha}{\rho})} \|\Delta_q\phi\|_{L^\rho([T_i,T_{i+1}]; L^p)}\Big)_{q\in \mathbb{N}}
  \Big\|_{\ell^r}
  \leq & C \Big(\|\phi(T_i)\|_{B^s_{p,r}} + \|f\|_{L^1([T_i,T_{i+1}]; B^s_{p,r})} \Big).
\end{align*}
%Using the triangle inequality gives that
%\begin{align*}
%\|\phi\|_{\widetilde{L}^\infty_T( B^{s}_{p,r})}
%  &\leq C_1\big( \sum_{i=0}^M\|\phi(T_i)\|_{B^s_{p,r}} + M\|f\|_{L^1_T (B^s_{p,r})}\big),\\
%\nu^{\frac{1}{\rho}}\|\phi\|_{L^\rho_T (B^{s+ \frac{2\alpha}{\rho}}_{p,r})}
%  &\leq C_1(1+\nu T)^{\frac{1}{\rho}}\big( \sum_{i=0}^M \|\phi(T_i)\|_{B^s_{p,r}} + M \|f\|_{L^1_T (B^s_{p,r})}\big),\\
%  \nu^{\frac{1}{\rho}}\Big\|\Big( 2^{q(s+\frac{2\alpha}{\rho})} \|\Delta_q\phi\|_{L_T^\rho( L^p)}\Big)_{q\in \ZZ^+}
%  \Big\|_{\ell^r}
%&\leq C_1 \bigg(\sum_{i=0}^M\|\phi(T_i)\|_{B^s_{p,r}} + M\|f\|_{L_T^1(B^s_{p,r})} \bigg).
%\end{align*}
By iterating \eqref{eq:phi-sminfty2}  $M$-times and using the fact
$M\approx \frac{1}{C_0} \int_0^T\|\nabla u\|_{L^\infty}\dd \tau$,
we deduce that
\begin{align*}
  \|\phi\|_{\widetilde{L}^\infty_T( B^{s}_{p,r})}
  & \leq C\bigg( \sum_{i=0}^{M-1}\|\phi(T_i)\|_{B^s_{p,r}} + \|f\|_{L^1_T (B^s_{p,r})}\bigg) \\
  & \leq C M \Big(\|\phi_0\|_{B^s_{p,r}} + \|f\|_{L^1_T(B^s_{p,r})} \Big) C^M
  + C  \|f\|_{L^1_T(B^s_{p,r})} \\
  & \leq C e^{C \int_0^T\|\nabla u\|_{L^\infty}\dd \tau}
  \Big(\|\phi_0\|_{B^s_{p,r}} + \|f\|_{L^1_T(B^s_{p,r})} \Big) ,
\end{align*}
which also yields
\begin{align*}
  \nu^{\frac{1}{\rho}}\|\phi\|_{\widetilde{L}^\rho_T(B^{s+\frac{2\alpha}{\rho}}_{p,r})}
  & \leq C(1+\nu T)^{\frac{1}{\rho}} \bigg( \sum_{i=0}^{M-1} \|\phi(T_i)\|_{B^s_{p,r}}
  + \|f\|_{L^1([0,T]; B^s_{p,r})}\bigg) \\
  & \leq C (1+\nu T)^{\frac{1}{\rho}} e^{C \int_0^T\|\nabla u\|_{L^\infty}\dd \tau}
  \Big(\|\phi_0\|_{B^s_{p,r}} + \|f\|_{L^1_T(B^s_{p,r})} \Big) ,
\end{align*}
and
\begin{align*}
  \nu^{\frac{1}{\rho}}\Big\|\Big( 2^{q(s+\frac{2\alpha}{\rho})} \|\Delta_q\phi\|_{L_T^\rho( L^p)}
  \Big)_{q\in \mathbb{N}}   \Big\|_{\ell^r}
%  & \leq C_1 M \Big(\|\phi_0\|_{B^s_{p,r}} + \|f\|_{L^1_T(B^s_{p,r})} \Big) C_1^M
%  + C_1 M\|f\|_{L^1_T(B^s_{p,r})} \\
  & \leq C e^{C \int_0^T\|\nabla u\|_{L^\infty}\dd \tau}  \Big(\|\phi_0\|_{B^s_{p,r}}
  + \|f\|_{L^1_T(B^s_{p,r})} \Big).
\end{align*}
Therefore, we complete the proof of \eqref{eq:TD-sm2}, \eqref{eq:TD-sm4}.
\end{proof}

The following compactness lemma plays an important role in the process of vanishing Prandtl number limit.
\begin{lemma}[Aubin-Lions lemma \cite{L1969}]\label{lem:Aubin-Lions}
Assume that $X_0\subseteq X\subseteq X_1$ are Banach spaces, and $X_0$ is compactly embedded in $X$ and
$X $ is continuously embedded in $X_1$. For all $1\leq p,q\leq \infty$, let
\begin{align*}
  V := \left\{u\in L_T^p(X_0)\;:\; \partial_tu\in L_T^q(X_1) \right\}.
\end{align*}
Then we have
\begin{enumerate}
\item if $p<\infty$, then $V$ is compactly embedded in $L_T^p(X)$;
\item if $p=\infty$ and $q>1$, then $V$ is compactly embedded in $C([0,T];X)$.
\end{enumerate}
\end{lemma}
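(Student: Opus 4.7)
The plan rests on two classical ingredients: a Lions--Ehrling interpolation inequality and a Kolmogorov--Riesz / Arzel\`a--Ascoli compactness criterion in Bochner spaces. First I would establish the Ehrling-type bound: for every $\eta>0$ there exists $C_\eta>0$ such that
\begin{equation*}
  \|v\|_X \leq \eta\,\|v\|_{X_0} + C_\eta\,\|v\|_{X_1}, \qquad \forall\, v\in X_0.
\end{equation*}
This is a standard contradiction argument exploiting the compact embedding $X_0\Subset X$ and the continuous embedding $X\hookrightarrow X_1$: if the bound failed for some $\eta_0$, one could pick $v_n$ with $\|v_n\|_X=1$, $\|v_n\|_{X_0}\leq 1/\eta_0$ and $\|v_n\|_{X_1}\to 0$; compactness would yield a subsequential limit $v$ in $X$ with $\|v\|_X=1$, but the $X_1$-limit forces $v=0$, a contradiction.

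For statement~(i) with $p<\infty$, let $(u_n)\subset V$ be bounded. Integrating the Ehrling bound in time yields
\begin{equation*}
  \|u_n-u_m\|_{L^p_T(X)} \leq \eta\,\|u_n-u_m\|_{L^p_T(X_0)} + C_\eta\,\|u_n-u_m\|_{L^p_T(X_1)},
\end{equation*}
so uniform boundedness of the first term reduces the problem to extracting a subsequence Cauchy in $L^p_T(X_1)$ (and then letting $\eta\to0$). I would obtain such a subsequence via the Riesz--Fr\'echet--Kolmogorov criterion in $L^p_T(X_1)$. Pointwise-in-time precompactness in $X_1$ is supplied by the $L^p_T(X_0)$-bound together with the compact inclusion $X_0\Subset X\hookrightarrow X_1$. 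Equi-continuity of time translations comes from the derivative bound: since $\partial_t u_n\in L^q_T(X_1)$,
\begin{equation*}
  \|u_n(\cdot+h)-u_n(\cdot)\|_{L^1(0,T-h;X_1)} \leq h\,\|\partial_t u_n\|_{L^1_T(X_1)} \leq C\,h,
\end{equation*}
and interpolating this $O(h)$ $L^1$-modulus with the uniform $L^p_T(X_1)$ bound (valid through the chain $L^p_T(X_0)\hookrightarrow L^p_T(X_1)$) produces a vanishing $L^p$-translation modulus, which completes Kolmogorov--Riesz.

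Statement~(ii), with $p=\infty$ and $q>1$, can be handled more directly by Arzel\`a--Ascoli. H\"older's inequality gives
\begin{equation*}
  \|u_n(t)-u_n(s)\|_{X_1} \leq \int_s^t \|\partial_\tau u_n\|_{X_1}\,\dd\tau \leq |t-s|^{1-1/q}\,\|\partial_t u_n\|_{L^q_T(X_1)},
\end{equation*}
so $(u_n)$ is equi-H\"older-continuous in $X_1$, while uniform boundedness in $X_0$ gives pointwise precompactness in $X_1$ at each $t$. Arzel\`a--Ascoli then supplies a subsequence $u_{n_k}\to u$ in $C([0,T];X_1)$, and applying the Ehrling bound pointwise and sending $k\to\infty$ followed by $\eta\to 0$ upgrades the convergence to $C([0,T];X)$.

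The main technical obstacle I foresee is in part~(i): marrying the $L^q$ time-derivative hypothesis with an $L^p$-translation modulus when $p>q$. The interpolation trick sketched above (an $L^1$-translation estimate of order $O(h)$ against a uniform $L^p$ bound in $X_1$) is precisely what closes this gap and makes the Riesz--Fr\'echet--Kolmogorov criterion applicable in $L^p_T(X_1)$; no essential difficulty arises in part~(ii) since $q>1$ yields continuous-in-time $X_1$-valued curves outright.
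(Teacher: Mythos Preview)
The paper does not prove this lemma; it is stated with a citation to Lions' 1969 book and used as a black box in Section~\ref{sec:limit}. So there is no ``paper's own proof'' to compare against, and your proposal stands on its own.

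Your overall architecture---Ehrling's interpolation inequality followed by a Kolmogorov--Riesz/Arzel\`a--Ascoli argument---is the standard route, and part~(ii) is fine as written. However, in part~(i) there is a genuine gap at exactly the point you flag as the obstacle. The claim that ``interpolating the $O(h)$ $L^1$-modulus with the uniform $L^p_T(X_1)$ bound produces a vanishing $L^p$-translation modulus'' is not correct: interpolating between $L^1$ and $L^p$ gives decay only in $L^r$ for $r<p$, and at $r=p$ the interpolation exponent on $h$ is zero. In particular, when $q<p$ (say $p$ large and $q=1$) there is no direct way to turn the $L^1$ translation estimate $\|\tau_h u_n - u_n\|_{L^1_T(X_1)}\le Ch$ into a uniformly vanishing $L^p_T(X_1)$ modulus, which is what the vector-valued Kolmogorov--Riesz criterion actually requires.

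This is why the full statement with decoupled $p$ and $q$ is usually attributed to Simon (1987) rather than to Aubin or Lions, whose original arguments essentially require $p=q$. Simon's proof bypasses the $L^p$ translation modulus by working instead with time averages $\frac{1}{b-a}\int_a^b u_n(s)\,\dd s$, which are bounded in $X_0$ (hence precompact in $X$) and approximate $u_n$ well in $X_1$ thanks to the derivative bound; combining this with Ehrling yields precompactness in $L^p_T(X)$ directly. If you want to repair your argument, that is the missing idea.
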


%\begin{lemma}[Arzel\'a-Ascoli theorem \cite{FR20}]\label{lem:Arz-Asco}
%Let $\Omega\subset\RR^d, \alpha\in(0,1)$, and let $\{f_i\}_{i\in\NN}$ be any sequence of function $f_i$ satisfying
%$\left\|f_i\right\|_{C^{0,\alpha}(\overline{\Omega})}\leq C_0$.
%Then there exists a subsequence $f_{i_j}$ which converges uniformly to a function $f\in C^{0,\alpha}(\overline{\Omega})$.
%\end{lemma}
%
%Refer to pp.8-9 in \cite{FR20}, we have the following lemma.
%\begin{lemma}\label{lem:compact}
%Assume that $u_i\rightarrow u_0$ uniformly in $\overline{\Omega}\subset\RR^d$, and that
%$\left\|u_i\right\|_{C^{k,\alpha}(\overline{\Omega})}\leq C_0$, with $\alpha\in(0,1]$ and for some $C_0$ independent of $i$.
%Then, we have that $u_0\in C^{k,\alpha}(\overline{\Omega})$, and $\left\|u_0\right\|_{C^{k,\alpha}(\overline{\Omega})}\leq C_0$.
%
%More generally, this result combined with Arzel\'a-Ascoli implies that if
%$\left\|u_i\right\|_{C^{k,\alpha}(\overline{\Omega})}\leq C_0$ with $\alpha\in(0,1)$,
%then a subsequence $u_{i_j}$ will converge in the $C^k(\overline{\Omega})$ norm to a function $u\in C^{k,\alpha}(\overline{\Omega})$.
%Interpolation inequalities in Holder spaces yields $C^{k,\alpha}(\overline{\Omega})\subset\subset C^{k.\beta}(\overline{\Omega})$
%for all $0<\beta<\alpha$.
%\end{lemma}

\section{Persistence of the $C^{2+\gamma}$ boundary regularity}\label{sec:C2gamma}
This section is devoted to the proof of  the persistence of the $C^{2+\gamma}$ regularity of the temperature patch boundary
$\partial D(t)$ for some $0<\gamma\le2\alpha-1$ if $\alpha\in(\frac{1}{2},1)$ and for some $\gamma\in(0,1)$ if $\alpha=1$. In particular, we shall explicitly show the dependence of the \textit{a priori}
estimates on the coefficient $\varepsilon = \frac{1}{\mathrm{Pr}}$.

The $L^2$ energy estimate for the system $(\mathrm{B}_\alpha)$ is more or less classical
(e.g. see Proposition 4.1 of \cite{KX22} without the dependence on the Prandtl number),
and we have the following result.
\begin{proposition}\label{lem:eneEs}
Let $\varepsilon =\frac{1}{\mathrm{Pr}} \in (0,1]$, $\alpha\in (0,1]$. Let $\mathbf{D}$ be either $\mathbb{R}^2$
or $\mathbb{T}^2$. Suppose that $(u,\theta)$ is a smooth solution of 2D Boussinesq-Navier-Stokes system
$(\mathrm{B}_\alpha)$
with initial data $u_0\in L^2(\mathbf{D})$ and $\theta_0\in L^2\cap L^p(\mathbf{D})$, $p\in [1,\infty]$.
Then there exists an absolute number $C_0>0$ such that for all $t>0$,
\begin{align}
  \|\theta(t)\|_{L^p(\mathbf{D})}& = \|\theta_0\|_{L^p(\mathbf{D})}, \label{eq:the-MP} \\
  \|u(t)\|_{L^2(\mathbf{D})} + \frac{1}{\sqrt{\varepsilon}} \|\Lambda^\alpha u\|_{L^2_t\big(L^2(\mathbf{D})\big)}
  & \leq C_0 \big(\|u_0\|_{L^2} + \|\theta_0\|_{L^2} \big)
  \Big(\frac{t}{\varepsilon}+1 \Big) . \label{eq:u-L2-es}
\end{align}
%In particular, if $\alpha=1, u_0\in H^1(\mathbf{D})$, one can get
%\begin{align}
 %\|\omega(t)\|_{L^2(\mathbf{D})} + \frac{1}{\sqrt{\varepsilon}}
  %\Big(\int_0^t\|\nabla \omega(\tau)\|_{L^2(\mathbf{D})}^2 \dd \tau\Big)^{\frac{1}{2}}
  %\leq 2\|\omega_0\|_{L^2(\mathbf{D})}
  %+ \frac{2}{\sqrt{\varepsilon}} \|\theta_0\|_{L^2(\mathbf{D})}  t := \widetilde{E}_\varepsilon(t) . \label{eq:w-L2-es}
%\end{align}
\end{proposition}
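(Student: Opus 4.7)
The plan is to derive the two estimates independently, treating the transport equation for $\theta$ first (which is essentially a maximum principle) and then performing an $L^2$ energy estimate on the momentum equation with careful bookkeeping of the factor $\varepsilon = 1/\mathrm{Pr}$.

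First I would establish \eqref{eq:the-MP}. Since $\theta$ satisfies the transport equation $\partial_t\theta + u\cdot\nabla\theta = 0$ and $u$ is divergence-free, multiplying by $|\theta|^{p-2}\theta$ and integrating (or, for $p=\infty$, using the transport representation $\theta(x,t) = \theta_0(X_t^{-1}(x))$ together with the measure-preserving property of the flow) yields $\|\theta(t)\|_{L^p} = \|\theta_0\|_{L^p}$. This step is routine.

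Next I would turn to \eqref{eq:u-L2-es}. Testing the momentum equation $\varepsilon(\partial_t u + u\cdot\nabla u) + \Lambda^{2\alpha} u = -\nabla p + \theta e_2$ against $u$ in $L^2$, the convection term vanishes by the divergence-free condition, the pressure term vanishes by integration by parts, and the dissipation term produces $\|\Lambda^\alpha u\|_{L^2}^2$. This gives
\begin{equation*}
  \tfrac{\varepsilon}{2}\tfrac{d}{dt}\|u\|_{L^2}^2 + \|\Lambda^\alpha u\|_{L^2}^2
  \leq \|\theta\|_{L^2}\,\|u\|_{L^2} = \|\theta_0\|_{L^2}\|u\|_{L^2},
\end{equation*}
where the last equality uses \eqref{eq:the-MP}. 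Dividing by $\|u\|_{L^2}$ and dropping the nonnegative dissipation yields $\varepsilon\,\tfrac{d}{dt}\|u\|_{L^2} \leq \|\theta_0\|_{L^2}$, so that
\begin{equation*}
  \|u(t)\|_{L^2} \leq \|u_0\|_{L^2} + \tfrac{t}{\varepsilon}\|\theta_0\|_{L^2}
  \leq \big(\|u_0\|_{L^2}+\|\theta_0\|_{L^2}\big)\bigl(1+\tfrac{t}{\varepsilon}\bigr).
\end{equation*}

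Finally, to obtain the dissipation estimate I would integrate the energy identity in time, bound the right-hand side using the pointwise bound on $\|u(\tau)\|_{L^2}$ just obtained, and then divide by $\varepsilon$:
\begin{equation*}
  \tfrac{1}{\varepsilon}\int_0^t \|\Lambda^\alpha u\|_{L^2}^2 \,d\tau
  \leq \tfrac{1}{2}\|u_0\|_{L^2}^2 + \tfrac{t}{\varepsilon}\|\theta_0\|_{L^2}\|u_0\|_{L^2}
  + \tfrac{t^2}{2\varepsilon^2}\|\theta_0\|_{L^2}^2
  \leq \tfrac{1}{2}\bigl(\|u_0\|_{L^2}+\|\theta_0\|_{L^2}\bigr)^2\bigl(1+\tfrac{t}{\varepsilon}\bigr)^2.
\end{equation*}
Taking square roots and summing the two bounds yields \eqref{eq:u-L2-es} with an absolute constant $C_0$. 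There is no real obstacle here, since the standard energy method applies verbatim; the only point requiring mild care is to track the exact power of $\varepsilon$ in each term so that the upper bound takes the product form $(\|u_0\|_{L^2}+\|\theta_0\|_{L^2})(t/\varepsilon + 1)$, which transparently shows the lack of uniform-in-$\varepsilon$ control in the whole space $\mathbb{R}^2$ setting and motivates the refined estimates on $\Gamma$ and $\nabla u$ later in the paper.
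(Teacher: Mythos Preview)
Your proposal is correct and follows essentially the same approach as the paper: both derive \eqref{eq:the-MP} from the transport structure, perform the $L^2$ energy estimate on the momentum equation to obtain $\tfrac{\varepsilon}{2}\tfrac{d}{dt}\|u\|_{L^2}^2 + \|\Lambda^\alpha u\|_{L^2}^2 \leq \|\theta_0\|_{L^2}\|u\|_{L^2}$, first extract the pointwise bound $\|u(t)\|_{L^2} \leq \|u_0\|_{L^2} + \tfrac{t}{\varepsilon}\|\theta_0\|_{L^2}$, and then re-insert it into the time-integrated energy identity to control the dissipation term.
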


\begin{proof}[Proof of Proposition \ref{lem:eneEs}]
  The conservation of the $L^p$ norm of $\theta$ in \eqref{eq:the-MP} follows from the transport equation.
By taking the dot product of the equation for $u$ in \eqref{BoussEq} with $u$ itself, and using an integration by parts, we get
\begin{align}\label{eq:u-eneEs1}
  \frac{\varepsilon}{2}\frac{\dd }{\dd t} \|u(t)\|_{L^2}^2 + \|\Lambda^\alpha u(t)\|_{L^2}^2
  \leq \Big|\int_{\mathbf{D}} \theta\, u_2 (x,t) \dd x \Big| \leq \|\theta_0\|_{L^2} \|u(t)\|_{L^2},
\end{align}
It follows that $\frac{\dd}{\dd t} \|u(t)\|_{L^2} \leq \frac{1}{\varepsilon} \|\theta_0\|_{L^2}$ and
\begin{align*}
  \|u(t)\|_{L^2(\mathbf{D})} \leq \|u_0\|_{L^2(\mathbf{D})} + \frac{t}{\varepsilon} \|\theta_0\|_{L^2(\mathbf{D})}.
\end{align*}
Inserting the above inequality into \eqref{eq:u-eneEs1} and integrating with respect to the time variable lead to
\begin{align*}
  \frac{1}{2}\|u(t)\|_{L^2}^2 + \frac{1}{\varepsilon} \|\Lambda^\alpha u\|_{L^2_t(L^2)}^2
  \leq \frac{1}{2}\|u_0\|_{L^2}^2 + \frac{t}{\varepsilon}\|\theta_0\|_{L^2} \Big(\|u_0\|_{L^2} + \frac{t}{\varepsilon} \|\theta_0\|_{L^2} \Big),
\end{align*}
which readily implies the desired bound \eqref{eq:u-L2-es}.
\end{proof}

The following result concerns the \textit{a priori} estimates for $u$ and
$\Gamma = \omega - \mathcal{R}_{1-2\alpha}\theta = \omega-\partial_1 \Lambda^{-2\alpha}\theta$
solving the system $(\mathrm{B}_\alpha)$ with $u_0\in H^1$, $\theta_0\in L^1\cap L^\infty$.
Note that Theorems 1.2, 5.2 and Proposition 5.4 of \cite{KX22} have provided similar estimate as \eqref{eq:u-Gam-es2},
but in order to clarify the dependence on the parameter $\varepsilon$, we here sketch the proof by using a slightly different argument.
\begin{proposition}\label{prop:ap-es1}
Let $\varepsilon = \frac{1}{\mathrm{Pr}}\in (0,1]$,
$\alpha\in(\frac{1}{2},1]$. Let $\mathbf{D} $ be either $\mathbb{R}^2$ or $\mathbb{T}^2$.
Suppose that $(u,\theta)$ is a smooth solution of 2D Boussinesq-Navier-Stokes equations $(\mathrm{B}_\alpha)$ with
\begin{itemize}
\item $u_0\in H^1(\mathbf{D})$,
\item $\nabla\cdot u_0=0$,
\item $\theta_0\in L^1\cap L^\infty(\mathbf{D})$.
\end{itemize}
Then, there exists a constant $C>0$ depending only on $\alpha$ and the norms of $(u_0,\theta_0)$ but independent of $\varepsilon$ such that for all $T>0$,  such that the following statements hold true.
\begin{enumerate}[(1)]
\item
If $\alpha\in (\frac{1}{2},1)$, we have
%for $\alpha =1$,
%\begin{align}\label{eq:nab-u-es-alp1}
%  \varepsilon \|\nabla u\|_{L^\infty_T(L^2)}^2 + \| u\|_{L^2_T(\dot H^2)}^2
%  \leq \varepsilon \|\nabla u_0\|_{L^2}^2 + \|\theta_0\|_{L^2}^2 T,
%\end{align}
%\begin{align}\label{eq:uLip-es}
%  \|\nabla u\|_{L^\infty_T(L^p)} + \|\nabla u\|_{L^1_T(B^{\frac{2}{p}(2\alpha-1)}_{\infty,1})} + \|\Gamma\|_{L^\infty_T(L^p)} +
%  \left\|\Gamma\right\|_{L_T^1 (B_{p, 1}^{\frac{4\alpha}{p}})} \leq C e^{C T} ,
%\end{align}
%for $\alpha \in(\frac{1}{2},1)$,
\begin{align}\label{eq:u-Gam-es2}
  \|(\nabla u,\Gamma)\|_{L^\infty_T(L^2)} + \|\Gamma\|_{L^1_T(B^1_{2,1})}
  + \left\|\Gamma\right\|_{\widetilde{L}_T^1 (B_{2, \infty}^{2\alpha})}
  + \|\nabla u\|_{\widetilde{L}^1_T(C^{2\alpha-1})} + \|\nabla u\|_{L^1_T(L^\infty)}
  \leq C e^{C T} .
\end{align}

%\item If $\mathbf{D} = \mathbb{R}^2$, $\alpha =1$, we have that for all $\gamma \in (0,1)$,
%\begin{align}\label{eq:u-Gam-es2-2}
 % \|(\nabla u,\Gamma)\|_{L^\infty_T(L^2)} + \|\Gamma\|_{L^1_T(B^1_{2,1})}
 % + \left\|\Gamma\right\|_{\widetilde{L}_T^1 (B_{2, \infty}^2)}
 % + \|\nabla u\|_{L^1_T(C^\gamma)}
 % \leq C e^{C T + C E_\varepsilon(T)} ,
%\end{align}
%where $E_\varepsilon(t)$ defined by \eqref{eq:u-L2-es} is the energy bound.
\item If $\mathbf{D} = \mathbb{R}^2$, $\alpha=1$,
we have that for all $\gamma\in(0,1)$,
\begin{align}\label{eq:u-Gam-es3}
  \|\nabla u\|_{L^\infty_T(L^2)} + \|\nabla u\|_{L^1_T(L^\infty)} + \|\nabla u\|_{L^1_T(C^\gamma)}
  \leq C (1+T^3)\Big(1+ \sqrt{\tfrac{T}{\varepsilon}}\Big).
\end{align}
\item
If $\mathbf{D} = \mathbb{T}^2$, $\alpha=1$,
we have that for all $\gamma\in(0,1)$,
\begin{align}\label{eq:u-Gam-es4}
  \|(\nabla u,\Gamma)\|_{L^\infty_T(L^2)} + \|\Gamma\|_{L^1_T(B^1_{2,1})}
  + \left\|\Gamma\right\|_{\widetilde{L}_T^1 (B_{2, \infty}^2)}
  + \|\nabla u\|_{L^1_T(C^\gamma)}  \leq C e^{C T} .
\end{align}
\end{enumerate}
\end{proposition}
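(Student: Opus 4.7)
The strategy is to exploit the good unknown $\Gamma:=\omega-\mathcal{R}_{1-2\alpha}\theta$, whose equation \eqref{eq:Gamma}, after dividing by $\varepsilon$, is the transport--diffusion equation
\[
\partial_t\Gamma+u\cdot\nabla\Gamma+\tfrac{1}{\varepsilon}\Lambda^{2\alpha}\Gamma=[\mathcal{R}_{1-2\alpha},u\cdot\nabla]\theta,
\]
to which the smoothing Lemma~\ref{lem:TD-sm} applies with effective viscosity $\nu=1/\varepsilon$. The argument proceeds in four steps: (a) an $L^\infty_T L^2$ bound on $\Gamma$; (b) a maximal-regularity estimate via \eqref{eq:TD-sm4}; (c) Besov interpolation to reach $L^1_T B^1_{2,1}$; (d) a Gr\"onwall closure using the Biot--Savart relation \eqref{eq:u-exp1} to recover $\|\nabla u\|_{L^1_T L^\infty}$.

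For step~(a), testing against $\Gamma$ yields
\[
\tfrac{\varepsilon}{2}\tfrac{\dd}{\dd t}\|\Gamma\|_{L^2}^2+\|\Lambda^\alpha\Gamma\|_{L^2}^2\leq \varepsilon\,\|[\mathcal{R}_{1-2\alpha},u\cdot\nabla]\theta\|_{L^2}\|\Gamma\|_{L^2},
\]
and the commutator is bounded by \eqref{eq:Rbeta-cm-es} ($\beta=2\alpha$, $s=0$, $r=2$) in case~(1), by \eqref{eq:R-1cm-es1} in case~(2), and by \eqref{eq:R-1cm-es2} in case~(3); in each instance one obtains a term $C\|\nabla u\|_{L^2}\|\theta_0\|_{L^2\cap L^\infty}$, plus an additional $\|u\|_{L^2}\|\theta_0\|_{L^2}$ in case~(2) only. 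Since $\omega=\Gamma+\mathcal{R}_{1-2\alpha}\theta$ and $\mathcal{R}_{1-2\alpha}$ is a smoothing operator of order $2\alpha-1>0$ (so that $\|\mathcal{R}_{1-2\alpha}\theta_0\|_{L^2}\leq C\|\theta_0\|_{L^1\cap L^\infty}$ by Hardy--Littlewood--Sobolev on $\mathbb{R}^2$, or Bernstein on $\mathbb{T}^2$), we have $\|\nabla u\|_{L^2}\lesssim \|\Gamma\|_{L^2}+C(\theta_0)$, and Gr\"onwall gives $\|\Gamma\|_{L^\infty_T L^2}+\|\nabla u\|_{L^\infty_T L^2}\leq Ce^{CT}$ uniformly in $\varepsilon$ in cases~(1) and~(3); in case~(2), the growth $\|u\|_{L^2}\lesssim 1+T/\varepsilon$ from Proposition~\ref{lem:eneEs}, combined with a Cauchy--Schwarz against the dissipation $\varepsilon^{-1/2}\|\Lambda^\alpha u\|_{L^2 L^2}$, forces the announced $\sqrt{T/\varepsilon}$ factor in \eqref{eq:u-Gam-es3}.

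For steps~(b)--(c), applying \eqref{eq:TD-sm4} with $\nu=1/\varepsilon$, $\rho=1$, $s=0$, $p=2$, $r=2$ produces
\[
\|\Gamma\|_{\widetilde{L}^1_T B^{2\alpha}_{2,2}}^{\mathrm{high}}\lesssim \varepsilon\, e^{C\int_0^T\|\nabla u\|_{L^\infty}\dd\tau}\bigl(\|\Gamma_0\|_{L^2}+\|f\|_{L^1_T L^2}\bigr),
\]
where $\|f\|_{L^1_T L^2}\lesssim T\|\nabla u\|_{L^\infty_T L^2}\|\theta_0\|_{L^2\cap L^\infty}$ by step~(a). The real-interpolation inequality $\|\Gamma\|_{B^1_{2,1}}\lesssim\|\Gamma\|_{B^0_{2,\infty}}^{1-1/(2\alpha)}\|\Gamma\|_{B^{2\alpha}_{2,\infty}}^{1/(2\alpha)}$ combined with H\"older in time, the low-frequency bound $\|\Delta_{-1}\Gamma\|_{L^2}\leq\|\Gamma\|_{L^\infty_T L^2}$, and $\varepsilon\leq 1$ convert this into uniform estimates of $\|\Gamma\|_{L^1_T B^1_{2,1}}$ and $\|\Gamma\|_{\widetilde{L}^1_T B^{2\alpha}_{2,\infty}}$. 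The Lipschitz control then follows from Biot--Savart together with $\|\nabla u\|_{L^\infty}\leq C\|\omega\|_{B^0_{\infty,1}}\leq C(\|\Gamma\|_{B^1_{2,1}}+\|\theta_0\|_{L^\infty})$, using the 2D embedding $B^1_{2,1}\hookrightarrow B^0_{\infty,1}$ and the order-$(1-2\alpha)$ smoothing of $\mathcal{R}_{1-2\alpha}$; the $\widetilde{L}^1_T(C^{2\alpha-1})$ bound on $\nabla u$ follows analogously from $B^{2\alpha}_{2,\infty}\hookrightarrow C^{2\alpha-1}$. Feeding these bounds back into the exponential factor in \eqref{eq:TD-sm4} closes the loop by Gr\"onwall.

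The main obstacle is the delicate tracking of the $\varepsilon$-scaling: the effective viscosity $\nu=1/\varepsilon$ produces a dangerous $\varepsilon^{-1}$ on the LHS of \eqref{eq:TD-sm4}, which is exactly compensated by the explicit $\varepsilon$-prefactor of the commutator forcing in the original form of the $\Gamma$-equation, leaving a residual $\varepsilon^{1/(2\alpha)}\leq 1$ after interpolation; harmless here, but essential when $\varepsilon\to 0$ in Theorem~\ref{thm:limit}. The dichotomy between cases~(2) and~(3) at $\alpha=1$ reduces entirely to whether the Riesz commutator estimate for $\mathcal{R}_{-1}=\partial_1\Lambda^{-2}$ requires $\|u\|_{L^2}$ (as on $\mathbb{R}^2$, through \eqref{eq:R-1cm-es1}) or only $\|\nabla u\|_{L^p}$ (as on $\mathbb{T}^2$, through \eqref{eq:R-1cm-es2} after invoking Poincar\'e); this is the sole mechanism generating the non-uniform $\sqrt{T/\varepsilon}$ in \eqref{eq:u-Gam-es3}.
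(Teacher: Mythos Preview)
Your strategy via Lemma~\ref{lem:TD-sm} has a genuine closure problem. Applying \eqref{eq:TD-sm4} to the $\Gamma$-equation with $\nu=1/\varepsilon$ indeed yields
\[
\|\Gamma\|_{\widetilde L^1_T(B^{2\alpha}_{2,\infty})}^{\mathrm{high}}
\lesssim \varepsilon\, e^{C\int_0^T\|\nabla u\|_{L^\infty}\,\dd\tau}\bigl(\|\Gamma_0\|_{L^2}+\|[\mathcal R_{1-2\alpha},u\cdot\nabla]\theta\|_{L^1_T L^2}\bigr),
\]
but after passing to $\|\nabla u\|_{L^1_T L^\infty}$ through Biot--Savart you are left with an inequality of the form
\[
X(T)\le A(T)\bigl(1+\varepsilon\,e^{CX(T)}\bigr),\qquad X(T):=\|\nabla u\|_{L^1_T(L^\infty)},
\]
which is \emph{not} amenable to Gr\"onwall: for $\varepsilon=1$ the right-hand side grows faster than the left as $X\to\infty$, so there is no barrier. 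The phrase ``closes the loop by Gr\"onwall'' hides a transcendental bootstrap that does not close for $\varepsilon$ merely in $(0,1]$. Your residual $\varepsilon^{1/(2\alpha)}$ from interpolation does not help, since nothing forces $\varepsilon$ to be small.

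The paper avoids this entirely by \emph{not} invoking \eqref{eq:TD-sm4} here. Instead it localizes the $\Gamma$-equation block by block: for $q\in\mathbb N$,
\[
\partial_t\Delta_q\Gamma+u\cdot\nabla\Delta_q\Gamma+\tfrac{1}{\varepsilon}\Lambda^{2\alpha}\Delta_q\Gamma
= -[\Delta_q,u\cdot\nabla]\Gamma+\Delta_q\bigl([\mathcal R_{1-2\alpha},u\cdot\nabla]\theta\bigr)=:f_q,
\]
giving $2^{2\alpha q}\|\Delta_q\Gamma\|_{L^1_T L^2}\le C\varepsilon(\|\Delta_q\Gamma_0\|_{L^2}+\|f_q\|_{L^1_T L^2})$ with \emph{no} exponential prefactor. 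The point is that the transport commutator $[\Delta_q,u\cdot\nabla]\Gamma$ is kept explicit and estimated via Lemma~\ref{lem:HKR} by $C\|\nabla u\|_{L^2}\|\Gamma\|_{B^0_{\infty,\infty}}$. Since $\|\nabla u\|_{L^\infty_T L^2}$ is already controlled in step~(a), the only unknown on the right is $\|\Gamma\|_{L^1_T(B^0_{\infty,\infty})}\le\|\Gamma\|_{L^1_T(B^1_{2,1})}$, a \emph{linear} feedback. A high/low split at level $N$ then reads
\[
\|\Gamma\|_{L^1_T(B^1_{2,1})}\le C\,2^N e^{CT}+C\,2^{N(1-2\alpha)}e^{CT}\|\Gamma\|_{L^1_T(B^1_{2,1})},
\]
and choosing $N$ with $C\,2^{N(1-2\alpha)}e^{CT}\approx\tfrac12$ absorbs the feedback. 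This is the mechanism you are missing.

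A secondary remark: in case~(2) the paper does not run the $L^2$ energy on $\Gamma$ at all but works with $\omega$ directly (testing \eqref{eq:vorticity} against $\omega$ and integrating $\partial_1\theta$ by parts), which sidesteps the $\|u\|_{L^2}$ contribution from \eqref{eq:R-1cm-es1} at that stage; the $\sqrt{T/\varepsilon}$ factor then enters through $\|\nabla\omega\|_{L^2_T L^2}$ and the subsequent $B^0_{\infty,1}$ argument, not through step~(a).
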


\begin{remark}\label{rem:apes1}
  Under the assumptions of Proposition \ref{prop:ap-es1},
we can show the global existence and uniqueness of solution $(\theta,u)$ to the
2D Boussinesq-Navier-Stokes system $(\mathrm{B}_\alpha)$
with $\varepsilon =\frac{1}{\mathrm{Pr}} \in (0,1]$ and $\alpha\in (\frac{1}{2},1]$,
which satisfies that for all $T>0$,
\begin{align}\label{eq:u-the-apri-H1}
  \theta\in L^\infty([0,T], L^1\cap L^\infty(\mathbf{D})), \quad
  u \in L^\infty([0,T],H^1(\mathbf{D})) \cap L^1([0,T], B^1_{\infty,1}(\mathbf{D})).
\end{align}
In particular, if $\theta_0$ is the temperature front initial data $\bar{\theta}_0\, \mathbf{1}_{D_0} $
with $\bar{\theta}_0\in L^\infty(\overline{D_0})$ and $D_0 \subset \mathbf{D}$
a bounded simply connected domain satisfying $\partial D_0 \in C^{1+\gamma}$, $\gamma\in (0,2\alpha-1)$,
then we have that \eqref{eq:the-exp}-\eqref{eq:reg-parD(t)} hold with $k=1$ and the same scope of $\gamma$.

Indeed, the existence part follows from a standard approximation process and the \textit{a priori}
estimates established in Propositions \ref{lem:eneEs} and \ref{prop:ap-es1}
(note that the fact that $u$ is controlled in $L^1_T(B^1_{\infty,1})$ in  \eqref{eq:u-the-apri-H1} can be easily obtained).
As far as the proof of the uniqueness is concerned, we refer to \cite{GGJ17} for the case $\alpha=1$
and to \cite{KX22} for the case $\alpha\in (\frac{1}{2},1)$.
Regarding the temperature patches, since $u$ is a divergence-free vector field which belongs to $L^1([0,T], W^{1,\infty}(\mathbf{D}))$ then using Cauchy-Lipschitz theorem, there exists a unique solution $X_t(\cdot) :\mathbf{D}\rightarrow \mathbf{D}$
to \eqref{eq:X} which is a bi-Lipschitzian measure-preserving homeomorphism,
thus the transport equation of $\theta$ ensures \eqref{eq:the-exp} holds,
and in combination with \eqref{eq:Xt-C1gam} and \eqref{eq:u-Gam-es2}, \eqref{eq:u-Gam-es4},
we have $\nabla X_t^{\pm 1} \in L^\infty_T(C^\gamma)$, $\gamma\in(0,2\alpha-1)$, which implies $\partial D(t) \in L^\infty_T(C^{1+\gamma})$.
\end{remark}

\begin{proof}[Proof of Proposition \ref{prop:ap-es1}]
Let us prove the first statement that is the point $(\textbf{1})$.
The proof of $(\textbf{1})$ works in either the whole space or the torus.
By computing the $L^2$ scalar product of the equation in $\Gamma$ \eqref{eq:Gamma} with $\Gamma$,
and using the commutator estimate \eqref{eq:Rbeta-cm-es} and the relation $\omega = \Gamma + \mathcal{R}_{1-2\alpha}\theta$,
we get
\begin{align*}%\label{eq:Gam-Ene-es1}
  \frac{1}{2} \frac{\dd }{\dd t} \|\Gamma(t)\|_{L^2}^2 + \frac{1}{\varepsilon} \|\Lambda^\alpha \Gamma(t)\|_{L^2}^2
  & \leq \|[\mathcal{R}_{1-2\alpha}, u\cdot \nabla]\theta\|_{L^2} \|\Gamma\|_{L^2} \nonumber \\
  & \leq C \|\omega\|_{L^2} \big( \|\theta\|_{B^{1-2\alpha}_{\infty,1}} + \|\theta\|_{L^2}\big) \|\Gamma\|_{L^2} \nonumber \\
  & \leq C \big(\|\Gamma\|_{L^2} +  \|\mathcal{R}_{1-2\alpha}\theta\|_{L^2}\big) \|\theta\|_{L^2\cap L^\infty}  \|\Gamma\|_{L^2} \nonumber \\
  & \leq C \|\Gamma\|_{L^2}^2 \big( 1 + \|\theta_0\|_{L^2\cap L^\infty} \big)
  + \|\theta_0\|_{L^1\cap L^\infty}^4,
\end{align*}
where in the last line we have used the fact that
\begin{align}\label{eq:Rbet-L2es}
  \|\mathcal{R}_{1-2\alpha} \theta(t)\|_{L^2} \leq C \|\theta(t)\|_{L^{\frac{1}{\alpha}}} \leq C \|\theta_0\|_{L^1\cap L^\infty}.
\end{align}
Gr\"onwall's inequality leads to
\begin{equation}\label{es:GamL2}
\begin{split}
  \|\Gamma\|_{L^\infty_T(L^2)}^2 + \frac{1}{\varepsilon} \|\Lambda^\alpha \Gamma\|_{L^2_T(L^2)}^2 &\leq\, C \big( \|\Gamma_0\|_{L^2}^2 + \|\theta_0\|_{L^1\cap L^\infty}^4 T \big)
  e^{C (1+\|\theta_0\|_{L^2\cap L^\infty})T}  \\
  &\leq\, C e^{CT}.
\end{split}
\end{equation}
In the above, $\Gamma_0:= \omega_0-\mathcal{R}_{1-
2 \alpha} \theta_0$ satisfies that
\begin{align*}
  \|\Gamma_0\|_{L^2}\leq \|\omega_0\|_{L^2} + \|\mathcal{R}_{1-2\alpha} \theta_0\|_{L^2}
  \leq \|u_0\|_{\dot H^1} + \|\theta_0\|_{L^{\frac{1}{\alpha}}} < \infty.
\end{align*}
Furthermore, by using \eqref{eq:Rbet-L2es} again, we find
\begin{align}\label{es:nab-uL2}
  \|\nabla u\|_{L^\infty_T(L^2)} \leq \|\omega\|_{L^\infty_T(L^2)}
  \leq \|\Gamma\|_{L^\infty_T(L^2)} + \|\mathcal{R}_{1-2\alpha}\theta\|_{L^\infty_T(L^2)}
  \leq C e^{CT}.
\end{align}

Now we consider the estimation of $\|\Gamma\|_{L^1_T(B^1_{2,1})}$ and $\|\Gamma\|_{\widetilde{L}^1_T(B^{2\alpha}_{2,\infty})}$.
For all $q\in \mathbb{N}$, applying the frequency localization operator $\Delta_q$ to the equation \eqref{eq:Gamma} yields
\begin{equation}\label{eq:Delta-q-Gamm}
\begin{aligned}
  \partial_t \Delta_q\Gamma + u \cdot \nabla \Delta_q\Gamma + \frac{1}{\varepsilon}\Lambda^{2 \alpha} \Delta_q\Gamma
  = - \left[\Delta_q, u \cdot \nabla\right] \Gamma +
  \Delta_q\big(\left[\mathcal{R}_{1-2 \alpha}, u \cdot \nabla\right] \theta\big)  := f_q .
\end{aligned}
\end{equation}
Taking the scalar product of the above equation with $\Delta_q \Gamma$, we get
\begin{align*}
  \frac{1}{2}\frac{\dd}{\dd t} \|\Delta_q\Gamma(t)\|_{L^2}^2 + \frac{c_0}{\varepsilon} 2^{2\alpha q} \| \Delta_q \Gamma(t)\|_{L^2}^2
  \leq \|\Delta_q \Gamma\|_{L^2} \|f_q\|_{L^2},
\end{align*}
with some $c_0>0$ absolute number. Integrating in the time variable leads to
\begin{align*}
  \|\Delta_q \Gamma(t)\|_{L^2} \leq e^{- \frac{c_0}{\varepsilon} 2^{2\alpha q} t} \|\Delta_q \Gamma_0\|_{L^2}
  + \int_0^t e^{- \frac{c_0}{\varepsilon} (t-\tau) e^{2\alpha q}} \|f_q(\tau)\|_{L^2} \dd \tau,
\end{align*}
and
\begin{align}\label{eq:Del-Gam-L2}
  \|\Delta_q\Gamma\|_{L^1_t(L^2)} \leq \frac{\varepsilon}{c_0} 2^{-2\alpha q} \|\Delta_q \Gamma_0\|_{L^2}
  + \frac{\varepsilon}{c_0} 2^{-2\alpha q} \|f_q\|_{L^1_t(L^2)}.
\end{align}
Let $N\in \mathbb{N}$ be a constant chosen later. Taking advantage of \eqref{es:GamL2}, \eqref{es:nab-uL2}
and Lemmas \ref{lem:Rbeta-cm}, \ref{lem:HKR}, we have
\begin{eqnarray}\label{eq:Gam-B121}
   \|\Gamma\|_{L^1_T(B^1_{2,1})} &=& \displaystyle\sum_{-1\leq q < N} 2^q \|\Delta_q \Gamma\|_{L^1_T(L^2)}
  + \sum_{q \geq N} 2^q \|\Delta_q \Gamma\|_{L^1_T(L^2)} \notag \\
  &\leq& C 2^N \|\Gamma\|_{L^1_T(L^2)} \notag \\
  && + C \sum_{q\geq N} 2^{q(1-2\alpha)} \Big( \|\Gamma_0\|_{L^2}+ \,\|[\Delta_q, u\cdot\nabla]\Gamma\|_{L^1_T(L^2)} + \|\Delta_q([\mathcal{R}_{1-2\alpha},u\cdot\nabla]\theta)\|_{L^1_T(L^2)} \Big)
  \nonumber \\
  &\leq& C 2^N  \|\Gamma\|_{L^1_T(L^2)} \notag\\
  &&+ \,C  2^{N(1-2\alpha)} \bigg( \|\Gamma_0\|_{L^2}  +  \|\nabla u\|_{L^\infty_T(L^2)}
  \Big(\|\Gamma\|_{L^1_T(B^0_{\infty,\infty})} +  \|\theta\|_{L^1_T(B^{1-2\alpha}_{\infty,\infty} \cap L^2)}\Big) \bigg) \nonumber \\
  &\leq& C e^{CT} 2^N + C 2^{N(1-2\alpha)} e^{CT} \|\Gamma\|_{L^1_T(B^1_{2,1})},
\end{eqnarray}
where in the last line we have used the continuous embedding $B^1_{2,1}(\mathbf{D})\hookrightarrow B^0_{\infty,\infty}(\mathbf{D})$
together with the fact that
\begin{align*}%\label{eq:the-bdd1}
  \|\theta\|_{L^1_T(B^{1-2\alpha}_{\infty,\infty}\cap L^2)} \leq
  \|\theta\|_{L^1_T(L^2\cap L^\infty)} \leq T \|\theta_0\|_{L^2\cap L^\infty}.
\end{align*}
By choosing $N\in \mathbb{N}$ so that $C 2^{N(1-2\alpha)} e^{CT} \approx \frac{1}{2}$, we infer that
\begin{align*}
  \|\Gamma\|_{L^1_T(B^1_{2,1})} \leq C e^{CT}.
\end{align*}
Repeating the above process, we find that
\begin{eqnarray}\label{eq:Gam-B2a2inf}
  \|\Gamma\|_{\widetilde{L}^1_T(B^{2\alpha}_{2,\infty})}
  &=&  \sum_{q\geq -1} 2^{2\alpha q} \|\Delta_q \Gamma\|_{L^1_T(L^2)} \nonumber \\
  &\leq& C \|\Delta_{-1}\Gamma\|_{L^1_T(L^2)} \nonumber \\
  && + C\,\sup_{q\in \mathbb{N}} \Big(\|\Delta_q \Gamma_0\|_{L^2}
  + \|[\Delta_q, u\cdot \nabla]\Gamma\|_{L^1_T(L^2)} + \|\Delta_q [\mathcal{R}_{1-2\alpha},u\cdot\nabla]\theta\|_{L^1_T(L^2)} \Big) \nonumber \\
  &\leq& \ C \ \|\Gamma\|_{L^1_T(L^2)}
   + \ C \  \|\Gamma_0\|_{L^2} +
  C \|\nabla u\|_{L^\infty_T(L^2)} \Big( \|\Gamma\|_{L^1_T(B^0_{\infty,\infty})} + \|\theta\|_{L^1_T(B^{1-2\alpha}_{\infty,2} \cap L^2)} \Big) \nonumber \\
  &\leq& Ce^{CT}.
\end{eqnarray}

Next, we want to prove that $\nabla u\in \widetilde{L}^1_T(C^{2\alpha-1})$. By using the identity
\begin{align}\label{eq:nab-u-rela}
  \nabla u = \nabla \nabla^\perp \Lambda^{-2} \omega = \nabla \nabla^\perp \Lambda^{-2} \Gamma + \nabla \nabla^\perp \partial_1 \Lambda^{-2-2\alpha} \theta,
\end{align}
we have
\begin{equation}\label{es:nab-uHold1}
\begin{split}
  \|\nabla u\|_{\widetilde{L}^1_T(C^{2\alpha-1})} & \leq \|\Delta_{-1} \nabla u\|_{L^1_T (L^\infty)}
  + \sup_{q\in \mathbb{N}} 2^{q(2\alpha-1)}\|\Delta_q \nabla u\|_{L^1_T(L^\infty)}  \\
  & \leq C \|\nabla u\|_{L^1_T(L^2)} + C \sup_{q\in \mathbb{N}} 2^{q(2\alpha -1)} \Big(\|\Delta_q \Gamma\|_{L^1_T(L^\infty)}
  + 2^{q(1-2\alpha)}\|\Delta_q \theta\|_{L^1_T(L^\infty)}\Big)  \\
  & \leq C \|\nabla u\|_{L^1_T(L^2)} + C \|\Gamma\|_{\widetilde{L}^1_T(B^{2\alpha}_{2,\infty})}
  + C \sup_{q\in \mathbb{N}} \|\Delta_q \theta\|_{L^1_T(L^\infty)} \\
  & \leq C e^{CT}.
\end{split}
\end{equation}
Since we have the following embedding $C^{2\alpha-1} \hookrightarrow B^0_{\infty,1} \hookrightarrow L^\infty$, then $\|\nabla u\|_{L_T^1L^\infty} \leq C e^{CT}$.
Hence gathering the above estimates completes the proof of \eqref{eq:u-Gam-es2}. \\

Let us prove the statement $(\textbf{2})$. From the vorticity equation \eqref{eq:vorticity} with $\alpha =1$ and the classical $L^2$ estimate, it is not difficult to see that
\begin{align*}
  \frac{\varepsilon}{2} \frac{\dd}{\dd t} \|\omega(t)\|_{L^2}^2 + \|\nabla \omega(t)\|_{L^2}^2
  \leq \Big|\int_{\mathbb{R}^2}\theta\, \partial_1\omega(x,t) \dd x \Big|
  \leq \frac{1}{2} \|\theta(t)\|_{L^2}^2 + \frac{1}{2} \|\nabla \omega(t)\|_{L^2}^2.
\end{align*}
By integrating in the time variable we obtain
\begin{align*}
  \|\omega(t)\|_{L^2}^2 + \frac{2}{\varepsilon} \|\nabla \omega(t)\|_{L^2}^2
  \leq \|\omega_0\|_{L^2}^2 + \frac{1}{\varepsilon} \|\theta_0\|_{L^2}^2 t,
\end{align*}
which also gives
\begin{align*}%\label{eq:nab-uL2-es}
  \|\nabla u\|_{L^\infty_T(L^2)} + \frac{1}{\sqrt{\varepsilon}} \|\nabla^2 u\|_{L^2}
  \leq 2 \big(\|\omega_0\|_{L^2} + \|\theta_0\|_{L^2} \big) \Big(1 + \sqrt{\tfrac{T}{\varepsilon}} \Big).
\end{align*}

Next we want to prove the estimate of $\|\omega\|_{L^1_T(B^0_{\infty,1})}$. Using the identity
$$\Gamma = \omega - \mathcal{R}_{-1}\theta = \omega - \partial_1 (-\Delta)^{-1}\theta,$$
together with Lemmas \ref{lem:Rbeta-cm} and \ref{lem:HKR},
we infer that for all $q\in \mathbb{N}$,
\begin{align*}
  \|[\Delta_q, u\cdot\nabla]\Gamma\|_{L^2} & \leq
  \|[\Delta_q,u\cdot\nabla](\mathrm{Id}-\Delta_{-1})\Gamma\|_{L^2}
  + \|\Delta_q(u\cdot\nabla \Delta_{-1}\Gamma)\|_{L^2}
  + \|u\cdot\nabla \Delta_q \Delta_{-1}\Gamma\|_{L^2} \\
  & \leq C \|\nabla u\|_{L^2} \|(\mathrm{Id} -\Delta_{-1})\Gamma\|_{B^0_{\infty,\infty}}
  + C \|u\|_{L^2} \|\nabla \Delta_{-1}\Gamma\|_{L^\infty} \\
  & \leq C \|\nabla u\|_{L^2} \big(\|\omega\|_{B^0_{\infty,\infty}} + \|\theta\|_{L^\infty} \big)
  + C \|u\|_{L^2} \big(\|\omega\|_{L^2} + \|\theta\|_{L^2} \big),
\end{align*}
and
\begin{align*}
  \|[\mathcal{R}_{-1},u\cdot\nabla]\theta\|_{L^2} \leq C \Big( \|\nabla u\|_{L^2}
  \left\Vert\theta\right\Vert_{B^{-1/2}_{\infty,1}} + \|u\|_{L^2} \|\theta\|_{L^2} \Big)
  \leq C \|u\|_{H^1} \|\theta\|_{L^2\cap L^\infty}.
\end{align*}
Hence, by noticing that the inequality \eqref{eq:Del-Gam-L2} with $\alpha=1$ still holds, and applying the above estimates and Proposition \ref{lem:eneEs}, we find that for some $N\in \mathbb{N}$ (to be chosen later), we have
\begin{align*}
  \|\omega\|_{L^1_T(B^0_{\infty,1})} & \leq \sum_{-1\leq q\leq N} \|\Delta_q \omega\|_{L^1_T(L^\infty)}
  + \sum_{q\geq N} \|\Delta_q \mathcal{R}_{-1} \theta \|_{L^1_T(L^\infty)}
  + \sum_{q\geq N} \|\Delta_q \Gamma\|_{L^1_T(L^\infty)} \\
  & \leq C\sum_{-\infty< q \leq N} 2^q \|\Delta_q \omega\|_{L^1_T(L^2)}
  + C \sum_{q\geq N} 2^{-q} \|\Delta_q \theta\|_{L^1_T(L^\infty)} \\
  & \quad + C \varepsilon \sum_{q\geq N} 2^{-q} \Big( \|\Delta_q \Gamma_0\|_{L^2}
  + \|[\Delta_q, u\cdot\nabla]\Gamma\|_{L^1_T(L^2)}
  + \|[\mathcal{R}_{-1},u\cdot\nabla]\theta\|_{L^1_T(L^2)}\Big) \\
  & \leq C 2^N T \Big(1+ \sqrt{\tfrac{T}{\varepsilon}} \Big) +
  C \varepsilon  2^{-N} \Big( \|(\omega_0,\theta_0)\|_{L^2}
  + \|\nabla u\|_{L^\infty_T(L^2)} \|\omega\|_{L^1_T(B^0_{\infty,\infty})} \Big) \\
  & \quad + C \varepsilon 2^{-N} \Big( \|\nabla u\|_{L^\infty_T(L^2)} \|\theta\|_{L^1_T(L^2\cap L^\infty)}
  + \|u\|_{L^\infty_T(L^2)} \big( \|\omega\|_{L^1_T(L^2)}
  + \|\theta\|_{L^1_T(L^2\cap L^\infty)}\big)\Big) \\
  & \leq C 2^N T \Big(1+ \sqrt{\tfrac{T}{\varepsilon}} \Big)
  + C \sqrt{\varepsilon} 2^{-N} \big(1+\sqrt{T} \big) \|\omega\|_{L^1_T(B^0_{\infty,\infty})}
  + C \varepsilon 2^{-N} (1+ T^2) \Big(1 + \sqrt{\tfrac{T}{\varepsilon}} \Big).
\end{align*}
Choosing $N\in \mathbb{N}$ such that $\max\{C\sqrt{\varepsilon} (1+\sqrt{T}),1\}2^{-N}\approx \frac{1}{2}$ allows us to write that
\begin{align*}
  \|\omega\|_{L^1_T(B^0_{\infty,1})} + \|\nabla u\|_{L^1_T(B^0_{\infty,1})}
  \leq C (1+T^2) \Big(1+\sqrt{\tfrac{T}{\varepsilon}}\Big),
\end{align*}
where $C>0$ depends only on the norms of $u_0$ and $\theta_0$, importantly,  $C>0$ is independent of $\varepsilon$.
Note that from the identity $\Gamma =\omega - \mathcal{R}_{-1}\theta$, we may write that
\begin{equation}\label{eq:Gam-B0inf-1}
\begin{split}
  \|\Gamma\|_{L^1_T(B^0_{\infty,1})} & \leq \|\omega\|_{L^1_T(B^0_{\infty,1})} + \|\mathcal{R}_{-1}\theta\|_{L^1_T(B^0_{\infty,1})} \\
  & \leq \|\omega\|_{L^1_T(B^0_{\infty,1})} + C \|\Delta_{-1}\mathcal{R}_{-1}\theta\|_{L^1_T(L^4)}
  + C \sum_{q\in\mathbb{N}} \|\Delta_q \mathcal{R}_{-1}\theta\|_{L^1_T(L^\infty)} \\
  & \leq \|\omega\|_{L^1_T(B^0_{\infty,1})} + C \|\theta\|_{L^1_T(L^1\cap L^\infty)}
  \leq C (1+T^2) \Big(1+\sqrt{\tfrac{T}{\varepsilon}}\Big).
\end{split}
\end{equation}

Similarly as above, we get that for all $\gamma\in (0,1)$,
\begin{eqnarray*}
   \|\omega\|_{L^1_T( B^\gamma_{\infty,1})}
  &\leq&  \|\Delta_{-1} \omega\|_{L^1_T(L^\infty)}
  + \sum_{q\in\mathbb{N}} 2^{q\gamma}\|\Delta_q \mathcal{R}_{-1}\theta\|_{L^1_T(L^\infty)}
  + \sum_{q\in\mathbb{N}} 2^{q\gamma} \|\Delta_q \Gamma\|_{L^1_T(L^\infty)} \\
  &\leq& C \| \Delta_{-1} \omega\|_{L^1_T(L^2)}
  + C \sum_{q\in \mathbb{N}} 2^{q(\gamma -1)} \|\Delta_q \theta\|_{L^1_T(L^\infty)}
  + C \sum_{q\in\mathbb{N}} 2^{q(\gamma +1)} \| \Delta_q \Gamma\|_{L^1_T(L^2)} \\
  &\leq& C  T \Big(1+\sqrt{\tfrac{T}{\varepsilon}} \Big) \\
  && \ +  \ C \varepsilon \sum_{q\in\mathbb{N}} 2^{q(\gamma-1)} \Big( \|\Delta_q \Gamma_0\|_{L^2}
  + \|[\Delta_q,u\cdot\nabla]\Gamma \|_{L^1_T(L^2)} + \|[\mathcal{R}_{-1},u\cdot\nabla]\theta\|_{L^1_T(L^2)}\Big) \\
  &\leq& C (1+ T^2) \Big(1+ \sqrt{\tfrac{T}{\varepsilon}} \Big) + C \sqrt{\varepsilon} \big(1+\sqrt{T} \big)
  \|\omega\|_{L^1_T(B^0_{\infty,\infty})} \\
  &\leq& C (1+ T^3) \Big(1+ \sqrt{\tfrac{T}{\varepsilon}} \Big).
\end{eqnarray*}
Collecting the above estimates leads to \eqref{eq:u-Gam-es3} and ends the proof of the statement $(\textbf{2})$. \\

%In the above, $\Gamma_0:= \omega_0-\mathcal{R}_{1-
%2 \alpha} \theta_0$ satisfies that
%\begin{align*}
 % \|\Gamma_0\|_{L^2}\leq \|\omega_0\|_{L^2} + \|\mathcal{R}_{1-2\alpha} \theta_0\|_{L^2}
 % \leq \|u_0\|_{\dot H^1} + \|\theta_0\|_{L^{\frac{1}{\alpha}}} < \infty.
%\end{align*}
%Furthermore, by using \eqref{eq:Rbet-L2es-2} again, we find
%\begin{align}\label{es:nab-uL2-2}
 % \|\nabla u\|_{L^\infty_T(L^2)} \leq \|\omega\|_{L^\infty_T(L^2)}
 % \leq \|\Gamma\|_{L^\infty_T(L^2)} + \|\mathcal{R}_{1-2\alpha}\theta\|_{L^\infty_T(L^2)}
 % \leq C e^{CT}.
%\end{align}

It remains to prove the statement $(\textbf{3})$.
Consider \eqref{eq:Gamma} in the case $\alpha=1$. By taking the scalar product of the evolution equation in $\Gamma$ with $\Gamma$ itself,  and using the commutator estimate \eqref{eq:R-1cm-es2}, we get
\begin{eqnarray*}
  \frac{1}{2} \frac{\dd }{\dd t} \|\Gamma(t)\|_{L^2}^2 + \frac{1}{\varepsilon} \|\nabla \Gamma(t)\|_{L^2}^2
  &\leq& \|[\mathcal{R}_{-1}, u\cdot \nabla]\theta(t)\|_{L^2} \|\Gamma(t)\|_{L^2} \\
  &\leq& \|[\mathcal{R}_{-1}, u\cdot \nabla]\theta(t)\|_{B^{\frac{1}{2}}_{2,\infty}} \|\Gamma(t)\|_{L^2} \\
  &\leq& C \|\omega\|_{L^2} \big(\|\theta\|_{B^{-\frac{1}{2}}_{\infty,\infty}} + \|\theta\|_{L^2}\big) \|\Gamma(t)\|_{L^2} \\
  &\leq& C \big(\|\Gamma\|_{L^2} + \|\mathcal{R}_{-1}\theta\|_{L^2} \big) \|\theta_0\|_{L^2\cap L^\infty}  \|\Gamma(t)\|_{L^2} \\
  &\leq& C \big(\|\Gamma(t)\|_{L^2}  +  1 \big)\|\Gamma(t)\|_{L^2} \\
   &\leq& C \|\Gamma(t)\|_{L^2}^2 + C,
  %& \leq C \big(\|\Gamma\|_{L^2} +  \|\mathcal{R}_{1-2\alpha}\theta\|_{L^2}\big) \|\theta\|_{L^2\cap L^\infty}  \|\Gamma\|_{L^2} \\
  %& \leq C \|\Gamma\|_{L^2}^2 \big( 1 + \|\theta_0\|_{L^2\cap L^\infty} \big)
  %+ \|\theta_0\|_{L^1\cap L^\infty}^4,
\end{eqnarray*}
where in the last line we have used that
\begin{eqnarray*}
  \|\mathcal{R}_{-1}\theta(t)\|_{L^2(\mathbb{T}^2)} \leq C \|\Lambda^{-1}\theta(t)\|_{L^4(\mathbb{T}^2)}
  \leq C \|\theta(t)\|_{L^{\frac{4}{3}}(\mathbb{T}^2)}
  \leq C \|\theta_0\|_{L^{\frac{4}{3}}(\mathbb{T}^2)}.
\end{eqnarray*}
Gr\"onwall's inequality and the fact that $\|\Gamma_0\|_{L^2} \leq \|\omega_0\|_{L^2} + \|\mathcal{R}_{-1}\theta_0\|_{L^2} \leq C $
allow us to state that, for all $t>0$, the following control holds
\begin{align*}
  \|\Gamma(t)\|_{L^2}^2 + \frac{1}{\varepsilon}\int_0^t \|\nabla \Gamma(\tau)\|_{L^2}^2\dd \tau
  \leq (\|\Gamma_0\|_{L^2}^2 + Ct\big) e^{Ct} \leq C e^{Ct}.
\end{align*}
It is easy to see that, using the identity $\omega = \Gamma + \mathcal{R}_{-1}\theta$, we may write that
\begin{align*}
  \|\nabla u(t)\|_{L^2} \leq \|\omega(t)\|_{L^2} \leq \|\Gamma(t)\|_{L^2}
  + \|\mathcal{R}_{-1}\theta(t)\|_{L^2} \leq C e^{Ct}.
\end{align*}

Then,  following the same lines as the proof of \eqref{eq:Gam-B121}, we find that for some $Q\in \mathbb{N}$ (that will be fixed later),
\begin{align*}
  & \|\Gamma\|_{L^1_T(B^1_{2,1})} = \sum_{-1\leq q < Q} 2^q \|\Delta_q \Gamma\|_{L^1_T(L^2)}
  + \sum_{q \geq Q} 2^q \|\Delta_q \Gamma\|_{L^1_T(L^2)} \notag \\
  & \leq C 2^Q \|\Gamma\|_{L^1_T(L^2)} +
  C \sum_{q\geq Q} 2^{-q} \Big( \|\Gamma_0\|_{L^2}  + \|[\Delta_q, u\cdot\nabla]\Gamma\|_{L^1_T(L^2)}
  + \|[\mathcal{R}_{-1},u\cdot\nabla]\theta\|_{L^1_T(L^2)} \Big) \nonumber \\
  & \leq C 2^Q  e^{CT}
  + C  2^{-Q} \bigg( \|\Gamma_0\|_{L^2}  +  \|\nabla u\|_{L^\infty_T(L^2)}
  \Big(\|\Gamma\|_{L^1_T(B^0_{\infty,\infty})} +  \|\theta\|_{L^1_T(B^{-1/2}_{\infty,\infty} \cap L^2)}\Big)
  \bigg) \nonumber \\
  & \leq C e^{CT} 2^Q + C 2^{-Q} e^{CT} \|\Gamma\|_{L^1_T(B^1_{2,1})}.
\end{align*}
By choosing $Q\in\mathbb{N}$ so that $C e^{CT} 2^{-Q} \approx \frac{1}{2}$, we find that
\begin{align*}
  \|\Gamma\|_{L^1_T(B^1_{2,1})} + \|\Gamma\|_{L^1_T(B^0_{\infty,1})} \leq C e^{CT}.
\end{align*}
Following the same idea as the proof of \eqref{eq:Gam-B2a2inf} and \eqref{es:nab-uHold1}, we infer that
\begin{align*}
  \|\Gamma\|_{\widetilde{L}^1_T(B^2_{2,\infty})} + \|\nabla u\|_{\widetilde{L}^1_T(B^1_{\infty,\infty})}
  \leq Ce^{CT}.
\end{align*}
Gathering the above estimates and the embedding $B^1_{\infty,\infty}(\mathbb{T}^2)\hookrightarrow C^\gamma(\mathbb{T}^2)$ ($0<\gamma<1$) lead to \eqref{eq:u-Gam-es4} and therefore the proof of the last statement $(\textbf{3})$, thus this completes the proof of Proposition \ref{prop:ap-es1}.
\end{proof}

%\begin{remark}\label{remark:3}
%If we take $\rho=1$, one can get $\Gamma\in \widetilde{L}_t^1 (B_{p, 1}^{\frac{4\alpha}{p} })\hookrightarrow L^1_t(B_{\infty,\infty}^0).$
%\end{remark}

%\begin{remark}
%We can control the $C^{2\alpha}$ boundary regularity of temperature patch solutions to \eqref{main:eq}.
%\end{remark}

%\begin{proposition}[Theorem 5.2 of \cite{KX22}]
%Let $(u, \theta)$ solve 2D Boussinesq system \eqref{main:eq} with initial data $u_0 \in H^1 \cap \dot{W}^{1, p}$ for some $p>2, ~\nabla \cdot u_0=0$, and $\theta_0 \in L^1 \cap L^{\infty}$. Then for all $r \in[2, p]$ and $\rho \in[1, r / 2)$,
%$$
%\left\|\omega-\mathcal{R}_{2 \alpha} \theta\right\|_{\widetilde{L}_t^\rho (B_{r, 1}^{\frac{4\alpha}{r}})} \leq \Phi_1(t) .
%$$
%\end{proposition}

%\begin{remark}
%Suppose $u_0\in H^2$, since $H^2\hookrightarrow H^1\cap \dot{W}^{1,p}$ for all $p>2$, from the Proposition~\ref{propKX}, $\nabla u\in L^\infty_tL^p$ for all $p>2$.
%\end{remark}
%The following result is the analogue of Lemma 2.9 in \cite{CMX22}.

The following result is concerned with the \textit{a priori} estimates for $(u,\Gamma)$ solving the system
$(\mathrm{B}_\alpha)$ with $u_0\in H^1\cap W^{1,p}$ and $\theta_0\in L^1\cap L^\infty$.

\begin{proposition}\label{prop:ap-es2}
Let $\varepsilon =\frac{1}{\mathrm{Pr}}\in (0,1]$, $\alpha\in(\frac{1}{2},1]$.
Let $\mathbf{D}$ be either $\mathbb{R}^2$ or $\mathbb{T}^2$.
Suppose that $(u,\theta)$ is the smooth solution for the 2D Boussinesq-Navier-Stokes system $(\mathrm{B}_\alpha)$
satisfying
\begin{itemize}
 \item $u_0\in H^1\cap\dot{W}^{1,p}(\mathbf{D})$, $2<p<\infty$,
 \item $\nabla\cdot u_0=0$,
\item $\theta_0\in L^1\cap L^{\infty}(\mathbf{D})$.
\end{itemize}
Then, for all $T>0$, there exists a constant $C>0$ depending on $\alpha$ and the norms of $(u_0,\theta_0)$
but independent of $\varepsilon$ such that the following statements hold.
\begin{enumerate}[(1)]
\item
If $\alpha\in(\frac{1}{2},1)$, then we have
\begin{align}\label{es:nabu-Gam2}
  \|(\nabla u,\Gamma)\|_{L^\infty_T(L^p)}
  + \|\Gamma\|_{\widetilde{L}^1_T(B^{2\alpha}_{p,\infty})} + \|\Gamma\|_{L^1_T (B^{2\alpha-1}_{\infty,1})}
  +  \|\nabla u\|_{L^1_T(C^{2\alpha-1})}
  \leq C e^{CT},
\end{align}
In particular, if additionally $\frac{2}{2\alpha-1}<p<\infty$, we also have
\begin{align}\label{eq:Gam-Lip-es}
  \|\Gamma\|_{L^1_T(B^1_{\infty,1})} \leq C e^{CT}.
\end{align}
\item If $\mathbf{D}=\RR^2$, $\alpha=1$, then for all $\gamma\in(0,1)$ we have
\begin{align}
  \|(\nabla u,\Gamma)\|_{L^\infty_T(L^p)} + \|\nabla u\|_{L^1_T(B^1_{\infty,\infty})}
  & \leq C (1+T^4) \Big(1+\tfrac{T}{\varepsilon}\Big), \quad  \label{es:nabu-Gam2-2a} \\
  \|\Gamma\|_{\widetilde{L}^1_T(B^{2}_{p,\infty})} + \|\Gamma\|_{L^1_T (B^{1}_{\infty,1})}
  %+ \|\nabla u\|_{\widetilde{L}^1_T(C^{\gamma})}
  & \leq C (1+ T^4)\Big(1+ \sqrt{\tfrac{T}{\varepsilon}} \Big). \label{es:nabu-Gam2-2}
\end{align}
\item
If $\mathbf{D}=\TT^2$, $\alpha=1$,
we have
\begin{align}\label{es:nabu-Gam3}
  \|(\nabla u,\Gamma)\|_{L^\infty_T(L^p)}
  + \|\Gamma\|_{\widetilde{L}^1_T(B^{2\alpha}_{p,\infty})} + \|\Gamma\|_{L^1_T (B^1_{\infty,1})}
  + \|\nabla u\|_{L^1_T(B^1_{\infty,\infty})}
  \leq C e^{CT}.
\end{align}
\end{enumerate}
\end{proposition}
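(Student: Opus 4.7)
The proof follows the blueprint of Proposition \ref{prop:ap-es1} but raises the integrability from $L^2$ to $L^p$; the three cases differ only in which commutator bound from Lemma \ref{lem:Rbeta-cm} is available to control the forcing $F := [\mathcal{R}_{1-2\alpha}, u\cdot\nabla]\theta$ in the equation $\partial_t\Gamma + u\cdot\nabla\Gamma + \tfrac{1}{\varepsilon}\Lambda^{2\alpha}\Gamma = F$. The plan is to close, in succession, an $L^p$-estimate on $\Gamma$ and $\nabla u$; a smoothing bound in $\widetilde L^1_T(B^{2\alpha}_{p,\infty})$ via Lemma \ref{lem:TD-sm}; a dyadic transfer to $L^1_T(B^{2\alpha-1}_{\infty,1})$ (and to $L^1_T(B^1_{\infty,1})$ when $p>2/(2\alpha-1)$); and finally a Biot--Savart recovery of $\nabla u$ in H\"older topology.

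For the first step, the positivity-type estimate for fractional transport-diffusion (which discards the coercive term) yields $\|\Gamma(t)\|_{L^p}\leq \|\Gamma_0\|_{L^p}+\int_0^t \|F(\tau)\|_{L^p}\,\dd\tau$. In cases (1) and (3), the bounds \eqref{eq:Rbeta-cm-es} and \eqref{eq:R-1cm-es2}, combined with $B^0_{p,1}\hookrightarrow L^p$ and the fact that $\|\theta\|_{B^{1-2\alpha}_{\infty,1}}\leq C\|\theta_0\|_{L^\infty}$ (valid since $1-2\alpha<0$), give $\|F\|_{L^p}\leq C\|\nabla u\|_{L^p}\|\theta_0\|_{L^2\cap L^\infty}$; in case (2), \eqref{eq:R-1cm-es1} contributes the extra term $C\|u\|_{L^2}\|\theta_0\|_{L^2}$ whose only available bound from Proposition \ref{lem:eneEs} is $C(1+T/\varepsilon)$. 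Using $\omega = \Gamma + \mathcal{R}_{1-2\alpha}\theta$, the Calder\'on--Zygmund boundedness $\|\nabla u\|_{L^p}\leq C\|\omega\|_{L^p}$, and the uniform-in-time Hardy--Littlewood--Sobolev bound $\|\mathcal{R}_{1-2\alpha}\theta\|_{L^p}\leq C\|\theta_0\|_{L^1\cap L^\infty}$ (gain $2\alpha-1>0$), Gr\"onwall closes the $L^\infty_T(L^p)$ control of $(\nabla u,\Gamma)$ with the stated bounds.

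Applying \eqref{eq:TD-sm4} to the $\Gamma$-equation with $\nu = 1/\varepsilon$, $\rho = 1$, $s=0$, $r = \infty$, the factor $\varepsilon^{-1}$ on the left is compensated by the $L^1_T(L^p)$ control of $F$ obtained above (and $\varepsilon\leq 1$ absorbs the residual scaling), yielding a uniform-in-$\varepsilon$ bound on $\sup_{q\in \mathbb{N}} 2^{2\alpha q}\|\Delta_q\Gamma\|_{L^1_T L^p}$; combined with $\|\Delta_{-1}\Gamma\|_{L^1_T L^p}\leq T\|\Gamma\|_{L^\infty_T L^p}$ this gives the $\widetilde L^1_T(B^{2\alpha}_{p,\infty})$ bound. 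To pass to $L^1_T(B^{2\alpha-1}_{\infty,1})$, I split $\sum_q 2^{q(2\alpha-1)}\|\Delta_q\Gamma\|_{L^1_T L^\infty}$ at a dyadic level $N$: low blocks are controlled via $\|\Gamma\|_{L^\infty_T L^p}$ with Bernstein cost $2^{2q/p}$, high blocks via the $B^{2\alpha}_{p,\infty}$-bound with cost $2^{q(2/p-1)}$, and the exponent $2/p-1<0$ for $p>2$ permits optimization in $N$. The same splitting with $1$ in place of $2\alpha-1$ produces \eqref{eq:Gam-Lip-es}, the summability of the high-frequency cost $2^{q(1+2/p-2\alpha)}$ being exactly the stated assumption $p>2/(2\alpha-1)$. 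The H\"older bound on $\nabla u$ then follows from \eqref{eq:nab-u-rela}: $\nabla\nabla^\perp\Lambda^{-2}\Gamma$ is a zero-order Calder\'on--Zygmund operator applied to $\Gamma$, hence controlled in $L^1_T(C^{2\alpha-1})$ by the preceding $B^{2\alpha-1}_{\infty,1}$-bound (with the $\Delta_{-1}$ block handled by $L^p$), while $\nabla\nabla^\perp\partial_1\Lambda^{-2-2\alpha}\theta$ is an operator of negative order $1-2\alpha<0$ applied to $\theta\in L^\infty_T(L^\infty)$, hence uniformly in $L^\infty_T(C^{2\alpha-1})$ and contributing $\leq CT$ to the $L^1_T$ norm.

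The principal obstacle is pinpointing where uniformity in $\varepsilon$ breaks down, since this is the whole reason for the trichotomy (1)--(3). The loss enters exclusively through the $\|u\|_{L^2}$ term of \eqref{eq:R-1cm-es1}: on the torus, Poincar\'e's inequality absorbs $\|u\|_{L^2}$ into $\|\nabla u\|_{L^p}$ inside the proof of \eqref{eq:R-1cm-es2}, and for $\alpha<1$ the stronger smoothing of $\mathcal{R}_{1-2\alpha}$ in \eqref{eq:Rbeta-cm-es} removes the low-frequency obstruction altogether. Propagating the only $\varepsilon$-sensitive bound of Proposition \ref{lem:eneEs} through the Gr\"onwall scheme of the first two steps produces exactly the $(1+T/\varepsilon)$ and $(1+\sqrt{T/\varepsilon})$ factors in \eqref{es:nabu-Gam2-2a}--\eqref{es:nabu-Gam2-2}.
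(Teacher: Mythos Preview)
Your overall architecture matches the paper's: an $L^p$ energy inequality on $\Gamma$ (dropping the nonnegative dissipation), a localized smoothing estimate to land in $\widetilde L^1_T(B^{2\alpha}_{p,\infty})$, an embedding into $L^1_T(B^{2\alpha-1}_{\infty,1})$ (and $B^1_{\infty,1}$ when $p>2/(2\alpha-1)$), and recovery of $\nabla u$ from \eqref{eq:nab-u-rela}. In cases (1) and (3) this is essentially the paper's proof, modulo a slightly worse $T$-growth: invoking \eqref{eq:TD-sm4} buys you the commutator-free black box at the price of the factor $\exp\big(C\!\int_0^T\!\|\nabla u\|_{L^\infty}\big)$, which via Proposition~\ref{prop:ap-es1} is $\exp(Ce^{CT})$ rather than the paper's $Ce^{CT}$ obtained by localizing the equation directly and using Lemma~\ref{lem:HKR}.

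There is, however, a genuine gap in case (2). The bound you quote, \eqref{eq:TD-sm4}, carries the exponential prefactor $\exp\big(C\!\int_0^T\!\|\nabla u\|_{L^\infty}\big)$, and in the whole-space $\alpha=1$ setting Proposition~\ref{prop:ap-es1}(2) only gives $\|\nabla u\|_{L^1_T(L^\infty)}\le C(1+T^3)(1+\sqrt{T/\varepsilon})$. Hence your smoothing step produces a factor $\exp\!\big(C\sqrt{T/\varepsilon}\big)$, not the polynomial $(1+\sqrt{T/\varepsilon})$ claimed in \eqref{es:nabu-Gam2-2}; the extra $\varepsilon$ you gain on the left is powerless against this. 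The paper avoids this by \emph{not} quoting \eqref{eq:TD-sm4}: it writes the Duhamel formula \eqref{es:Deltaq} for $\Delta_q\Gamma$ directly, integrates in time to get
\[
2^{2q}\|\Delta_q\Gamma\|_{L^1_T(L^p)}\le C\varepsilon\Big(\|\Delta_q\Gamma_0\|_{L^p}+\|[\Delta_q,u\cdot\nabla]\Gamma\|_{L^1_T(L^p)}+\|\Delta_q F\|_{L^1_T(L^p)}\Big),
\]
and bounds the commutator \emph{linearly} via Lemma~\ref{lem:HKR} by $\|\nabla u\|_{L^p}\|\Gamma\|_{B^0_{\infty,\infty}}$. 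The crucial $\varepsilon$ in front then converts $\|\nabla u\|_{L^\infty_T(L^p)}\le C(1+T)(1+T/\varepsilon)$ into a quantity bounded polynomially in $T$ alone. A related issue lurks in your $L^p$ step for (2): applying \eqref{eq:R-1cm-es1} at integrability $p$ forces $\|\nabla u\|_{L^p}$ onto the right-hand side, making Gr\"onwall produce $e^{CT}$ instead of the stated polynomial; the paper sidesteps this by embedding $B^{1-2/p}_{2,1}\hookrightarrow L^p$ and applying \eqref{eq:R-1cm-es1} at $p=2$, so that only the $L^2$-controlled quantities of Proposition~\ref{prop:ap-es1} appear and a direct time-integration suffices.
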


\begin{remark}\label{rem:prop-ap-es2}
Under the assumptions of Proposition \ref{prop:ap-es2},
and if we additionally consider the temperature patch initial data
$\theta_0(x) = \bar{\theta}_0(x) \mathbf{1}_{D_0}(x)$ as in Theorem \ref{thm:main} with
\begin{align*}
\begin{cases}
  \partial D_0 \in C^{2\alpha},\; \bar{\theta}_0 \in L^\infty(\overline{D_0}), \;
  \quad & \textrm{if}\;\; \alpha\in (\frac{1}{2},1), \\
  \partial D_0\in W^{2,\infty}, \;\bar{\theta}_0\in C^\mu(\overline{D_0}),\;0<\mu<1,
  \quad & \textrm{if}\;\; \alpha =1,
\end{cases}
\end{align*}
we can prove that 
\begin{equation*}
\begin{cases}
  \partial D(t) \in L^\infty_T(C^{2\alpha}), \quad & \textrm{if}\;\; \alpha\in (\frac{1}{2},1), \\
  \partial D(t) \in L^\infty_T(W^{2,\infty}),\quad & \textrm{if}\;\; \alpha=1.
\end{cases}
\end{equation*}
In fact, if $\alpha\in (\frac{1}{2},1)$, the claim follows from the fact that $\nabla X_t^{\pm1} \in L^\infty_T(C^{2\alpha-1})$ which is a direct consequence of  \eqref{es:nabu-Gam2} and \eqref{eq:Xt-C1gam};
while if $\alpha=1$, the claim follows from the fact that $X_t^{\pm 1} \in L^\infty_T(W^{2,\infty})$
which is a consequence of the following estimate
\begin{align}\label{eq:W2inf-es}
  \|\nabla u\|_{L^1_T(W^{1,\infty}(\mathbf{D}))}
  \leq
  \begin{cases}
    C \exp \Big\{C (1+ T^4) \big(1+ \sqrt{\frac{T}{\varepsilon}}\big)\Big\}, \quad &\textrm{for}\;\; \alpha =1, \mathbf{D} =\mathbb{R}^2, \\
    C \exp \{ C e^{CT} \},\quad & \textrm{for}\;\; \alpha=1, \mathbf{D} = \mathbb{T}^2;
  \end{cases}
\end{align}
as for the proof of  \eqref{eq:W2inf-es}, we infer from \eqref{eq:nab-u-rela} that
\begin{align*}
  \|\nabla^2 u\|_{L^1_T(L^\infty)}
  & \leq \| \nabla^2 \nabla^\perp \Lambda^{-2} \Gamma \|_{L^1_T(L^\infty)}
  + \|\nabla^2 \nabla^\perp \partial_1 \Lambda^{-4} \theta\|_{L^1_T(L^\infty)} \\
  & \leq C \|\Gamma\|_{L^1_T(B^1_{\infty,1})} + \|\nabla^2 \nabla^\perp \partial_1 \Lambda^{-4} \theta\|_{L^1_T(L^\infty)};
\end{align*}
by following the geometric lemma in \cite{bertozzi1993} or  \cite[Sec. 3.2]{CMX22}, we conclude that $\|\nabla^2 \nabla^\perp \partial_1 \Lambda^{-4} \theta\|_{L^1_T(L^\infty)}$ and $\|\nabla u\|_{L^1_T(W^{1,\infty})}$
is controlled by \eqref{eq:W2inf-es}.
\end{remark}

\begin{proof}[Proof of Proposition \ref{prop:ap-es2}]
%First, we localise in frequencies the equation for $\Gamma$ to get
Let us first prove \textbf{(1)}. In this case $\alpha\in (\frac{1}{2},1)$, we prove \eqref{es:nabu-Gam2}-\eqref{eq:Gam-Lip-es}
in a unified approach without distinguishing $\mathbf{D}$ to be $\mathbb{R}^2$ or $\mathbb{T}^2$.

By multiplying both sides of the $\Gamma$-equation \eqref{eq:Gamma} with $|\Gamma|^{p-2}\Gamma(x,t)$
and integrating over the spatial variable, we use the integration by parts to get
\begin{align}\label{eq:Gam-Lp-es1a}
  \frac{1}{p} \frac{\dd}{\dd t} \|\Gamma(t)\|_{L^p}^p + \frac{1}{\varepsilon} \int_\mathbf{D} \Lambda^{2\alpha} \Gamma\,\, |\Gamma|^{p-2}
  \Gamma(x,t) \dd x \leq \|[\mathcal{R}_{1-2\alpha}, u\cdot\nabla]\theta \|_{L^p} \|\Gamma(t)\|_{L^p}^{p-1} .
\end{align}
The positivity lemma in \cite{CorC04} ensures that the term coming from dissipation $\Lambda^{2\alpha}\Gamma$ is nonnegative,
thus by applying Lemma \ref{lem:Rbeta-cm} and the Cader\'on-Zygmund theorem, we find
\begin{align*}
  \frac{\dd }{\dd t}\|\Gamma(t)\|_{L^p} & \leq \|[\mathcal{R}_{1-2\alpha},u\cdot\nabla]\theta(t)\|_{L^p} \\
  & \leq C \|\omega(t)\|_{L^p} \big(\|\theta(t)\|_{B^{1-2\alpha}_{\infty,1}} + \|\theta(t)\|_{L^2} \big) \\
  & \leq C \big(\|\Gamma(t)\|_{L^p} + \|\mathcal{R}_{1-2\alpha}\theta(t)\|_{L^p} \big) \|\theta(t)\|_{L^2\cap L^\infty} \\
  & \leq C \big(\|\Gamma(t)\|_{L^p} + \|\theta_0\|_{L^1\cap L^\infty} \big) \|\theta_0\|_{L^2\cap L^\infty},
\end{align*}
where in the last line we have used the following estimate
\begin{align}\label{eq:R1-2alp-the1}
  \|\mathcal{R}_{1-2\alpha} \theta(t)\|_{L^p} \leq C \|\Lambda^{1-2\alpha}\theta(t)\|_{L^p}
  \leq C \| \theta(t)\|_{L^{\frac{2p}{(2\alpha-1)p+2}}} \leq C \|\theta_0\|_{L^1\cap L^\infty}.
\end{align}
Gr\"onwall's inequality and the fact that $\|\Gamma_0\|_{L^p}\leq \|\omega_0\|_{L^p} + \|\mathcal{R}_{1-2\alpha}\theta_0\|_{L^p}\leq C$
imply that $\|\Gamma(t)\|_{L^p} \leq  e^{C t}$, thus, using the identity $\omega =\Gamma + \mathcal{R}_{1-2\alpha}\theta$, one finds
\begin{align*}
  \|(\Gamma,\nabla u)\|_{L^\infty_T(L^p)} \leq C \|(\Gamma,\omega)\|_{L^\infty_T(L^p)}
  \leq C \|\Gamma\|_{L^\infty_T(L^p)} + \|\mathcal{R}_{1-2\alpha}\theta\|_{L^\infty_T(L^p)} \leq C e^{CT}.
\end{align*}

By taking the scalar product, for all $q\in \mathbb{N}$, of the equation \eqref{eq:Delta-q-Gamm} with $\left|\Delta_q\Gamma\right|^{p-2} \Delta_q\Gamma(x,t)$
and then using the following estimate (see \cite{CMZ07})
%integrating in the space variable we find
%$$
%\frac{1}{p} \frac{\mathrm{d}}{\mathrm{d} t}\left\|\Delta_q\Gamma(t)\right\|_{L^p}^p+\int_{\mathbb{R}^2}\left(\Lambda^{2 \alpha} \Delta_q\Gamma\right)\left|\Delta_q\Gamma\right|^{p-2} \Delta_q\Gamma \mathrm{~d} x \leq\left\|\Delta_q\Gamma(t)\right\|_{L^p}^{p-1}\left\|f_q(t)\right\|_{L^p}.
%$$
%From Lemma~\ref{lem:CMZ},
%we have the following inequality,
\begin{align*}
  \int_\mathbf{D} \left(\Lambda^{2 \alpha} \Delta_q\Gamma\right)\left|\Delta_q\Gamma\right|^{p-2} \Delta_q\Gamma
  \mathrm{~d} x \geq c\, 2^{2 \alpha q}\left\|\Delta_q\Gamma\right\|_{L^p}^p,\quad \forall q\in \NN,
\end{align*}
for some $c>0$ independent of $q$, we obtain
\begin{align*}
  \frac{1}{p} \frac{\mathrm{d}}{\mathrm{d} t}\left\|\Delta_q\Gamma(t)\right\|_{L^p}^p +
  \frac{c}{\varepsilon} \, 2^{2 \alpha q}\left\|\Delta_q\Gamma(t)\right\|_{L^p}^p
  \leq\left\|\Delta_q\Gamma(t)\right\|_{L^p}^{p-1}\left\|f_q(t)\right\|_{L^p},
\end{align*}
which gives
%and $\left(\Gamma_0\right)_q:=\Delta_q \Gamma_0$ )
\begin{align}\label{es:Deltaq}
  \left\|\Delta_q\Gamma(t)\right\|_{L^p} \leq e^{-\frac{c}{\varepsilon} t 2^{2 \alpha q}}\left\|\Delta_q\Gamma_0\right\|_{L^p}
  + \int_0^t e^{- \frac{c}{\varepsilon} (t-\tau) 2^{2 \alpha q}}\left\|f_q(\tau)\right\|_{L^p} \mathrm{d} \tau.
\end{align}
Taking the $L^1([0,T])$ norm, and using \eqref{eq:u-Gam-es2}, Lemmas~\ref{lem:Rbeta-cm} and \ref{lem:HKR},
we get that for all $q\in \mathbb{N}$,
\begin{align*}
  2^{2 \alpha q }\left\|\Delta_q\Gamma\right\|_{L_T^1 (L^p)}
  & \leq C  \left\|\Delta_q\Gamma_0\right\|_{L^p}+ C \int_0^T \left\|\left[\Delta_q, u \cdot \nabla\right]
  \Gamma \right\|_{L^p} \mathrm{d} \tau
  + C \int_0^T \left\|\Delta_q \left(\left[\mathcal{R}_{1-2 \alpha}, u \cdot \nabla\right] \theta\right)
  \right\|_{L^p} \mathrm{d} \tau \\
%  & \leq C \left\|\Gamma_0\right\|_{L^p}+ C \int_0^T \| \nabla u(\tau)\|_{L^p} \|\Gamma(\tau)\|_{B^0_{\infty,\infty}}
%  \mathrm{d} \tau + C \int_0^T \left\|\left[\mathcal{R}_{2 \alpha}, u \cdot \nabla\right]
%  \theta(\tau)\right\|_{B^0_{p,\infty}} \mathrm{d} \tau \\
  & \leq C \left\|\Gamma_0\right\|_{L^p}  + C \|\nabla u\|_{L^\infty_T(L^p)} \left(\|\Gamma\|_{L^1_T(B^0_{\infty,\infty})}
  +\|\theta\|_{L^1_T(B^{1-2\alpha}_{\infty, \infty}\cap L^2)} \right)
  \leq  C e^{CT}.
\end{align*}
%where in the last line we have used the following inequality %(owing to the Hardy-Littlewood-Sobolev inequality)
%\begin{align*}
%  \left\|\mathcal{R}_{1-2 \alpha} \theta_0\right\|_{L^p} \leq C \|\Lambda^{1-2\alpha} \theta_0\|_{L^p}
%  \leq C \| \theta_0\|_{L^{\frac{2p}{(2\alpha-1)p+2}}} \leq C \|\theta_0\|_{L^1\cap L^\infty}.
%\end{align*}
Hence, we have
\begin{align*}
  \|\Gamma\|_{\widetilde{L}^1_T(B^{2\alpha}_{p,\infty})} \leq C \|\Delta_{-1}\Gamma\|_{L^1_T (L^p)}
  + \sup_{q\in \mathbb{N}} 2^{2\alpha q}  \|\Delta_q \Gamma\|_{L^1_T(L^p)}  \leq C e^{CT}.
\end{align*}
Together with the continuous embedding $B^{2\alpha}_{p,\infty}\hookrightarrow B^{2\alpha-\frac{2}{p}}_{\infty,\infty}\hookrightarrow B^{2\alpha -1}_{\infty,1}$ ($p>2$) and $B^{2\alpha-\frac{2}{p}}_{\infty,\infty}\hookrightarrow B^1_{\infty,1}$
($p>\frac{2}{2\alpha-1}$),
the above inequality yields $\|\Gamma\|_{L^1_T(B^{2\alpha-1}_{\infty,1})} \leq Ce^{CT} $ and \eqref{eq:Gam-Lip-es}.
%which together with the continuous embedding $B^{2\alpha}_{p,\infty} \hookrightarrow B^{2\alpha-\frac{2}{p}}_{\infty,\infty}
%\hookrightarrow B^1_{\infty,1}$ (for all $p>\frac{2}{2\alpha-1}$ and $\frac{1}{2}<\alpha<1$)
%leads to the wanted result of $\Gamma$.

As for the estimate of $\nabla u$ in the space of $L^1_T(C^{2\alpha-1})= L^1_T(B^{2\alpha-1}_{\infty,\infty})$,
it suffices to follow the same lines as the proof of \eqref{es:nab-uHold1}
and we see that
\begin{equation}\label{eq:nab-u-C2alp-1}
\begin{split}
  \|\nabla u\|_{L^1_T(C^{2\alpha-1})} & \leq C \|\Delta_{-1}\nabla u\|_{L^1_T(L^\infty)} +
  C \Big\| \sup_{q\in \mathbb{N}}2^{q(2\alpha-1)} \|\Delta_q \nabla u\|_{L^\infty} \Big\|_{L^1_T} \\
  & \leq C \|\nabla u\|_{L^1_T(L^2)} + C \|\Gamma\|_{L^1_T( B^{2\alpha-1}_{\infty,\infty})}
  + C \|\theta\|_{L^1_T(B^0_{\infty,\infty})}
  \leq C e^{CT}.
\end{split}
\end{equation}
Hence collecting the above estimates gives \eqref{es:nabu-Gam2}-\eqref{eq:Gam-Lip-es} and therefore \textbf{(1)} is proved. \\

To prove the second statement, that is \textbf{(2)}, we consider the case $\mathbf{D}=\RR^2$ and $\alpha=1$.
We remark that \eqref{eq:Gam-Lp-es1a} now becomes
\begin{align*}%\label{eq:Gam-Lp-es1}
  \frac{1}{p} \frac{\dd}{\dd t} \|\Gamma(t)\|_{L^p}^p + \frac{p-1}{\varepsilon} \int_{\mathbb{R}^2} |\Gamma(x,t)|^{p-2}
  |\nabla\Gamma(x,t)|^2 \dd x \leq \|[\mathcal{R}_{-1}, u\cdot\nabla]\theta \|_{L^p} \|\Gamma(t)\|_{L^p}^{p-1},
\end{align*}
thus we use the embedding $B^{1-\frac{2}{p}}_{2,1}(\mathbb{R}^2)\hookrightarrow L^p(\mathbb{R}^2)$
and \eqref{eq:R-1cm-es1} to find
\begin{align}\label{eq:Gam-Lp-es1}
  \frac{\dd}{\dd t}\|\Gamma(t)\|_{L^p} &\leq \|[\mathcal{R}_{-1}, u\cdot\nabla]\theta(t)\|_{L^p}\\
  &\leq C \|[\mathcal{R}_{-1}, u\cdot\nabla]\theta(t) \|_{B^{1-2/p}_{2,1}} \nonumber  \\
  &\leq C \|\nabla u(t)\|_{L^2}\|\theta(t)\|_{B^{-2/p}_{\infty,1}}
  + C \|u(t)\|_{L^2}\|\theta(t)\|_{L^2} . \nonumber
%  &\leq C (\frac{1}{\sqrt{\varepsilon}}t^{3/2}+\frac{1}{\varepsilon}t^2+t+1)\leq C (\frac{1}{\varepsilon}t^2+1),
\end{align}
Integrating in time and applying Proposition \ref{lem:eneEs} give that
\begin{align*}
  \|\Gamma\|_{L^\infty_T(L^p)} & \leq \|\Gamma_0\|_{L^p} + C  \big(\|\nabla u\|_{L^1_T(L^2)} + \|u\|_{L^1_T(L^2)} \big)
  \|\theta\|_{L^\infty_T(L^2\cap L^\infty)} \\
  & \leq \|\omega_0\|_{L^p} + \|\mathcal{R}_{-1}\theta_0\|_{L^p}
  + C \big( \sqrt{T} \|\nabla u\|_{L^2_T(L^2)} + T \|u\|_{L^\infty_T(L^2)}\big) \|\theta_0\|_{L^2\cap L^\infty} \\
  & \leq C (1+ T)\Big(1+ \tfrac{T}{\varepsilon}\Big),
\end{align*}
where in the above we also have used that $\|\mathcal{R}_{-1}\theta_0\|_{L^p} \leq C \|\theta_0\|_{L^{\frac{2p}{p+2}}} $.
Thus,
\begin{align*}
  \|\nabla u\|_{L^\infty_T(L^p)}\leq C \|\omega\|_{L^\infty_T(L^p)}
  \leq C \left(\|\Gamma\|_{L^\infty_T(L^p)} + \|\mathcal{R}_{-1} \theta\|_{L^\infty_T(L^p)} \right)
  \leq C (1+T)\Big(1+\tfrac{T}{\varepsilon}\Big).
\end{align*}

Then, we prove the estimate of $\|\Gamma\|_{\widetilde{L}^1_T(B^{2}_{p,\infty})}$.
%\begin{comment}
%For all $q\in \mathbb{N}$, applying the frequency localization operator $\Delta_q$ to the equation \eqref{eq:Gamma} yields
%\begin{equation}\label{eq:Delta-q-Gamm-2}
%\begin{aligned}
%  \partial_t \Delta_q\Gamma + u \cdot \nabla \Delta_q\Gamma - \frac{1}{\varepsilon}\Delta \Delta_q\Gamma
%  = - \left[\Delta_q, u \cdot \nabla\right] \Gamma +
%  \Delta_q\big(\left[\mathcal{R}_{-1}, u \cdot \nabla\right] \theta\big)  := f_q .
%\end{aligned}
%\end{equation}
%Taking the scalar product of the above equation with $\Delta_q \Gamma$, we get
%\begin{align*}
%  \frac{1}{p}\frac{\dd}{\dd t} \|\Delta_q\Gamma(t)\|_{L^p}^p + \frac{c_0}{\varepsilon} 2^{2 q} \| \Delta_q \Gamma(t)\|_{L^p}^p
%  \leq \|\Delta_q \Gamma\|_{L^p}^{p-1} \|f_q\|_{L^p},
%\end{align*}
%with some $c_0>0$ absolute number.
%\end{comment}
We see that \eqref{es:Deltaq} with $\alpha=1$ still holds, and we integrate on the time interval $[0,T]$ to get that for all $q\in \mathbb{N}$,
\begin{align*}
  \|\Delta_q\Gamma\|_{L^1_t(L^p)} \leq \frac{\varepsilon}{c} 2^{-2 q} \|\Delta_q \Gamma_0\|_{L^p}
  + \frac{\varepsilon}{c} 2^{-2 q} \|f_q\|_{L^1_t(L^p)},
\end{align*}
for some $c>0$ independent of $q$ and $f_q$ given by \eqref{eq:Delta-q-Gamm}.
Thus by taking advantage of \eqref{eq:u-L2-es}, \eqref{eq:Gam-B0inf-1}, Lemmas \ref{lem:Rbeta-cm}, \ref{lem:HKR}
and the following estimate
\begin{align*}
  \|\Delta_{-1}\Gamma\|_{L^\infty_T(L^p)} & \leq \|\Delta_{-1}\omega\|_{L^\infty_T(L^p)}
  + \|\Delta_{-1}\mathcal{R}_{-1}\theta\|_{L^\infty_T(L^p)} \\
  & \leq C \|\omega\|_{L^\infty_T(L^2)} + C \|\mathcal{R}_{-1}\theta\|_{L^\infty_T(L^p)}
  \leq C \Big(1+ \sqrt{\tfrac{T}{\varepsilon}}\Big),
\end{align*}
we have
\begin{align}\label{eq:Gam-B2p-inf1}
  & \|\Gamma\|_{\widetilde{L}^1_T(B^{2}_{p,\infty})} \leq \|\Delta_{-1} \Gamma\|_{L^1_T(L^p)}
  + \sup_{q \in \mathbb{N}} 2^{2q} \|\Delta_q \Gamma\|_{L^1_T(L^p)} \nonumber \\
  & \leq C \|\Delta_{-1}\Gamma\|_{L^1_T(L^p)}
  + C \sup_{q\in \mathbb{N}} \Big( \|\Gamma_0\|_{L^p}
  + \varepsilon \|[\Delta_q, u\cdot\nabla]\Gamma\|_{L^1_T(L^p)}
  + \varepsilon \|\Delta_q([\mathcal{R}_{-1},u\cdot\nabla]\theta)\|_{L^1_T(L^p)} \Big) \nonumber \\
  & \leq C \|\Delta_{-1}\Gamma\|_{L^1_T(L^p)}  + C \|\Gamma_0\|_{L^p} \nonumber \\
  & \quad +  C \bigg( \varepsilon\|\nabla u\|_{L^\infty_T(L^p)}
  \Big(\|\Gamma\|_{L^1_T(B^0_{\infty,\infty})} +  \|\theta\|_{L^1_T(B^{-1/2}_{\infty,\infty} \cap L^2)}\Big)
  + \varepsilon\|u\|_{L_T^\infty L^2} \|\theta\|_{L_T^1 L^2}\bigg) \nonumber \\
  & \leq C T\Big(1+\sqrt{\tfrac{T}{\varepsilon}}\Big)  + C (1+ T^4) \Big(1+ \sqrt{\tfrac{T}{\varepsilon}} \Big)
  \leq C (1+ T^4) \Big( 1 + \sqrt{\tfrac{T}{\varepsilon}}\Big) .
\end{align}
As a result of the embedding $\widetilde{L}_T^1 (B^2_{p,\infty}) \hookrightarrow L_T^1(B^1_{\infty,1}) $,
we obtain the same upper bound as $\|\Gamma\|_{L_T^1B^1_{\infty,1}} $. Together with \eqref{eq:nab-u-C2alp-1} with $\alpha =1$,
we find
\begin{align}\label{eq:nab-u-B1inf}
  \|\nabla u\|_{L^1_T(B^1_{\infty,\infty})}
  \leq C\|\nabla u\|_{L^1_T(L^p)} + C \|\Gamma\|_{L^1_T(B^1_{\infty,\infty})} + C \|\theta\|_{L^1_T(B^0_{\infty,\infty})}
  \leq C (1+ T^4) \Big(1+ \tfrac{T}{\varepsilon}\Big).
\end{align}
Hence, gathering the above inequalities completes the proof of \eqref{es:nabu-Gam2-2a}-\eqref{es:nabu-Gam2-2}. \\

It remains to prove \textbf{(3)}. We treat the case $\mathbf{D} = \mathbb{T}^2$ and $\alpha=1$.
By using \eqref{eq:Gam-Lp-es1}, \eqref{eq:R1-2alp-the1} (with $\alpha=1$) and the commutator estimate \eqref{eq:R-1cm-es2}, we get
\begin{align*}
  \frac{\dd}{\dd t}\|\Gamma(t)\|_{L^p} &\leq \|[\mathcal{R}_{-1}, u\cdot\nabla]\theta(t)\|_{L^p} \\
  & \leq C \|[\mathcal{R}_{-1}, u\cdot\nabla]\theta(t)\|_{B^{1/2}_{p,\infty}}  \\
  & \leq C \|\omega(t)\|_{L^p} \big(\|\theta(t)\|_{B^{-1/2}_{\infty,\infty}} + \|\theta(t)\|_{L^2} \big) \\
  & \leq C \big( \|\Gamma(t)\|_{L^p} + \|\mathcal{R}_{-1}\theta(t)\|_{L^p} \big)
  \|\theta(t)\|_{L^2\cap L^\infty} \\
  & \leq C \big(\|\Gamma(t)|_{L^p} + \|\theta_0\|_{L^1\cap L^\infty} \big) \|\theta_0\|_{L^2\cap L^\infty}.
\end{align*}
Gr\"onwall's inequality ensures that
\begin{align*}
  \|\Gamma\|_{L^\infty_T(L^p)} \leq \big(\|\Gamma_0\|_{L^p} + CT \big) e^{CT} \leq C e^{CT}.
\end{align*}
Following the same idea as the proof of \eqref{eq:Gam-B2p-inf1}-\eqref{eq:nab-u-B1inf} and using \eqref{eq:u-Gam-es4}, we infer that
\begin{align*}
  \|\Gamma\|_{\widetilde{L}^1_T(B^2_{p,\infty})} + \|\Gamma\|_{L^1_T(B^1_{\infty,1})} + \|\nabla u\|_{L^1_T(B^1_{\infty,\infty})}
  \leq C e^{CT}.
\end{align*}
By collecting the above estimates we deduce \eqref{es:nabu-Gam3}.
\end{proof}

Our main result in this section is as follows.
\begin{proposition}\label{prop:C2gam}
Let $\varepsilon =\frac{1}{\mathrm{Pr}} \in (0,1]$, $\alpha\in (\frac{1}{2},1]$.
Let $\mathbf{D}$ be either $\mathbb{R}^2$ or $\mathbb{T}^2$.
Suppose that
\begin{itemize}
 \item $u_0\in H^1\cap\dot{W}^{1,p}(\mathbf{D})$,
\item $\partial_{W_0}u_0\in W^{1,p}(\mathbf{D})$, for some $p>\frac{2}{2\alpha-1}$,
\item $\nabla\cdot u_0=0$,
\item $\theta_0(x) = \bar{\theta}_0(x) \mathbf{1}_{D_0}(x)$,
\end{itemize}
\begin{align*}
  \bar{\theta}_0(x) \in
  \begin{cases}
    C^{2+\gamma-2\alpha}(\overline{D_0}),\quad & \textrm{for}\;\;\gamma\in(0,2\alpha-1), \alpha\in (\frac{1}{2},1], \\
    C^{1+\widetilde{\gamma}}(\overline{D_0}),\,\widetilde{\gamma}>0, \quad
    & \textrm{for}\;\; \gamma = 2\alpha-1,\alpha\in (\frac{1}{2},1),
  \end{cases}
\end{align*}
where $D_0\subset \mathbf{D}$ is a bounded simply connected domain
with boundary $\partial D_0\in C^{2+\gamma}$ for some $\gamma\in (0,2\alpha-1]$ if $\alpha\in (\frac{1}{2},1)$
and for some $\gamma\in(0,1)$ if $\alpha =1$.
Then there exists a unique global solution $(u,\theta)$
to the 2D Boussinesq-Navier-Stokes system $(\mathrm{B}_\alpha)$ which satisfies
\begin{equation*}
  \theta(x, t) = \bar{\theta}_0(X_t^{-1}(x)) \mathbf{1}_{D(t)}(x),
\end{equation*}
with
\begin{equation}\label{eq:reg-parD(t)-1}
  \partial D(t) \in L^{\infty}\left(0, T ; C^{2+\gamma}(\mathbf{D})\right),
\end{equation}
where $D(t)=X_t(D_0)$,  $X_t$ is the particle-trajectory generated by the velocity $u$
and $X^{-1}_t$ is its inverse. In particular, if either $\big\{\alpha\in (\frac{1}{2},1)\big\}$,
or $\big\{\mathbf{D}=\mathbb{T}^2$, $\alpha=1\big\}$, the result \eqref{eq:reg-parD(t)-1}
holds uniformly with respect to $\varepsilon$.
\end{proposition}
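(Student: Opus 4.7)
Global existence and uniqueness of $(u,\theta)$ with $u\in L^\infty_T H^1 \cap L^1_T(B^1_{\infty,1})$ follow from a standard approximation procedure combined with the uniform a priori bounds of Propositions~\ref{prop:ap-es1}--\ref{prop:ap-es2}; uniqueness in the temperature-patch class is treated as in \cite{GGJ17,KX22}. Since $D(t) = X_t(D_0)$, the boundary regularity is encoded by $\varphi(x,t) = \varphi_0(X_t^{-1}(x))$ solving \eqref{eq:varphi}, so it suffices to prove $\varphi \in L^\infty_T C^{2+\gamma}$. Setting $W = \nabla^\perp \varphi$ (which is divergence-free and tangent to $\partial D(t)$), one verifies that
\begin{equation*}
  \partial_t W + u\cdot \nabla W = \partial_W u .
\end{equation*}
Applying the transport estimate \eqref{eq:T-BesEs0} to $\nabla W$ reduces the problem to producing an $L^1_T(C^\gamma)$ bound for $\partial_W \nabla u$ of the form $\int_0^t \|\nabla W(\tau)\|_{C^\gamma}\, g(\tau)\, d\tau$ with $g\in L^1(0,T)$ depending only on norms already controlled by Propositions~\ref{prop:ap-es1}--\ref{prop:ap-es2}.

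Using the Biot--Savart decomposition \eqref{eq:u-exp1}, I split
\begin{equation*}
  \partial_W \nabla u = \partial_W \nabla \nabla^\perp \Lambda^{-2}\Gamma + \partial_W \nabla \nabla^\perp \partial_1 \Lambda^{-2-2\alpha}\theta
\end{equation*}
and treat the two contributions separately. The temperature piece is handled via the patch structure: Lemma~\ref{lem:str-reg} yields $\partial_{W_0}\theta_0 \in C^{\gamma+1-2\alpha}$ (respectively $L^\infty$ at the endpoint $\gamma=2\alpha-1$), and since $[\partial_t + u\cdot\nabla,\partial_W]=0$ along the flow this regularity propagates to $\partial_W\theta$. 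Applying the striated estimate \eqref{eq:str-es-mDphi} to the $(1-2\alpha)$-order operator $\nabla^2\nabla^\perp \partial_1 \Lambda^{-2-2\alpha}$ then yields the desired $L^1_T C^\gamma$ bound with a constant involving $\|W\|_{\mathcal{B}^{1,0}_W}$ and controlled norms of $\theta$.

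The vorticity piece is the central difficulty. Applying $\partial_W$ to \eqref{eq:Gamma}, and using again $[\partial_t + u\cdot \nabla, \partial_W] = 0$, one finds
\begin{equation*}
  \partial_t(\partial_W\Gamma) + u\cdot\nabla(\partial_W\Gamma) + \tfrac{1}{\varepsilon}\Lambda^{2\alpha}(\partial_W\Gamma) = \tfrac{1}{\varepsilon}[\Lambda^{2\alpha}, W\cdot\nabla]\Gamma + \partial_W\bigl([\mathcal{R}_{1-2\alpha}, u\cdot\nabla]\theta\bigr) .
\end{equation*}
I apply the smoothing estimate \eqref{eq:TD-sm4} with $\rho=1$ and $\nu = 1/\varepsilon$: the resulting $\varepsilon$ prefactor on the right-hand side exactly cancels the singular $1/\varepsilon$ in front of the commutator, which is decisive for uniformity in $\varepsilon$. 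In the strictly subcritical range $\alpha \in (1/2,1)$, the commutator estimate \eqref{eq:commEs} (applied with $\sigma=2\alpha$) bounds $[\Lambda^{2\alpha},W\cdot\nabla]\Gamma$ by $C\|W\|_{W^{1,\infty}}\|\Gamma\|_{B^{s+2\alpha}_{p,r}}$, while Lemma~\ref{lem:Rbeta-cm} handles the $\mathcal{R}_{1-2\alpha}$ term. Choosing $p>2/(2\alpha-1)$ so that the Besov embeddings $B^{2\alpha}_{p,\infty}\hookrightarrow B^{\gamma'}_{\infty,1}$ hold for some $\gamma'>\gamma$, one obtains a bound on $\|\partial_W\Gamma\|_{L^1_T B^{\gamma'}_{\infty,1}}$. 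The striated estimate \eqref{eq:paWmD-es} for the zero-order operator $\nabla\nabla^\perp \Lambda^{-2}$ then converts this into the required $C^\gamma$ control of $\partial_W \nabla\nabla^\perp\Lambda^{-2}\Gamma$.

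The principal obstacle is the endpoint $\alpha=1$: the commutator estimate \eqref{eq:commEs} with $\sigma=2$ is inapplicable because its admissible range of $s$ is empty, so the singular term $\tfrac{1}{\varepsilon}[-\Delta,W\cdot\nabla]\Gamma$ must be treated differently. Here one exploits the local expansion $[-\Delta,W\cdot\nabla]\Gamma = -2(\nabla W)\cdot\nabla^2\Gamma - (\Delta W)\cdot\nabla\Gamma$ and pairs it with the $\widetilde{L}^1_T(B^{2}_{p,\infty})$ and $L^1_T(B^1_{\infty,1})$ bounds on $\Gamma$ from Proposition~\ref{prop:ap-es2}; on $\mathbb{T}^2$ these are uniform in $\varepsilon$, on $\mathbb{R}^2$ they retain the factor $(1+\sqrt{T/\varepsilon})$, accounting for the exception in \eqref{eq:cases}. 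Gathering all the bounds produces
\begin{equation*}
  \|W(t)\|_{C^{1+\gamma}} \leq \|W_0\|_{C^{1+\gamma}} + \int_0^t \|\nabla W(\tau)\|_{C^\gamma}\, g(\tau)\, d\tau
\end{equation*}
with $g\in L^1(0,T)$, and Grönwall's inequality closes the bootstrap to give \eqref{eq:reg-parD(t)-1}, uniformly in $\varepsilon$ in the cases \eqref{eq:cases}.
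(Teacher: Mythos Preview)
Your proposal is correct and follows essentially the same route as the paper: the reduction to $W=\nabla^\perp\varphi\in L^\infty_T C^{1+\gamma}$, the Biot--Savart split \eqref{eq:u-exp1}, the smoothing estimate \eqref{eq:TD-sm4} applied to the $\partial_W\Gamma$ equation (with the crucial $\varepsilon$-cancellation you identify), the use of \eqref{eq:commEs} for $\alpha\in(\tfrac12,1)$ versus the explicit expansion of $[-\Delta,W\cdot\nabla]\Gamma$ at $\alpha=1$, and Lemma~\ref{lem:str-reg} for the $\theta$-contribution all match the paper's argument. The only detail you gloss over is that \eqref{eq:TD-sm4} controls only the high-frequency part $(q\in\mathbb{N})$, so the block $\Delta_{-1}(\partial_W\Gamma)=\Delta_{-1}\mathrm{div}(W\Gamma)$ must be estimated separately (trivially, by $\|W\|_{L^\infty}\|\Gamma\|_{L^p}$), but this is routine.
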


\begin{proof}[Proof of Proposition \ref{prop:C2gam}]
In view of Remark \ref{rem:apes1}, it suffices to show the \textit{a priori} estimate on
$L^\infty_T(C^{2+\gamma}(\mathbf{D}))$ of $\varphi$, which is the level-set function of $D(t)$
defined by \eqref{eq:varphi}. Let us denote by $W := \nabla^{\perp} \varphi$ the tangential vector field,  it satisfies
\begin{align}\label{eq:W}
  \partial_t W + u \cdot \nabla W = W \cdot \nabla u := \partial_W u, \quad W|_{t=0} = W_0,
\end{align}
and
\begin{align}\label{eq:nabW}
  \partial_t \nabla W+u \cdot \nabla(\nabla W)
  =\partial_W \nabla u+\nabla W \cdot \nabla u-\nabla u \cdot \nabla W,
  \quad \nabla W|_{t=0} = \nabla W_0.
\end{align}

 Thanks to \eqref{eq:T-BesEs0} and the product estimate $\|f\,g\|_{C^\gamma} \leq C \|f\|_{C^\gamma} \|g\|_{C^\gamma}$,
we obtain that
\begin{equation}\label{eq:nabW-Cgam}
\begin{aligned}
  \|\nabla W(t)\|_{C^\gamma} \leq & C \left\|\nabla W_0\right\|_{C^\gamma}
  + C \int_0^t\left\|\partial_W \nabla u(\tau) \right\|_{C^\gamma} \mathrm{d} \tau
  + C \int_0^t\|\nabla u(\tau)\|_{C^\gamma}\|\nabla W(\tau)\|_{C^\gamma} \mathrm{d} \tau  .
\end{aligned}
\end{equation}
The maximum principle of the equation \eqref{eq:W} also gives
\begin{align}\label{eq:W-Linf-es}
  \|W(t)\|_{L^\infty} \leq \|W_0\|_{L^\infty} + \int_0^t \|\nabla u(\tau)\|_{L^\infty} \|W(\tau)\|_{L^\infty} \dd \tau.
\end{align}

The main goal is to bound the $\partial_W \nabla u$ in $L^1_t(C^\gamma)$. Using the identity \eqref{eq:nab-u-rela},
we find that
%\begin{align}\label{eq:parWnabu}
%  \partial_W \nabla u=\partial_W \nabla \nabla^{\perp} \Lambda^{-2} \omega=\partial_W \nabla \nabla^{\perp} \Lambda^{-2} \Gamma
%  + \partial_W \nabla \nabla^{\perp} \partial_1 \Lambda^{-2-2\alpha} \theta,
%\end{align}
%and thus
\begin{align}\label{eq:parWu-Cgam}
  \|\partial_W \nabla u\|_{L^1_t (C^\gamma)} \leq \big\|\partial_W \nabla \nabla^{\perp} \Lambda^{-2} \Gamma\big\|_{L^1_t(C^\gamma)}
  + \big\|\partial_W \nabla \nabla^{\perp} \partial_1 \Lambda^{-2-2\alpha} \theta \big\|_{L^1_t(C^\gamma)}.
\end{align}
Below we split the proof into three parts according to the domain $\mathbf{D}$ and the value of $\alpha$.
\vskip1mm

\textbf{(1)} First, we deal with the case $\alpha\in(\frac{1}{2},1)$, and we prove the uniform estimates with respect to $\varepsilon$ regardless of the domain. Gr\"onwall's inequality and \eqref{eq:W-Linf-es}, \eqref{eq:u-Gam-es2} give that
\begin{align}\label{eq:W-Linf-es-alp}
  \|W(t)\|_{L^\infty} \leq \|W_0\|_{L^\infty} e^{\int_0^t \|\nabla u(\tau)\|_{L^\infty}\dd \tau} \leq C e^{\exp(Ct)}.
\end{align}
In order to control the term of $\Gamma$ in \eqref{eq:parWu-Cgam}, we prove an estimate of ~$\partial_W \Gamma=W \cdot \nabla \Gamma$.
From \eqref{eq:Gamma} and the fact that $\left[\partial_W, \partial_t+u \cdot \nabla\right]=0$,
we see that $\partial_W \Gamma$ solves the following equation:
\begin{equation}\label{eq:parW}
\begin{aligned}
  \partial_t\left(\partial_W \Gamma\right)+u \cdot \nabla\left(\partial_W \Gamma\right)
  + \frac{1}{\varepsilon} \Lambda^{2\alpha}\left(\partial_W \Gamma\right)
  = & \,\frac{1}{\varepsilon} [\Lambda^{2\alpha},W\cdot\nabla]\Gamma
  + \partial_W \big(\left[\mathcal{R}_{1-2\alpha}, u \cdot \nabla\right] \theta\big).
\end{aligned}
\end{equation}
According to the smoothing estimate \eqref{eq:TD-sm4} in Lemma~\ref{lem:TD-sm},
there exists a constant $C>0$ independent of $\varepsilon$ so that for all $2\alpha-1<\gamma^{\prime}<
\min\{1,4\alpha-2-\frac{2}{p}\}$ (recall that $\frac{2}{p}<2\alpha-1 $), therefore %then $-1<\gamma^{\prime}-2\alpha<1-2\alpha$,
\begin{align*}
  \left\|\partial_W \Gamma\right\|_{L_t^1\big( B_{\infty, 1}^{\gamma^{\prime}}\big)}
  \leq  & \, \big\| \big( 2^{q\gamma'} \|\Delta_q (\partial_W \Gamma)\|_{L^1_t(L^\infty)}
  \big)_{q\in\mathbb{N}} \big\|_{\ell^1} +
  \|\Delta_{-1} (\partial_W \Gamma)\|_{L^1_t(L^\infty)} \\
  \leq & \,C\, \varepsilon\, e^{C\int_0^t\|\nabla u(\tau)\|_{L^\infty}\dd \tau}
  \bigg(\left\|\partial_{W_0} \Gamma_0\right\|_{B_{\infty, 1}^{\gamma^{\prime}-2\alpha}}
  + \frac{1}{\varepsilon} \int_0^t\left\|\left[\Lambda^{2\alpha}, W \cdot \nabla\right] \Gamma\right\|_{ B_{\infty, 1}^{\gamma^{\prime}-2\alpha}}\dd \tau \\
  & +\int_0^t\left\|\partial_W\left(\left[\mathcal{R}_{1-2\alpha}, u \cdot \nabla\right] \theta\right)\right\|_{B_{\infty, 1}^{\gamma^{\prime}-2\alpha}}
  \dd \tau\bigg)+ \|\Delta_{-1}\mathrm{div}\,(W\,\Gamma)\|_{L_t^1(L^\infty)}\\
  \leq &\, C  e^{C\int_0^t\|\nabla u(\tau)\|_{L^\infty}\dd \tau}
  \bigg(\varepsilon\left\|\partial_{W_0} \Gamma_0\right\|_{B_{\infty, 1}^{\gamma^{\prime}-2\alpha}}
  + \int_0^t\left\|\left[\Lambda^{2\alpha}, W \cdot \nabla\right] \Gamma\right\|_{ B_{\infty, 1}^{\gamma^{\prime}-2\alpha}}\dd \tau \\
  & +\varepsilon\int_0^t\left\|\partial_W\left(\left[\mathcal{R}_{1-2\alpha},
  u \cdot \nabla\right] \theta\right)\right\|_{B_{\infty, 1}^{\gamma^{\prime}-2\alpha}}
  \dd \tau\bigg) + C\int_0^t\|W(\tau)\|_{L^\infty}\|\Gamma(\tau)\|_{L^\infty}\dd \tau.
\end{align*}
Taking advantage of the identities 
\begin{align*}
  \Gamma_0=\omega_0+\mathcal{R}_{1-2\alpha} \theta_0
  =\omega_0+\partial_1 \Lambda^{-2\alpha} \theta_0,
  \quad \textrm{and}\quad
  \partial_{W_0} \partial_j u_0 = \partial_j (\partial_{W_0} u_0)-\partial_j W_0\cdot \nabla u_0,
\end{align*}
and using the product estimate \eqref{eq:prodBes},
we get
\begin{eqnarray*}
   \left\|\partial_{W_0} \Gamma_0\right\|_{ B_{\infty, 1}^{\gamma^{\prime}-2\alpha}} &\leq&
  \left\|\partial_{W_0} \nabla u_0\right\|_{ B_{\infty, 1}^{\gamma^{\prime}-2\alpha}}
  + \left\|\partial_{W_0} \mathcal{R}_{1-2\alpha} \theta_0\right\|_{B_{\infty, 1}^{\gamma^{\prime}-2\alpha}} \\
  &\lesssim& \left\|\partial_{W_0} u_0\right\|_{B_{\infty, 1}^{\gamma^{\prime}-2\alpha+1}}
  + C \left\|\nabla W_0\right\|_{L^{\infty}}
  \left\|\nabla u_0\right\|_{ B_{\infty, 1}^{\gamma^{\prime}-2\alpha}}
  + C \left\|W_0\right\|_{L^{\infty}}\left\|\nabla \mathcal{R}_{1-2\alpha} \theta_0\right\|_{ B_{\infty, 1}^{\gamma^{\prime}-2\alpha}} \\
  &\lesssim& C \left\|\partial_{W_0} u_0\right\|_{W^{1, p}}+C\left\|\varphi_0\right\|_{W^{2, \infty}}\left\|u_0\right\|_{W^{1, p}}
  + C\left\|\varphi_0\right\|_{W^{1, \infty}}\left\|\theta_0\right\|_{L^2\cap L^\infty} <\infty,
\end{eqnarray*}
where in the last line we have used the embedding that for all $0<\gamma'<4\alpha-2-\tfrac{2}{p}$,
\begin{align*}%\label{eq:embed}
  L^p\hookrightarrow B^0_{p,\infty}\hookrightarrow B^{\gamma^{\prime}+2-4\alpha}_{\infty,1} ,
  \quad W^{1,p} \hookrightarrow B^{\gamma'+3 -4\alpha}_{\infty,1} \hookrightarrow B_{\infty, 1}^{\gamma^{\prime}+1-2\alpha},
\end{align*}
and the following estimate
\begin{align*}
  \left\|\nabla \mathcal{R}_{1-2\alpha} \theta_0\right\|_{ B_{\infty, 1}^{\gamma^{\prime}-2\alpha}}
  \leq C \| \theta_0\|_{B^{\gamma^{\prime}+2-4\alpha}_{\infty,1}}
  \leq C\|\theta_0\|_{L^p} .
\end{align*}
%here we using Lemma~\ref{inequa:bern}, the fact $L^p\hookrightarrow B^0_{p,\infty}\hookrightarrow B^{\gamma^{\prime}+2-4\alpha}_{\infty,1}$
%with $\gamma^{\prime}<4\alpha-2-\frac{2}{p}$ and $W^{1,p}\hookrightarrow B_{\infty, 1}^{\gamma^{\prime}-2\alpha+1}$ with $\gamma^{\prime}<2\alpha-\frac{2}{p}$.
%Notice that $\Gamma\in L^1(0,T;\left(B_{p, \infty}^2\alpha\right))$ from Step 1 of Theorem~\ref{thm1},
%$$
%\|\Gamma\|_{L_T^1\left(B_{\infty, 1}^{\gamma^{\prime}+1}\right)} \leq C\|\Gamma\|_{L_T^1\left(B_{p, \infty}^{2\alpha}\right)} \leq C(1+T)^3
%$$
%thus (2.16) and (3.17) ensure that
%$$
%\begin{gathered}
%\left\|\nabla W: \nabla^2 \Gamma\right\|_{L_t^1\left(B_{\infty, 1}^{\gamma^{\prime}+\alpha-\frac{5}{2}}\right)} \leq C\int_0^t\|\nabla W(\tau)\|_{L^{\infty}}\left\|\nabla^2 \Gamma(\tau)\right\|_{B_{\infty, 1}^{\gamma^{\prime}-1}}d \tau , \\
%\|\Delta W \cdot \nabla \Gamma\|_{L_t^1\left(B_{\infty, 1}^{\gamma^{\prime}+\alpha-\frac{5}{2}}\right)} \leq\|\Delta W \cdot \nabla \Gamma\|_{L_t^1\left(B_{\infty, \infty}^{2\alpha-2}\right)} \leq C \int_0^t\|\nabla W(\tau)\|_{B_{\infty, \infty}^{2\alpha-1}}\|\nabla \Gamma(\tau)\|_{L^{\infty}} \mathrm{d} \tau .
%\end{gathered}
%$$
Using  \eqref{eq:prodBes} (with $-1<\gamma^{\prime}-2\alpha<1-2\alpha<0$), Lemma \ref{lem:Rbeta-cm}
and \eqref{es:nabu-Gam2}, \eqref{eq:W-Linf-es}, together with the embedding $L^\infty\hookrightarrow B^{\gamma'+2-4\alpha+2/p}_{\infty,1}$,
we find that for all $0<\gamma^{\prime}<4\alpha-2-\frac{2}{p}$,
\begin{align}\label{eq:R-comm-Bes1}
  \left\|\nabla\left(\left[\mathcal{R}_{1-2\alpha}, u \cdot \nabla\right] \theta\right)
  \right\|_{L^1_t\big(B_{\infty, 1}^{\gamma^{\prime}-2\alpha}\big)}
%  &\leq C \left\|\left[\mathcal{R}_{1-2\alpha}, u \cdot \nabla\right] \theta
%  \right\|_{L^1_T\big(B_{\infty, 1}^{\gamma^{\prime}+1-2\alpha}\big)} \\
  &\leq C \left\|\left[\mathcal{R}_{1-2\alpha}, u \cdot \nabla\right] \theta
  \right\|_{L^1_t\big(B_{p, 1}^{\gamma^{\prime}+1-2\alpha+\frac{2}{p}}\big)} \nonumber \\
  &\leq C\|\nabla u\|_{L^1_t(L^p)}\Big(
  \|\theta\|_{L^\infty_t\big(B^{\gamma^{\prime}+ 2-4\alpha+\frac{2}{p}}_{\infty,1}\big)}
  +\|\theta\|_{L^\infty_t(L^2)}\Big) \nonumber \\
  &\leq C \|\nabla u\|_{L^1_t(L^p)}\Big( \|\theta_0\|_{L^\infty}+\|\theta_0\|_{L^2}\Big) \leq C e^{Ct},
\end{align}
and
\begin{align*}
  \left\|\partial_W\left(\left[\mathcal{R}_{1-2\alpha}, u \cdot \nabla\right] \theta\right)
  \right\|_{L^1_t\big(B_{\infty, 1}^{\gamma^{\prime}-2\alpha}\big)}
  &\leq C\|W\|_{L^\infty_t(L^\infty)} \left\|\nabla\left[\mathcal{R}_{1-2\alpha}, u \cdot \nabla\right] \theta\right\|_{L^1_t\big(B_{\infty, 1}^{\gamma^{\prime}-2\alpha}\big)} \\
  & \leq C e^{\exp(Ct)} .
\end{align*}

According to \eqref{eq:commEs} in Lemma~\ref{lem:CMX2.5}, we deduce that for all $2\alpha-1<\gamma^{\prime}<1$,
\begin{align*}
  \left\|\left[\Lambda^{2\alpha}, W \cdot \nabla\right] \Gamma\right\|_{B_{\infty, 1}^{\gamma^{\prime}-2\alpha}}
%  &\leq C \|\nabla W\|_{L^\infty} \|\Gamma\|_{B^{\gamma'}_{\infty,1}} + \|W\|_{L^\infty} \|\Gamma\|_{L^\infty} \\
  \leq C \| W\|_{W^{1,\infty}} \|\Gamma\|_{B_{\infty, 1}^{\gamma'}}.
\end{align*}
Hence,  gathering the estimates \eqref{es:nabu-Gam2}, \eqref{eq:W-Linf-es} and the above estimates, we find that
\begin{align*}
  \left\|\partial_W \Gamma\right\|_{L_t^1\left(B_{\infty, 1}^{\gamma^{\prime}}\right)}
  %&\le C e^{C\int_0^t\|\nabla u(\tau)\|_{L^\infty}\dd \tau}\big(\left\|\partial_{W_0} \Gamma_0\right\|_{B_{\infty, 1}^{\gamma^{\prime}-2\alpha}}+\int_0^t\left\|\left[\Lambda^{2\alpha}, W \cdot \nabla\right] \Gamma\right\|_{B_{\infty, 1}^{\gamma^{\prime}-2\alpha}}\dd \tau+\int_0^t\left\|\partial_W\left(\left[\mathcal{R}_{2\alpha}, u \cdot \nabla\right] \theta\right)\right\|_{B_{\infty, 1}^{\gamma^{\prime}-2\alpha}}\dd \tau\big)\\
  &\leq C e^{C \int_0^t\|\nabla u\|_{L^\infty}\dd \tau}\bigg(1 +
  \int_0^t\| W\|_{W^{1,\infty}}\|\Gamma\|_{B_{\infty, 1}^{\gamma'}}\dd \tau
  + \|W\|_{L_t^\infty(L^{\infty})} \|\nabla u\|_{L_t^1(L^p)}  \bigg )\\
  & \quad + C \|W\|_{L^\infty_t(L^\infty)} \|\Gamma\|_{L^1_t(L^\infty)} \\
  &\leq C e^{\exp (Ct)} \bigg(1+\int_0^t\|W(\tau)\|_{W^{1,\infty}} \|\Gamma(\tau)\|_{B^1_{\infty,1}}\dd \tau\bigg).
\end{align*}
%Thanks to  \eqref{eq:prodBes}, \eqref{eq:paWmD-es} and the divergence-free property of $W$,
%we infer that (recalling $\gamma \le 2\alpha -1 < \gamma'$)
Next, we want to get an estimate of $\partial_W\big(\nabla \nabla^{\perp} \Lambda^{-2} \Gamma\big).$ We have
\begin{eqnarray*}
  \big\|\partial_W\big(\nabla \nabla^{\perp} \Lambda^{-2} \Gamma\big)\big\|_{L_t^1\left(C^\gamma\right)}
  &\leq&   C \big\|\Delta_{-1}\partial_W\big(\nabla \nabla^{\perp} \Lambda^{-2} \Gamma\big)\big\|_{L_t^1\left(L^\infty\right)}
  +  C \big\|\nabla\partial_W\big(\nabla \nabla^{\perp}
  \Lambda^{-2} \Gamma\big)\big\|_{L_t^1\left( C^{\gamma-1}\right)} \\
  &\leq & C \big\|\Delta_{-1} \mathrm{div}\,\big(W\,\nabla \nabla^{\perp} \Lambda^{-2} \Gamma\big)\big\|_{L_t^1\left(L^\infty\right)}
  + C \big\|\nabla W\cdot\nabla\big(\nabla\nabla^{\perp} \Lambda^{-2} \Gamma\big)\big\|_{L_t^1\left( C^{\gamma-1}\right)} \\
  && \ + \ C\big\|\partial_W\big(\nabla^2 \nabla^{\perp} \Lambda^{-2} \Gamma\big)\big\|_{L_t^1\left( C^{\gamma-1}\right)}.
%  & \leq C e^{\exp(Ct)}  + C \int_0^t \| W(\tau)\|_{W^{1,\infty}} \|\Gamma(\tau)\|_{C^\gamma} \dd \tau
%  + C \|\partial_W \Gamma\|_{L^1_t (C^\gamma)},
\end{eqnarray*}
The right-hand-side terms of the above inequality can be estimated as follows, thanks to \eqref{eq:u-Gam-es2} and \eqref{eq:W-Linf-es},
\begin{eqnarray*}
  \big\|\Delta_{-1}\partial_W\big(\nabla \nabla^{\perp} \Lambda^{-2} \Gamma\big)\big\|_{L_t^1\left(L^\infty\right)}
  &\leq&  C\|W\|_{L^\infty_t(L^\infty)} \|\nabla\nabla^\perp \Lambda^{-2}\Gamma\|_{L^1_t(L^2)} \\
  &\leq&  C e^{\exp(Ct)},
\end{eqnarray*}
and taking advantage of  \eqref{eq:prodBes} and Lemma~\ref{lem:m(D)},
\begin{eqnarray*}
  \big\|\nabla W\cdot\nabla\big(\nabla\nabla^{\perp} \Lambda^{-2} \Gamma\big)\big\|_{L^1_t(C^{\gamma-1})}
  &\leq &  C\int_0^t \|\nabla W(\tau)\|_{L^\infty} \|\nabla^2\nabla^{\perp} \Lambda^{-2} \Gamma(\tau)\|_{C^{\gamma-1}}  \dd \tau \\
  &\leq &  C \int_0^t \|\nabla W(\tau)\|_{L^\infty} \left(\|\Delta_{-1}\nabla^2\nabla^{\perp} \Lambda^{-2} \Gamma(\tau)\|_{L^{\infty}}
  + \| \Gamma(\tau)\|_{C^{\gamma}}\right) \dd \tau \\
%  &\le C \|\nabla W\|_{L^\infty} \left(\|\Delta_{-1} \Gamma\|_{L^{\infty}}+ \| \Gamma\|_{C^{\gamma}}\right)\\
  &\leq&  C \int_0^t \|\nabla W(\tau)\|_{L^\infty} \| \Gamma(\tau)\|_{C^{\gamma}} \dd \tau,
\end{eqnarray*}
then, since we have \eqref{eq:paWmD-es} (with $s=\gamma -1$, $\sigma=1$),
\begin{align*}
  \big\|\partial_W\big(\nabla^2 \nabla^{\perp} \Lambda^{-2} \Gamma\big)\big\|_{L^1_t(C^{\gamma-1})}
  \le C \big\|\partial_W\Gamma\big\|_{L^1_t(C^\gamma)} + C \int_0^t \|W(\tau)\|_{W^{1,\infty}}\|\Gamma(\tau)\|_{C^{\gamma}} \ \dd \tau.
\end{align*}
Collecting the above estimates yields
\begin{equation}\label{eq:parWGam-Bes}
\begin{aligned}
  \big\|\partial_W\big(\nabla \nabla^{\perp} \Lambda^{-2} \Gamma\big)\big\|_{L_t^1\left(C^\gamma\right)}
  & \leq C \left\|\partial_W \Gamma\right\|_{L_t^1\left(B_{\infty, 1}^{\gamma^{\prime}}\right)}
  + C \int_0^t \|W(\tau)\|_{W^{1,\infty}} \|\Gamma(\tau)\|_{C^\gamma} \dd \tau + Ce^{\exp{(Ct)}}\\
  & \leq C e^{\exp (Ct)} \bigg( 1 +  \int_0^t \|W(\tau)\|_{W^{1,\infty}}
  \|\Gamma(\tau)\|_{B^{1}_{\infty,1}} \dd \tau \bigg).
\end{aligned}
\end{equation}

For the estimation of the $\theta$ term in \eqref{eq:parWu-Cgam}, by using \eqref{eq:prodBes}, \eqref{eq:paWmD-es} (with $\sigma =2-2\alpha$, $s=\gamma-1\in (-1, 2\alpha-2]$)
and \eqref{eq:Rbet-L2es}, \eqref{eq:W-Linf-es}, we obtain
\begin{eqnarray}\label{eq:parWthe-Bes}
   \big\|\partial_W\big(\nabla \nabla^{\perp} \partial_1 \Lambda^{-2-2\alpha}
  \theta\big)\big\|_{L_t^1\left(C^\gamma\right)}  &\leq& C \left\|\Delta_{-1} \mathrm{div}\,\big(W\,\nabla \nabla^{\perp} \partial_1 \Lambda^{-2-2\alpha}
  \theta\big)\right\|_{L_t^1\left(L^\infty\right)} \nonumber \\
  && + \left\|\nabla\partial_W\big(\nabla \nabla^{\perp} \partial_1 \Lambda^{-2-2\alpha}
  \theta\big)\right\|_{L_t^1\left(C^{\gamma-1}\right)} \nonumber \\
  &\leq &  C \|W\|_{L_t^\infty(L^\infty)}\left\|\mathcal{R}_{1-2\alpha}
  \theta\right\|_{L_t^1\left(L^2\right)} \nonumber \\
  && +\,   C \int_0^t\|\nabla W(\tau)\|_{L^\infty}\|\nabla^2 \nabla^{\perp} \partial_1 \Lambda^{-2-2\alpha}
  \theta\|_{C^{\gamma-1}}\dd\tau \nonumber\\
  && +\,  C \left\|\partial_W\big(\nabla^2 \nabla^{\perp} \partial_1 \Lambda^{-2-2\alpha}
  \theta\big)\right\|_{L_t^1\left(C^{\gamma-1}\right)} \nonumber \\
%  & \leq C \left\|\partial_W \theta\right\|_{L_t^1\left(B_{\infty,\infty}^{\gamma +1 -2\alpha}\right)}
%  + C \int_0^t\| W(\tau)\|_{W^{1,\infty}} \|\theta(\tau)\|_{B^{\gamma + 1 - 2\alpha}_{\infty,\infty}}\dd \tau + Ce^{\exp{(Ct)}}\nonumber \\
  &\leq & C  \left\|\partial_W \theta\right\|_{L_t^1\left(C^{\gamma + 1-2\alpha}\right)}
  + C  \int_0^t\| W\|_{W^{1,\infty}} \|\theta\|_{L^2\cap L^\infty} \dd \tau \nonumber\\
  && + \,C  e^{\exp{(Ct)}},
\end{eqnarray}
where in the last line we also used the estimate
\begin{equation*}
  \left\Vert\nabla^2 \nabla^{\perp} \partial_1 \Lambda^{-2-2\alpha}
  \theta\right\Vert_{C^{\gamma-1}} \leq C  \|\Delta_{-1} \Lambda^{2-2\alpha} \theta\|_{L^2}
  + C \|\theta\|_{B^{\gamma+1-2\alpha}_{\infty,\infty}}
  \leq C \|\theta\|_{L^2\cap L^\infty}.
\end{equation*}
Since $\partial_W\theta$ solves the transport equation
\begin{align}\label{eq:par-W-the}
  \partial_t (\partial_W \theta) + u\cdot \nabla(\partial_W \theta) =0,\quad \partial_W \theta |_{t=0} = \partial_{W_0} \theta_0,
\end{align}
%and $\partial_{W_0} \theta_0 \in C^{-1+ (\gamma + 2 -2\alpha)} = B^{\gamma +1 -2\alpha}_{\infty,\infty}$
%in view of Lemma \ref{lem:str-reg},
we apply Lemma \ref{lem:CMX} and \eqref{eq:u-Gam-es2} to show that for all $\gamma\in(0,2\alpha-1)$,
\begin{align}\label{eq:parWthe-Bes2}
  \|\partial_W \theta(t)\|_{C^{\gamma + 1 -2\alpha}} \leq C  e^{C \int_0^t \|\nabla u(\tau)\|_{L^\infty}
  \dd \tau} \|\partial_{W_0}\theta_0\|_{C^{\gamma+1-2\alpha}} \leq C  e^{\exp(C t)},
\end{align}
and for $\gamma =2\alpha -1$,
\begin{eqnarray}\label{eq:parWthe-Bes2-2}
  \Vert \partial_W \theta(t)\Vert_{B^{0}_{\infty,\infty}} &\leq &  C e^{C \int_0^t \Vert \nabla u(\tau)\Vert_{L^\infty}  \dd \tau} \Vert \partial_{W_0}\theta_0\Vert_{B^{0}_{\infty,\infty}}\nonumber \\
  &\leq&  C e^{C \int_0^t \|\nabla u(\tau)\|_{L^\infty}
   \dd \tau} \Vert\partial_{W_0}\theta_0\Vert_{L^\infty} \\
  &\leq & C e^{\exp(C t)}, \nonumber
\end{eqnarray}
where $\partial_{W_0} \theta_0 \in C^{\gamma + 1 -2\alpha}$ for $\gamma \in(0,2\alpha -1)$
and $\partial_{W_0} \theta_0 \in L^{\infty}$ for $\gamma =2\alpha-1$ in view of Lemma \ref{lem:str-reg}. \\

Therefore, by collecting the  estimates \eqref{eq:nabW-Cgam}, \eqref{eq:parWu-Cgam} and \eqref{eq:parWGam-Bes}-\eqref{eq:parWthe-Bes2-2}, we find
\begin{align*}
  & \|W(t)\|_{C^{1+\gamma}} + \left\|\partial_W \Gamma\right\|_{L_t^1\big(B_{\infty, 1}^{\gamma^\prime}\big)}
  + \left\|\partial_W \nabla u\right\|_{L_t^1\left(C^\gamma\right)} \\
  \leq & C e^{\exp (Ct)}\bigg(1+ \int_0^t\|W(\tau)\|_{C^{ 1+\gamma}}\left(\|\Gamma(\tau)\|_{B^{1}_{\infty,1}}
  + \|\theta(\tau)\|_{L^2\cap L^\infty}+\|\nabla u(\tau)\|_{C^\gamma}\right)\mathrm{d} \tau\bigg).
\end{align*}
Using Gr\"onwall's inequality combined with \eqref{es:nabu-Gam2}-\eqref{eq:Gam-Lip-es} (together with the embedding $C^{2\alpha-1}\hookrightarrow C^\gamma$,
$\gamma\in (0,2\alpha-1]$) guarantee that
\begin{equation*}%\label{eq:W-C2+gam}
  \|W\|_{L_T^{\infty}\left(C^{1+\gamma}\right)}
  +\left\|\partial_W \Gamma\right\|_{L_T^1\left(B_{\infty, 1}^{\gamma^{\prime}}\right)}
  +\left\|\partial_W \nabla u\right\|_{L_T^1\left(C^\gamma\right)} \leq  C e^{\exp\,\exp(CT)},
\end{equation*}
where $C>0$ is independent of $\varepsilon$.
This implies $W\in L^{\infty}\left([0, T], C^{1+\gamma}\left(\mathbf{D}\right)\right)$  uniformly in $\varepsilon$
for all $0<\gamma\le2\alpha-1$ and $\alpha\in (\frac{1}{2},1)$, this ends the proof in the case $\alpha\in(\frac{1}{2},1)$.
%This completes the proof of Proposition \ref{prop:C2gam} for $\alpha\in(\frac{1}{2},1)$.
\vskip1mm

\textbf{(2)} We deal with the case of $\alpha=1$ and $\mathbf{D}= \mathbb{R}^2$.
%Note that the Section 3.3 of \cite{CMX22} have provided the estimate for $\alpha=1$ and $\mathbf{D}=\RR^2$,
%but in order to clarify the dependence of parameter $\varepsilon$ and deal with the case for $\mathbf{D}=\TT^2$,
%we here sketch the proof by using a similar way.
Note that \eqref{eq:W-Linf-es} and \eqref{eq:u-Gam-es3} imply
\begin{align}\label{eq:W-Linf-es-2}
  \|W(t)\|_{L^\infty} \leq \|W_0\|_{L^\infty} e^{\int_0^t \|\nabla u(\tau)\|_{L^\infty}\dd \tau}
  \leq C e^{CE_\varepsilon(t)},\quad \textrm{where}\; \;
  E_\varepsilon(t) := (1+T^4)\Big(1+ \sqrt{\tfrac{T}{\varepsilon}}\Big).
\end{align}
We want to get a control of $\partial_W \Gamma=W \cdot \nabla \Gamma$. Since $\partial_W \Gamma$ solves the following equation,
\begin{align*}
  \partial_t\left(\partial_W \Gamma\right)+u \cdot \nabla\left(\partial_W \Gamma\right)
  -\frac{1}{\varepsilon}\Delta\left(\partial_W \Gamma\right)
  & =-\frac{1}{\varepsilon}\left[\Delta, \partial_W\right] \Gamma
  +\partial_W\left(\left[\mathcal{R}_{-1}, u \cdot \nabla\right] \theta\right) \\
  & =-\frac{1}{\varepsilon}\Delta W \cdot \nabla \Gamma-\frac{2}{\varepsilon} \nabla W: \nabla^2 \Gamma
  + \partial_W\left(\left[\mathcal{R}_{-1}, u \cdot \nabla\right] \theta\right).
\end{align*}
Using the smoothing estimate \eqref{eq:TD-sm4} together with the product estimate \eqref{eq:prodBes}, we find that for all
$\gamma^\prime \in (1, \min \big\{\gamma+1, 2-\frac{2}{p}\big\})$,
\begin{align*}
  &\left\|\partial_W \Gamma\right\|_{L_t^1\big(B_{\infty, 1}^{\gamma^{\prime}}\big)} \\
  & \leq C\varepsilon e^{C\int_0^t\|\nabla u\|_{L^\infty}\dd\tau}
  \left(\left\|\partial_{W_0} \Gamma_0\right\|_{B_{\infty, 1}^{\gamma^{\prime}-2}}
  +\frac{1}{\varepsilon}\int_0^t\Big(\|\Delta W \cdot \nabla \Gamma\|_{B_{\infty, 1}^{\gamma^{\prime}-2}}
  +\left\|\nabla W: \nabla^2 \Gamma\right\|_{B_{\infty, 1}^{\gamma^\prime-2}}\Big)\dd \tau \right.\\
  &\quad \left.+ \left\|\partial_W\left(\left[\mathcal{R}_{-1}, u \cdot \nabla\right] \theta\right)
  \right\|_{L_t^1\left(B_{\infty, 1}^{\gamma^\prime-2}\right)}\right)
  + C  \left\|\Delta_{-1}\partial_W \Gamma\right\|_{L_t^1 (L^\infty)} \\
  & \leq C\varepsilon \,e^{C\int_0^t\|\nabla u\|_{L^\infty}\dd\tau}\left(\left\|\partial_{W_0} \Gamma_0
  \right\|_{B_{\infty, 1}^{\gamma^\prime-2}}
  +\frac{1}{\varepsilon}\int_0^t \Big(\|\Delta W\|_{B^{\gamma^\prime-2}_{\infty,1}}\| \nabla \Gamma\|_{L^\infty}
  + \|\nabla W\|_{L^\infty}\|\nabla^2\Gamma\|_{B_{\infty, 1}^{\gamma^\prime-2}}\Big) \dd \tau \right.\\
  &\quad \left.+ \left\|\partial_W\left(\left[\mathcal{R}_{-1}, u \cdot \nabla\right] \theta\right)
  \right\|_{L_t^1\left(B_{\infty, 1}^{\gamma^\prime-2}\right)}\right)
  + \left\|\Delta_{-1} \mathrm{div} \big(W\,\Gamma\big)\right\|_{L_t^1(L^\infty)} \\
  & \leq e^{C\int_0^t\|\nabla u\|_{L^\infty} \dd\tau}\left(\int_0^t\| W\|_{B^{\gamma^\prime}_{\infty,1}}
  \|\Gamma\|_{B_{\infty, 1}^{\gamma^\prime}} \dd \tau
  + \varepsilon \left\|\partial_W\left(\left[\mathcal{R}_{-1}, u \cdot \nabla\right] \theta\right)
  \right\|_{L_t^1\left(B_{\infty, 1}^{\gamma^\prime-2}\right)}
  +\varepsilon\left\|\partial_{W_0} \Gamma_0\right\|_{B_{\infty, 1}^{\gamma^\prime-2}}\right) \\
  & \quad + C \|W\|_{L^\infty_t(L^\infty)} \Vert \Gamma\Vert_{L^1_t(L^p)}.
\end{align*}
Using again that $\Gamma_0=\omega_0+\mathcal{R}_{-1} \theta_0=\omega_0+\partial_1 \Lambda^{-2} \theta_0$ and the embedding $W^{1, p} \subset B_{\infty, 1}^{\gamma^\prime-1}$ where  $\gamma^\prime$ is such that $0<\gamma^\prime-1<1-\frac{2}{p}$, we get
\begin{align*}
  \left\|\partial_{W_0} \Gamma_0\right\|_{B_{\infty, 1}^{\gamma^\prime-2}}
  & \leq\left\|\partial_{W_0} \nabla u_0\right\|_{B_{\infty, 1}^{\gamma^{\prime}-2}}
  + \left\|\partial_{W_0} \mathcal{R}_{-1} \theta_0\right\|_{B_{\infty, 1}^{\gamma^\prime-2}} \\
  & \leq C\left\|\partial_{W_0} u_0\right\|_{B_{\infty, 1}^{\gamma^\prime-1}}
  + C \left\|\nabla W_0\right\|_{L^{\infty}}\left\|\nabla u_0\right\|_{B_{\infty, 1}^{\gamma^\prime-2}}
  + C\left\|W_0\right\|_{L^\infty}\left\|\nabla \mathcal{R}_{-1} \theta_0\right\|_{B_{\infty, 1}^{\gamma^\prime-2}} \\
  & \leq C\left\|\partial_{W_0} u_0\right\|_{W^{1, p}} + C\left\|\varphi_0\right\|_{W^{2, \infty}}
  \left\|u_0\right\|_{W^{1, p}} + C\left\|\varphi_0\right\|_{W^{1, \infty}}\left\|\theta_0\right\|_{L^2 \cap L^{\infty}}<\infty .
\end{align*}
%\begin{comment}
%thus (2.16) and (3.17) ensure that
%$$
%\begin{gathered}
%\left\|\nabla W: \nabla^2 \Gamma\right\|_{L_t^1\left(B_{\infty, 1}^{\gamma^{\prime}-1}\right)} \leq C\|\nabla W\|_{L_t^{\infty}\left(L^{\infty}\right)}\left\|\nabla^2 \Gamma\right\|_{L_t^1\left(B_{\infty, 1}^{\gamma^{\prime}-1}\right)} \leq C e^{C(1+t)^2}, \\
%\|\Delta W \cdot \nabla \Gamma\|_{L_t^1\left(B_{\infty, 1}^{\gamma^{\prime}-1}\right)} \leq C \int_0^t\|\Delta W(\tau)\|_{B_{\infty, 1}^{\gamma^{\prime}-1}}\|\nabla \Gamma(\tau)\|_{L^{\infty}} \mathrm{d} \tau
%\end{gathered}
%$$
%\end{comment}
Then, using the estimates \eqref{eq:prodBes}, \eqref{eq:R-1cm-es1}, \eqref{es:nabu-Gam2-2a}, \eqref{eq:W-Linf-es-2} together with
the  embedding $L^\infty\hookrightarrow B^{\gamma'-2+ 2/p}_{\infty,1}$
for all $1<\gamma^\prime<2-\frac{2}{p}$, we deduce that
\begin{eqnarray*}
  \varepsilon\left\|\partial_W\left(\left[\mathcal{R}_{-1}, u \cdot \nabla\right] \theta\right)
  \right\|_{L_t^1\big(B_{\infty, 1}^{\gamma^\prime-2}\big)}
  &\leq & C  \varepsilon \|W\|_{L_t^\infty(L^\infty)} \left\|\left[\mathcal{R}_{-1}, u \cdot \nabla\right]
  \theta\right\|_{L_t^1 \big(B_{\infty, 1}^{\gamma^\prime -1}\big)} \\
  &\leq&  C \varepsilon e^{C E_\varepsilon(t)} \|[\mathcal{R}_{-1},u\cdot\nabla]\theta \|_{L^1_t\big(B^{\gamma'-1+\frac{2}{p}}_{p,1}\big)} \\
  &\leq&  C e^{CE_\varepsilon(t)} \Big(\|\nabla u\|_{L^1_t(L^p)} \|\theta\|_{L^\infty_t\big(B^{\gamma'-2+\frac{2}{p}}_{\infty,1}\big)}
  + \|u\|_{L^1_t(L^2)} \|\theta\|_{L^\infty_t(L^2)} \Big) \\
  &\leq& C  e^{C E_\varepsilon(t)}.
\end{eqnarray*}
%\begin{remark}
%Here we cannot get
%\begin{align*}
%  \varepsilon\|W(t)\|_{L^\infty} \leq  C e^{\exp(Ct)},
%  \end{align*}
%which implies we cannot get
%\begin{align*}
%  & \varepsilon\|W\|_{L_T^{\infty}\left(\mathcal{C}_W^{ 1 +\gamma,0}\right)}
%  +\varepsilon\|\nabla u\|_{L_T^1\left(\mathcal{C}_W^{\gamma, 1}\right)}
%  +\varepsilon\|\Gamma\|_{L_T^1\left(\mathcal{B}_W^{\gamma^{\prime},1}\right)} \leq C e^{\exp\,\exp (CT)}.
%\end{align*}
%\end{remark}
Gathering the above estimates and \eqref{es:nabu-Gam2-2a}, \eqref{eq:W-Linf-es-2} yields
\begin{align*}
  \left\|\partial_W \Gamma\right\|_{L_t^1\left(B_{\infty, 1}^{\gamma^\prime}\right)}
  \leq C e^{CE_\varepsilon(t)} \left(1+\int_0^t\| W(\tau)\|_{B^{\gamma^\prime}_{\infty,1}}
  \|\Gamma(\tau)\|_{B_{\infty, 1}^{\gamma^\prime}}\dd \tau\right).
\end{align*}
Thanks to the striated estimate \eqref{eq:paWmD-es} (with $\sigma=0$), we also infer that
\begin{equation}\label{es:parWGa}
\begin{split}
  \big\|\partial_W\big(\nabla \nabla^{\perp} \Lambda^{-2} \Gamma\big)\big\|_{L_t^1\left(C^\gamma\right)}
  & \leq C\left\|\partial_W \Gamma\right\|_{L_t^1\left(C^\gamma\right)}
  + C\int_0^t\|W(\tau)\|_{W^{1, \infty}} \|\Gamma(\tau)\|_{C^\gamma}\dd\tau \\
  & \leq C e^{CE_\varepsilon(t)} \left(1+\int_0^t\| W(\tau)\|_{B^{\gamma^\prime}_{\infty,1}}
  \|\Gamma(\tau)\|_{B_{\infty, 1}^{\gamma^\prime}} \dd \tau\right) .
\end{split}
\end{equation}
Then, by following the same lines as proof of \eqref{eq:parWthe-Bes} and using \eqref{eq:W-Linf-es-2}, we  get
\begin{align}\label{es:ParWtheta0}
  & \big\|\partial_W\big(\nabla \nabla^\perp \partial_1 \Lambda^{-4} \theta\big)\big\|_{L_t^1\left(C^\gamma\right)} \nonumber \\
  &\leq C \big\|\Delta_{-1} \partial_W\big(\nabla \nabla^{\perp} \partial_1 \Lambda^{-4}
  \theta\big)\big\|_{L_t^1\left(L^\infty\right)}
  + C \big\|\nabla \partial_W\big(\nabla \nabla^{\perp} \partial_1 \Lambda^{-4} \theta\big)
  \big\|_{L_t^1\left(B_{\infty, \infty}^{\gamma-1}\right)} \nonumber \\
  &\leq C \|W\|_{L_t^{\infty}\left(L^{\infty}\right)}\|\theta\|_{L_t^1\left(L^2\right)}
  + C \int_0^t\|\nabla W\|_{L^{\infty}} \big\|\nabla^2 \nabla^{\perp} \partial_1 \Lambda^{-4}
  \theta\big\|_{B_{\infty, \infty}^{\gamma-1}} \dd\tau \nonumber \\
  & \quad + C \big\| \partial_W\big(\nabla^2 \nabla^{\perp} \partial_1 \Lambda^{-4} \theta\big)
  \big\|_{L_t^1\left(B_{\infty, \infty}^{\gamma-1}\right)} \nonumber \\
  &\leq C e^{CE_\varepsilon(t)} + C \int_0^t \|W(\tau)\|_{W^{1,\infty}} \|\theta(\tau)\|_{L^2\cap L^\infty}\dd\tau
  + C \left\|\partial_W \theta\right\|_{L_t^1\left(B_{\infty, \infty}^{\gamma-1}\right)}.
  \end{align}
From the equation \eqref{eq:par-W-the}, we apply Lemma \ref{lem:str-reg} and \eqref{eq:u-Gam-es3} to get that
\begin{eqnarray}\label{es:ParWtheta}
  \left\|\partial_W \theta(t)\right\|_{B_{\infty, \infty}^{\gamma-1}}
  \leq e^{C \int_0^t\|\nabla u\|_{L^{\infty}} \mathrm{d} \tau}\left\|\partial_{W_0} \theta_0\right\|_{C^{\gamma-1}}
  \leq C e^{C E_\varepsilon(t)} .
\end{eqnarray}
%Notice that by virtue of \eqref{es:nabu-Gam2-2},
%\begin{align}\label{es:Gamma1+}
%  \|\Gamma\|_{L_t^1\left(B_{\infty, 1}^{\gamma^{\prime}+1}\right)}
%  \leq C\|\Gamma\|_{\widetilde{L}_t^1\left(B_{p, \infty}^2\right)} \leq Ce^{CE_\varepsilon(t)}.
%\end{align}
Therefore, taking advantages of \eqref{eq:nabW-Cgam}, \eqref{eq:parWu-Cgam}, \eqref{eq:W-Linf-es-2}-\eqref{es:ParWtheta}
and the embedding $C^{\gamma+1} \subset B_{\infty, 1}^{\gamma^{\prime}}$
for all $1<\gamma^\prime< \min\{\gamma+1,2-\frac{2}{p}\}$, we find
\begin{align*}
  &\|W(t)\|_{C^{\gamma+1}}+\left\|\partial_W \Gamma\right\|_{L_t^1\left(B_{\infty, 1}^{\gamma^\prime}\right)}
  + \left\|\partial_W \nabla u\right\|_{L_t^1\left(C^\gamma\right)} \\
  & \leq C e^{C E_\varepsilon(t)}\left(1+ \int_0^t \|W(\tau)\|_{C^{\gamma+1}}
  \Big(1+ \|\Gamma(\tau)\|_{B^{\gamma^\prime}_{\infty,1}}
  +\|\nabla u(\tau)\|_{C^\gamma}\Big)\dd\tau\right) .
\end{align*}
Together with the continuous embedding
$\widetilde{L}^1_t(B^2_{p,\infty})\hookrightarrow L^1_t(B^{\gamma'}_{\infty,1})$,
Gr\"onwall's inequality and the estimates \eqref{eq:u-Gam-es3}, \eqref{es:nabu-Gam2-2} guarantee that
\begin{eqnarray}\label{eq:W-C2+gam-2}
  \|W\|_{L_T^{\infty}\left(C^{1+\gamma}\right)}
  +\left\|\partial_W \Gamma\right\|_{L_T^1\left(B_{\infty, 1}^{\gamma^{\prime}}\right)}
  +\left\|\partial_W \nabla u\right\|_{L_T^1\left(C^\gamma\right)} \nonumber &\leq& C e^{C E_\varepsilon(T)} \nonumber\\
  &\times& \exp\Big\{e^{C E_\varepsilon(T)} \Big(T + \|\Gamma\|_{L^1_T(B^{\gamma'}_{\infty,1})}
  + \|\nabla u\|_{L^1_T(C^\gamma)} \Big) \Big\} \nonumber\\
  &\leq&  C e^{\exp(CE_\varepsilon(T))}
  \end{eqnarray}
where $C>0$ is independent of $\varepsilon$. This implies $W\in L^\infty([0,T], C^{1+\gamma}(\mathbb{R}^2))$ for all $0<\gamma<1$
and $\varepsilon\in (0,1]$.
\vskip1mm

%In terms of the notations \eqref{eq:abbr}-\eqref{norm:BBsln-2}, the estimates in
%Proposition~\ref{prop:ap-es2}
%and \eqref{eq:W-C2+gam-2} yield that
%\begin{equation}\label{eq:3.52-2}
%\begin{aligned}
%  & \|W\|_{L_T^{\infty}\left(\mathcal{C}_W^{ 1 +\gamma,0}\right)}
%  +\|\nabla u\|_{L_T^1\left(\mathcal{C}_W^{\gamma, 1}\right)}
%  +\|\Gamma\|_{L_T^1\left(\mathcal{B}_W^{\gamma^{\prime},1}\right)} \\
%  & = \|W\|_{L_T^{\infty}\left(C^{ 1+\gamma}\right)}+\left\|\left(\nabla u, \partial_W \nabla u\right)\right\|_{L_T^1\left(C^\gamma\right)} +\left\|\left(\Gamma, \partial_W \Gamma\right)\right\|_{L_T^1\left(B_{\infty, 1}^{\gamma^{\prime}}\right)} \\
%  &\leq C e^{\exp\,\exp (CE_\varepsilon(T))} ,
%\end{aligned}
%\end{equation}
%where $C>0$ is independent of $\varepsilon$.

(\textbf{3}) When $\alpha=1$ and $\mathbf{D}=\TT^2$, the proof is quite similar to the  case
$\alpha=1$ and $\mathbf{D}=\RR^2$.
The only difference lies on the use of Lemma \ref{lem:Rbeta-cm} and Propositions \ref{prop:ap-es1}, \ref{prop:ap-es2},
thus by repeating the process in the above by doing small modifications if necessary,
we conclude that \eqref{eq:W-C2+gam-2} holds with $E_\varepsilon(T)$ replaced by $\exp(CT)$.
Hence, we obtain that $W\in L^\infty(0,T;C^{1+\gamma}(\mathbb{T}^2))$ uniformly in $\varepsilon$ for all $0<\gamma<1$.
\vskip1mm

To sum-up, in terms of the notations \eqref{eq:abbr}-\eqref{norm:BBsln-2}, and using Propositions \ref{prop:ap-es1}, \ref{prop:ap-es2},
we have that for  $\alpha\in(\frac{1}{2},1)$ and for all $0<\gamma \le 2\alpha-1 < \gamma' < \min\{1, 4\alpha-2-\frac{2}{p}\}$,
\begin{eqnarray}\label{eq:str-es2-alp}
   \|W\|_{L_T^{\infty}\left(\mathcal{C}_W^{ 1 +\gamma,0}\right)}
  +\|\nabla u\|_{L_T^1\left(\mathcal{C}_W^{\gamma, 1}\right)}
  +\|\Gamma\|_{L_T^1\left(\mathcal{B}_W^{\gamma^{\prime},1}\right)} 
  &=& \|W\|_{L_T^{\infty}\left(C^{ 1+\gamma}\right)}+
  \left\|\left(\nabla u, \partial_W \nabla u\right)\right\|_{L_T^1\left(C^\gamma\right)} \nonumber\\
  && \ + \ \left\|\left(\Gamma, \partial_W \Gamma\right)\right\|_{L_T^1\left(B_{\infty, 1}^{\gamma^{\prime}}\right)} \\
  &\leq& C e^{\exp\,\exp (CT)} \nonumber,
\end{eqnarray}
whereas for $\alpha =1$ and for all $0<\gamma <1 <\gamma' <\min\{\gamma+1,2-\frac{2}{p}\}$,
\begin{equation}\label{eq:str-es2-1}
  \|W\|_{L_T^\infty\left(\mathcal{C}_W^{ 1 +\gamma,0}\right)}
  +\|\nabla u\|_{L_T^1\left(\mathcal{C}_W^{\gamma, 1}\right)}
  +\|\Gamma\|_{L_T^1\left(\mathcal{B}_W^{\gamma^\prime,1}\right)}
  \leq
  \begin{cases}
    C e^{\exp \exp(CT)},\quad & \textrm{for}\;\; \mathbf{D}= \mathbb{T}^2, \\
    C e^{\exp(C E_\varepsilon(T))}, \quad & \textrm{for}\;\; \mathbf{D} = \mathbb{R}^2,
  \end{cases}
\end{equation}
where $C>0$ is independent of $\varepsilon$ and $E_\varepsilon(t)$ is defined in \eqref{eq:W-Linf-es-2}.
This completes the proof of Proposition \ref{prop:C2gam}.
\end{proof}

\section{Persistence of the $C^{k+\gamma}$ boundary regularity with $k\geq 3$}\label{sec:Ckgamma}
In this section, under the assumptions of Theorem \ref{thm:main},
we want to prove that the regularity $C^{k+\gamma}$ of the boundary of initial patch temperature is preserved globally in time. More precisely, we want to prove that $\partial D(t) \in C^{k+\gamma}$ for all $k \geq 3$ and  for some $0<\gamma\le2\alpha-1$ if $\alpha\in(\frac{1}{2},1)$
and for some $\gamma\in(0,1)$ if $\alpha=1$.
In particular, the persistence of $C^{k+\gamma}$-boundary regularity is uniform with respect to $\varepsilon$
for the case that either $\{\alpha\in (\frac{1}{2},1)\}$ or
$\{\mathbf{D}=\mathbb{T}^2, \alpha =1\}$.

Recall that the regularity of the boundary of the patch temperature   $\partial D(t)$ is closely related with the striated regularity
of the (tangential) vector field $W=\nabla^{\perp} \varphi$ with $\nabla^{\perp}=(-\partial_2,\partial_1)^T$.
Indeed, according to \cite{Chem91} (see also \cite{LZ16}), we have that that for $k\geq 3$,
\begin{equation}\label{eq:targ1}
\begin{aligned}
  \partial D(t)\in L^\infty ([0,T], C^{k+\gamma}) \;
  \Longleftrightarrow \; \big(\partial_W^{k-1} W \big)(\cdot, t) \in L^{\infty}\left([0,T], C^\gamma(\mathbf{D})\right);
  %\gamma \in(0,2\alpha-1].
  \end{aligned}
\end{equation}
and, in particular, to prove the uniform persistence of the $C^{k+\gamma}$ regularity of the patch boundary
it suffices to prove \eqref{eq:targ1} uniformly in $\varepsilon$.
%\textrm{uniformly in $\varepsilon$} ~
%\textrm{if either} ~\big\{\mathbf{D}= \mathbb{R}^2, \alpha\in (\frac{1}{2},1)\big\}  \\
%&~~~~\textrm{or}~ \big\{\mathbf{D}=\mathbb{T}^2, \alpha\in(\frac{1}{2},1]\big\},
%~\textrm{and is dependent on $\varepsilon$}, ~\textrm{if} ~\big\{\mathbf{D}= \mathbb{R}^2, \alpha=1\big\}

%Our main result in this section is as follows.
%\begin{theorem}\label{thm:Ckgam}
%Let $\alpha\in (\frac{1}{2},1)$. Suppose that $u_0\in H^1\cap \dot{W}^{1,p}$, $(\partial_{W_0}u_0, \partial_{W_0}^2u_0,\ldots,\partial_{W_0}^{k-1}u_0)\in W^{1,p}$
%for some $p>\frac{2}{2\alpha-1}$, $\nabla\cdot u_0=0$, and $\theta_0(x) = \bar{\theta}_0(x) \mathbf{1}_{D_0}(x) $, $\bar{\theta}_0 \in C^{\gamma+k-2\alpha}(\overline{D_0})$ for $\gamma\in(0,2\alpha-1)$ and $\bar{\theta}_0 \in C^{k-1+\widetilde{\gamma}}(\overline{D_0})$ for $\gamma=2\alpha-1, \widetilde{\gamma}>0$, where $D_0\subset \RR^2$ is a bounded simply connected domain
%with boundary $\partial D_0\in C^{k+\gamma}$ for some $\gamma\in (0,2\alpha-1]$.
%Then the solution $(u,\theta)$ to the 2D subcritical Boussinesq-Navier-Stokes system \eqref{BoussEq} satisfies
%\begin{equation*}
%  \theta(x, t) = \bar{\theta}_0(X_t^{-1}(x)) \mathbf{1}_{D(t)}(x) \quad\text { with } \quad
%  \partial D(t) \in L^{\infty}\left(0, T ; C^{k+\gamma}\right),
%\end{equation*}
%where $D(t)=X_t(D_0)$,  $X_t$ is the particle-trajectory generated by the velocity $u$
%and $X^{-1}_t$ is its inverse.
%\end{theorem}
%\begin{proof}[Proof of Theorem \ref{thm:Ckgam}]
For this purpose, we shall prove that for all $k\geq 3$, $\frac{1}{2}<\alpha<1$, $0<\gamma\le 2\alpha-1$ and $\gamma^\prime$ such that
$ 2\alpha-1<\gamma'<\min\big\{1,4\alpha-2-\tfrac{2}{p},\gamma+2\alpha-1\big\},$
\begin{equation}\label{eq:targ2}
\begin{aligned}
  \|W\|_{L_T^{\infty}\big(\mathcal{C}_{W}^{\gamma+1, k-2}\big)}
  + \|\nabla u\|_{L_T^1\big(\mathcal{C}_{ W}^{\gamma, k-1}\big)}
  + \|\Gamma\|_{L_T^1\big(\mathcal{B}_W^{\gamma^{\prime}, k-1}\big)}  \leq H_{k-1}(T);
\end{aligned}
\end{equation}
whereas for all $k\geq 3$, $\alpha=1$, $0<\gamma<1,$ and
$ 1<\gamma^\prime<\min\big\{\gamma+1, 2 - \frac{2}{p}\big\}$,
\begin{equation}\label{eq:targ3}
\begin{split}
  \|W\|_{L_T^{\infty}\big(\mathcal{C}_{W}^{\gamma+1, k-2}\big)}
  +\|\nabla u\|_{L_T^1\big(\mathcal{C}_{ W}^{\gamma, k-1}\big)}
  + \|\Gamma\|_{L_T^1\big(\mathcal{B}_W^{\gamma^\prime, k-1}\big)}
  \leq
  \begin{cases}
    H_{k-1}(T),\quad & \textrm{for}\;\; \mathbf{D} = \mathbb{T}^2, \\
    H_{k-1}(E_\varepsilon(T)),\quad & \textrm{for} \;\;\mathbf{D}=\mathbb{R}^2,
  \end{cases}
\end{split}
\end{equation}
where $H_{k-1}(T)$ depends on $T$ but is independent of $\varepsilon$ and
$H_{k-1}(E_\varepsilon(T))$ depends on $E_\varepsilon(T)$.
If \eqref{eq:targ2} and \eqref{eq:targ3} are proved then a direct consequence of these controls is that
\begin{equation*}
\begin{aligned}
  \big\|\partial_W^{k-1} W\big\|_{L_T^\infty\left(C^\gamma\right)}
  = \big\|W \cdot \nabla \partial_W^{k-2} W\big\|_{L_T^\infty\left(C^\gamma\right)}
  & \leq C\|W\|_{L_T^\infty \left(C^\gamma\right)}
  \big\|\partial_W^{k-2} W\big\|_{L_T^\infty\left(C^{\gamma+1}\right)} \\
  & \leq C\|W\|_{L_T^\infty \left(C^\gamma\right)}
  \|W\|_{L_T^\infty (\mathcal{C}_W^{\gamma+1,k-2})} <\infty,
\end{aligned}
\end{equation*}
which corresponds to the desired result \eqref{eq:targ1}. Hence it suffices to prove that \eqref{eq:targ2} and \eqref{eq:targ3} hold.
In order to show the estimates \eqref{eq:targ2} and \eqref{eq:targ3}, we apply the induction method. \\

 First we deal with the estimate \eqref{eq:targ2} where $\mathbf{D}$ is  either $\mathbb{R}^2$
or $\mathbb{T}^2$. Assume that for some $\ell \in\{1, \ldots, k-2\}$, we have
\begin{equation}\label{eq:ind-assum}
\begin{aligned}
  \|W\|_{L_T^{\infty}\left(\mathcal{C}_{W}^{\gamma+1, \ell-1}\right)}
  +\|\nabla u\|_{L_T^1\left(\mathcal{C}_{W}^{\gamma, \ell}\right)}
  +\|\Gamma\|_{L_T^1\left(\mathcal{B}_W^{\gamma^{\prime}, \ell}\right)}
  \leq H_{\ell}(T),
\end{aligned}
\end{equation}
we want to prove that it also holds for $\ell$ replaced by $\ell+1$, that is,
\begin{equation}\label{eq:ind-conc}
\begin{aligned}
  \|W\|_{L_T^{\infty}\left(\mathcal{C}_{W}^{\gamma+1,\ell}\right)}+\|\nabla u\|_{L_T^1\left(\mathcal{C}_{W}^{\gamma, \ell+1}\right)}
  + \|\Gamma\|_{L_T^1\left(\mathcal{B}_W^{\gamma^{\prime}, \ell+1}\right)}
  \leq H_{\ell+1}(T) .
\end{aligned}
\end{equation}
The inductive statement is true for $\ell=1$, as a matter of fact, one  notices that \eqref{eq:str-es2-alp} is nothing but \eqref{eq:ind-assum} with $\ell=1$,
and also Lemma \ref{lem:CMXstra-es} and Lemmas \ref{lem:CMX5.1}, \ref{lem:CMX5.2} can be applied with $k= \ell$.
\vskip1mm
We first derive the estimation of the $L_T^\infty\big(C^{\gamma-1}\big)$-norm of $\partial_W^{\ell} \nabla^2 W$.
In view of \eqref{eq:nabW} and the fact that $\left[\partial_W, \partial_t+u \cdot \nabla\right]=0$, we have
\begin{equation}\label{eq:parWel-nab2W}
\begin{aligned}
  \partial_t\big(\partial_W^\ell \nabla^2 W\big) + u \cdot \nabla\big(\partial_W^\ell \nabla^2 W\big)
  = & \, \partial_W^{\ell+1} \nabla^2 u + 2 \partial_W^{\ell}\left(\nabla W \cdot \nabla^2 u\right)
  + \partial_W^{\ell}\left(\nabla^2 W \cdot \nabla u\right) \\
  & -\partial_W^{\ell}\left(\nabla^2 u \cdot \nabla W\right)
  -2 \partial_W^{\ell}\left(\nabla u \cdot \nabla^2 W\right) .
\end{aligned}
\end{equation}
Owing to Lemma~\ref{lem:CMX}, we find that for all $\gamma \in(0,2\alpha-1]$,
\begin{align}\label{eq:4.13}
  \big\|\partial_W^\ell \nabla^2 W(t)\big\|_{C^{\gamma-1}}
  & \leq C e^{C\int_0^t \|\nabla u\|_{L^\infty}\dd\tau} \left( \big\|\partial_{W_0}^\ell \nabla^2 W_0\big\|_{C^{\gamma-1}}
  + \int_0^t\big\|\partial_W^{\ell+1} \nabla^2 u(\tau)\big\|_{C^{\gamma-1}} \mathrm{d} \tau\right. \nonumber \\
  & \quad + \int_0^t \Big\|\Big(\partial_W^\ell\left(\nabla^2 W \cdot \nabla u\right),
  \partial_W^\ell \left(\nabla u \cdot \nabla^2 W\right) \Big)\Big\|_{C^{\gamma-1}} \mathrm{d} \tau \nonumber \\
  & \quad \left.+ \int_0^t\left\|\left(\partial_W^\ell \left(\nabla W \cdot \nabla^2 u\right),
  \partial_W^\ell \left(\nabla^2 u \cdot \nabla W\right)\right)\right\|_{C^{\gamma-1}} \mathrm{d} \tau\right).
\end{align}
Since $\partial D_0\in C^{k+\gamma}(\mathbf{D})$ implying $\varphi_0 \in C^{k+\gamma}\left(\mathbf{D}\right)$,
and by repeatedly  using \eqref{eq:prodBes}, one can get
\begin{align*}%\label{eq:4.14}
  \big\|\partial_{W_0}^\ell \nabla^2 W_0\big\|_{C^{\gamma-1}}
  & \lesssim_{\|W_0\|_{L^\infty}} \|\nabla\partial_{W_0}^{\ell-1}\nabla^2W_0\|_{C^{\gamma-1}} \nonumber \\
  &\lesssim_{\|W_0\|_{W^{1,\infty}}}  \|\nabla\partial_{W_0}^{\ell-2}\nabla^2W_0\|_{C^{\gamma-1}}
  + \|\nabla^2\partial_{W_0}^{\ell-2}\nabla^2W_0\|_{C^{\gamma-1}}  \nonumber \\
  & \lesssim_{\|W_0\|_{W^{\ell-1,\infty}}} \|\nabla^3W_0\|_{C^{\gamma-1}}
  +\cdots + \|\nabla^{\ell +2} W_0\|_{C^{\gamma-1} } \nonumber \\
  & \lesssim_{\|\varphi_0\|_{W^{\ell,\infty}}} \left\|\varphi_0\right\|_{C^{\ell+2+\gamma}}
  \lesssim_{\|\varphi_0\|_{W^{k-2, \infty}}} \left\|\varphi_0\right\|_{C^{k+\gamma}}.
\end{align*}
In view of Lemma~\ref{lem:CMXstra-es} and the striated estimates \eqref{eq:CMX5.6}, \eqref{eq:CMX5.14},
\eqref{eq:CMX5.16}, the last two integrals on the right-hand side of \eqref{eq:4.13} can be treated as follows:
\begin{equation*}%\label{eq:4.15}
\begin{aligned}
  & \int_0^t\left\|\left(\partial_W^{\ell}\left(\nabla W \cdot \nabla^2 u\right),
  \partial_W^{\ell}\left(\nabla^2 u \cdot \nabla W\right)\right)\right\|_{C^{\gamma-1}} \mathrm{d} \tau \\
  & \leq \,C\int_0^t\left\|\left(\nabla W \cdot \nabla^2 u,
  \nabla^2 u \cdot \nabla W\right)\right\|_{\mathcal{C}_{W}^{\gamma-1, \ell}} \mathrm{d} \tau \\
  & \le \, C \int_0^t\|\nabla W\|_{\mathcal{B}_W^{0, \ell}}
  \left\|\nabla^2 u\right\|_{\mathcal{C}_{ W}^{\gamma-1, \ell}} \mathrm{d} \tau
  \le C \int_0^t\|W(\tau)\|_{\mathcal{B}_W^{1, \ell}}
  \|\nabla u(\tau)\|_{\mathcal{C}_{W}^{\gamma, \ell}} \mathrm{d} \tau,
\end{aligned}
\end{equation*}
and
\begin{equation*}%\label{eq:4.16}
\begin{aligned}
  & \int_0^t\left\|\left(\partial_W^{\ell}\left(\nabla^2 W \cdot \nabla u\right),
  \partial_W^{\ell}\left(\nabla u \cdot \nabla^2 W\right)\right)\right\|_{C^{\gamma-1}} \mathrm{d} \tau \\
  &\le C \int_0^t\left\|\left(\nabla^2 W \cdot \nabla u, \nabla u \cdot \nabla^2 W\right)
  \right\|_{\mathcal{C}_{W}^{\gamma-1,\ell}} \mathrm{~d} \tau \\
  & \le C \int_0^t\left\|\nabla^2 W\right\|_{\mathcal{C}_{W}^{\gamma-1, \ell}}
  \|\nabla u\|_{\mathcal{B}_W^{0, \ell}} \mathrm{d} \tau
  \le C \int_0^t\|W(\tau)\|_{\mathcal{C}_{W}^{\gamma+1, \ell}}
  \|\nabla u(\tau)\|_{\mathcal{C}_{W}^{\gamma, \ell}} \mathrm{d} \tau .
\end{aligned}
\end{equation*}
Now the main task is to control the second term on the right-hand side of \eqref{eq:4.13},  and it follows from equality \eqref{eq:u-exp1} that
\begin{equation}\label{eq:nab2u-str-es1}
\begin{aligned}
  \big\|\partial_W^{\ell+1} \nabla^2 u\big\|_{L_t^1\left(C^{\gamma-1}\right)}
  & \leq \big\|\partial_W^{\ell+1} \nabla^2 \nabla^{\perp} \Lambda^{-2}
  \Gamma \big\|_{L_t^1\left(C^{\gamma-1}\right)}
  + \big\|\partial_W^{\ell+1} \nabla^2 \nabla^{\perp} \partial_1
  \Lambda^{-2-2\alpha} \theta\big\|_{L_t^1\left(C^{\gamma-1}\right)} \\
  & \leq \big\|\nabla^2 \nabla^\perp \Lambda^{-2} \Gamma
  \big\|_{L_t^1\big(\mathcal{C}_W^{\gamma-1, \ell+1}\big)}
  + \big\|\nabla^2 \nabla^\perp \partial_1 \Lambda^{-2-2\alpha}
  \theta\big\|_{L_t^1\big(\mathcal{C}_W^{\gamma-1, \ell+1}\big)} .
\end{aligned}
\end{equation}
Taking advantage of \eqref{eq:str-es-mDphi} with $s=\gamma-1$ and $\sigma=2-2\alpha$, we get
\begin{equation*}%\label{eq:m(D)theta}
\begin{aligned}
  \big\|\nabla^2 \nabla^\perp \partial_1 \Lambda^{-2-2\alpha}
  \theta \big\|_{L_t^1\big(\mathcal{C}_W^{\gamma-1, \ell+1}\big)}
  \leq C \|\theta\|_{L_t^1\left(\mathcal{C}_{W}^{\gamma-2\alpha+1, \ell+1}\right)}
  + C \int_0^t \|W(\tau)\|_{\mathcal{B}_W^{1,\ell}}
  \|\theta(\tau)\|_{\mathcal{C}_W^{\gamma-2\alpha+1, \ell}}\mathrm{d} \tau.
\end{aligned}
\end{equation*}
Since $\partial_W^j\theta$ for all $j\in\{1,\cdots,\ell+1\}$ satisfies
\begin{align*}
  \partial_t(\partial_W^j\theta) + u\cdot\nabla(\partial_W^j\theta) = 0,
\end{align*}
we use \eqref{eq:T-BesEs} and Lemma~\ref{lem:str-reg} to infer that for all $\gamma\in(0,2\alpha-1)$,
\begin{equation*}
  \|\partial_W^j\theta\|_{L_t^\infty(C^{\gamma-(2\alpha-1)})}
  \leq Ce^{C\|\nabla u\|_{L_t^1(L^\infty)}}\|\partial_{W_0}^j\theta_0\|_{C^{\gamma-(2\alpha-1)}}\le Ce^{\exp (Ct)},
\end{equation*}
and for $\gamma=2\alpha-1$,
\begin{equation*}
\begin{aligned}
  \|\partial_W^j\theta\|_{L_t^\infty(B^0_{\infty,\infty})}
  & \leq C e^{C\|\nabla u\|_{L_t^1(L^\infty)}} \|\partial_{W_0}^j\theta_0\|_{B^0_{\infty,\infty}} \\
  & \leq C e^{C\|\nabla u\|_{L_t^1(L^\infty)}} \|\partial_{W_0}^j \theta_0\|_{L^\infty} \leq Ce^{\exp (Ct)}.
\end{aligned}
\end{equation*}
Summing the above inequalities over $j\in\{1,\cdots,\ell+1\}$ leads to that for all $\gamma\in(0,2\alpha-1]$,
\begin{equation}\label{eq:the-str-es}
  \|\theta\|_{L_t^\infty\big(\mathcal{C}^{\gamma-(2\alpha-1),\ell+1}_{W}\big)}
  \leq Ce^{\exp (Ct)}.
\end{equation}
Then, using \eqref{eq:the-str-es} gives
\begin{equation}\label{eq:the-str-es2}
  \|\nabla^2 \nabla^\perp \partial_1 \Lambda^{-2-2\alpha}
  \theta\|_{L_t^1\big(\mathcal{C}^{\gamma-1,\ell+1}_{W}\big)}
  \leq Ce^{\exp(Ct)} + Ce^{\exp(Ct)}\int_0^t\|W(\tau)\|_{\mathcal{B}_W^{1,\ell}}\dd \tau.
\end{equation}
For the first term of the right-hand side of \eqref{eq:nab2u-str-es1}, we use \eqref{eq:str-es-mDphi} (with $s=\gamma-1$, $\sigma=1$)
to deduce that
\begin{equation}\label{eq:Gam-str-es1}
  \big\|\nabla^2 \nabla^{\perp} \Lambda^{-2} \Gamma\big\|_{L_t^1\big(\mathcal{C}_{W}^{\gamma-1, \ell+1}\big)}
  \le C  \|\Gamma\|_{L_t^1\big(\mathcal{C}_{ W}^{\gamma,\ell+1}\big)}
  +C\int_0^t\|W(\tau)\|_{\mathcal{B}_W^{1, \ell}}\|\Gamma(\tau)\|_{\mathcal{C}_{W}^{\gamma, \ell}} \mathrm{d} \tau.
\end{equation}
In the following we consider the smoothing estimate of $\partial_W^{\ell+1} \Gamma$.
From \eqref{eq:Gamma} and the fact $$\big[\partial_W^{\ell+1}, \partial_t+u \cdot \nabla\big]=0,$$ we see that
\begin{equation}\label{eq:parWl+1Gam}
\begin{aligned}
  \partial_t\big(\partial_W^{\ell+1} \Gamma\big) + u \cdot \nabla
  \big(\partial_W^{\ell+1} \Gamma\big) + \frac{1}{\varepsilon} \Lambda^{2\alpha}\big(\partial_W^{\ell+1} \Gamma\big)
  & = \frac{1}{\varepsilon} \big[\Lambda^{2\alpha}, \partial_W^{\ell+1}\big] \Gamma
  + \partial_W^{\ell+1}\big(\left[\mathcal{R}_{1-2\alpha}, u \cdot \nabla\right] \theta\big) \\
  & := F_{\ell+1},
\end{aligned}
\end{equation}
where
\begin{equation}\label{eq:4.23}
\begin{aligned}
  \big[\Lambda^{2\alpha}, \partial_W^{\ell+1}\big] \Gamma
  =\big[\Lambda^{2\alpha}, \partial_W \big] \partial_W^\ell \Gamma
  + \partial_W \big(\big[\Lambda^{2\alpha}, \partial_W^\ell\big] \Gamma\big)
  = \sum_{j=0}^\ell \partial_W^j\big(\big[\Lambda^{2\alpha}, \partial_W\big] \partial_W^{\ell-j} \Gamma\big).
\end{aligned}
\end{equation}
According to \eqref{eq:TD-sm4} in Lemma~\ref{lem:TD-sm}, we infer that for all $2\alpha-1<\gamma^{\prime} <1$,
\begin{eqnarray}\label{eq:parGam-stra-es}
   \big\|\partial_W^{\ell+1} \Gamma\big\|_{L_t^1\big(B_{\infty, 1}^{\gamma^\prime}\big)} &\leq& \Big\| \Big(2^{q\gamma'} \big\|\Delta_q (\partial_W^{\ell+1} \Gamma)\big\|_{L^1_t(L^\infty)}
  \Big)_{q\in\mathbb{N}} \Big\|_{\ell^1}
  + \big\|\Delta_{-1} (\partial_W^{\ell+1} \Gamma) \big\|_{L^1_t (L^\infty)} \nonumber \\
  &\leq& C \varepsilon e^{C\int_0^t\|\nabla u(\tau)\|_{L^\infty}\dd\tau}
  \Big(\big\|\partial_{W_0}^{\ell+1} \Gamma_0\big\|_{B_{\infty, 1}^{\gamma^\prime-2\alpha}}
  + \left\|F_{\ell+1}\right\|_{L_t^1\big(B_{\infty, 1}^{\gamma^\prime-2\alpha}\big)}\Big) \nonumber \\
  && +\big\|\Delta_{-1}(\partial_W^{\ell+1}\Gamma) \big\|_{L_t^1(L^\infty)} \nonumber \\
  &\leq& C e^{\exp(C t)} \left(\varepsilon\big\|\partial_{W_0}^{\ell+1} \Gamma_0 \big\|_{B_{\infty, 1}^{\gamma^\prime-2\alpha}} + \big\| \big[\Lambda^{2\alpha}, \partial_W^{\ell+1}\big] \Gamma
  \big\|_{L_t^1\big(B_{\infty, 1}^{\gamma^\prime-2\alpha}\big)}\right. \nonumber \\
  && \left. + \ \varepsilon\big\|\partial_W^{\ell+1}\left(\left[\mathcal{R}_{1-2\alpha}, u \cdot \nabla\right]
  \theta\right)\big\|_{L_t^1\big(B_{\infty, 1}^{\gamma^\prime-2\alpha}\big)}\right) \nonumber \\
  && + \, C \int_0^t\|W(\tau)\|_{L^\infty} \|\partial_W^\ell\Gamma(\tau)\|_{L^\infty}\dd\tau.
\end{eqnarray}
Taking advantage of the relation $\Gamma_0=\omega_0-\mathcal{R}_{1-2\alpha} \theta_0$ and the equality
\begin{eqnarray}\label{eq:4.25}
  \big[\nabla, \partial_W^{\ell+1}\big] f & = &\left[\nabla, \partial_W\right] \partial_W^{\ell} f + \partial_W \big(\left[\nabla, \partial_W\right] \partial_W^{\ell-1} f\big)
%  +\partial_W^2\big(\left[\nabla, \partial_W\right] \partial_W^{\ell-2} f\big)
  + \cdots +\partial_W^\ell \big(\left[\nabla, \partial_W\right] f\big) \nonumber \\
  &=&\sum_{j=0}^\ell \partial_W^j\big(\nabla W \cdot \nabla \partial_W^{\ell-j} f\big),
\end{eqnarray}
we apply Lemmas~\ref{lem:CMXstra-es},~\ref{lem:str-reg} and the striated estimates \eqref{eq:CMX5.6}, \eqref{eq:CMX5.14},
\eqref{eq:CMX5.16} to deduce that
\begin{align}\label{eq:4.26}
  &\quad \big\|\partial_{W_0}^{\ell+1} \Gamma_0\big\|_{B_{\infty, 1}^{\gamma^\prime-2\alpha}}
  \leq \big\|\partial_{W_0}^{\ell+1} \nabla u_0\big\|_{B_{\infty, 1}^{\gamma^\prime-2\alpha}}
  + \big\|\partial_{W_0}^{\ell+1} \mathcal{R}_{1-2\alpha} \theta_0\big\|_{B_{\infty, 1}^{\gamma^\prime-2\alpha}} \nonumber \\
  & \leq C \big\|\nabla \partial_{W_0}^{\ell+1} u_0\big\|_{B_{\infty, 1}^{\gamma^\prime-2\alpha}}
  + C \sum_{j=0}^\ell \big\|\nabla W_0 \cdot \nabla \partial_{W_0}^{\ell-j} u_0\big\|_{\mathcal{B}_{W_0}^{\gamma^\prime-2\alpha, j}}
  + C\big\|W_0 \cdot \nabla \mathcal{R}_{1-2\alpha} \theta_0\big\|_{\mathcal{B}_{W_0}^{\gamma^\prime-2\alpha, \ell}} \nonumber \\
  & \leq C \big\|\partial_{W_0}^{\ell+1} u_0\big\|_{W^{1, p}} + C\sum_{j=0}^\ell \left\|\nabla W_0\right\|_{\mathcal{B}_{W_0}^{0, j}}
  \big\|\nabla \partial_{W_0}^{\ell-j} u_0\big\|_{\mathcal{B}_{W_0}^{\gamma^\prime-2\alpha, j}}
  + C \left\|W_0\right\|_{\mathcal{B}_{W_0}^{0, \ell}}
  \left\|\nabla \mathcal{R}_{1-2\alpha} \theta_0\right\|_{\mathcal{B}_{W_0}^{\gamma^\prime-2\alpha, \ell}} \nonumber \\
  & \leq C \big\| \partial_{W_0}^{\ell+1} u_0 \big\|_{W^{1, p}} + C\left\|W_0\right\|_{\mathcal{B}_{W_0}^{1, \ell}}
  \left\|u_0\right\|_{\mathcal{B}_{W_0}^{\gamma^\prime-(2\alpha-1), \ell}}
  + C\left\|W_0\right\|_{\mathcal{B}_{W_0}^{0, \ell}} \Big(1+\left\|W_0\right\|_{\mathcal{B}_{W_0}^{1, \ell-1}}
  \Big) \left\|\theta_0\right\|_{\mathcal{B}_{W_0}^{\gamma^\prime+2-4\alpha, \ell}} \nonumber \\
  & \leq C \Big(1+\left\|W_0\right\|_{\mathcal{B}_{W_0}^{1, \ell}}\Big)
  \bigg(\sum_{j=0}^{\ell+1}\big\|\partial_{W_0}^j u_0\big\|_{W^{1, p}}
  + \sum\limits_{j=0}^\ell \big\|\partial_{W_0}^j\theta_0\big\|_{B^{\gamma'+2-4\alpha}_{\infty,1}}\bigg) \le C ,
\end{align}
where in the last three lines we have used the continuous embedding that
\begin{align*}
  W^{1,p}\hookrightarrow B^{\gamma'+3-4\alpha}_{\infty,1}\hookrightarrow B^{\gamma'+1-2\alpha}_{\infty,1},
  \quad~ L^\infty\hookrightarrow B^{\gamma'+2-4\alpha}_{\infty,1},
  \quad ~B^{\gamma-(2\alpha-1)}_{\infty,\infty}\hookrightarrow B^{\gamma'+2-4\alpha}_{\infty,1},
\end{align*}
valid for all
$$0<\gamma\le 2\alpha-1<\gamma'<\min\big\{1,\gamma+2\alpha-1,4\alpha-2-\tfrac{2}{p}\big\}.$$

For the commutator term $\big[\Lambda^{2\alpha}, \partial_W^{\ell+1}\big] \Gamma$ given by \eqref{eq:4.23},
it follows from \eqref{eq:CMX2.15} (with $s=\gamma'-2\alpha$, $\sigma=2\alpha$), \eqref{eq:CMX5.6} and \eqref{eq:CMX5.14} that
\begin{align}\label{eq:Lamd-2alp-com-Gam}
  \big\|\big[\Lambda^{2\alpha}, \partial_W^{\ell+1}\big] \Gamma\big\|_{L^1_t(B_{\infty, 1}^{\gamma^\prime-2\alpha})}
  & \leq \sum_{j=0}^\ell \big\|\partial_W^j \big([\Lambda^{2\alpha}, W\cdot\nabla] \partial_W^{\ell-j} \Gamma\big)
  \big\|_{L^1_t (B^{\gamma'-2\alpha}_{\infty,1})} \nonumber \\
  & \leq  C \sum_{j=0}^\ell \big\| [\Lambda^{2\alpha}, W\cdot\nabla]\partial_W^{\ell-j} \Gamma
  \big\|_{L^1_t \big(\mathcal{B}^{\gamma'-2\alpha,j}_W \big)} \nonumber \\
  & \leq  C \sum_{j=0}^\ell \int_0^t \Big(\|\nabla W(\tau)\|_{\mathcal{B}^{0,j}_W} + \|W(\tau)\|_{L^\infty} \Big)
  \|\partial_W^{\ell-j}\Gamma(\tau)\|_{\mathcal{B}^{\gamma',j}_W} \dd \tau \nonumber \\
  & \leq C \int_0^t \|W(\tau)\|_{\mathcal{B}_W^{1, \ell}} \|\Gamma(\tau)\|_{\mathcal{B}_W^{\gamma^\prime, \ell}}
  \dd \tau,
\end{align}
%\begin{align*}
%  \|\partial_W^i[\Lambda^{2\alpha}, u\cdot\nabla]\partial_W^{\ell-i}\phi\|_{B^s_{\infty,1}}
%  &\leq \left\|[\Lambda^{2\alpha}, u\cdot\nabla]\partial_W^{\ell-i}\phi\right\|_{\mathcal{B}^{s,i}_{W}}\\
%  &\le C \left(\|\nabla u\|_{\mathcal{B}_{W}^{0, i}}
%  +\|u\|_{L^{\infty}}\right)\|\partial_W^{\ell-i}\phi\|_{\mathcal{B}_{W}^{s+2\alpha,i}}\\
%  &\le C \left(\|\nabla u\|_{\widetilde{\mathcal{B}}_{W}^{0, i}}+\|u\|_{L^{\infty}}\right)\|\phi\|_{\mathcal{B}_{W}^{s+2\alpha,\ell}}\\
%  &\le C \left(\|u\|_{\widetilde{\mathcal{B}}_{W}^{1, i}}+\|u\|_{L^{\infty}}\right)\|\phi\|_{\mathcal{B}_{W}^{s+2\alpha,\ell}}\\
%  &\le C \|u\|_{\mathcal{B}_{W}^{1, i}}\|\phi\|_{\mathcal{B}_{W}^{s+2\alpha,\ell}}\\
%  &\le C \|u\|_{\mathcal{B}_{W}^{1,\ell}}\|\phi\|_{\mathcal{B}_{W}^{s+2\alpha,\ell}},
%\end{align*}
where $C>0$ depends on $\|W\|_{L^\infty_T(\mathcal{C}_W^{\gamma+1, \ell-1})}$ which is bounded by
$H_\ell(T)$.
For the second term of $F_{\ell+1}$ in \eqref{eq:parGam-stra-es}, by using the formula
$\nabla\left(\left[\mathcal{R}_{1-2\alpha}, u \cdot \nabla\right] \theta\right)
=\left[\nabla \mathcal{R}_{1-2\alpha}, u \cdot \nabla\right] \theta-(\nabla u) \cdot \nabla \mathcal{R}_{1-2\alpha} \theta$
and \eqref{eq:prodBes}, we see that
\begin{equation*}%\label{eq:4.28}
\begin{aligned}
  & \, \varepsilon\big\|\partial_W^{\ell+1}\left(\left[\mathcal{R}_{1-2\alpha}, u \cdot \nabla\right]
  \theta\right)\big\|_{L_t^1\big(B_{\infty, 1}^{\gamma^{\prime}-2\alpha}\big)} \\
  & \le C \varepsilon \|W\|_{L_t^{\infty}\left(L^{\infty}\right)}\big\|\nabla \partial_W^\ell
  \big(\left[\mathcal{R}_{1-2\alpha}, u \cdot \nabla\right] \theta\big)
  \big\|_{L_t^1\big(B_{\infty, 1}^{\gamma^\prime-2\alpha}\big)} \\
  & \le C \varepsilon \big\|\partial_W^\ell \left(\left[\nabla \mathcal{R}_{1-2\alpha}, u \cdot \nabla\right]
  \theta\right)\big\|_{L_t^1\left(B_{\infty, 1}^{\gamma^{\prime}-2\alpha}\right)}
  + C \varepsilon \big\|\partial_W^{\ell}\left(\nabla u \cdot \nabla \mathcal{R}_{1-2\alpha} \theta\right)
  \big\|_{L_t^1\big(B_{\infty, 1}^{\gamma^\prime-2\alpha}\big)} \\
  & \quad + C \varepsilon \big\|\big[\nabla, \partial_W^\ell\big]\left(\left[\mathcal{R}_{1-2\alpha}, u \cdot \nabla\right]
  \theta\right)\big\|_{L_t^1\big(B_{\infty, 1}^{\gamma^\prime-2\alpha}\big)} \\
  & := N_1 + N_2 + N_3 .
\end{aligned}
\end{equation*}
To estimate $N_1$, we use  \eqref{eq:CMX2.15} in Lemma~\ref{lem:CMXstra-es}, the induction  assumption \eqref{eq:ind-assum}
together with the estimate \eqref{eq:the-str-es} (with the embedding
$\mathcal{C}^{\gamma-2\alpha+1,\ell+1}_W \hookrightarrow \mathcal{B}^{\gamma'+2-4\alpha,\ell}_W$), we find
\begin{equation*}%\label{eq:4.29}
\begin{aligned}
  N_1 &\leq C \varepsilon \left\|\left[\nabla \mathcal{R}_{1-2\alpha}, u \cdot \nabla\right]
  \theta\right\|_{L_t^1\big(\mathcal{B}_W^{\gamma^{\prime}-2\alpha, \ell}\big)}\\
  &\leq C \Big(\|\nabla u\|_{L_t^1\big(\mathcal{B}_W^{0, \ell}\big)}
  +\varepsilon\| u\|_{L_t^1\left(L^{\infty}\right)} \Big)
  \|\theta\|_{L_t^{\infty}\big(\mathcal{B}_W^{\gamma^{\prime}+2-4\alpha, \ell}\big)}\\
  &\leq C,
\end{aligned}
\end{equation*}
where in the last line we have used the following estimate (which is a consequence of  \eqref{eq:u-L2-es} and \eqref{eq:u-Gam-es2})
\begin{align*}
  \varepsilon \| u\|_{L_t^1(L^\infty)}
  \leq C \int_0^t\|\varepsilon u\|_{L^2}^{\frac{1}{2}}\|\varepsilon\nabla u\|_{L^\infty}^{\frac{1}{2}}\dd\tau
  \leq C \Big( \|\varepsilon u\|_{L_t^1(L^2)}+\|\varepsilon\nabla u\|_{L_t^1(L^\infty)}\Big) \leq C,
\end{align*}
and $C>0$ independent of $\varepsilon$ depends on $H_{\ell}(T)$ with $H_{\ell}(T)\geq C e^{\exp\exp(CT)}$.
By applying \eqref{eq:CMX2.13}, \eqref{eq:str-es-mDphi},  \eqref{eq:ind-assum}
and \eqref{eq:the-str-es}, the second term $N_2$ can be estimated as
\begin{align*}%\label{eq:4.30}
  N_2 & \leq C \varepsilon \left\|\nabla u \cdot \nabla \mathcal{R}_{1-2\alpha}
  \theta\right\|_{L_t^1\big(\mathcal{B}_W^{\gamma^{\prime}-2\alpha, \ell}\big)} \nonumber \\
  & \leq C\varepsilon \|\nabla u\|_{L_t^1\big(\mathcal{B}_W^{0, \ell}\big)}
  \left\|\nabla \mathcal{R}_{1-2\alpha} \theta\right\|_{L_t^{\infty}\big(\mathcal{B}_W^{\gamma^{\prime}-2\alpha, \ell}\big)} \nonumber \\
  & \leq C \varepsilon \|\nabla u\|_{L_t^1\big(\mathcal{B}_W^{0, \ell}\big)}
  \Big(\|\theta\|_{L_t^{\infty}\big(\mathcal{B}_W^{\gamma^{\prime}+2-4\alpha, \ell}\big)}
  + \|W\|_{L_t^\infty \big(\mathcal{B}_W^{1, \ell-1}\big)}
  \|\theta\|_{L_t^{\infty} \big(\mathcal{B}_W^{\gamma^{\prime}+2-4\alpha, \ell-1}\big)}\Big) \nonumber \\
  & \leq C.
\end{align*}
For $N_3$, we use \eqref{eq:4.25}, \eqref{eq:ind-assum}, Lemma~\ref{lem:CMXstra-es},
together with the estimates of $N_1$, $N_2$, one obtains that
\begin{align*}
  N_3 & \leq \varepsilon \sum_{i=0}^{\ell-1}\left\|\partial_W^i\left(\nabla W \cdot \nabla \partial_W^{\ell-1-i}
  \left[\mathcal{R}_{1-2\alpha}, u \cdot \nabla\right] \theta\right)\right
  \|_{L_t^1\big(B_{\infty, 1}^{\gamma^{\prime}-2\alpha}\big)} \\
  & \le C \varepsilon \sum_{i=0}^{\ell-1}\left\|\nabla W \cdot \nabla \partial_W^{\ell-1-i}\left[\mathcal{R}_{1-2\alpha},
  u \cdot \nabla\right] \theta\right\|_{L_t^1\big(\mathcal{B}_W^{\gamma^{\prime}-2\alpha, i}\big)} \\
  & \le C \varepsilon \sum_{i=0}^{\ell-1}\|\nabla W\|_{L_t^{\infty}\left(\mathcal{B}_W^{0, i}\right)}
  \left\|\nabla \partial_W^{\ell-1-i}\left[\mathcal{R}_{1-2\alpha}, u \cdot \nabla\right]
  \theta\right\|_{L_t^1\big(\mathcal{B}_W^{\gamma^{\prime}-2\alpha, i}\big)} \\
  & \le C \varepsilon \|W\|_{L_t^{\infty}\big(\mathcal{B}_W^{1, \ell-1}\big)} \sum_{i=0}^{\ell-1}
  \left\|\nabla \partial_W^{\ell-1-i}\left[\mathcal{R}_{1-2\alpha}, u \cdot \nabla\right]
  \theta\right\|_{L_t^1\big(\mathcal{B}_W^{\gamma^{\prime}-2\alpha, i}\big)} \\
  & \le C \varepsilon \left\|\nabla\left(\left[\mathcal{R}_{1-2\alpha}, u \cdot \nabla\right] \theta\right)
  \right\|_{L_t^1\left(\mathcal{B}_W^{\gamma^{\prime}-2\alpha, \ell-1}\right)}
  + C \varepsilon \sum_{i=0}^{\ell-2}\left\|\big[\nabla, \partial_W^{\ell-1-i}\big]\left(\left[\mathcal{R}_{1-2\alpha},
  u \cdot \nabla\right] \theta\right)\right\|_{L_t^1\big(\mathcal{B}_W^{\gamma^{\prime}-2\alpha, i}\big)} \\
  & \leq C+C \varepsilon \sum_{i=0}^{\ell-2} \sum_{j=0}^{\ell-2-i}\left\|\partial_W^j\left(\nabla W \cdot \nabla
  \partial_W^{\ell-2-i-j}\left(\left[\mathcal{R}_{1-2\alpha}, u \cdot \nabla\right]
  \theta\right)\right)\right\|_{L_t^1\big(\mathcal{B}_W^{\gamma^{\prime}-2\alpha, i}\big)} \\
  & \leq C+C \varepsilon \sum_{0 \leq i+j \leq \ell-2}\left\|\nabla W \cdot \nabla
  \partial_W^{\ell-2-i-j}\left(\left[\mathcal{R}_{1-2\alpha}, u \cdot \nabla\right]
  \theta\right)\right\|_{L_t^1\big(\mathcal{B}_W^{\gamma^{\prime}-2\alpha, i+j}\big)} \\
  & \leq C+C \varepsilon \sum_{0 \leq i+j \leq \ell-2}\left\|\nabla \partial_W^{\ell-2-i-j}\left(\left[\mathcal{R}_{1-2\alpha},
  u \cdot \nabla\right] \theta\right)\right\|_{L_t^1\big(\mathcal{B}_W^{\gamma^{\prime}-2\alpha, i+j}\big)},
\end{align*}
where $C>0$ is independent of $\varepsilon$ and depends on $H_{\ell}(t)$ with
$H_{\ell}(T) \geq H_{\ell-1}(T) \geq C e^{\exp\exp(C T)}$.
By repeating the above process and using \eqref{eq:R-comm-Bes1} we end up with
\begin{equation*}%\label{eq:4.31}
\begin{split}
  N_3 \leq  C+C\varepsilon \left\|\nabla\left(\left[\mathcal{R}_{1-2\alpha}, u \cdot \nabla\right]
  \theta\right)\right\|_{L_t^1\big(B_{\infty, 1}^{\gamma^{\prime}-2\alpha}\big)}
  \leq C.
\end{split}
\end{equation*}
Hence, it follows from \eqref{eq:Gam-str-es1}, \eqref{eq:parGam-stra-es}, \eqref{eq:4.26}, \eqref{eq:Lamd-2alp-com-Gam}
and the above estimates on $N_1$-$N_3$ that
\begin{equation}\label{eq:Gam-Bgam'-el+1}
\begin{aligned}
  \|\Gamma\|_{L_t^1\big(\mathcal{B}_W^{\gamma^{\prime}, \ell+1}\big)}
  =\,  \big\|\partial_W^{\ell+1} \Gamma \big\|_{L_t^1\big(B_{\infty, 1}^{\gamma^{\prime}}\big)}
  + \|\Gamma\|_{L_t^1\big(\mathcal{B}_W^{\gamma^\prime, \ell}\big)}
  \leq C\left(1+\int_0^t\|W(\tau)\|_{\mathcal{B}_W^{1, \ell}}
  \|\Gamma(\tau)\|_{\mathcal{B}_W^{\gamma^{\prime}, \ell}} \mathrm{d} \tau\right),
\end{aligned}
\end{equation}
and (recall that $0<\gamma <\gamma'$)
\begin{equation}\label{eq:Gam-str-es2}
  \big\|\nabla^2 \nabla^{\perp} \Lambda^{-2} \Gamma\big\|_{L_t^1\big(\mathcal{C}_{W}^{\gamma-1, \ell+1}\big)}
  \le C  + C\int_0^t\|W(\tau)\|_{\mathcal{B}_W^{1, \ell}}\|\Gamma(\tau)\|_{\mathcal{B}_{W}^{\gamma', \ell}} \mathrm{d} \tau.
\end{equation}
Then, using \eqref{eq:4.13}, \eqref{eq:the-str-es2} and \eqref{eq:Gam-str-es2}, we find that for all $(\gamma,\gamma')$ such that
$$0<\gamma\leq 2\alpha-1 < \gamma'<\min \big\{1, 4\alpha-2-\tfrac{2}{p},\gamma+2\alpha-1\big\},$$
\begin{equation}\label{eq:4.34}
\begin{aligned}
  \big\|\partial_W^\ell \nabla^2 W(t)\big\|_{C^{\gamma-1}}
  + \big\|\partial_W^{\ell+1} \nabla^2 u\big\|_{L_t^1\left(C^{\gamma-1}\right)}
  \leq C\int_0^t\left(\|\Gamma\|_{\mathcal{B}_W^{\gamma^{\prime}, \ell}}
  +\|\nabla u\|_{\mathcal{C}_{ W}^{\gamma, \ell}}+1\right)\|W\|_{\mathcal{C}_{W}^{\gamma+1, \ell}}\mathrm{d} \tau+C,
\end{aligned}
\end{equation}
where $C>0$ is independent of $\varepsilon$ and depends on $H_{\ell}(T)\geq C e^{\exp\exp{(CT)}}$.
By using \eqref{eq:4.23}, \eqref{eq:4.25} and Lemma~\ref{lem:CMXstra-es} and Lemma~\ref{lem:CMX5.2}, we infer that
\begin{eqnarray*}
  \big\|\big[\nabla^2, \partial_W^\ell\big] W(t)\big\|_{C^{\gamma-1}}
  &\leq& \sum_{j=0}^{\ell-1}\big\|\partial_W^j\big(\nabla^2 W \cdot \nabla \partial_W^{\ell-1-j} W\big)
  \big\|_{L^\infty_t(C^{\gamma-1})} \\
  && \ + \ 2 \sum_{j=0}^{\ell-1}\big\|\partial_W^j\big(\nabla W \cdot\nabla^2 \partial_W^{\ell-1-j} W\big)\big\|_{L^\infty_t(C^{\gamma-1})} \\
  &\leq& \sum_{j=0}^{\ell-1}\big\|\nabla^2 W \cdot \nabla \partial_W^{\ell-1-j} W\big\|_{L^\infty_t\big(\mathcal{C}_W^{\gamma-1, j}\big)}
  +  2\sum_{j=0}^{\ell-1}\big\|\nabla W \cdot \nabla^2 \partial_W^{\ell-1-j} 
  W\big\|_{L^\infty_t\big(\mathcal{C}_W^{\gamma-1, j}\big)} \\
  &\leq& C \sum_{j=0}^{\ell-1} \bigg(\left\|\nabla^2 W\right\|_{L^\infty_t(\mathcal{C}_{W}^{\gamma-1, j})}
  \big\|\nabla \partial_W^{\ell-1-j} W\big\|_{L^\infty_t(\mathcal{B}_W^{0, j})}  \\
  && \ + \   \|\nabla W\|_{L^\infty_t(\mathcal{B}_W^{0, j})}
  \big\|\nabla^2 \partial_W^{\ell-1-j} W\big\|_{L^\infty_t\big(\mathcal{C}_{W}^{\gamma-1, j}\big)}\bigg) \\
  &\leq& C \|W\|_{L^\infty_t\big(\mathcal{C}_{W}^{\gamma+1, \ell-1}\big)} \|W\|_{L^\infty_t\big(\mathcal{B}_W^{1, \ell-1}\big)}
  \Big(1+ \|W\|_{L^\infty_t(\mathcal{B}^{1,\ell-1}_W)} \Big) \\
  &\leq& C,
\end{eqnarray*}
and
\begin{equation*}
\begin{aligned}
  \big\|\big[\nabla, \partial_W^{\ell+1}\big] \nabla u\big\|_{L_t^1\left(C^{\gamma-1}\right)}
  & \leq \sum_{j=0}^{\ell}\big\|\partial_W^j\big(\nabla W \cdot \nabla \partial_W^{\ell-j} \nabla u\big)
  \big\|_{L_t^1\left(C^{\gamma-1}\right)}  \\
  & \leq \sum_{j=0}^\ell \big\|\nabla W \cdot \nabla \partial_W^{\ell-j} \nabla u
  \big\|_{L_t^1\big(\mathcal{C}_{W}^{\gamma-1, j}\big)} \\
  & \leq C \sum_{j=0}^{\ell} \int_0^t\|\nabla W\|_{\mathcal{B}_W^{0, j}}
  \big\|\nabla \partial_W^{\ell-j} \nabla u\big\|_{\mathcal{C}_W^{\gamma-1, j}} \ d \tau  \\
  & \leq C \int_0^t\|W(\tau)\|_{\mathcal{B}_W^{1, \ell}}\|\nabla u(\tau)\|_{\mathcal{C}_{W}^{\gamma, \ell}} \ d \tau .
\end{aligned}
\end{equation*}
Consequently, taking advantage of the following estimates
\begin{align*}%\label{eq:4.35}
  \|W(t)\|_{\mathcal{C}_{W}^{\gamma+1, \ell}}
  & \leq \big\|\partial_W^\ell W(t) \big\|_{C^{\gamma+1}} + \|W(t)\|_{\mathcal{C}_{W}^{\gamma+1, \ell-1}} \nonumber \\
  & \leq C \big\|\nabla^2 \partial_W^\ell W(t) \big\|_{C^{\gamma-1}}
  + \big\|\Delta_{-1} \partial_W^\ell W(t)\big\|_{L^{\infty}} + C \nonumber \\
  & \le C \big\|\partial_W^\ell \nabla^2 W(t)\big\|_{C^{\gamma-1}}
  + \big\|\big[\nabla^2, \partial_W^\ell\big] W(t)\big\|_{C^{\gamma-1}}
  + \|W(t)\|_{L^\infty} \big\|\partial_W^{\ell-1} W(t)\big\|_{L^\infty} + C \nonumber \\
  & \le C\big\|\partial_W^{\ell} \nabla^2 W(t) \big\|_{C^{\gamma-1}} + C, \nonumber
\end{align*}
and
\begin{eqnarray*}%\label{eq:4.36}
   \|\nabla u\|_{L_t^1\big(\mathcal{C}_W^{\gamma, \ell+1}\big)}
  &\leq& \big\|\partial_W^{\ell+1} \nabla u\big\|_{L_t^1(C^\gamma)}
  + \|\nabla u\|_{L_t^1\big(\mathcal{C}_W^{\gamma, \ell}\big)} \nonumber \\
  &\leq& C \big\|\nabla \partial_W^{\ell+1} \nabla u \big\|_{L_t^1\left(C^{\gamma-1}\right)}
  +\big\|\Delta_{-1} \partial_W^{\ell+1} \nabla u\big\|_{L_t^1\left(L^{\infty}\right)}+C \nonumber \\
  &\leq& C \big\|\partial_W^{\ell+1} \nabla^2 u\big\|_{L_t^1\left(C^{\gamma-1}\right)}
  + C\big\|\big[\nabla, \partial_W^{\ell+1}\big] \nabla u\big\|_{L_t^1\left(C^{\gamma-1}\right)} \nonumber \\
  && +\, C\|W\|_{L_t^{\infty}\left(L^{\infty}\right)}\big\|\partial_W^\ell \nabla u\big\|_{L_t^1\left(L^{\infty}\right)}+C \nonumber \\
  &\leq& C \big\|\partial_W^{\ell+1} \nabla^2 u\big\|_{L_t^1\left(C^{\gamma-1}\right)}
  + C\int_0^t\|W(\tau)\|_{\mathcal{B}_W^{1, \ell}}\|\nabla u(\tau)\|_{\mathcal{C}_{W}^{\gamma,\ell}} \mathrm{d} \tau+C,
\end{eqnarray*}
and in combination with the estimates \eqref{eq:Gam-Bgam'-el+1}, \eqref{eq:4.34}, we deduce that for all $0<\gamma\leq 2\alpha-1$
and $2\alpha-1<\gamma^\prime<\min \big\{1,4\alpha-2-\frac{2}{p},\gamma+2\alpha-1\big\},$
\begin{equation*}%\label{eq:4.37}
\begin{aligned}
  & \|W(t)\|_{\mathcal{C}_{W}^{\gamma+1, \ell}}+\|\nabla u\|_{L_t^1\left(\mathcal{C}_{W}^{\gamma, \ell+1}\right)}
  + \|\Gamma\|_{L_t^1\left(\mathcal{B}_W^{\gamma^{\prime}, \ell+1}\right)} \\
  & \leq C\int_0^t\left(\|\Gamma(\tau)\|_{\mathcal{B}_W^{\gamma^{\prime}, \ell}}
  +\|\nabla u(\tau)\|_{\mathcal{C}_{ W}^{\gamma, \ell}} + 1\right)\|W(\tau)\|_{\mathcal{C}_{W}^{\gamma + 1, \ell}}\mathrm{d} \tau+C,
\end{aligned}
\end{equation*}
where $C>0$ depends on $H_{\ell}(T)$ but is independent of $\varepsilon$. Gr\"onwall's inequality and assumption \eqref{eq:ind-assum} guarantee that
\begin{equation*}%\label{eq:4.38}
\begin{aligned}
  & \|W\|_{L_T^{\infty}\big(\mathcal{C}_W^{\gamma+1, \ell}\big)} + \|\nabla u\|_{L_T^{1}\big(\mathcal{C}_W^{\gamma, \ell+1}\big)}
  + \|\Gamma\|_{L_T^1\big(\mathcal{B}_W^{\gamma^\prime, \ell+1}\big)} \\
  & \leq C \exp \bigg\{C\|\nabla u\|_{L_T^1\big(\mathcal{C}_{W}^{\gamma, \ell}\big)}
  + C\|\Gamma\|_{L_T^1\big(\mathcal{B}_W^{\gamma^{\prime}, \ell}\big)}+C T\bigg\} \leq  H_{\ell+1}(T),
\end{aligned}
\end{equation*}
which corresponds to \eqref{eq:ind-conc}, as desired. Therefore, the estimate \eqref{eq:targ2} is proved.
\vskip1mm

It remains to prove  the estimate \eqref{eq:targ3}. One may follow the same steps as the proof of the estimate \eqref{eq:targ2} up to possibly minor modifications. The main issue is the estimate of $\|[\Delta,\partial_W^{\ell+1}]\Gamma\|_{L^1_t(B^{\gamma'-2}_{\infty,1})}$ but it was already done in \cite{CMX22}  (the notation must be adapted as $\gamma'$ used here corresponds to $\gamma'+1$). For the proof of \eqref{eq:targ3} in the case $\varepsilon =1$ and $\mathbf{D}=\mathbb{R}^2$  we also refer to \cite{CMX22}.
Hence, the proof of Theorem \ref{thm:main} is completed.

\section{Infinite Prandtl number limit in the torus $\mathbb{T}^2$}\label{sec:limit}

This section is devoted to the proof of Theorem \ref{thm:limit} concerning the passage to the limit when the Prandtl number $\mathrm{Pr}$
goes to infinity.
Before that, we present a general convergence result of the system $(\mathrm{B}_\alpha)$
without assuming the temperature patch structure.

\begin{proposition}\label{prop:weak-limit}
For each $\varepsilon \in (0,1]$, consider $(u^\varepsilon, \theta^\varepsilon)$
a global regular solution of the Boussinesq-Navier-Stokes system $(\mathrm{B}_\alpha)$ defined on $\TT^2$ for $\alpha \in (\frac{1}{2},1]$
with uniformly bounded initial data $(u_0^\varepsilon, \theta_0^\varepsilon) \in (H^1 \times L^\infty)(\TT^2)$,
$\nabla\cdot u_0^\varepsilon=0$ and $\int_{\mathbb{T}^2} \theta_0^\varepsilon \dd x =0$.
We suppose that the initial data converge to
$(u_0, \theta_0) \in (H^1 \times L^\infty)(\TT^2)$ as $\varepsilon\rightarrow 0$.
%satisfies $\int_{\TT^2}\theta_0\dd x=0$.
%and $\|\theta_0^\varepsilon-\theta_0\|_{L^1 (\TT^2)} \leq C\varepsilon^{1+\sigma}$
%with $\sigma>0$ and $C>0$ independent of $\varepsilon$.

Then, as $\varepsilon\rightarrow 0$, up to extraction of a subsequence, $(u^\varepsilon, \theta^\varepsilon)$
converges to a global unique weak solution $(u, \theta)\in L^\infty([0,T], H^1(\mathbb{T}^2)) \times L^\infty([0,T]\times \mathbb{T}^2)$
of the (fractional) Stokes-transport system $(\mathrm{ST}_\alpha)$ given by \eqref{eq:StokTrans}.
%\begin{equation}\label{limitBossEq}
%  \left\{\begin{aligned}
%  \partial_t \theta + u \cdot \nabla \theta & =0, \qquad \qquad \quad (t, x) \in \mathbb{R}_{+} \times \mathbb{T}^2, \\
%   \Lambda^{2 \alpha} u  & = - \nabla p+\theta e_2, \\
%  \nabla \cdot u & =0, \\
%  \left.\theta \right|_{t=0} & = \theta_0.
%%  \left.\theta\right|_{t=0} & =\theta_0 .
%  \end{aligned}\right.
%\end{equation}
\end{proposition}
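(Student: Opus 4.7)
The plan is to derive $\varepsilon$-uniform estimates on $(u^\varepsilon,\theta^\varepsilon)$, extract convergent subsequences by compactness, and pass to the limit in the distributional form of $(\mathrm{B}_\alpha)$. The main obstruction is that the natural $L^2$ energy bound \eqref{eq:u-L2-es} blows up as $\varepsilon\to 0$; the crucial observation will be that on $\TT^2$ the gradient bound from Proposition~\ref{prop:ap-es1} is $\varepsilon$-uniform for every $\alpha\in(\tfrac{1}{2},1]$, and combined with a zero-mode argument it promotes to a full uniform $H^1$ bound.

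First I would establish the $\varepsilon$-uniform estimates. The maximum principle \eqref{eq:the-MP} gives $\|\theta^\varepsilon\|_{L^\infty_{T,x}}\leq C$, and the divergence-free transport preserves the zero-mean constraint, so $\int_{\TT^2}\theta^\varepsilon(t)\,\dd x=0$. Integrating the momentum equation over $\TT^2$ eliminates the advection, dissipation, and pressure contributions, leaving $\tfrac{1}{\mathrm{Pr}}\tfrac{\dd}{\dd t}\bar{u}^\varepsilon=\bar{\theta}^\varepsilon e_2=0$, i.e.\ $\bar{u}^\varepsilon(t)=\bar{u}_0^\varepsilon$ is uniformly bounded. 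Coupling this with $\|\nabla u^\varepsilon\|_{L^\infty_T L^2}\leq C e^{CT}$ from \eqref{eq:u-Gam-es2}/\eqref{eq:u-Gam-es4} and Poincar\'e's inequality on the mean-zero part yields $\|u^\varepsilon\|_{L^\infty_T H^1}\leq C$ uniformly in $\varepsilon$.

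Next, I would obtain strong compactness of $\theta^\varepsilon$ via Lemma~\ref{lem:Aubin-Lions}. The conservative form $\partial_t\theta^\varepsilon=-\nabla\cdot(u^\varepsilon\theta^\varepsilon)$ and the uniform control $\|u^\varepsilon\theta^\varepsilon\|_{L^\infty_T L^2}\leq C$ place $\partial_t\theta^\varepsilon$ uniformly in $L^\infty_T H^{-1}$. Since $L^2\hookrightarrow H^{-s}$ is compact on $\TT^2$ for every $s\in(0,1)$ (by Rellich and duality), Aubin-Lions with $p=q=\infty$ produces a subsequence (still denoted $\theta^\varepsilon$) and a limit $\theta$ with $\theta^\varepsilon\to\theta$ strongly in $C([0,T];H^{-s}(\TT^2))$. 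Weak-$\ast$ extraction simultaneously yields $u^\varepsilon\rightharpoonup u$ in $L^\infty_T H^1$ and $\theta^\varepsilon\rightharpoonup\theta$ weakly-$\ast$ in $L^\infty_{T,x}$, the divergence-free and zero-mean conditions passing to the limit.

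The hardest step will be identifying the nonlinear limits. For the temperature equation, test against $\phi\in C^\infty_c([0,T)\times\TT^2)$ and split
\begin{equation*}
\int_0^T\!\!\int_{\TT^2}(\theta^\varepsilon u^\varepsilon-\theta u)\cdot\nabla\phi\,\dd x\,\dd t
=\int_0^T\!\!\int_{\TT^2}(\theta^\varepsilon-\theta)\,u^\varepsilon\cdot\nabla\phi\,\dd x\,\dd t
+\int_0^T\!\!\int_{\TT^2}\theta\,(u^\varepsilon-u)\cdot\nabla\phi\,\dd x\,\dd t;
\end{equation*}
the second summand vanishes by weak $L^2_{t,x}$-convergence of $u^\varepsilon$ tested against $\theta\nabla\phi\in L^2_{t,x}$, while the first is bounded by $T\|\theta^\varepsilon-\theta\|_{L^\infty_T H^{-s}}\|u^\varepsilon\cdot\nabla\phi\|_{L^\infty_T H^s}\to 0$, using that $u^\varepsilon\cdot\nabla\phi$ is uniformly in $L^\infty_T H^1\hookrightarrow L^\infty_T H^s$. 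For the momentum equation I would test with a divergence-free $\phi\in C^\infty_c([0,T);\TT^2)$ to eliminate the pressure: the $\varepsilon$-prefactor multiplying $\partial_t u^\varepsilon$ and $u^\varepsilon\otimes u^\varepsilon$ kills the material-derivative term thanks to the uniform $L^\infty_T L^2$ bound on $u^\varepsilon$ and the consequent $L^\infty_T L^1$ bound on $u^\varepsilon\otimes u^\varepsilon$; the dissipation passes by weak $L^2_{t,x}$-convergence of $u^\varepsilon$ against $\Lambda^{2\alpha}\phi\in C^\infty$; and the buoyancy by weak-$\ast$ convergence of $\theta^\varepsilon$. The resulting distributional identity is $\Lambda^{2\alpha}u=-\nabla p+\theta e_2$, and uniqueness of weak solutions of $(\mathrm{ST}_\alpha)$ in the class $L^\infty_T H^1\times L^\infty_{T,x}$ (cf.~\cite{Gray23} for $\alpha=1$ and \cite{Cobb23} for $\alpha\in(\tfrac{1}{2},1)$) identifies the limit uniquely.
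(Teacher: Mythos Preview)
Your proposal is correct and follows essentially the same strategy as the paper: the zero-mode conservation combined with the $\varepsilon$-uniform gradient bound from Proposition~\ref{prop:ap-es1} and Poincar\'e's inequality yields the uniform $L^\infty_T H^1$ control, Aubin--Lions gives the strong compactness of $\theta^\varepsilon$ in $C([0,T];H^{-s})$, and the nonlinear terms are handled by the same weak/strong splitting. The only cosmetic differences are that the paper fixes $s=\tfrac12$ and estimates the convective term $\varepsilon\,u^\varepsilon\cdot\nabla u^\varepsilon$ directly via $L^4\times L^2\times L^4$ rather than your $L^\infty_T L^1$ bound on $u^\varepsilon\otimes u^\varepsilon$, but both routes are equivalent.
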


\begin{remark}
Concerning the existence and uniqueness of the global weak solution for the (fractional) Stokes-transport system \eqref{eq:StokTrans},
one  refers to the work \cite{Gray23} (see Corollary 7) for the case $\alpha=1$  and
 to \cite{Cobb23} (see Theorem 1.5) for the case $\alpha\in\left(\frac{1}{2},1\right)$.
Note that from the assumption we have $\int_{\TT^2}\theta_0\dd x=0$,
which is the compatibility condition for the system \eqref{eq:StokTrans}.
\end{remark}

\begin{proof}[Proof of Proposition~\ref{prop:weak-limit}]
%Set
%\begin{align*}
%  \widetilde{u^\varepsilon}(x,t) := u^\varepsilon(x,t)-\int_{\TT^2}u^\varepsilon(x,t) \dd x
%  = u^\varepsilon(x,t) - \widehat{u^\varepsilon}(0,t).
%\end{align*}
Since $\widehat{\theta_0^\varepsilon}(0) = \int_{\mathbb{T}^2} \theta_0^\varepsilon(x)\ dx=0$,
from the equation verified by $\theta$ in \eqref{BoussEq} we find
\begin{align*}
  \int_{\mathbb{T}^2} \theta^\varepsilon(x,t) \ dx = \int_{\mathbb{T}^2} \theta^\varepsilon_0(x) \ dx =0,\quad \forall t>0.
\end{align*}
Integrating the evolution equation in $u$ in \eqref{BoussEq} over the spatial variable gives
\begin{align*}
  \varepsilon\frac{\dd}{\dd t} \int_{\mathbb{T}^2} u^\varepsilon(x,t)\ dx = \int_{\mathbb{T}^2} \theta^\varepsilon(x,t) e_2\dd x =0,\quad
  \textrm{that is}, \quad \int_{\mathbb{T}^2} u^\varepsilon(x,t) \ dx = \int_{\mathbb{T}^2} u^\varepsilon_0(x) \ dx.
\end{align*}
Thanks to Poincar\'e's inequality, one gets
\begin{align*}
  \|u^\varepsilon(t)\|_{H^1(\TT^2)}
  &\leq \|\nabla u^\varepsilon(t)\|_{L^2}
  + \Big\|u^\varepsilon(\cdot,t) - \frac{1}{|\mathbb{T}^2|}\int_{\mathbb{T}^2} u^\varepsilon(x,t)\dd x
  \Big\|_{L^2} + C \Big|\int_{\mathbb{T}^2} u^\varepsilon(x,t)\dd x\Big| \\
  & \leq C \left\|\nabla u^\varepsilon(t)\right\|_{L^2(\TT^2)}
  + C \Big|\int_{\mathbb{T}^2} u^\varepsilon_0(x)\dd x\Big|.
\end{align*}
%and
%\begin{align*}
%  \|u^\varepsilon(t)\|_{H^1(\mathbb{T}^2)}
%  \leq C\|\nabla u^\varepsilon(t)\|_{L^2(\mathbb{T}^2)} + C \|u_0^\varepsilon\|_{L^2}.
%\end{align*}
According to Propositions~\ref{lem:eneEs}, \ref{prop:ap-es1},
we have that $u^\varepsilon$ is uniformly bounded in $L_T^\infty(H^1(\TT^2))$ and
$\theta^\varepsilon$ is uniformly bounded in $L_T^\infty(L^\infty(\TT^2))$.
Thus, up to extraction of subsequences, we have the weak-$*$ convergence, as $\varepsilon\rightarrow 0$,
\begin{align}
  & u^\varepsilon \rightharpoonup^* u,~~~ \text{in}~~~ L^\infty_T\big(H^1(\TT^2)\big), \label{eq:u-weak-conv} \\
  &\theta^\varepsilon\rightharpoonup^* \theta,~~~ \text{in}~~~L_T^\infty \big(L^\infty(\TT^2)\big). \nonumber %\label{eq:the-weak-conv}
\end{align}
It follows from the lower semicontinuity of weak limit that
$u\in L^\infty_T(H^1(\mathbb{T}^2))$ and $\theta\in L^\infty_T(L^\infty(\mathbb{T}^2))$.
From the equation $\partial_t \theta^\varepsilon = -\mathrm{div}\,(u^\varepsilon\,\theta^\varepsilon)$ and
\begin{align*}
  \|\mathrm{div}\,(u^\varepsilon\,\theta^\varepsilon)\|_{L^\infty_T(H^{-1})} \leq C \|u^\varepsilon\,\theta^\varepsilon\|_{L^\infty_T(L^2)}
  \leq C \|u^\varepsilon\|_{L^\infty_T(L^2)} \|\theta^\varepsilon\|_{L^\infty_T(L^\infty)} \leq C,
\end{align*}
we know that $\partial_t \theta^\varepsilon$ is uniformly  bounded in $L^\infty_T(H^{-1}(\mathbb{T}^2))$.
Since $L^2(\mathbb{T}^2) \hookrightarrow H^{-\frac{1}{2}}(\mathbb{T}^2)$ is compact, we use the Aubin-Lions lemma or the Rellich compactness theorem (see Lemma \ref{lem:Aubin-Lions} or \cite{LEM16})
to infer the strong convergence:
\begin{align}\label{eq:the-str-conv}
  \theta^\varepsilon \rightarrow \theta,\quad \textrm{in}\;\; C\big([0,T], H^{-\frac{1}{2}}(\mathbb{T}^2)\big).
\end{align}

Now we can pass the limit to show that $(u,\theta)$ solves the (fractional) Stokes-transport system \eqref{eq:StokTrans}.
We only need to show the convergence of the nonlinear terms, %$u^\varepsilon \cdot \nabla \theta^\varepsilon$,
since the linear ones can be dealt with in a standard way.
For all $\zeta\in \mathcal{S}(\mathbb{T}^2\times[0,T])$, we have that as $\varepsilon \rightarrow 0$,
\begin{align*}
  \varepsilon \Big|\int_0^T \int_{\mathbb{T}^2} \big(u^\varepsilon \cdot\nabla u^\varepsilon\big) \zeta\dd x \dd t \Big|
  \leq \varepsilon \|u^\varepsilon\|_{L^\infty_T(L^4)} \|\nabla u^\varepsilon\|_{L^\infty_T(L^2)} \|\zeta\|_{L^1_T(L^4)}
  \leq C \varepsilon \|u^\varepsilon\|_{L^\infty_T(H^1)}^2 \|\zeta\|_{L^1_T(L^4)} \rightarrow 0,
\end{align*}
and
\begin{align}\label{eq:converg-nonl}
  & \Big|\int_0^T \int_{\mathbb{T}^2} \mathrm{div}\, (u^\varepsilon\,\theta^\varepsilon)\, \zeta \dd x \dd t -
  \int_0^T \int_{\mathbb{T}^2} \mathrm{div}\, (u\,\theta)\, \zeta \dd x \dd t  \Big| \nonumber \\
  & \leq \Big|\int_0^T \int_{\mathbb{T}^2} (u^\varepsilon - u)\cdot \nabla \zeta \, \theta \, \dd x \dd t\Big|
  + \Big|\int_0^T \int_{\mathbb{T}^2} u^\varepsilon \cdot \nabla \zeta\,(\theta^\varepsilon -\theta) \dd x \dd t \Big| \nonumber \\
  & \leq \Big|\int_0^T \int_{\mathbb{T}^2} (u^\varepsilon - u)\cdot \nabla \zeta \, \theta \, \dd x \dd t\Big|
  + \|u^\varepsilon\cdot \nabla \zeta\|_{L^1_T(H^{\frac{1}{2}})} \|\theta^\varepsilon -\theta\|_{L^\infty_T(H^{-\frac{1}{2}})} \rightarrow 0,
\end{align}
where the convergence in \eqref{eq:converg-nonl} follows from \eqref{eq:u-weak-conv}, \eqref{eq:the-str-conv}
and the estimate that $$\|u^\varepsilon\cdot\nabla \zeta\|_{L^1_T(H^{\frac{1}{2}})} \leq \|u^\varepsilon\cdot\nabla \zeta\|_{L^1_T(H^1)}
\leq C\|u^\varepsilon\|_{L^\infty_T(H^1)} \|\zeta\|_{L^1_T(W^{2,\infty})} \leq C.$$
\end{proof}
\begin{coro}\label{cor:strong-limit}
Under the assumptation of Proposition~\ref{prop:weak-limit}, one can get $\theta^\varepsilon$~and~$\theta$ in Proposition~\ref{prop:weak-limit} satisfies $\theta^\varepsilon\rightarrow\theta$ in $L_T^r(L^q(\TT^2))$ for all $1\leq q,r<\infty$.
\end{coro}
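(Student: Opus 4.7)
The plan is to combine the strong convergence $\theta^\varepsilon\to\theta$ in $C([0,T];H^{-1/2}(\mathbb{T}^2))$ (which was already established inside the proof of Proposition~\ref{prop:weak-limit}) with the uniform bound $\|\theta^\varepsilon\|_{L^\infty([0,T]\times\mathbb{T}^2)}\le C$, exploiting the classical principle that in a uniformly convex Banach space weak convergence together with convergence of norms yields strong convergence. The missing norm-convergence ingredient will come from the transport structure of the $\theta$-equation.

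First I would show that for every fixed $t\in[0,T]$ one has the weak convergence $\theta^\varepsilon(t)\rightharpoonup\theta(t)$ in $L^q(\mathbb{T}^2)$ for each $q\in(1,\infty)$: indeed $\theta^\varepsilon(t)\to\theta(t)$ in $\mathcal{D}'(\mathbb{T}^2)$ as a consequence of the $H^{-1/2}$ convergence, while the uniform $L^q$-bound supplies weak-$*$ compactness, and the distributional limit must coincide with $\theta(t)$. Next I would use that $\theta^\varepsilon$ solves a transport equation with the divergence-free velocity $u^\varepsilon\in L^1([0,T];B^1_{\infty,1}(\mathbb{T}^2))$ furnished by Proposition~\ref{prop:ap-es1}, so the flow $X^\varepsilon_t$ is a measure-preserving bi-Lipschitz homeomorphism and $\|\theta^\varepsilon(t)\|_{L^q}=\|\theta^\varepsilon_0\|_{L^q}$ for every $q$; for the limit equation, with $u\in L^\infty_TH^1\subset L^1_TW^{1,2}$ and $\mathrm{div}\,u=0$, the DiPerna--Lions renormalization theorem ensures that the weak solution $\theta\in L^\infty$ is renormalized and thus $\|\theta(t)\|_{L^q}=\|\theta_0\|_{L^q}$. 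Together with the strong convergence $\theta_0^\varepsilon\to\theta_0$ in $L^q$ (which follows from the assumed convergence in $L^\infty$ together with the finite volume of $\mathbb{T}^2$), this delivers $\|\theta^\varepsilon(t)\|_{L^q}\to\|\theta(t)\|_{L^q}$ for every $t$.

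The Radon--Riesz theorem in the uniformly convex spaces $L^q(\mathbb{T}^2)$, $1<q<\infty$, then upgrades the weak convergence to strong: $\theta^\varepsilon(t)\to\theta(t)$ in $L^q(\mathbb{T}^2)$ for each $t$. The endpoint $q=1$ is recovered from the $L^2$ case via $L^2\hookrightarrow L^1$ on $\mathbb{T}^2$. Finally, because $\|\theta^\varepsilon(t)-\theta(t)\|_{L^q}$ is bounded uniformly in $\varepsilon$ and $t$ by $2\|\theta_0\|_{L^\infty}|\mathbb{T}^2|^{1/q}$ and tends to $0$ for every $t$, the dominated convergence theorem produces the claimed $L^r([0,T];L^q)$ convergence for all $1\le r<\infty$. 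The delicate point is the norm-preservation step for the limit $\theta$: since $u$ is only $H^1$ in space there is no classical flow, and the DiPerna--Lions renormalization theory is precisely what allows one to replace the classical identity $\|\theta(t)\|_{L^q}=\|\theta_0\|_{L^q}$ by its weak-solution counterpart; the rest consists of standard weak-compactness, uniform-convexity, and dominated-convergence arguments.
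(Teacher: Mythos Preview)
Your proposal is correct and rests on the same core idea as the paper: conservation of $L^q$ norms by the transport equation, combined with weak convergence, yields strong convergence via the Radon--Riesz property of uniformly convex spaces. The paper executes this a bit more economically by working once in the Hilbert space $L^2([0,T]\times\mathbb{T}^2)$ (weak convergence plus $\|\theta^\varepsilon\|_{L^2_{t,x}}=T^{1/2}\|\theta_0^\varepsilon\|_{L^2}\to T^{1/2}\|\theta_0\|_{L^2}=\|\theta\|_{L^2_{t,x}}$) and then interpolating against the uniform $L^\infty_{t,x}$ bound to reach all $(q,r)$, whereas you work pointwise in $t$ in each $L^q(\mathbb{T}^2)$ and close with dominated convergence. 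For norm preservation at the limit, the paper simply invokes the well-posedness theory of the Stokes-transport system (\cite{Gray23,Cobb23}); your DiPerna--Lions justification is valid but slightly overcautious, since for $(\mathrm{ST}_\alpha)$ with $\alpha>\tfrac12$ the relation $\nabla u=\nabla\nabla^\perp\partial_1\Lambda^{-2-2\alpha}\theta$ with $\theta\in L^\infty$ in fact gives a Lipschitz velocity and a classical flow.
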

\begin{proof}[Proof of Corollary~\ref{cor:strong-limit}]
Since $\theta^\varepsilon$ is uniformly bounded in $L_T^\infty(L^\infty(\TT^2))$ and $\theta\in L_T^\infty(L^\infty(\TT^2))$, thanks to the interpolation and embedding properties of Lebesgue spaces, we only need to show $\theta^\varepsilon\rightarrow\theta$ in $L_T^2(L^2(\TT^2))$. Since $(u, \theta)\in L^\infty([0,T], H^1(\mathbb{T}^2)) \times L^\infty([0,T]\times \mathbb{T}^2)$ is a global unique weak solution of the (fractional) Stokes-transport system~\eqref{eq:StokTrans}, one gets $\|\theta\|_{L^2(\TT^2)}=\|\theta_0\|_{L^2(\TT^2)}$ and $\|\theta\|_{L^2([0,T]\times\TT^2)}=T^{1/2}\|\theta_0\|_{L^2(\TT^2)}$. Similarly we have $\|\theta^\varepsilon\|_{L^2([0,T]\times\TT^2)}=T^{1/2}\|\theta_0^\varepsilon\|_{L^2(\TT^2)}$. Since $\theta_0^\varepsilon$ converges to $\theta_0$ in $L^\infty(\TT^2)\subset L^2(\TT^2)$, one finds that $\|\theta^\varepsilon\|_{L^2([0,T]\times\TT^2)}$ converges to $\|\theta\|_{L^2([0,T]\times\TT^2)}$ as $\varepsilon\rightarrow 0$. Taking advantages of $\theta^\varepsilon\rightharpoonup \theta$ in $L^2([0,T]\times\TT^2)$, one  gets that $\theta^\varepsilon\rightarrow\theta$ in $L_T^2(L^2(\TT^2))$.
\end{proof}
Now we give the proof of Theorem \ref{thm:limit}.

\begin{proof}[Proof of Theorem \ref{thm:limit}]
Let $X_t^\varepsilon$ be the particle-trajectory generated by the velocity $u^\varepsilon$, that is, $X^\varepsilon_t$
solves
\begin{align}\label{eq:Xt-epsilon}
  \frac{\dd}{\dd t} X_t^\varepsilon(y) = u^\varepsilon\big(X_t^\varepsilon(y),t\big),\quad
  X_t^\varepsilon(y) |_{t=0} = y.
\end{align}
Denote by $X^{\varepsilon,-1}_t$ the inverse map of $X_t^\varepsilon$, then $X^{\varepsilon,-1}_t$  satisfies
\begin{align}\label{eq:Xt-1esps}
  X_t^{\varepsilon,-1}(x) = x - \int_0^t u^\varepsilon\big( \tau, X^\varepsilon_\tau
  \circ X_t^{\varepsilon,-1}(x)\big) \dd \tau.
\end{align}
Thanks to Proposition \ref{prop:ap-es2}, we have the following uniform estimates,
$\nabla u^\varepsilon \in L^1_T(C^{2\alpha -1}(\mathbb{T}^2))$ for $\alpha\in (\frac{1}{2},1)$
and $\nabla u^\varepsilon \in L^1_T(B^1_{\infty,\infty}(\mathbb{T}^2))$ for $\alpha =1$.
Since $$\widehat{u^\varepsilon}(0,t) = \int_{\mathbb{T}^2} u^\varepsilon(x,t)\dd x
= \int_{\mathbb{T}^2} u^\varepsilon_0 (x)\dd x \leq C_0 \|u_0^\varepsilon\|_{L^2}$$ for all $t\in [0,T]$,
we get that $u^\varepsilon \in L^1_T(C^{2\alpha}(\mathbb{T}^2))$ uniformly in $\varepsilon$ if $\alpha\in (\frac{1}{2},1)$ and
$u^\varepsilon\in L^1_T(C^{1+\overline{\gamma}}(\mathbb{T}^2))$, for all $\overline{\gamma}\in (0,1)$
uniformly in $\varepsilon$ if $\alpha=1$.
Therefore, using  \eqref{eq:Xt-C1gam} we find that the system \eqref{eq:Xt-epsilon} has a unique solution
$X_t^\varepsilon(\cdot):\mathbb{T}^2 \rightarrow \mathbb{T}^2$ on $[0,T]$ which is a measure-preserving homeomorphism satisfying
that $X_t^\varepsilon$ and its inverse $X_t^{\varepsilon, -1}$ belong to $L^\infty_T(C^{2\alpha}(\mathbb{T}^2))$ if $\alpha\in (\frac{1}{2},1)$
and belong to $L^\infty_T(C^{1+\overline{\gamma}}(\mathbb{T}^2))$, $\overline{\gamma} \in (0,1)$
if $\alpha=1$.
Besides, from the equation of $\theta$ in \eqref{BoussEq}, it holds that
\begin{align}\label{eq:the-epsil}
  \theta^\varepsilon(x,t) = \bar{\theta}_0\big(X_t^{\varepsilon,-1}(x)\big)
  \mathbf{1}_{D^\varepsilon(t)}(x),\quad\textrm{with}\;\; D^\varepsilon(t) = X_t^\varepsilon(D_0).
\end{align}
By using the equations \eqref{eq:Xt-epsilon} and \eqref{eq:Xt-1esps}, we also know that
$\partial_t X_t^{\varepsilon,\pm 1}  \in L^\infty_T(H^1(\mathbb{T}^2))$ uniformly in $\varepsilon$,
thus Aubin-Lions lemma  guarantees that there exist
$X_t(\cdot) : \mathbb{T}^2 \rightarrow \mathbb{T}^2$ and its inverse
$X_t^{-1}(\cdot):\mathbb{T}^2 \rightarrow \mathbb{T}^2$
such that, as $\varepsilon \rightarrow 0$ and up to the extraction of a subsequence,
\begin{align*}
  X_t^{\varepsilon,\pm 1} \rightarrow X_t^{\pm 1},\quad \textrm{in}\;\;
  C\big([0,T]; C^{1+\widetilde\gamma}(\mathbb{T}^2)\big),\; \widetilde{\gamma} \in (0, 2\alpha-1).
\end{align*}
Moreover, $X_t(\cdot)$ is a measure-preserving homeomorphism that solves the limit equation \eqref{eq:X}
in the sense of distribution, and also $X_t^{\pm1} \in L^\infty_T(C^{2\alpha}(\mathbb{T}^2))$ if $\alpha\in
(\frac{1}{2},1)$ and $X^{\pm1}_t \in L^\infty_T(C^{1+\overline{\gamma}}(\mathbb{T}^2))$,
$\forall \overline{\gamma}\in(0,1)$
if $\alpha=1$. Passing to the limit in \eqref{eq:the-epsil}, we recover that
$\theta(x,t)$ which solves the first equation in \eqref{eq:StokTrans} satisfies
\begin{align*}
  \theta(x,t) = \bar{\theta}_0\big(X_t^{-1}(x)\big) \mathbf{1}_{D(t)}(x),\quad \textrm{with}
  \;\; D(t) = X_t(D_0).
\end{align*}
If $k =1$, the regularity of $X_t^{\pm1}$ implies the global persistence of the regularity $C^{1+\gamma}$ of $\partial D(t)$,
that is, \eqref{eq:limit-targ} with $k=1$ holds.
If  $k\geq 2$, then recalling that $\varphi_0(x)\in C^{k+\gamma}(\mathbb{T}^2)$ satisfying \eqref{eq:D_0}
is the level-set characterization of $D_0$, we have that the domain $D^\varepsilon(t)$ can be characterized by
$\varphi^\varepsilon(x,t) = \varphi_0\big(X^{\varepsilon,-1}_t(x)\big)$ satisfying that
\begin{align*}
  \partial_t\varphi^\varepsilon + u^\varepsilon\cdot\nabla\varphi^\varepsilon =0, \quad
  \varphi^\varepsilon|_{t=0}(x) = \varphi_0(x).
\end{align*}
Since $\varphi^\varepsilon$ is bounded in $L_T^\infty(C^{2+\gamma}(\TT^2))$
(see \eqref{eq:str-es2-alp} and \eqref{eq:str-es2-1})
and $u^\varepsilon$ is controlled in $L_T^\infty(H^1(\TT^2))$,
one gets that $\partial_t\varphi^\varepsilon$ belongs to $L_T^\infty(H^1(\TT^2))$ uniformly in $\varepsilon$.
Since $C^{2+\gamma}(\TT^2)\hookrightarrow C^{2+\gamma_1}(\TT^2)$ ($\gamma_1<\gamma$) is compact,
Lemma~\ref{lem:Aubin-Lions} yields that, up to the extraction of a subsequence,
\begin{align}\label{eq:varphi-conv}
  \varphi^\varepsilon\rightarrow \varphi,\quad \textrm{in}\;\; C\big([0,T],C^{2 + \gamma_1}(\TT^2)\big),
  0< \gamma_1<\gamma.
\end{align}
By letting $\varepsilon\rightarrow 0$ in the equation of $\varphi^\varepsilon$, we find that $\varphi(x,t) = \varphi_0(X_t^{-1}(x))$ is the level-set characterization of the domain $D(t)$
and it solves the equation (in the sense of distribution)
\begin{align*}
  \partial_t\varphi + u\cdot\nabla\varphi = 0,\quad \varphi|_{t=0}(x) = \varphi(x).
\end{align*}
Besides, it follows from the weak-$*$ limit of $\varphi^\varepsilon$ that $\varphi\in L_T^\infty(C^{2+\gamma}(\TT^2))$,
and this proves that the global persistence of the $C^{2+\gamma}$ regularity of $\partial D(t)$.

For the case $k>2$, in light of \eqref{eq:targ1} and \eqref{eq:targ3}, we have $\partial_{W^\varepsilon}^{k-1}
W^\varepsilon\in L^\infty_T(C^\gamma(\mathbb{T}^2))$ uniformly in $\varepsilon$,
with $W^\varepsilon := \nabla^\perp \varphi^\varepsilon$.
From the equation \eqref{eq:W} and the fact that $[\partial_{W^\varepsilon}, \partial_t
+ u^\varepsilon \cdot\nabla] =0$, we get
\begin{align*}
  \partial_t \big(\partial_{W^\varepsilon}^{k-1} W^\varepsilon \big) + u^\varepsilon \cdot \nabla
  \big(\partial_{W^\varepsilon}^{k-1} W^\varepsilon \big) = \partial_{W^\varepsilon}^{k-1}
  \big(W^\varepsilon \cdot\nabla u^\varepsilon \big).
\end{align*}
The uniform estimates \eqref{eq:targ1} and \eqref{eq:targ3} together with \eqref{eq:prodBes} and the striated estimate \eqref{eq:CMX2.13} imply that
\begin{align*}
  \|u^\varepsilon \cdot\nabla \partial_{W^\varepsilon}^{k-1} W^\varepsilon\|_{L^1_T(C^{\gamma-1})}
  \leq C \|u^\varepsilon\|_{L^1_T(L^\infty)} \|\partial_{W^\varepsilon}^{k-1} W^\varepsilon\|_{L^\infty_T(C^\gamma)}
  \leq C,
\end{align*}
and
\begin{align*}
  \|\partial_{W^\varepsilon}^{k-1} (W^\varepsilon\cdot \nabla u^\varepsilon)\|_{L^1_T(C^{\gamma -1})}
  \leq \|W^\varepsilon \cdot \nabla u^\varepsilon\|_{L^1_T(\mathcal{C}^{\gamma-1,k-1}_{W^{\varepsilon}})}
  \leq C \|W^\varepsilon\|_{L^\infty_T(\mathcal{B}^{0,k-1}_{W^\varepsilon})}
  \|\nabla u^\varepsilon\|_{L^1_T(C^{\gamma-1,k-1}_{W^\varepsilon})} \leq C,
\end{align*}
where $C>0$ is independent of $\varepsilon$, thus it follows that $\partial_t \big(\partial_{W^\varepsilon}^{k-1}
W^\varepsilon\big) \in L^1_T(C^{\gamma-1}(\mathbb{T}^2))$ uniformly in $\varepsilon$.
Using the Aubin-Lions lemma ensures that, up to an extraction of a subsequence,
\begin{align}\label{eq:Weps-conv}
  \partial_{W^\varepsilon}^{k-1} W^\varepsilon \rightarrow f_k ,\quad \textrm{in}\;\; L^2([0,T], C^{\gamma_2}(\mathbb{T}^2)),
  \, 0<\gamma_2<\gamma.
\end{align}
We claim that $f_k = \partial_W^{k-1} W$ with $W:= \nabla^\perp \varphi$.
Indeed, if $k=3$, it follows from \eqref{eq:varphi-conv} that $f_3 = \partial_W^2 W$.
Now assume that $f_\ell = \partial_W^{\ell -1} W$ for each $\ell \in \{3,\cdots, k-1\}$,
we shall show that $f_{\ell+1} = \partial_W^\ell W$.
Noting that by using \eqref{eq:varphi-conv} and \eqref{eq:Weps-conv},
we find that for all $\zeta \in \mathcal{S}(\mathbb{T}^2\times[0,T])$,
\begin{align*}
  \int_0^T \int_{\mathbb{T}^2} \big(\partial_{W^\varepsilon}^\ell  W^\varepsilon\big)\, \zeta\, \dd x \dd t
  & = - \int_0^T \int_{\mathbb{T}^2} \big(\partial_{W^\varepsilon}^{\ell-1} W^\varepsilon\big)\,
  \big(\partial_{W^\varepsilon}\zeta\big)\, \dd x \dd t \\
  &\rightarrow -\int_0^T\int_{\mathbb{T}^2} f_\ell \, \big(\partial_W \zeta\big) \,\dd x \dd t
  = \int_0^T \int_{\mathbb{T}^2} (\partial_W f_\ell) \,\zeta \, \dd x \dd t,
\end{align*}
and it follows from the uniqueness of the limit that $f_{\ell +1} = \partial_W f_\ell = \partial_W^\ell W$.
Thus the induction method ensures that $f_k = \partial_W^{k-1}W$, as desired.

Furthermore, the weak-$*$ limit of
$\partial_{W^\varepsilon}^{k-1} W^\varepsilon$
implies that $\partial_W^{k-1}W \in L^\infty_T(C^\gamma(\mathbb{T}^2))$.
In combination with \eqref{eq:targ1}, this shows the global persistence of $C^{k+\gamma}$ regularity of the boundary patch $\partial D(t)$.
Therefore,   the proof of Theorem \ref{thm:limit} is finished.
\end{proof}

\section{Proof of Lemma \ref{lem:CMXstra-es}: striated estimates}\label{sec:apdx}

Let us denote
\begin{align*}
  R_q\left(\alpha_1, \ldots, \alpha_m\right):= \int_{[0,1]^m} \int_{\mathbb{R}^d} \prod_{i=1}^m \alpha_i
  \left(x+f_i(\tau) 2^{-q} y\right) h(\tau, y) \,\dd y\dd\tau,
\end{align*}
where $q \in \mathbb{N}$, $h \in C\left([0,1]^m ; \mathcal{S}\left(\mathbb{R}^d\right)\right)$,
$f_i \in L^{\infty}\left([0,1]^m\right)$ for all $\tau \in(0,1)^m$.
Note that when $f_i \equiv 0$ and $\int_{\mathbb{R}^d} h(\tau, y) \mathrm{d} y=1$,
the identity becomes $R_q\left(\alpha_1, \ldots, \alpha_m\right)=\prod\limits_{i=1}^m \alpha_i(x)$.

First we recall the following important result whose proof  can be found \cite{CMX22}.
%Lemma~5.1
\begin{lemma}\label{lem:CMX5.1}
Let $(k,N) \in \mathbb{Z}^+\times \mathbb{Z}^+$, $\rho \in(0,1)$,
and $\mathcal{W}=\left\{W_i\right\}_{1 \leq i \leq N}$ be a set of regular divergence-free vector fields of
$\mathbb{R}^d$ satisfying that
\begin{equation}\label{eq:CMX5.2}
\begin{aligned}
  \|\mathcal{W}\|_{\widetilde{\mathcal{C}}_{\mathcal{W}}^{1+\rho, k-1}}
  := \sum_{\lambda=0}^{k-1}\big\|\left(T_{\mathcal{W} \cdot \nabla}\right)^\lambda
  \mathcal{W}\big\|_{C^{1+\rho}} <\infty.
\end{aligned}
\end{equation}
Let $\alpha_i$ $(i=1, \ldots, m)$ be such that $\operatorname{supp} \widehat{\alpha_i} \subset B\left(0, C_i 2^q\right)$, $C_i\geq 1$,
and let $\psi$ be a smooth function with compact support in a ball.
Then, we have that for all $s \in \mathbb{R},(p, r) \in[1, \infty]^2$ and $\ell \in\{0,1, \ldots, k\}$,
\begin{align}\label{eq:CMX5.3}
  \big\|(T_{\mathcal{W} \cdot \nabla})^\ell R_q\left(\alpha_1, \ldots, \alpha_m\right)\big\|_{L^p}
  \leq C \min _{1 \leq i \leq m}\bigg(\sum_{|\mu| \leq \ell}
  \left\|\left(T_{\mathcal{W\cdot \nabla}}\right)^{\mu_i} \alpha_i\right\|_{L^p} \prod_{1 \leq j \leq m, j \neq i}
  \left\|\left(T_{\mathcal{W} \cdot \nabla}\right)^{\mu_j} \alpha_j\right\|_{L^{\infty}}\bigg),
\end{align}
with $\mu=\left(\mu_1, \ldots, \mu_m\right)$ and $|\mu|=\mu_1+\cdots+\mu_m$, and
\begin{equation}\label{eq:CMX5.4}
  \big\|(T_{\mathcal{W} \cdot \nabla})^\ell \psi\left(2^{-q} D\right) \phi\big\|_{L^p}
  \leq C \sum_{\lambda=0}^\ell \big\|(T_{\mathcal{W} \cdot \nabla})^\lambda \phi \big\|_{L^p},
\end{equation}
and
\begin{align}\label{eq:CMX5.5}
  \Big\|\left(2^{q s}\big\|\left(T_{\mathcal{W} \cdot \nabla}\right)^\ell \Delta_q \phi
  \big\|_{L^p}\right)_{q \geq-1}\Big\|_{\ell^r}
  + \Big\|\left(2^{q(s-1)}\big\|\left(T_{\mathcal{W} \cdot \nabla}\right)^\ell \nabla \Delta_q \phi\big\|_{L^p}
  \right)_{q \geq-1}\Big\|_{\ell^r} \leq C\|\phi\|_{\widetilde{\mathcal{B}}_{p, r, \mathcal{W}}^{s, \ell}},
\end{align}
and
\begin{equation}\label{eq:CMX5.6}
  \|\nabla \phi\|_{\widetilde{\mathcal{B}}_{p, r, \mathcal{W}}^{s, \ell}}
  \le C\|\phi\|_{\widetilde{\mathcal{B}}_{p, r, \mathcal{W}}^{s+1, \ell}} .
\end{equation}
In the above the positive constant C depends on $\|\mathcal{W}\|_{\widetilde{\mathcal{C}}_{ \mathcal{W}}^{1+\rho, k-1}}$.
\end{lemma}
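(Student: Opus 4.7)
The plan is to establish the four estimates in the order \eqref{eq:CMX5.3}, \eqref{eq:CMX5.4}, \eqref{eq:CMX5.5}, \eqref{eq:CMX5.6}, with the first providing the engine for the rest. The heart of the argument is a careful induction on the striated depth $\ell$, exploiting the key structural feature of the paraproduct $T_{\mathcal{W} \cdot \nabla} = \sum_{q' \in \mathbb{N}} S_{q'-1}(\mathcal{W}) \cdot \nabla \Delta_{q'}$: the outer factor is frequency-truncated at scale $2^{q'-1}$ while the derivative acts on a dyadic block, so applying $T_{\mathcal{W}\cdot\nabla}$ to an object whose spectrum sits in a ball $B(0, C 2^q)$ engages only finitely many terms $|q' - q| \leq N_0$, and the derivative factor $\nabla \Delta_{q'}$ producing $2^{q'}$ is compensated by the $2^{-q}$ scaling built into $R_q$.

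For \eqref{eq:CMX5.3}, the base case $\ell=0$ follows from Minkowski's integral inequality: changing variables $z = f_i(\tau) 2^{-q} y$ and bounding all factors but one in $L^\infty$ gives the product bound with the chosen index singled out in $L^p$. For the inductive step we apply $T_{\mathcal{W} \cdot \nabla}$ to $R_q(\alpha_1, \ldots, \alpha_m)$, and seek a Leibniz-type decomposition
\[
  T_{\mathcal{W} \cdot \nabla} R_q(\alpha_1, \ldots, \alpha_m)
  = \sum_{i=1}^m R_q\big(\alpha_1, \ldots, T_{\mathcal{W}\cdot\nabla}\alpha_i, \ldots, \alpha_m\big) + \mathcal{E}_q,
\]
with remainder $\mathcal{E}_q$ controlled by $\|\mathcal{W}\|_{\widetilde{\mathcal{C}}^{1+\rho, k-1}_{\mathcal{W}}}$ times products of lower-order striated norms of the $\alpha_i$'s. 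To obtain this, one writes out $T_{\mathcal{W}\cdot\nabla}$ through its convolution kernel $\sum_{q'} (S_{q'-1}\mathcal{W}) \cdot \nabla (2^{q' d} h(2^{q'} \cdot))$, uses the identity $\nabla_x R_q(\alpha_1,\ldots,\alpha_m) = \sum_i R_q(\alpha_1,\ldots,\nabla\alpha_i,\ldots,\alpha_m)$ (the $y$-shift commutes with $x$-differentiation), and regroups; the error $\mathcal{E}_q$ is a sum of commutators $[T_{\mathcal{W}\cdot\nabla}, M_{\alpha_i}]$ involving multiplication operators. Iterating $\ell$ times and applying the inductive hypothesis to each resulting $R_q$-term yields \eqref{eq:CMX5.3}.

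Estimate \eqref{eq:CMX5.4} is obtained by realizing $\psi(2^{-q}D) \phi = R_q(\phi)$ with $m=1$, $h = \mathcal{F}^{-1}\psi$ and a single shift, so \eqref{eq:CMX5.3} immediately gives the desired sum over $\lambda \leq \ell$. For \eqref{eq:CMX5.5}, one writes $\Delta_q \phi$ as such an $R_q(\phi)$, applies the just-proved bound, and sums against the weight $2^{qs}$ in $\ell^r$, recognizing the right-hand side as $\|\phi\|_{\widetilde{\mathcal{B}}^{s,\ell}_{p,r,\mathcal{W}}}$; the $\nabla \Delta_q$ piece uses the spectral localization to write $\nabla\Delta_q = 2^q \Psi(2^{-q}D)\Delta_q$ for a suitable symbol $\Psi \in C^\infty_c$ vanishing at the origin, thereby absorbing the shift in the exponent $s-1$. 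Finally \eqref{eq:CMX5.6} drops out as a direct corollary: $\nabla\phi$ has Littlewood-Paley blocks $\nabla \Delta_q \phi$ enjoying the extra factor $2^q$, and one invokes \eqref{eq:CMX5.5} twice, for $\phi$ at level $s+1$ and for $\nabla\phi$ at level $s$.

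The main obstacle is producing a clean version of the Leibniz-type decomposition for $T_{\mathcal{W}\cdot\nabla}$ acting on the product-of-translates object $R_q(\alpha_1,\ldots,\alpha_m)$ together with sharp control on the commutator remainder $\mathcal{E}_q$. When iterating $\ell$ times, nested commutators of the form $[T_{\mathcal{W}\cdot\nabla}, [T_{\mathcal{W}\cdot\nabla}, \cdots ]]$ accumulate, and it is precisely the hypothesis $\|\mathcal{W}\|_{\widetilde{\mathcal{C}}^{1+\rho,k-1}_{\mathcal{W}}} < \infty$, giving $C^{1+\rho}$ regularity of the iterated paraproducts $(T_{\mathcal{W}\cdot\nabla})^\lambda \mathcal{W}$ up to order $k-1$, that provides the Hölder smoothness needed to close each commutator estimate uniformly in the iteration depth. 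The careful bookkeeping of how many striated derivatives land on $\mathcal{W}$ versus on the $\alpha_i$'s at each iteration step, and verifying that the total count never exceeds $k-1$ on $\mathcal{W}$ while yielding $|\mu| \leq \ell$ on the $\alpha$'s, is the technical crux of the proof.
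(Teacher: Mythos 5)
Note first that the paper itself does not prove this lemma: it is quoted from \cite{CMX22} (``whose proof can be found in \cite{CMX22}''), so your proposal can only be measured against that cited argument. Your engine for \eqref{eq:CMX5.3} --- induction on $\ell$, redistributing $T_{\mathcal{W}\cdot\nabla}$ onto the factors of $R_q$ by a Leibniz-type identity and controlling the commutator remainders through the difference $S_{q'-1}\mathcal{W}(x+f_i(\tau)2^{-q}y)-S_{q'-1}\mathcal{W}(x)$, which gains the factor $2^{-q}\|\nabla\mathcal{W}\|_{L^\infty}$ --- is indeed the right mechanism and is in the spirit of \cite{CMX22}. One imprecision: since the spectra of the $\alpha_i$ lie in \emph{balls}, the sum in $T_{\mathcal{W}\cdot\nabla}$ runs over all $q'\le q+N_0$, not over a finite band $|q'-q|\le N_0$; the gain comes from the commutator differences, not from finitely many terms.

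There are, however, two genuine gaps in how you derive the remaining estimates. First, \eqref{eq:CMX5.4} is \emph{not} the case $m=1$ of \eqref{eq:CMX5.3}: that estimate requires $\operatorname{supp}\widehat{\alpha_1}\subset B(0,C2^q)$, which a general $\phi$ does not satisfy, and the hypothesis is essential (already for $\ell=1$ the remainder is of size $\sum_{q'}2^{q'-q}\|\Delta_{q'}\phi\|_{L^p}$, which diverges without the spectral cutoff). If you instead take $\alpha_1=\widetilde\psi(2^{-q}D)\phi$ with a fattened cutoff so that the hypothesis holds, the right-hand side of \eqref{eq:CMX5.3} returns $\sum_{\lambda}\|(T_{\mathcal{W}\cdot\nabla})^{\lambda}\widetilde\psi(2^{-q}D)\phi\|_{L^p}$, i.e.\ exactly the quantity you are trying to bound --- the reduction is circular. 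Estimate \eqref{eq:CMX5.4} needs its own induction, commuting $T_{\mathcal{W}\cdot\nabla}$ with $\psi(2^{-q}D)$ and treating separately the range $q'\le q+N_0$ (commutator gain $2^{-q}2^{q'}$) and the tail $q'>q+N_0$ (divergence form plus the observation that only frequencies of $\mathcal{W}$ comparable to $2^{q'}$ survive the low-frequency projection). Second, \eqref{eq:CMX5.5} cannot be obtained by ``summing \eqref{eq:CMX5.4} against the weight $2^{qs}$ in $\ell^r$'': the right-hand side of \eqref{eq:CMX5.4} carries no decay in $q$, so $\big\|\big(2^{qs}\cdot\mathrm{const}\big)_q\big\|_{\ell^r}$ is infinite for $s\ge 0$ and in no case reproduces the Besov norm. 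What is needed is a commutator estimate showing that $(T_{\mathcal{W}\cdot\nabla})^{\ell}$ essentially preserves the annular localization of $\Delta_q\phi$, so that $\|(T_{\mathcal{W}\cdot\nabla})^{\ell}\Delta_q\phi\|_{L^p}$ is bounded by $\sum_{\lambda\le\ell}\sum_{|q'-q|\le N_\ell}\|\Delta_{q'}(T_{\mathcal{W}\cdot\nabla})^{\lambda}\phi\|_{L^p}$ (compare \eqref{eq:CMX5.8} in Lemma \ref{lem:CMX5.2}); only such a bound yields the $\ell^r$-summable coefficients. The same defect propagates to your one-line derivation of \eqref{eq:CMX5.6}, which also requires commuting $\nabla$ and $\Delta_j$ with the iterated paraproducts rather than a direct appeal to \eqref{eq:CMX5.5}.
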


Based on Lemma~\ref{lem:CMX5.1}, we obtain the following useful striated estimates. %which is the analogue of \cite[Lemma 5.2]{CMX22}.
\begin{lemma}\label{lem:CMX5.2}
Let $\mathcal{W}=\left\{W_i\right\}_{1 \leq i \leq N}$ be a set of regular divergence-free vector fields of
$\mathbb{R}^d$ satisfying \eqref{eq:CMX5.2} with $(k,N) \in \mathbb{Z}^+\times \mathbb{Z}^+$, $\rho \in(0,1)$.
Let $m(D):= \Lambda^\sigma m_0(D)$, $\sigma>-1$, and $m_0(D)$ be a zero-order pseudo-differential operator with
$m_0(\xi) \in C^{\infty}\left(\mathbb{R}^d \backslash\{0\}\right)$. Then, there exists a positive constant $C$ depending on
$\|\mathcal{W}\|_{\widetilde{\mathcal{C}}_{W}^{1+\rho, k-1}}$ such that the following statements hold
for all $\ell \in\{0,1, \ldots, k\}$.
\begin{enumerate}
\item We have that for all $q \geq-1$,
\begin{equation}\label{eq:CMX5.7}
  \big\|\Delta_q\left(T_{\mathcal{W} \cdot \nabla}\right)^\ell \nabla m(D) \phi\big\|_{L^p}
  \leq C \sum_{\lambda=0}^\ell 2^{q(1+\sigma)} \big\|(T_{\mathcal{W} \cdot \nabla})^\lambda \phi\big\|_{L^p},
\end{equation}
and for all $q \in \mathbb{N}$,
\begin{equation}\label{eq:CMX5.8}
  2^q\big\|(T_{\mathcal{W} \cdot \nabla})^\ell \Delta_q \phi\big\|_{L^p}
  \leq C \sum_{q_1 \in \mathbb{N},\left|q_1-q\right| \leq N_{\ell}} \sum_{\lambda=0}^\ell
  \big\|(T_{\mathcal{W} \cdot \nabla})^\lambda \Delta_{q_1} \nabla \phi\big\|_{L^p},
\end{equation}
with $N_{\ell} \in \mathbb{N}$ depending only on $\ell$.

\item For all $s<0$, we have
\begin{equation}\label{eq:CMX5.9}
  \left\|T_v w\right\|_{\widetilde{\mathcal{B}}_{p, r, \mathcal{W}}^{s, \ell}}
  \leq C \min \bigg\{\sum_{\mu=0}^{\ell}\|v\|_{\widetilde{\mathcal{B}}_{\mathcal{W}}^{0, \mu}}
  \|w\|_{\widetilde{\mathcal{B}}_{p, r, \mathcal{W}}^{s, \ell-\mu}},
  \sum_{\mu=0}^{\ell}\|v\|_{\widetilde{\mathcal{B}}_{p, r, \mathcal{W}}^{s, \mu}}
  \|w\|_{\widetilde{\mathcal{B}}_{\mathcal{W}}^{0, \ell-\mu}}\bigg\},
\end{equation}
and
\begin{equation}\label{eq:CMX5.10}
  \left\|T_v w\right\|_{\widetilde{\mathcal{B}}_{p, r, \mathcal{W}}^{0, \ell}}
  \leq C \sum_{\mu=0}^\ell \|v\|_{\widetilde{\mathcal{B}}_{\mathcal{W}}^{0, \mu}}
  \|w\|_{\widetilde{\mathcal{B}}_{p, r, \mathcal{W}}^{0, \ell-\mu}}.
\end{equation}
While for all $s<1$,
\begin{equation}\label{eq:CMX5.11}
  \left\|T_{\nabla w} v\right\|_{\widetilde{\mathcal{B}}_{p, r, \mathcal{W}}^{s, \ell}}
  \leq C \sum_{\mu=0}^\ell \|w\|_{\widetilde{\mathcal{C}}_{\mathcal{W}}^{s, \mu}}
  \|v\|_{\widetilde{\mathcal{B}}_{p, r, \mathcal{W}}^{1, \ell-\mu}},
\end{equation}
and for all $s \in \mathbb{R}$,
\begin{equation}\label{eq:CMX5.12}
  \left\|T_{\nabla w} v\right\|_{\widetilde{\mathcal{B}}_{p, r, \mathcal{W}}^{s, \ell}}
  \leq C \sum_{\mu=0}^{\ell}\|\nabla w\|_{\widetilde{\mathcal{B}}_{\mathcal{W}}^{0, \mu}}
  \|v\|_{\widetilde{\mathcal{B}}_{p, r, \mathcal{W}}^{s, \ell-\mu}}.
\end{equation}

\item Assume that $v$ is a divergence-free vector field of $\mathbb{R}^d$,
then we have that for all $s>-1$,
\begin{equation}\label{eq:CMX5.13}
   \|R(v\cdot,\nabla w)\|_{\widetilde{\mathcal{B}}_{p, r, \mathcal{W}}^{s, \ell}} \lesssim \min \bigg\{\sum_{\mu=0}^{\ell}\|v\|_{\widetilde{\mathcal{B}}_{\mathcal{W}}^{0, \mu}}
  \|\nabla w\|_{\widetilde{\mathcal{B}}_{p, r, \mathcal{W}}^{s, \ell-\mu}},
  \sum_{\mu=0}^{\ell}\|v\|_{\widetilde{\mathcal{B}}_{p, r, \mathcal{W}}^{s, \mu}}
  \|\nabla w\|_{\widetilde{\mathcal{B}}_{\mathcal{W}}^{0, \ell-\mu}},
  \sum_{\mu=0}^{\ell}\|v\|_{\widetilde{\mathcal{B}}_{\mathcal{W}}^{1, \mu}}
  \|w\|_{\widetilde{\mathcal{B}}_{p, r, \mathcal{W}}^{s, \ell-\mu}}\bigg\}.
\end{equation}
\item We have that
\begin{equation}\label{eq:CMX5.14}
  \|\phi\|_{\mathcal{B}_{p, r, \mathcal{W}}^{s, \ell}}
  \leq C\|\phi\|_{\widetilde{\mathcal{B}}_{p, r, \mathcal{W}}^{s, \ell}}
  \leq C\|\phi\|_{\mathcal{B}_{p, r, \mathcal{W}}^{s, \ell}},\quad \forall\, s\in (-1,1),
\end{equation}

\begin{equation}\label{eq:CMX5.15}
  \|\phi\|_{\mathcal{B}_{\mathcal{W}}^{1, \ell}} \le C\|\phi\|_{\widetilde{\mathcal{B}}_{\mathcal{W}}^{1, \ell}}
  \le C\|\phi\|_{\mathcal{B}_{\mathcal{W}}^{1, \ell}},
\end{equation}

\begin{equation}\label{eq:CMX5.16}
  \|\mathcal{W}\|_{\widetilde{\mathcal{B}}_{p, r, \mathcal{W}}^{s, \ell}}
  \leq C\|\mathcal{W}\|_{\mathcal{B}_{p, r, \mathcal{W}}^{s, \ell}},\quad \forall \,s>-1,
\end{equation}
and,
\begin{equation}\label{eq:CMX5.17}
  \|\phi\|_{\widetilde{\mathcal{B}}_{p, r, \mathcal{W}}^{s, \ell}} \le C\|\phi\|_{\mathcal{B}_{p, r, \mathcal{W}}^{s, \ell}}
  + C\|\phi\|_{\mathcal{B}_{\mathcal{W}}^{1, \ell}}\|\mathcal{W}\|_{\mathcal{B}_{p, r, \mathcal{W}}^{s, \ell}},
  \quad \forall\, s\geq 1.
\end{equation}
\end{enumerate}
\end{lemma}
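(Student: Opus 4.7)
The plan is to reduce each of the four parts to the core building blocks of Lemma \ref{lem:CMX5.1}, exploiting the fact that $T_{\mathcal{W}\cdot\nabla}$ is itself a paraproduct-type operator whose action on frequency-localized pieces is already controlled by \eqref{eq:CMX5.3}--\eqref{eq:CMX5.5}. The main obstacle throughout is that $(T_{\mathcal{W}\cdot\nabla})^\ell$ is not a true derivation; every Leibniz-type expansion must be produced by an explicit Bony decomposition, whose remainders are then of type $R_q(\alpha_1,\ldots,\alpha_m)$ and fall under \eqref{eq:CMX5.3}.

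For (i), the plan is to write $\Delta_q\nabla m(D) = 2^{q(1+\sigma)} \Delta_q\widetilde{m}_q(D)$, where the symbol $\widetilde{m}_q(\xi) = 2^{-q\sigma}|\xi|^\sigma i\xi\, m_0(\xi)\,\widetilde{\psi}(2^{-q}\xi)/|\xi|$ is, by the homogeneity of $\Lambda^\sigma m_0$ and standard pseudodifferential smoothness, the symbol of a convolution operator uniformly in $q$ with kernel in a fixed $\psi(2^{-q}D)$ class. Then \eqref{eq:CMX5.4} gives \eqref{eq:CMX5.7}. For \eqref{eq:CMX5.8}, I would factor $\Delta_q = 2^{-q}\psi_0(2^{-q}D) \widetilde{\Delta}_q \nabla$ for $q\in\mathbb{N}$ with $\psi_0$ an appropriate Fourier multiplier and again invoke \eqref{eq:CMX5.4}, the finite sum over $|q_1 - q|\leq N_\ell$ absorbing the small frequency support spread introduced by commuting $\widetilde{\Delta}_q$ past $(T_{\mathcal{W}\cdot\nabla})^\ell$.

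For (ii) and (iii), the starting point is the spectral localization of $T_v w = \sum_q S_{q-1}v\,\Delta_q w$ and $R(v,w) = \sum_q \Delta_q v\,\widetilde{\Delta}_q w$. I would expand $(T_{\mathcal{W}\cdot\nabla})^\ell(S_{q-1}v\,\Delta_q w)$ inductively via the identity $T_{\mathcal{W}\cdot\nabla}(fg) = T_{\mathcal{W}\cdot\nabla} f\cdot g + f \cdot T_{\mathcal{W}\cdot\nabla} g + \mathcal{R}_{\mathcal{W}}(f,g)$, where $\mathcal{R}_\mathcal{W}$ is governed by \eqref{eq:CMX5.3}. After $\ell$ iterations this yields a Leibniz-type sum over splits $(\mu, \ell-\mu)$ plus remainders of type $R_q$ that are controlled by the striated norms of $\mathcal{W}$ through \eqref{eq:CMX5.2}. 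Taking $L^p$--$L^\infty$ bounds on each factor via \eqref{eq:CMX5.5} and summing via discrete Young's inequality on $q$ produces \eqref{eq:CMX5.9}--\eqref{eq:CMX5.12}; the restriction $s<0$ (resp.\ $s<1$) in \eqref{eq:CMX5.9} (resp.\ \eqref{eq:CMX5.11}) is precisely what makes the geometric series over $q$ converge. For \eqref{eq:CMX5.13}, I would use $\nabla\cdot v = 0$ to rewrite $\Delta_q v\cdot\nabla \widetilde{\Delta}_q w = \nabla\cdot(\Delta_q v\,\widetilde{\Delta}_q w)$, which provides the extra derivative needed for the restriction $s>-1$ and interchanges the roles of $v$ and $w$ to give the three alternative forms of the bound.

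Finally for (iv), the plan is to apply Bony's decomposition in the single derivative:
\begin{equation*}
\partial_\mathcal{W} f = T_{\mathcal{W}\cdot\nabla} f + T_{\nabla f}\mathcal{W} + R(\mathcal{W},\nabla f),
\end{equation*}
and to iterate. The range $s\in(-1,1)$ is exactly the range in which \eqref{eq:CMX5.11} and \eqref{eq:CMX5.13} allow one to bound $T_{\nabla f}\mathcal{W}$ and $R(\mathcal{W},\nabla f)$ in $\widetilde{\mathcal{B}}^{s,\ell}_{p,r,\mathcal{W}}$ by the right-hand side of \eqref{eq:CMX5.14} using only $\|\mathcal{W}\|_{\widetilde{\mathcal{C}}^{1+\rho,\ell-1}_\mathcal{W}}$; this supplies both inequalities in \eqref{eq:CMX5.14}. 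The endpoint statement \eqref{eq:CMX5.15} requires, at $s=1$, replacing the estimate on $T_{\nabla f}\mathcal{W}$ by \eqref{eq:CMX5.12}, which is valid for all $s$, and this is why only the $\mathcal{B}^{1,\ell}_\mathcal{W}$ norms (not the mixed $\mathcal{B}^{1,\ell}_{p,r,\mathcal{W}}$ form with general $p$) appear. Estimate \eqref{eq:CMX5.16} is immediate from $\mathcal{W} = \partial_\mathcal{W}^0 \mathcal{W}$ and the embedding $\mathcal{B}\hookrightarrow\widetilde{\mathcal{B}}$ for $s>-1$. For \eqref{eq:CMX5.17} with $s\geq 1$, the Bony decomposition forces the appearance of an extra factor $\|\mathcal{W}\|_{\mathcal{B}^{s,\ell}_{p,r,\mathcal{W}}}$ coming from $T_{\nabla f}\mathcal{W}$, which is exactly the second term on the right-hand side. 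The main bookkeeping obstacle here will be the iteration of this one-step decomposition to all orders $\lambda\leq\ell$: I would proceed by induction on $\ell$, controlling the cross terms using the paraproduct and remainder estimates (ii)--(iii) just established, together with \eqref{eq:CMX5.2}.
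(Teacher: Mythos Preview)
Your outline for parts (ii)--(iv) is broadly correct and in the spirit of the original argument in \cite{CMX22}, which the paper simply cites for all of these estimates. The only part the paper actually proves is \eqref{eq:CMX5.7}, and here your sketch has a real gap.

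You propose to write $\Delta_q\nabla m(D)=2^{q(1+\sigma)}\Delta_q\widetilde m_q(D)$ and then invoke \eqref{eq:CMX5.4}. But in \eqref{eq:CMX5.7} the operator ordering is $\Delta_q\,(T_{\mathcal W\cdot\nabla})^\ell\,\nabla m(D)\phi$: the outer localization $\Delta_q$ and the multiplier $\nabla m(D)$ are separated by $(T_{\mathcal W\cdot\nabla})^\ell$, so they cannot be combined into a single $\psi(2^{-q}D)$-type operator acting on $\phi$. Estimate \eqref{eq:CMX5.4} is of the form $\|(T_{\mathcal W\cdot\nabla})^\ell\psi(2^{-q}D)\phi\|_{L^p}\le C\sum_\lambda\|(T_{\mathcal W\cdot\nabla})^\lambda\phi\|_{L^p}$, with the multiplier on the \emph{inside}; it does not tell you how an \emph{outer} $\Delta_q$ interacts with an inner multiplier of order $1+\sigma$ across $(T_{\mathcal W\cdot\nabla})^\ell$. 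To salvage your route you would first have to prove that $(T_{\mathcal W\cdot\nabla})^\ell$ has frequency spread bounded by some $N_\ell$, insert $\sum_{|q_1-q|\le N_\ell}\Delta_{q_1}$ between $(T_{\mathcal W\cdot\nabla})^\ell$ and $\nabla m(D)$, and only then rescale $\Delta_{q_1}\nabla m(D)$. That step is not in your plan.

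The paper takes a different and cleaner path: induction on $\ell$. One writes
\[
(T_{\mathcal W\cdot\nabla})\nabla m(D)f=-[\nabla m(D),T_{\mathcal W\cdot\nabla}]f+\nabla m(D)(T_{\mathcal W\cdot\nabla})f,
\]
expands the commutator $[\nabla m(D),T_{\mathcal W\cdot\nabla}]$ via the kernel of $\nabla m(D)\widetilde\psi(2^{-q_1}D)$ and a first-order Taylor expansion of $S_{q_1-1}\mathcal W$, which \emph{gains one derivative} and turns the commutator into a sum of $R_q$-type terms of order $\sigma$ governed by \eqref{eq:CMX5.3}. The second piece $\nabla m(D)(T_{\mathcal W\cdot\nabla})f$ falls under the induction hypothesis with $\phi$ replaced by $(T_{\mathcal W\cdot\nabla})\phi$. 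This is the mechanism that produces the $2^{q(1+\sigma)}$ factor without ever having to transport the outer $\Delta_q$-localization through $(T_{\mathcal W\cdot\nabla})^\ell$.
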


\begin{proof}[Proof of Lemma \ref{lem:CMX5.2}]
The only statement which needs to be proved is \eqref{eq:CMX5.7} since the other estimates are the same as Lemma 5.2 in \cite{CMX22}.
We prove it via the induction method. We first remark that \eqref{eq:CMX5.7} for $\ell=0$ is true: this follows from Lemma \ref{lem:m(D)}.
Assume that it holds for all $\ell^\prime\in\{0,\cdots,\ell\}$ with some $\ell\in\{0,1,\cdots,k-1\}$,
we want to prove that \eqref{eq:CMX5.7} is true at the rank $(\ell+1)$.
Similarly as \eqref{eq:comm-formu1}, we notice that
\begin{align*}
  \left(T_{\mathcal{W} \cdot \nabla}\right)\nabla m(D) f
  & = - \big[\nabla m(D), T_{\mathcal{W}\cdot\nabla} \big] f + \nabla m(D) \left(T_{\mathcal{W}\cdot\nabla}\right)f \\
  & =-\sum_{q_1 \in \mathbb{N}}\big[\nabla m(D), S_{q_1-1} \mathcal{W} \cdot \nabla\big] \Delta_{q_1} f
  + \nabla m(D)\left(T_{\mathcal{W} \cdot \nabla}\right) f\\
  & =-\sum_{q_1 \in \mathbb{N}}\left[\nabla m(D) \widetilde{\psi}\left(2^{-q_1} D\right), S_{q_1-1} \mathcal{W} \cdot \nabla\right]
  \Delta_{q_1} f + \nabla m(D)\left(T_{\mathcal{W} \cdot \nabla}\right) f,
\end{align*}
and
\begin{align*}
  & {\left[\nabla m(D) \widetilde{\psi}\left(2^{-q_1} D\right), S_{q_1-1} \mathcal{W} \cdot \nabla\right] \Delta_{q_1} f } \\
  & = 2^{q_1 (d+ 1+ \sigma)} \int_{\mathbb{R}^d} \widetilde{h}_1(2^{q_1}y)
  \big( S_{q_1-1}\mathcal{W}(x-y) - S_{q_1 -1}\mathcal{W}(x) \big) \cdot \nabla \Delta_{q_1} f(x-y) \dd y \\
  & = 2^{q_1(1+\sigma)} \int_{\mathbb{R}^d} \widetilde{h}_1(y)\left(S_{q_1-1} \mathcal{W}\left(x-2^{-q_1} y\right)-S_{q_1-1}
  \mathcal{W}(x)\right)  \cdot \nabla \Delta_{q_1} f\left(x-2^{-q_1} y\right) \mathrm{d} y \\
  & = - 2^{q_1\sigma}\int_0^1 \int_{\mathbb{R}^d} \widetilde{h}_1(y) y \cdot \nabla S_{q_1-1}
  \mathcal{W}\left(x-\tau2^{-q_1} y\right) \cdot \nabla \Delta_{q_1} f\left(x-2^{-q_1} y\right) \mathrm{d} y \mathrm{~d} \tau,
\end{align*}
with $\widetilde{h}_1 := \mathcal{F}^{-1}( i \xi m(\xi)\widetilde{\psi}(\xi) ) \in \mathcal{S}(\mathbb{R}^d)$.
Thus by using the induction assumption together with Lemma~\ref{lem:CMX5.1}, we infer that for all $q \geq-1$,
\begin{eqnarray*}
  \big\|\Delta_q\left(T_{\mathcal{W} \cdot \nabla}\right)^{\ell+1} \nabla m(D) \phi\big\|_{L^p} &\leq& \big\|\Delta_q\left(T_{\mathcal{W} \cdot \nabla}\right)^\ell
  \left(\left[\nabla m(D), T_{\mathcal{W} \cdot \nabla}\right] \phi\right)\big\|_{L^p} \ + \ \big\|\Delta_q\left(T_{\mathcal{W} \cdot \nabla}\right)^\ell \nabla m(D)\left(T_{\mathcal{W} \cdot \nabla} \right)\phi\big\|_{L^p}\\
  &\lesssim& \sum_{q_1 \in \mathbb{N}, q_1 \sim q} \sum_{\mu_1+\mu_2 \leq \ell} 2^{q_1\sigma}
  \big\|(T_{\mathcal{W} \cdot \nabla})^{\mu_1} S_{q_1-1} \nabla \mathcal{W}\big\|_{L^{\infty}}
  \big\|(T_{\mathcal{W} \cdot \nabla})^{\mu_2} \nabla \Delta_{q_1} \phi\big\|_{L^p} \\
&& \ + \  2^{q(1+\sigma)} \sum_{\lambda=0}^\ell \big\|(T_{\mathcal{W} \cdot \nabla})^{\lambda+1} \phi\big\|_{L^p} \\
  &\lesssim& \sum_{\lambda=0}^\ell 2^{q(1+\sigma)}\big\|(T_{\mathcal{W} \cdot \nabla})^\lambda \phi\big\|_{L^p} +  \sum_{\lambda=1}^{\ell+1} 2^{q(1+\sigma)}\big\|(T_{\mathcal{W} \cdot \nabla})^\lambda \phi\big\|_{L^p} \\
  &\lesssim&  \sum_{\lambda=0}^{\ell+1} 2^{q(1+\sigma)} \big\|(T_{\mathcal{W} \cdot \nabla})^\lambda \phi\big\|_{L^p},
\end{eqnarray*}
where in the last line we have used \eqref{eq:CMX5.4} and the following estimate
\begin{align*}
  \left\|\left(T_{\mathcal{W} \cdot \nabla}\right)^{\mu_{1}} \nabla S_{q_1-1} \mathcal{W}\right\|_{L^{\infty}}
  & \leq \sum_{q_2=-1}^{q_1-1} 2^{-q_2 \rho}\Big(2^{q_2 \rho}\left\|\left(T_{\mathcal{W} \cdot \nabla}\right)^{\mu_{1}}
  \Delta_{q_2} \nabla \mathcal{W}\right\|_{L^{\infty}}\Big) \\
  & \leq C\|\nabla \mathcal{W}\|_{\widetilde{\mathcal{C}}_{\mathcal{W}}^{\rho, \mu_1}}
  \leq C\|\mathcal{W}\|_{\widetilde{\mathcal{C}}_{\mathcal{W}}^{1+\rho, \mu_{1}}} < \infty.
\end{align*}
Hence, by induction, we have proved  \eqref{eq:CMX5.7} which is the wanted control.
\end{proof}

Then, we turn to the proof of Lemma~\ref{lem:CMXstra-es}.
\begin{proof}[Proof of Lemma~\ref{lem:CMXstra-es}]
Since \eqref{eq:CMX2.13} was already proved in Lemma 2.4 in \cite{CMX22},
we only need to prove statements $(\mathrm{ii})$ and $(\mathrm{iii})$.
We shall prove \eqref{eq:CMX2.15} and \eqref{eq:str-es-mDphi} again by induction on the index $k$.
For $k=0$, \eqref{eq:CMX2.15} follows directly from \eqref{eq:commEs},
while \eqref{eq:str-es-mDphi} follows from \eqref{eq:paWmD-es} together with the estimate
\begin{align}\label{eq:mDphi-Bes}
  \|m(D) \phi\|_{B^s_{p,r}} & \leq C \|\Delta_{-1} m(D)\phi\|_{L^p} +
  C \big\| \big(2^{js} \|\Delta_j m(D)\phi\|_{L^p}\big)_{j\in\mathbb{N}}\big\|_{\ell^r} \nonumber \\
  & \leq C \|\Delta_{-1} m(D) \phi\|_{L^p} +
  C \big\| \big(2^{j(s+\sigma)} \|\Delta_j \phi\|_{L^p}\big)_{j\in\mathbb{N}}\big\|_{\ell^r} \nonumber \\
  & \leq C \mathbf{1}_{\{-1<\sigma\leq 0\}} \|\Delta_{-1}m(D)\phi\|_{L^p}
  + C \|\phi\|_{B^{s+\sigma}_{p,r}},
\end{align}
where $C>0$ is a universal constant
(the norm $\|\mathcal{W}\|_{\mathcal{C}_{\mathcal{W}}^{1+\rho, k-1}}$ plays no role).

Assume that \eqref{eq:CMX2.15} and \eqref{eq:str-es-mDphi} holds for $\ell \in\{0,1, \ldots, k-1\}$ (where $k=0$ when $\ell=0$ ) with $\ell$ in place of the $k$-index, we intend to prove that they also hold for the $\ell+1$ case. For the estimation of \eqref{eq:CMX2.15}, thanks to Bony's decomposition, we have
\begin{eqnarray*}
  [m(D), u \cdot \nabla] \phi &=& \sum_{j \in \mathbb{N}} \left[m(D), S_{j-1} u \cdot \nabla\right] \Delta_j \phi
  + \sum_{j \in \mathbb{N}}\left[m(D), \Delta_j u \cdot \nabla\right] S_{j-1} \phi \\
  &&+ \sum_{j \geq 3} m(D) \operatorname{div}\big(\Delta_j u \widetilde{\Delta}_j \phi\big)
  - \sum_{j \geq 3} \operatorname{div}\big(\Delta_j u ~ m(D) \widetilde{\Delta}_j \phi\big) \\
  &&+ \sum_{-1 \leq j \leq 2} \left[m(D), \Delta_j u \cdot \nabla\right] \widetilde{\Delta}_j \phi \\
  &:=&\sum_{j=1}^5  I_j.
\end{eqnarray*}
It follows from \eqref{eq:CMX5.14} that
\begin{equation*}%\label{eq:CMX5.27}
  \|[m(D), u \cdot \nabla] \phi\|_{\mathcal{B}_{p, r, \mathcal{W}}^{s,\ell+1}}
  \leq C \big\|\big(I_1,I_2,I_3,I_4,I_5\big)\big\|_{\widetilde{\mathcal{B}}_{p, r, \mathcal{W}}^{s, \ell+1}} .
\end{equation*}
For ${I}_1$, noticing that $m(D) \widetilde{\psi}\left(2^{-j} D\right)=2^{j(d+\sigma)} \widetilde{h}\left(2^j \cdot\right) *$
with $\widetilde{h}:= \mathcal{F}^{-1}(m \widetilde{\psi}) \in \mathcal{S}\left(\mathbb{R}^d\right)$, and
\begin{equation*}
\begin{aligned}
  \big[m(D), S_{j-1} u \cdot & \nabla\big] \Delta_j \phi(x)
  = \big[m(D) \widetilde{\psi}\left(2^{-j} D\right), S_{j-1} u\cdot\big] \nabla \Delta_j \phi(x) \\
  & = 2^{j\sigma}\int_{\mathbb{R}^d} \widetilde{h}(y)
  \left(S_{j-1} u \left(x-2^{-j} y\right)-S_{j-1} u(x)\right) \cdot \nabla \Delta_j \phi\left(x-2^{-j} y\right) \mathrm{d} y \\
  & = - 2^{j(\sigma-1)} \int_0^1 \int_{\mathbb{R}^d} \widetilde{h}(y) y \cdot
  \nabla S_{j-1} u\left(x-\tau 2^{-j} y\right) \cdot \nabla \Delta_j \phi\left(x-2^{-j} y\right) \dd y\dd\tau,
\end{aligned}
\end{equation*}
we apply Lemma~\ref{lem:CMX5.1} and Lemma~\ref{lem:CMX5.2} to obtain that for all $\lambda \in\{0,1, \ldots, \ell+1\}$,
\begin{eqnarray*}
  2^{qs}\big\|\Delta_q\left(T_{\mathcal{W} \cdot \nabla}\right)^\lambda {I}_1\big\|_{L^p} &\lesssim&  2^{qs} \sum_{j \in \mathbb{N}, j \sim q} \left\|\Delta_q (T_{\mathcal{W} \cdot \nabla})^\lambda
  \big(\big[m(D) \widetilde{\psi}\left(2^{-j} D\right), S_{j-1}u \cdot \nabla\big] \Delta_j \phi\big)\right\|_{L^p} \\
  &\lesssim&  2^{qs} \sum_{j \in \mathbb{N}, j \sim q} \sum_{\lambda_1+\lambda_2 \leq \lambda}
  2^{j(\sigma-1)} \big\|\left(T_{\mathcal{W}\cdot \nabla}\right)^{\lambda_1} \nabla S_{j-1} u
  \big\|_{L^\infty} \big\|\left(T_{\mathcal{W} \cdot \nabla}\right)^{\lambda_2} \nabla \Delta_j \phi\big\|_{L^p} \\
  &\lesssim&  \sum_{j \in \mathbb{N}, j \sim q} \sum_{\lambda_1+\lambda_2 \leq \lambda}
  \bigg(\sum_{j^\prime \leq j-1}\big\|\left(T_{\mathcal{W}\cdot \nabla}\right)^{\lambda_1} \nabla \Delta_{j^\prime}
  u\big\|_{L^\infty}\bigg) 2^{j(\sigma+s-1)}\big\|(T_{\mathcal{W}\cdot \nabla})^{\lambda_2} \Delta_j \nabla \phi\big\|_{L^p} \\
 &\lesssim& c_q \sum_{\lambda_1=0}^{\ell+1} \|\nabla u\|_{\widetilde{\mathcal{B}}_{\mathcal{W}}^{0, \lambda_1}}
  \|\nabla \phi\|_{\widetilde{\mathcal{B}}_{p, r, \mathcal{W}}^{\sigma-1+s, \ell+1-\lambda_1}} \\
  &\lesssim& c_q \sum_{\lambda_1=0}^{\ell+1} \|\nabla u\|_{\widetilde{\mathcal{B}}_{\mathcal{W}}^{0, \lambda_1}}
  \|\phi\|_{\widetilde{\mathcal{B}}_{p, r, \mathcal{W}}^{s+\sigma, \ell+1-\lambda_1}} \\
  &\lesssim& \|\nabla u\|_{\mathcal{B}_{\mathcal{W}}^{0, \ell+1}} \|\phi\|_{\mathcal{B}_{p, r, \mathcal{W}}^{s+\sigma, \ell+1}},
\end{eqnarray*}
with $\left\{c_q\right\}_{q \geq-1}$ satisfying $\left\|c_q\right\|_{\ell^r}=1$.
It immediately leads to
\begin{equation*}%\label{eq:CMX5.28}
  \|{I}_1\|_{\widetilde{\mathcal{B}}_{p, r, \mathcal{W}}^{s, \ell+1}}
  \leq C \|\nabla u\|_{\mathcal{B}_{\mathcal{W}}^{0, \ell+1}}\|\phi\|_{\mathcal{B}_{p, r, \mathcal{W}}^{\sigma+s, \ell+1}}.
\end{equation*}
Then, we use the strategy as the proof of \eqref{eq:comm-formu1}, that is, we  decompose $I_2$ as
\begin{equation*}
\begin{aligned}
  {I_2} %& =\sum\limits_{j\in\NN}{\left[m(D), \Delta_j u \cdot \nabla\right] S_{j-1} \phi} \\
  = \sum\limits_{j\in\NN} m(D) \widetilde{\psi}(2^{-j}D)\big(\Delta_j u\cdot\nabla S_{j-1}\phi\big)
  - \sum\limits_{j\in\NN}\Delta_j u\cdot\nabla m(D)S_{j-1} \phi
  := {I}_{2,1}+{I}_{2,2}.
\end{aligned}
\end{equation*}
For ${I}_{2,1}$, by using the induction assumptions and taking advantages of
\eqref{eq:CMX5.3}, \eqref{eq:CMX5.6}, \eqref{eq:CMX5.8} and \eqref{eq:CMX5.14},
we find that for all $\lambda \in\{0,1, \ldots, \ell+1\}$,
\begin{eqnarray*}
  2^{qs}\big\|\Delta_q\left(T_{\mathcal{W} \cdot \nabla}\right)^\lambda {I}_{2,1}\big\|_{L^p} &\lesssim& 2^{qs} \sum_{j \in \mathbb{N}, j \sim q}\left\|\Delta_q\left(T_{\mathcal{W}\cdot \nabla}\right)^\lambda
  m(D)\widetilde{\psi}(2^{-j}D)(\Delta_j u \cdot \nabla S_{j-1} \phi)\right\|_{L^p} \\
 &\lesssim& 2^{qs} \sum_{j \in \mathbb{N}, j \sim q}\sum\limits_{\lambda_1=0}^{\lambda}
  2^{j\sigma}\big\|(T_{\mathcal{W}\cdot \nabla})^{\lambda_1} (\Delta_j u \cdot \nabla S_{j-1} \phi)\big\|_{L^p}\\
  &\lesssim& \sum_{j \in \mathbb{N}, j \sim q}\sum\limits_{\lambda_2+\lambda_3\leq\lambda}
  2^{j(s+\sigma)} \big\|(T_{\mathcal{W}\cdot \nabla})^{\lambda_2} \Delta_j u\big\|_{L^\infty}
  \big\| (T_{\mathcal{W}\cdot \nabla})^{\lambda_3} \nabla S_{j-1} \phi\big\|_{L^p} \\
  &\lesssim&  \sum_{j \in \mathbb{N}, j \sim q} \sum\limits_{\lambda_2+\lambda_3\leq\lambda} 2^{j(s+\sigma-1)}
  \bigg(\sum_{q_1 \in \mathbb{N}, q_1 \sim j}\sum_{\lambda_4=0}^{\lambda_2}
  \big\|(T_{\mathcal{W}\cdot \nabla})^{\lambda_4}
  \Delta_{q_1}\nabla u\big\|_{L^\infty}\bigg) \\
  && \times \ \bigg(\sum\limits_{j^\prime\leq j-1}\big\|  (T_{\mathcal{W}\cdot \nabla})^{\lambda_3} \nabla \Delta_{j^\prime}\phi\big\|_{L^p}\bigg)\\
&\lesssim&  \sum_{j \in \mathbb{N}, j \sim q} \sum_{\lambda_4+\lambda_3 \leq \lambda}
  \|\nabla u\|_{\widetilde{\mathcal{B}}^{0,\lambda_4}_{\mathcal{W}}}
  \sum_{j^{\prime} < j-1} 2^{\left(j-j^\prime\right)(s+\sigma-1)} 2^{j^\prime(s+\sigma-1)}
  \big\|(T_{\mathcal{W} \cdot \nabla})^{\lambda_3} \nabla \Delta_{j^\prime} \phi \big\|_{L^p} \\
  &\lesssim& c_q\|\nabla u\|_{\widetilde{\mathcal{B}}_{\mathcal{W}}^{0, \ell+1}}
  \sum_{\lambda_3=0}^\lambda\|\nabla \phi\|_{\widetilde{\mathcal{B}}_{p, r, \mathcal{W}}^{\sigma-1+s, \lambda_3}} \\
  &\lesssim& c_q\|\nabla u\|_{\widetilde{\mathcal{B}}_{\mathcal{W}}^{0, \ell+1}}
  \|\phi\|_{\widetilde{\mathcal{B}}_{p, r, \mathcal{W}}^{\sigma+s, \ell+1}} \\
  &\lesssim& c_q\|\nabla u\|_{\mathcal{B}_{\mathcal{W}}^{0, \ell+1}}
  \|\phi\|_{\mathcal{B}_{p, r, \mathcal{W}}^{\sigma+s, \ell+1}},
\end{eqnarray*}
and
\begin{eqnarray*}
  2^{qs}\big\|\Delta_q\left(T_{\mathcal{W} \cdot \nabla}\right)^\lambda {I}_{2,2}\big\|_{L^p} &\lesssim& 2^{qs} \sum_{j \in \mathbb{N}, j \sim q}\big\|\Delta_q\left(T_{\mathcal{W}\cdot \nabla}\right)^\lambda
  \big(\Delta_j u\cdot\nabla m(D)S_{j-1} \phi\big) \big\|_{L^p} \\
   &\lesssim& 2^{qs} \sum_{j \in \mathbb{N}, j \sim q}
  \sum\limits_{\lambda_1+\lambda_2\leq\lambda} \big\|(T_{\mathcal{W}\cdot \nabla})^{\lambda_1}
  \Delta_j u\big\|_{L^\infty} \big\|(T_{\mathcal{W}\cdot \nabla})^{\lambda_2}
  \nabla m(D)S_{j-1} \phi \big\|_{L^p} \\
   &\lesssim& \sum_{j \in \mathbb{N}, j \sim q} \sum\limits_{\lambda_1+\lambda_2\leq\lambda} 2^{j(s-1)}
  \bigg(\sum_{q_1 \in \mathbb{N}, q_1 \sim j} \sum_{\lambda_3=0}^{\lambda_1}
  \big\|(T_{\mathcal{W}\cdot \nabla})^{\lambda_3}
  \Delta_{q_1} \nabla u\big\|_{L^\infty} \bigg) \times \\
   && \ \bigg( \big\| (T_{\mathcal{W}\cdot \nabla})^{\lambda_2}
  \nabla m(D) \Delta_{-1} \phi\big\|_{L^p}  + \sum_{0\leq j'\leq j-1}
  \big\|(T_{\mathcal{W}\cdot\nabla})^{\lambda_2} \nabla m(D) \Delta_{j'}\phi \big\|_{L^p} \bigg) \\
   &\lesssim&  \sum_{j \in \mathbb{N}, j \sim q} \sum_{\lambda_2+\lambda_3 \leq \lambda}
  \|\nabla u\|_{\widetilde{\mathcal{B}}^{0,\lambda_3}_{\mathcal{W}}}
  2^{j(s-1)}\bigg(\|\nabla m(D) \Delta_{-1} \phi\|_{L^p}  \\
  && \ + \ 
  \sum_{0\leq j^\prime \leq j-1} \sum_{\lambda_4=0}^{\lambda_2} 2^{j'(1+\sigma)}
  \big\|(T_{\mathcal{W} \cdot \nabla})^{\lambda_4} \Delta_{j'} \phi \big\|_{L^p} \bigg) \\
   &\lesssim& \|\nabla u\|_{\widetilde{\mathcal{B}}^{0,\ell+1}_\mathcal{W}}
   \\
  &\times& \sum_{j \in \mathbb{N},j\sim q} 2^{j(s-1)}  \bigg(\|\Delta_{-1} \phi\|_{L^p} +
  \sum_{\lambda_4=0}^\lambda \sum_{0\leq j^\prime \leq j-1} 2^{j'(1-s)}  2^{j'(s+\sigma)}
  \big\|(T_{\mathcal{W} \cdot \nabla})^{\lambda_4} \Delta_{j'} \phi \big\|_{L^p} \bigg) \\
   &\lesssim& c_q \|\nabla u\|_{\widetilde{\mathcal{B}}_{\mathcal{W}}^{0, \ell+1}}
  \sum_{\lambda_2=0}^\lambda \|\phi\|_{\widetilde{\mathcal{B}}_{p, r, \mathcal{W}}^{s+\sigma, \lambda_2}} \\
  &\lesssim&  c_q\|\nabla u\|_{\widetilde{\mathcal{B}}_{\mathcal{W}}^{0, \ell+1}}
  \|\phi\|_{\widetilde{\mathcal{B}}_{p, r, \mathcal{W}}^{s+\sigma, \ell+1}} \\
   &\lesssim& c_q\|\nabla u\|_{\mathcal{B}_{\mathcal{W}}^{0, \ell+1}}
  \|\phi\|_{\mathcal{B}_{p, r, \mathcal{W}}^{s+\sigma,\ell+1}}
\end{eqnarray*}
where $\left\{c_q\right\}_{q \geq-1}$ satisfies $\left\|c_q\right\|_{\ell^r}=1$.
Then the above estimates readily give 
\begin{equation*}%\label{eq:CMX5.29}
  \|{I_2}\|_{\widetilde{\mathcal{B}}_{p, r, \mathcal{W}}^{s, \ell+1}}
  \leq C\|\nabla u\|_{\mathcal{B}_{\mathcal{W}}^{0, \ell+1}}
  \|\phi\|_{\mathcal{B}_{p, r, \mathcal{W}}^{s+\sigma,\ell+1}}.
\end{equation*}
For ${I}_3$, by applying Lemma~\ref{lem:CMX5.1} and Lemma~\ref{lem:CMX5.2},
we find that for all $\lambda \in\{0,1, \ldots, \ell+1\}$,
\begin{eqnarray*}
   2^{qs}\big\|\Delta_q\left(T_{\mathcal{W}\cdot \nabla}\right)^\lambda {I}_3\big\|_{L^p} &\lesssim& 2^{qs} \sum_{j \geq \max \left\{3, q-N_\lambda\right\}}\left\|\Delta_q\left(T_{\mathcal{W}\cdot \nabla}
  \right)^\lambda \nabla m(D)\big(\Delta_j u \widetilde{\Delta}_j \phi\big)\right\|_{L^p} \\
  &\lesssim& 2^{q(1+s+\sigma)} \sum_{\lambda_1=0}^\lambda \bigg(\sum_{j \geq \max \left\{3, q-N_\lambda\right\}}
  \big\|(T_{\mathcal{W} \cdot \nabla})^{\lambda_1}\big(\Delta_j u\,
  \widetilde{\Delta}_j \phi\big)\big\|_{L^p}\bigg) \\
  &\lesssim& 2^{q(1+s+\sigma)} \sum_{j \geq \max \left\{3, q-N_\lambda\right\}}
  \sum_{\lambda_2+\lambda_3 \leq \lambda} \big\|(T_{\mathcal{W} \cdot \nabla})^{\lambda_2}
  \Delta_j u\big\|_{L^\infty}\big\|(T_{\mathcal{W} \cdot \nabla})^{\lambda_3}
  \widetilde{\Delta}_j \phi\big\|_{L^p} \\
  &\lesssim& \sum_{\lambda_2+\lambda_3 \leq \ell+1} \sum_{j \geq \max \left\{3, q-N_\lambda\right\}}
  2^{(q-j)(1+s+\sigma)}  2^j\big\|(T_{\mathcal{W}\cdot\nabla})^{\lambda_2}
  \Delta_j u\big\|_{L^\infty}   \\
  &&\ \ \  \times \ 2^{j(\sigma+s)} \big\|(T_{\mathcal{W}\cdot \nabla}
  )^{\lambda_3} \widetilde{\Delta}_j \phi\big\|_{L^p} \\
  &\lesssim& \sum_{\lambda_2=0}^{\ell+1} \sum_{j \geq \max \left\{3, q-N_\lambda\right\}}
  2^{(q-j)(1+s+\sigma)} \bigg(\sum_{j_1 \sim j} \sum_{\lambda_4=0}^{\lambda_2}
  \big\|(T_{\mathcal{W}\cdot \nabla})^{\lambda_4} \Delta_{j_1} \nabla u\big\|_{L^\infty}
  \bigg)\\
  && \ \ \times \ \|\phi\|_{\widetilde{\mathcal{B}}_{p, \infty, \mathcal{W}}^{\sigma+s, \ell+1}} \\
  &\lesssim& c_q\|\nabla u\|_{\widetilde{\mathcal{B}}_{\mathcal{W}}^{0, \ell+1}}
  \|\phi\|_{\widetilde{\mathcal{B}}_{p, r, \mathcal{W}}^{\sigma+s, \ell+1}} \\
  &\lesssim& c_q\|\nabla u\|_{\mathcal{B}_{\mathcal{W}}^{0, \ell+1}}
  \|\phi\|_{\mathcal{B}_{p, r, \mathcal{W}}^{\sigma+s, \ell+1}},
\end{eqnarray*}
which guarantees that
\begin{equation*}%\label{eq:CMX5.30}
  \left\|{I}_3\right\|_{\widetilde{\mathcal{B}}_{p, r, \mathcal{W}}^{\sigma+s, \ell+1}}
  \leq C\|\nabla u\|_{\mathcal{B}_{\mathcal{W}}^{0, \ell+1}}
  \|\phi\|_{\mathcal{B}_{p, r, \mathcal{W}}^{\sigma+s, \ell+1}} .
\end{equation*}
For ${I}_4$, similarly as above, we infer that for each $\lambda \in\{0,1, \ldots, \ell+1\}$,
\begin{eqnarray*}
  2^{qs}\big\|\Delta_q\left(T_{\mathcal{W} \cdot \nabla}\right)^\lambda {I}_4\big\|_{L^p} &\lesssim& 2^{qs} \sum_{j \geq \max \left\{3, q-N_\lambda\right\}}\left\|\Delta_q
  \left(T_{\mathcal{W} \cdot \nabla}\right)^\lambda \operatorname{div}
  \left(\Delta_j u \widetilde{\Delta}_j m(D) \phi\right)\right\|_{L^p} \\
  &\lesssim& 2^{q(1+s)} \sum_{\lambda_1=0}^\lambda \sum_{j \geq \max \left\{3, q-N_\lambda\right\}}
  \big\|(T_{\mathcal{W} \cdot \nabla})^{\lambda_1}
  \big(\Delta_j u \, \widetilde{\Delta}_j m(D) \phi\big)\big\|_{L^p} \\
  &\lesssim& 2^{q(1+s)}\sum_{j \geq \max \left\{3, q-N_\lambda\right\}}
  \sum_{\lambda_2+\lambda_3 \leq \lambda}\big\|(T_{\mathcal{W} \cdot \nabla})^{\lambda_2}
  \Delta_j u \big\|_{L^\infty} \big\|(T_{\mathcal{W} \cdot \nabla})^{\lambda_3}
  \widetilde{\Delta}_j m(D) \phi\big\|_{L^p} \\
  &\lesssim& \sum_{\lambda_2+\lambda_3\leq\ell+1} \sum_{j \geq \max \left\{3, q-N_\lambda\right\}}
  2^{(q-j)(1+s)} 2^j \big\|(T_{\mathcal{W} \cdot \nabla})^{\lambda_2}
  \Delta_j u\big\|_{L^\infty} \\
  && \ \times \ \bigg(2^{j(s+\sigma)} \sum_{\lambda_4=0}^{\lambda_3}
  \big\|(T_{\mathcal{W} \cdot \nabla})^{\lambda_4} \widetilde{\Delta}_j \phi\big\|_{L^p}\bigg) \\
  &\lesssim& \sum_{\lambda_2=0}^{\ell+1} \sum_{j \geq \max \left\{3, q-N_\lambda\right\}}
  2^{(q-j)(1+s)}\bigg(\sum_{j_1 \sim j} \sum_{\lambda_5=0}^{\lambda_2}
  \big\|(T_{\mathcal{W} \cdot \nabla})^{\lambda_5} \Delta_{j_1} \nabla u\big\|_{L^\infty}\bigg)
  \left\|\phi\right\|_{\widetilde{\mathcal{B}}_{p, \infty, \mathcal{W}}^{s+\sigma, \ell+1}} \\
  &\lesssim& c_q\|\nabla u\|_{\widetilde{\mathcal{B}}_{\mathcal{W}}^{0, \ell+1}}
  \left\|\phi\right\|_{\widetilde{\mathcal{B}}_{p, r, \mathcal{W}}^{s+\sigma, \ell+1}} \\
 &\lesssim& c_q\|\nabla u\|_{\mathcal{B}_{\mathcal{W}}^{0, \ell+1}}
  \left\|\phi\right\|_{\mathcal{B}_{p, r, \mathcal{W}}^{\sigma+s, \ell+1}},
\end{eqnarray*}
Therefore, we get
\begin{equation*}%\label{eq:CMX5.32}
  \left\|{I}_4\right\|_{\widetilde{\mathcal{B}}_{p, r, \mathcal{W}}^{s, \ell+1}}
  \leq C\|\nabla u\|_{\mathcal{B}_{\mathcal{W}}^{0, \ell+1}}
  \|\phi\|_{\mathcal{B}_{p, r, \mathcal{W}}^{\sigma+s, \ell+1}}.
\end{equation*}
Taking advantages of Lemma~\ref{lem:m(D)},
the term ${I}_5$ can be easily estimated as follows:
\begin{equation*}%\label{eq:CMX5.34}
\begin{aligned}
  \left\|{I}_5 \right\|_{\widetilde{\mathcal{B}}_{p, r, \mathcal{W}}^{s, \ell+1}}
  & \leq \sum_{q=-1}^{N_{\ell}} \sum_{\lambda=0}^{\ell+1} \sum_{j=-1}^2
  \left(\big\|\Delta_q(T_{\mathcal{W} \cdot \nabla})^\lambda m(D)
  \operatorname{div}\big(\Delta_j u \,\widetilde{\Delta}_j \phi\big)\big\|_{L^p}\right. \\
  & \left.\hspace{8em} + \big\|\Delta_q (T_{\mathcal{W} \cdot \nabla})^\lambda
  \big(\Delta_j u \cdot \nabla m(D) \widetilde{\Delta}_j \phi\big)\big\|_{L^p}\right) \\
  & \leq C \sum_{j=-1}^2\left(\big\|m(D) \operatorname{div}\big(\Delta_j u\, \widetilde{\Delta}_j \phi\big)
  \big\|_{L^p} + \big\|\Delta_j u \cdot \nabla m(D) \widetilde{\Delta}_j \phi\big\|_{L^p} \right) \\
  & \leq C\|u\|_{L^\infty} \bigg(\sum_{-1 \leq j \leq 2}\big\|\widetilde{\Delta}_j \phi\big\|_{L^p}\bigg)
  \leq C\|u\|_{L^{\infty}}\|\phi\|_{\mathcal{B}_{p, r, W}^{\sigma+s, \ell+1}} .
\end{aligned}
\end{equation*}
Hence, gathering the above estimates,  %namely \eqref{eq:CMX5.27}, and \eqref{eq:CMX5.34},
we find the wanted inequality \eqref{eq:CMX2.15}. \\

Then, to prove the control  \eqref{eq:str-es-mDphi}, thanks to  \eqref{eq:stBes-fact} and \eqref{eq:mDphi-Bes}, we have that
%\begin{equation*}%\label{eq:CMX5.18}
\begin{eqnarray*}
  \|m(D) \phi\|_{\mathcal{B}_{p, r, \mathcal{W}}^{s,\ell+2}}
   &= & \|\partial_{\mathcal{W}}(m(D) \phi)\|_{\mathcal{B}_{p, r, \mathcal{W}}^{s, \ell+1}}
  + \|m(D) \phi\|_{B_{p, r}^{s}} \\
  &\leq& C\|\mathcal{W} \cdot \nabla(m(D) \phi)\|_{\mathcal{B}_{p, r, \mathcal{W}}^{s,\ell+1}}
  +  C\|\phi\|_{B_{p, r}^{s+\sigma}} + C \mathbf{1}_{\{-1<\sigma\leq 0\}}\left\|\Delta_{-1} m(D) \phi\right\|_{L^p}\\
 &\leq& \|[m(D), \mathcal{W}\cdot\nabla]\phi\|_{\mathcal{B}_{p, r, \mathcal{W}}^{s,\ell+1}}
  +\|m(D)\partial_{\mathcal{W}}\phi\|_{\mathcal{B}_{p, r, \mathcal{W}}^{s,\ell+1}} \\
  && \ + \ C\|\phi\|_{B_{p, r}^{s+\sigma}} + C \mathbf{1}_{\{-1<\sigma\leq 0\}}\left\|\Delta_{-1} m(D) \phi\right\|_{L^p}.
\end{eqnarray*}
Thanks to the induction assumptions of \eqref{eq:CMX2.15}-\eqref{eq:str-es-mDphi} and using \eqref{eq:Del-1mDpaW},
one can get
\begin{align*}
  \|[m(D), \mathcal{W}\cdot\nabla]\phi\|_{\mathcal{B}_{p, r, \mathcal{W}}^{s,\ell+1}}
  &\leq C \|\phi\|_{\mathcal{B}_{p, r, \mathcal{W}}^{s+\sigma,\ell+1}}
  \left(\|\nabla\mathcal{W}\|_{\mathcal{B}^{0,\ell+1}_{\mathcal{W}}} +\|\mathcal{W}\|_{L^\infty}\right)\\
  &\leq C \|\phi\|_{\mathcal{B}_{p, r, \mathcal{W}}^{s+\sigma,\ell+1}}
  \|\mathcal{W}\|_{\mathcal{B}^{1,\ell+1}_{\mathcal{W}}},
\end{align*}
and
\begin{align*}
  \|m(D)\partial_{\mathcal{W}}\phi\|_{\mathcal{B}_{p, r, \mathcal{W}}^{s,\ell+1}}
  & \leq C \|\partial_{\mathcal{W}}\phi\|_{\mathcal{B}_{p, r, \mathcal{W}}^{s+\sigma,\ell+1}}
  + C \|\mathcal{W}\|_{\mathcal{B}^{1,\ell}_{\mathcal{W}}}
  \Big(\|\partial_{\mathcal{W}}\phi\|_{\mathcal{B}_{p, r, \mathcal{W}}^{s+\sigma,\ell}}
  +  \left\|\Delta_{-1} m(D) \mathrm{div}\,(\mathcal{W}\, \phi)\right\|_{L^p} \Big) \\
  & \leq C \|\phi\|_{\mathcal{B}_{p, r, \mathcal{W}}^{s+\sigma,\ell+2}}
  + C\|\mathcal{W}\|_{\mathcal{B}^{1,\ell}_{\mathcal{W}}}
  \Big(\|\phi\|_{\mathcal{B}_{p, r, \mathcal{W}}^{s+\sigma,\ell+1}}
  +  \left\| \phi\right\|_{B^{s+\sigma}_{p,r}}\Big) \\
  & \leq C \|\phi\|_{\mathcal{B}_{p, r, \mathcal{W}}^{s+\sigma,\ell+2}},
\end{align*}
where $C>0$ depends on  $\|\mathcal{W}\|_{\mathcal{C}^{1+\rho,k-1}_{\mathcal{W}}}$.
Collecting the above estimates allows us to conclude that \eqref{eq:str-es-mDphi} holds in the step $\ell+1$, this ends the proof.
\end{proof}

\noindent {\bf{Acknowledgements:}} L. Xue has been supported by National Key Research and Development Program of China (No. 2020YFA0712900)
and by National Natural Science Foundation of China (No. 12271045).

\end{document}